\numberwithin{equation}{section}
\numberwithin{figure}{section}
 \theoremstyle{plain}
 \newtheorem{thm}{Theorem}
 \theoremstyle{plain}
 \newtheorem{thm2}{Theorem}
 \theoremstyle{plain}
 \newtheorem*{thm*}{Theorem}
  \theoremstyle{plain}
  \numberwithin{thm}{section}
  \newtheorem{cor}[thm]{Corollary}
  \theoremstyle{plain}
  \newtheorem{lem}[thm]{Lemma}
  \theoremstyle{remark}
  \newtheorem{rem}[thm]{Remark}
    \theoremstyle{remark}
    \newtheorem{rem2}{Remark}
    \theoremstyle{remark}
  \newtheorem{example}[thm]{Example}
   \theoremstyle{plain}
  \newtheorem{prop}[thm]{Proposition}
     \theoremstyle{plain}
  \newtheorem{definition}[thm]{Definition}
   \theoremstyle{plain}
  \newtheorem{assumption}{Assumption}
   \theoremstyle{plain}
     \newtheorem{assumption2}{Assumption}
   \theoremstyle{plain}
  \def\Ddots{\mathinner{\mkern1mu\raise\p@
\vbox{\kern7\p@\hbox{.}}\mkern2mu
\raise4\p@\hbox{.}\mkern2mu\raise7\p@\hbox{.}\mkern1mu}}
\newcommand{\norm}[1]{\left\| #1 \right\|}
\newcommand{\eklm}[1]{\left\langle #1 \right\rangle}
\renewcommand{\d}{\,d}
\newcommand{\N}{{\mathbb N}}
\newcommand{\Z}{{\mathbb Z}}
\newcommand{\C}{{\mathbb C}}
\newcommand{\R}{{\mathbb R}}
\newcommand{\B}{{\mathcal B}}
\newcommand{\D}{{\mathcal D}}
\newcommand{\E}{{\mathcal E}}
\newcommand{\M}{{\mathcal M}}
\newcommand{\X}{{\mathfrak X}}
\newcommand{\Xbf}{{\mathbf X}}
\newcommand{\V}{{\mathcal V}}
\newcommand{\W}{{\mathcal W}}
\newcommand{\0}{{\rm 0}}
\renewcommand{\epsilon}{\varepsilon}
\renewcommand{\rho}{\varrho}
\newcommand{\bdm}{\begin{displaymath}}
\newcommand{\edm}{\end{displaymath}}
\newcommand{\bq}{\begin{equation}}
\newcommand{\eq}{\end{equation}}
\newcommand{\bqn}{\begin{equation*}}
\newcommand{\eqn}{\end{equation*}}
\newcommand{\Cinft}{{\rm C^{\infty}}}
\newcommand{\CT}{{\rm C^{\infty}_c}}
\renewcommand{\L}{{\rm L}}
\newcommand{\GL}{\mathrm{GL}}
\newcommand{\SL}{\mathrm{SL}}
\newcommand{\SO}{\mathrm{SO}}
\newcommand{\SU}{\mathrm{SU}}
\newcommand{\g}{{\bf \mathfrak g}}
\newcommand{\aL}{{\bf \mathfrak a}}
\newcommand{\nL}{{\bf \mathfrak n}}
\renewcommand{\k}{{\bf \mathfrak k}}
\newcommand{\m}{{\bf \mathfrak m}}
\renewcommand{\t}{{\bf \mathfrak t}}
\newcommand{\p}{{\bf \mathfrak p}}
\newcommand{\Ad}{\mathrm{Ad}}
\renewcommand{\Im}{\mathrm{Im}\,}
\renewcommand{\Re}{\mathrm{Re}\,}
\newcommand{\vol}{\mathrm{vol}\,}
\newcommand{\Gam}{{_\Gamma}}
\newcommand{\gam}{\Gamma\backslash}
\DeclareMathOperator{\supp}{supp\,}
\DeclareMathOperator{\Res}{Res}
\newcommand{\longrightleftarrows}{\mathrel{\substack{\longrightarrow \\[-.6ex] \longleftarrow}}}
\newcommand\longmapsfrom{\mathrel{\reflectbox{\ensuremath{\longmapsto}}}}
\newcommand{\hooklongrightarrow}{\lhook\joinrel\longrightarrow}
\definecolor{darkblue}{rgb}{0,0,0.4}
\newcommand{\tu}[1]{\textup{#1}}
\newcommand{\Abb}[4]{\left\{ \begin{array}{ccc}
                               #1 & \rightarrow &#2\\
			       #3 &\mapsto &#4
                               \end{array}\right.}
\begin{document}
\title{Quantum-classical correspondence on associated\\ vector bundles over locally symmetric spaces}
\author{Benjamin K\"uster\hspace{5em}Tobias Weich\\\vspace*{-0.75em}
\hspace{-2em}\texttt{\scriptsize bkuester@mathematik.uni-marburg.de}\hspace{3.5em}\texttt{\scriptsize weich@math.upb.de}\vspace*{1.5em}}
\date{\today}
\maketitle
{\bf Abstract.}\quad For a compact Riemannian locally symmetric space $\mathcal M$ of rank one and an associated vector bundle $\mathbf V_\tau$ over the unit cosphere bundle $S^\ast\mathcal M$, we give a precise description of those classical (Pollicott-Ruelle) resonant states on $\mathbf V_\tau$ that vanish under covariant derivatives in the Anosov-unstable directions of the chaotic geodesic flow on $S^\ast\mathcal M$. In particular, we show that they are isomorphically mapped by natural pushforwards into generalized common eigenspaces of the algebra of invariant differential operators $D(G,\sigma)$ on compatible associated vector bundles $\mathbf W_\sigma$ over $\mathcal M$. As a consequence of this description, we obtain an 
exact band structure of the Pollicott-Ruelle spectrum. Further, under some mild assumptions on the representations $\tau$ and $\sigma$ defining the bundles $\mathbf V_\tau$ and $\mathbf W_\sigma$, we obtain a very explicit description of the generalized common eigenspaces. This allows us to relate classical Pollicott-Ruelle resonances to quantum  eigenvalues of a Laplacian in a suitable Hilbert space of sections of $\mathbf W_\sigma$. Our methods of proof are based on representation theory and Lie theory.
\tableofcontents
\newpage
\section*{Introduction}
\addcontentsline{toc}{section}{Introduction}
Let $\mathcal{M}$ be a compact Riemannian manifold without boundary. The Riemannian metric allows to define two natural differential operators of fundamental importance: The Laplace-Beltrami operator $\Delta:\Cinft(\mathcal{M})\to \Cinft(\mathcal{M})$ as well as the generator $X:\Cinft(S^\ast\M)\to \Cinft(S^\ast\M)$ of the geodesic flow $\varphi_t$ on the unit cosphere bundle $S^\ast\M$. As $\mathcal M$ is compact, 
$\Delta$ extends to a self-adjoint unbounded operator in $\L^2(\mathcal M)$ with purely discrete spectrum of finite multiplicities. It is a broad concept of spectral geometry to relate the spectrum of $\Delta$ to properties of the geodesic flow. As in physics, the presence of such relations is often called \emph{quantum-classical correspondence}.\footnote{In the language of physics, the geodesic flow $\varphi_t:S^\ast\M\to S^\ast\M$ generated by $X$ describes the classical mechanical movement of a single free point particle on $\M$ with constant speed, while  $\Delta$ is, up to rescaling, the Schr\"odinger operator associated to the corresponding quantum mechanical system.} Seminal results like Selberg's trace formula \cite{Sel56}, or more generally Gutzwiller's trace formula \cite{Gut71,CdV73, DG75}, relate the spectrum of the Laplacian to periodic geodesics. However, these results do not link the spectrum of $\Delta$ to any spectrum of $X$, nor do they give a relation between the corresponding eigenstates. It has only been recently  that Dyatlov, Faure, and Guillarmou \cite{dfg} have shown that in the case of strictly negative constant curvature an explicit relation between the eigenvalues of $\Delta$ and the so-called spectrum of Pollicott-Ruelle resonances of the geodesic flow holds and that there is even an explicit relation between the associated eigenstates. In order to state their result, let us recall the definition of Pollicott-Ruelle resonances. 

The Pollicott-Ruelle resonances form a discrete set of numbers in the complex plane that can be associated to a hyperbolic flow. Historically, Pollicott-Ruelle resonances are defined as poles of the meromorphically continued Laplace-transformed correlation function \cite{Pol85, Rue86,Rue87}. From a modern perspective, these resonances can be regarded as the discrete spectrum of $-X$ on certain anisotropic distribution spaces \cite{Liv04, BL07, FSj11, DZ16, DG16}. From this point of view, a Pollicott-Ruelle resonance $\lambda\in \C$ comes with a finite-dimensional eigenspace of \emph{Pollicott-Ruelle resonant states} $\Res(\lambda)\subset \mathcal D'(S^*\mathcal M)$ (see Section \ref{sec:defruelle} for a precise definition).

Note that a crucial assumption for the existence of a discrete spectrum of the non-elliptic operator $-X$ is that the corresponding flow $\varphi_t$ is Anosov and, in particular, there is a flow-invariant continuous splitting of the tangent bundle $T(S^\ast\mathcal M)=E_0\oplus E_s\oplus E_u$ into neutral, stable, and unstable subbundles. In general, the subbundles $E_s$ and $E_u$ might be only H\"older-continuous but for $\mathcal M$ of strictly negative constant  curvature (or, more generally, for $\mathcal M$ a Riemannian locally symmetric space of rank one) they are in fact smooth subbundles. Let us, for the moment, assume that $\mathcal M$ has strictly negative constant curvature. A central role in the result of Dyatlov, Faure, and Guillarmou is played by those resonant states that are invariant along the unstable directions.
We write 	
\begin{equation}
 \label{eq:first_band_res_state}
 \textup{Res}^0(\lambda):=\{u\in \textup{Res}(\lambda):\mathfrak Xu=0 ~\forall~ \mathfrak X\in\Gamma^ \infty(E_u)\}
\end{equation}
and call the elements of $\textup{Res}^0(\lambda)$ \emph{first band resonant states}. We call $\lambda$ a \emph{first band resonance}\footnote{The name ``first band'' comes from the observation due to Dyatlov, Faure, and Guillarmou that the Pollicott-Ruelle spectrum on surfaces of strictly negative constant curvature has an exact structure of bands parallel to the imaginary axis. The first band resonances turn out to be those lying closest to the imaginary axis.} if $\textup{Res}^0(\lambda)\neq \{0\}$.

Now, let $\pi:S^\ast\mathcal M\to\mathcal M$ be the bundle projection and recall that the pushforward
of distributions along this submersion is a well-defined continuous operator $\pi_\ast:\mathcal D'(S^\ast\mathcal M)\to \mathcal D'(\mathcal M)$. Dyatlov,  Faure, and Guillarmou then prove the following remarkable result \cite{dfg}: If $\lambda$ is a first band Pollicott-Ruelle resonance, $\pi_\ast$ restricts to
\[
 \pi_\ast: \textup{Res}^0(\lambda) \to \ker_{\L^2(\mathcal M)}(\Delta-\mu(\lambda)),
\]
where $\mu(\lambda) = -\left(\lambda+\rho\right)^2 + \rho^2$ and $\rho=(\dim(\mathcal M)-1)/2$. Furthermore, provided one has $\lambda \notin -\rho-\frac{1}{2}\N_0$, the above map is an isomorphism of complex vector spaces. 

The aim of the present article is to extend these results to the setting of associated vector bundles over arbitrary compact Riemannian locally symmetric spaces of rank one. From now on, suppose that $\mathcal M$ is such a space. Let us briefly introduce the setting: We can write our manifold as a double quotient $\mathcal M=\Gamma \backslash G/K$, where $G$ is a connected real semisimple Lie group of real rank one with finite center, $K\subset G$ a maximally compact subgroup, and 
$\Gamma\subset G$ a discrete torsion-free cocompact subgroup. The Riemannian metric on $\mathcal M$ is defined in a canonical way from the non-degenerate Killing form on the Lie algebra $\mathfrak g$ of $G$. This metric has strictly negative sectional curvature\footnote{The case of manifolds with strictly negative constant curvature, studied in \cite{dfg}, is the special case of $G=\SO(n+1,1)_0$, $n\in \N$, where the subscript $0$ denotes the identity component.}, so the geodesic flow on $S^\ast \mathcal M$ is Anosov. We can also write the cosphere bundle as a double quotient in the following way: Let $G=KAN$ be an Iwasawa decomposition and define $M\subset K$ to be the centralizer of $A$ in $K$, then $S^\ast \mathcal M = \Gamma\backslash G/M$. Given the structure as double quotients, there is the following canonical way to construct vector bundles over $\mathcal M$ and $S^\ast\mathcal M$: Let $\sigma:K\to \mathrm{End}(W)$ and $\tau:M\to \mathrm{End}(V)$ be finite-dimensional unitary representations. Then we define the associated vector bundles
\[
 \mathbf V_\tau:= \gam G\times_\tau V,\qquad\quad\qquad\mathbf W_\sigma:= \gam G\times_\sigma W
\]
over $S^\ast\mathcal M$ and $\mathcal M$, respectively. These bundles come with canonical connections. Regarding $\mathbf W_\sigma$, this allows to define the Bochner Laplacian $\Delta$ on sections of $\mathbf W_\sigma$. Due to its ellipticity, it has a purely discrete eigenvalue spectrum with finite multiplicities in $\L^2(\mathcal M, \mathbf W_\sigma)$. Regarding $\mathbf V_\tau$, the connection allows to lift the generator $X$ of the geodesic flow to sections of $\mathbf V_\tau$ as a covariant derivative (see Definition \ref{def:liftedgeodvf})\footnote{In the entire introduction we use a simplified notation that is not necessarily consistent with the notation in the following sections.}, and according to \cite{DG16} there is a notion of discrete Pollicott-Ruelle spectrum for this lifted operator. We denote the space of Pollicott-Ruelle resonant states of a resonance $\lambda\in \C$ by $\textup{Res}_{\mathbf V_\tau}(\lambda)\subset\mathcal D'(S^\ast\mathcal M, \mathbf V_\tau)$ (see Section \ref{sec:defruelle} for a precise definition). Using the covariant derivative into the unstable directions we can define analogously to (\ref{eq:first_band_res_state}) the space of first band resonant states on $\mathbf V_\tau$
\[
 \textup{Res}_{\mathbf V_\tau}^0(\lambda):=\{u\in \textup{Res}_{\mathbf V_\tau}(\lambda): \nabla_{\mathfrak X}u=0 ~\forall~ \mathfrak X\in\Gamma^ \infty(E_u)\}.
\]
Note that, contrary to the scalar case, the projection $\pi:S^\ast\mathcal M\to\mathcal M$ is in general not sufficient to define a pushforward of distributions $\mathcal D'(S^\ast\mathcal M, \mathbf V_\tau)\to \mathcal D'(\mathcal M, \mathbf W_\sigma)$ as such a 
construction requires passing from $V$ to $W$ via an $M$-equivariant homomorphism $h:V\to W$.
We call $\tau$ and $\sigma$ \emph{compatible} if there is at least one such $h$ that is non-zero. In this case one can define for each non-trivial $h\in \textup{Hom}_M(V,W)$ a non-trivial natural pushforward
\begin{equation}
 \label{eq:push_forward}
 h_{\pi*}:\mathcal D'(S^\ast\mathcal M, \mathbf V_\tau)\to  \mathcal D'(\mathcal M, \mathbf W_\sigma),
\end{equation}
see Definition \ref{def:pushforwards} for details. Note that if the representation $\tau$ or $\sigma$ is reducible, the bundle $\mathbf V_\tau$ or $\mathbf W_\sigma$ splits into a direct sum of associated subbundles, and we can treat pairs of subbundles of $\mathbf V_\tau$ and $\mathbf W_\sigma$ arising this way separately. Therefore, it is reasonable to assume for the rest of this introduction that $\tau$ and $\sigma$ are irreducible. 
By basic representation theory, $\tau$ and $\sigma$ are then compatible in the above sense iff the restriction of $\sigma$ from $K$ to $M$ contains a subrepresentation equivalent to $\tau$. In order to state the first result, let us introduce for $\lambda \in \C$
\[
\mu(\lambda) := -(\lambda+\norm{\rho})^2 + \norm{\rho}^2 +\norm{i\omega_{[\sigma]}+i\delta_\k}^2-\big\Vert i\omega_{[\tau_1^0]}+i\delta_\m\big\Vert^2+\norm{i\delta_\m}^2-\norm{i\delta_\k}^2.
\]
Here $\rho\in \g^\ast$ and $\delta_\k,\delta_\m\in i\g^\ast\subset \g^\ast\otimes_\R\C$ are sums of particular root systems defined in  \eqref{eq:rho} and \eqref{eq:rhom}, respectively, and $\omega_{[\sigma]},\omega_{[\tau_1^0]}\in i\g^\ast$ are the highest weights associated to the equivalence classes of irreducible unitary representations $[\sigma]\in \hat K$ and $[\tau_1^0]\in \hat M_0$ as introduced in Section \ref{sec:invariantsweights}. Finally, $\norm{\cdot}$ denotes the norm on $\mathfrak g^\ast$ that is obtained by the Killing form and Cartan involution (see Section~\ref{sec:setupnot}).

As a first result we obtain
\begin{thm2}\label{thm:QC_int}
 For every $\lambda \in \C$, each non-trivial pushforward $h_{\pi^*}$ as in \eqref{eq:push_forward} restricts to a map
 \bq
  h_{\pi*}: \Res^0_{\mathbf V_\tau}(\lambda) \longrightarrow \ker_{\L^2(\mathbf W_\sigma)}(\Delta - \mu(\lambda))\label{eq:restrictedhpi}
 \eq
and the map is injective unless $\lambda$ lies in a certain discrete set of points on the real axis.
Moreover, for $\lambda$ outside this discrete set, the image of the map \eqref{eq:restrictedhpi} is the joint eigenspace of the algebra of invariant differential operators with respect to a 
finite-dimensional representation $\chi_{\tau,\sigma}$ (see Definition \ref{def:eigencomnot} and Lemma \ref{lem:smbaction} for a precise definition
of these joint eigenspaces and Theorem \ref{thm:main} for a slightly more precise bijectivity statement).
\end{thm2}
As a consequence of this correspondence between first band Pollicott-Ruelle
resonances and Laplace eigenvalues, we obtain the following global band structure of the whole Pollicott-Ruelle resonance spectrum.
\begin{thm2}\label{thm:band_int}
If $\lambda\in \C$  is a Pollicott-Ruelle resonance on  $\mathbf V_\tau$, then either $\Im(\lambda) = 0$ or 
$\Re(\lambda) \in - \|\rho\| - \N_0\|\alpha_0\|$. Furthermore, if 
$s\in \tu{Res}_{\mathbf V_\tau}(-\|\rho\| + ir)$, $r\in \R\setminus\{0\}$, then 
for all $\X \in \Gamma^\infty(\gam E_u)$ we have $\X s=0$, i.e.,
\bqn
\tu{Res}_{\mathbf V_\tau}(-\|\rho\| + ir) = \tu{Res}^0_{\mathbf V_\tau}(-\|\rho\| + ir).
\eqn
\end{thm2}
\begin{rem2} An exact band structure has before been obtained in \cite{dfg} in the scalar case for manifolds of constant negative curvature. We would like to emphasize that
even the step to generalize the band structure for scalar resonances from constant curvature manifolds to general rank one manifolds heavily relies on the quantum-classical correspondence on vector bundles given in Theorem \ref{thm:QC_int}.
\end{rem2}

While Theorem~\ref{thm:QC_int} is sufficient to prove the global band structure,  
a drawback of this general result is that the image of $h_{\pi*}$ is given in a 
rather abstract sense as a generalized joint eigenspace of the algebra of invariant differential
operators on $\mathbf W_\sigma$. Contrary to the scalar case, this algebra can be quite complicated (in particular non-commutative) in the vector-valued case. However, under two mild assumptions, we are able to derive a rather simple form of this generalized joint eigenspace.
\begin{assumption}
 $\tau$ occurs in the restriction of $\sigma$ from $K$ to $M$ with multiplicity one.
\end{assumption}
This assumption is equivalent to $\dim_\C\textup{Hom}_M(V,W) = 1$, so the linear pushforward map (\ref{eq:push_forward}) is unique up to scalar multiplication. To state the second assumption, let $[\tau]$ denote the equivalence class of $\tau$ with respect to unitary equivalence of irreducible $M$-representations.
\begin{assumption} $[\tau]$ is invariant under the Weyl group action.\footnote{For details on the Weyl group and the considered action, see Section \ref{sec:setupnot}.}
\end{assumption}
\begin{rem2}
 Both assumptions above can be explicitly checked. Furthermore, they are rather mild assumptions. For example, they are fulfilled in the families of even-dimensional real hyperbolic symmetric spaces ($G=\SO(2n,1)_0$) or complex hyperbolic symmetric spaces ($G=\SU(n+1,1)$) for all pairs of compatible irreducible representations $\tau,\sigma$.
\end{rem2}
\begin{thm2}
Let $\mathcal M$ be a compact Riemannian locally symmetric space of rank one. Let $\tau$ and $\sigma$ be compatible irreducible unitary representations of $M$ and $K$, respectively, fulfilling Assumptions 1 and 2. Let $\mathrm{End}_M(W)$ denote the space of $M$-equivariant endomorphisms on $W$ and set $N:=\dim_\C \mathrm{End}_M(W)-1$. Then there are differential operators $D_1,\ldots, D_N$ on $\mathbf W_\sigma$, commuting with $\Delta$, which allow to define the Hilbert spaces $\mathcal H_\sigma:=\{s\in L^2(\mathcal M, \mathbf W_\sigma):D_js=0\;\forall\,j\}$, and for every first band Pollicott-Ruelle resonance $\lambda\in \C$, the unique\footnote{More precisely, the linear map $h_{\pi*}$ is unique up to non-zero scalar multiples, which implies that its image is unique.} natural pushforward map $h_{\pi*}$ restricts to a map
\[
h_{\pi*}:\mathrm{Res}^0_{\mathbf V_{\tau}}(\lambda)\longrightarrow \ker_{\mathcal H_\sigma}(\Delta-\mu(\lambda)).
\]
Moreover, $h_{\pi*}$ is an isomorphism of complex vector spaces unless $\lambda$ lies in a discrete subset of $\R$ of exceptional points (cf.\ Theorem \ref{thm:rough}).
\end{thm2}
Apart from the above mentioned results, we give a precise description of Jordan blocks (Theorem \ref{thm:jordan}). In Appendix~\ref{app:Bochner_eigenspaces} and Appendix~\ref{app:joint_eigenspace} we show for two examples how to apply the general results to concrete examples. In particular, we show in Appendix~\ref{app:joint_eigenspace} how to determine the joint eigenspaces on some particular associated vector bundles over $\mathbb H^3=\SO(3,1)_0/\SO(3)$. This allows us to recover the results obtained in \cite{dfg} and even give a slightly more precise description.  

Let us give a short overview of related work: The first explicit relation between first band Pollicott-Ruelle resonant states and Laplace eigenstates was obtained in \cite{dfg} in the setting of compact manifolds with constant negative curvature. On a purely spectral level (without considering the resonant states), it had before been announced in \cite{FT13} for the case of compact hyperbolic surfaces. In fact, the result for the first band on compact hyperbolic surfaces is already implicitly contained in a work on horocyclic invariant distributions of Flaminio and Forni \cite{FF03}.

While the result of \cite{dfg} on compact manifolds of strictly negative constant curvature in arbitrary dimension had to exclude certain points due to the non-bijectivity of the Poisson transform, the article \cite{GHW18a} treats also these exceptional points in the case of hyperbolic surfaces, relating their multiplicities to the Euler characteristic of the surface.

Another generalization of \cite{dfg} is the passage from compact to convex cocompact manifolds. In \cite{GHW18a}
the full Pollicott-Ruelle spectrum on convex cocompact hyperbolic surfaces is studied. A generalization to 
convex cocompact hyperbolic spaces of arbitrary dimension has been obtained (up to the exceptional points) in \cite{Had17a, Had18}.

Finally, a first generalization to general compact Riemannian locally symmetric spaces of rank one has been obtained by Guillarmou, Hilgert, and the second author in \cite{GHW18b}. There, the first band of scalar Pollicott-Ruelle resonances and their relation to Patterson-Sullivan distributions are studied.

A band structure of the full Pollicott-Ruelle resonance spectrum has before been obtained on constant negative curvature manifolds in the scalar case \cite{dfg}. However, \cite{dfg} gives also a more precise description of the higher order bands by inverting the horocycle operators which we do not cover in the present article.

Let us finally give an outline of this article and a rough sketch of the proof:

In Section~\ref{sec:prelim} we introduce the basic definitions regarding  Pollicott-Ruelle resonances, Riemannian symmetric spaces, and associated vector bundles. We also recall some basic facts from the structure theory of semisimple Lie groups and Riemannian symmetric spaces which we heavily use in the proofs. In Section \ref{sec:res_on_VB} we prove a first central result (Lemma \ref{lem:ruellerep}) that gives an isomorphism from first band Pollicott-Ruelle resonant states on $\mathbf V_\tau$ to $\Gamma$-invariant distributional vectors in non-spherical principle series representations. This representation theoretic characterization of first band resonant states is completely general and relies only on the structure theory of rank one Riemannian symmetric spaces. In Proposition \ref{prop:jordanchar} we additionally give a characterization of possible first band Jordan blocks in terms of distributional sections of a boundary vector bundle.

Section \ref{sec:fbpoisson} is then devoted to the identification of the non-spherical principle series representations with generalized common eigenspaces of the algebra of invariant differential operators on the associated vector bundle $G\times_\sigma W\to G/K$. This identification is obtained by working with a general vector-valued Poisson transform developed in the thesis of Olbrich \cite{olbrichdiss}. Some of its advantages and disadvantages in comparison to other vector-valued Poisson transforms are explained in Section \ref{sec:poissonadv}. Furthermore, we link the Poisson transform to the natural pushforwards $h_{\pi*}$ by combining the results of Section \ref{sec:setupnot} and Section \ref{sec:fbpoisson}. In Theorem \ref{thm:main} we obtain the characterization of first band resonant states in the most general setting for arbitrary compatible representations $\tau, \sigma$. 
Based on this result, we prove the statement on the band structure in Section~\ref{sec:band}. The proof is based on horocycle operators similar to those in \cite{dfg}. 

Finally, in Section \ref{sec:specialcase}, we obtain a much more explicit result by imposing the mild  Assumptions 1 and 2. This is achieved by constructing particular generators $\Delta, D_1,\ldots,D_N$ of the algebra $D(G,\sigma)$ that have a nice behavior under the considered algebra representations. 

In a subsequent article \cite{KW19} we apply large parts of the theory developed in the present article to certain bundles on real hyperbolic manifolds, providing  applications to Pollicott-Ruelle resonances and topological phenomena.

\subsection*{Acknowledgments}
We are deeply grateful to Joachim Hilgert for proposing to use 
the general results from Olbrich's thesis in the context of 
Pollicott-Ruelle resonances and for his patient explanations 
of the structure theory of symmetric spaces and different 
aspects of the occurring algebraic and representation theoretic 
techniques. We equally thank Colin Guillarmou for helpful 
discussions and clarifications of analytic and geometric aspects 
of this problem and, in particular, for his detailed explanation 
of the results in \cite{dfg}. Finally, we thank Martin Olbrich for helpful comments. 

\section{Preliminaries}\label{sec:prelim}
In this section we collect some basic notions about Pollicott-Ruelle
resonances, locally symmetric spaces and homogenous vector bundles.
\subsection{Pollicott-Ruelle resonances for geodesic flows}\label{sec:geomres}
Let $\mathcal M$ be a compact boundaryless Riemannian manifold with strictly negative sectional curvature. The geodesic flow $\varphi_t$ on the unit co-sphere bundle 
$S^*\mathcal M$ is generated by a smooth vector field $X$ on $S^*\mathcal M$
and the Liouville measure $d\mu_{\tu{Liouv}}$ is a canonical smooth measure on $S^*\mathcal M$. 
The Pollicott-Ruelle reonances form a canonical spectral invariant associated to the geodesic flow and we want to recall their definition and some
basic properties in this subsection.\footnote{Note that the notion of Pollicott-Ruelle 
resonances is not restricted to geodesic flows on negatively curved manifolds. In fact, nearly
all the statements of this subsection hold for general Anosov flows on compact manifolds
\cite{BL07, FSj11, DZ16}. Even the 
assumption of compactness is not necessary: See \cite{DG16} for flows on non-compact
manifolds with compact trapped set and \cite{BW17} for flows on surfaces with cusps,
where even the trapped set is non-compact.} Note that, unlike in the rest of the article,  
we do not assume that $\mathcal M$ is locally symmetric in this subsection. 
By the strictly negative sectional curvature, we deduce that the geodesic flow on 
$S^* \mathcal M$ is Anosov, i.e., we have the H\"older continuous splitting
\[
 T(S^*\mathcal M) = E_0\oplus E_+\oplus E_-
\]
where $E_0 = \R X$ is the neutral bundle, $E_+$ is the stable bundle and $E_-$ is the
unstable bundle of the Anosov flow. We can then define the dual splitting
$T^*(S^*\mathcal M) = E_0^*\oplus E_+^*\oplus E_-^*$ consisting of subbundles that satisfy fiber-wise
\[
 E_0^*(E_+\oplus E_-)=0,~~~E_\pm^*(E_0\oplus E_\mp) = 0.
\]
The Anosov property is crucial for the definition of Pollicott-Ruelle resonances.

As we want to introduce Pollicott-Ruelle resonances on vector bundles, let us consider
a Hermitian vector bundle $\mathcal V \to\mathcal M$ with compatible connection $\nabla$
and denote by $\Gamma^\infty(\mathcal V)$, $\L^2(S^\ast\mathcal M, \mathcal V)$, and $\mathcal D'(S^*\mathcal M, \mathcal V)$ 
the smooth sections, $\L^2$-sections and distributional sections, respectively. Given 
the connection and the geodesic vector field $X$, we can define the lifted geodesic vector field $\mathbf X:=\nabla_X$,  
which is an unbounded antisymmetric first order differential operator in 
$\L^2(S^\ast\mathcal M, \mathcal V)$. Standard spectral theory implies that the resolvent
$R(\lambda):= (-\mathbf X - \lambda)^{-1}:\L^2(S^\ast\mathcal M, \mathcal V)\to \L^2(S^\ast\mathcal M, \mathcal V)$ is holomorphic
for $\Re(\lambda)>0$. 
\begin{prop}\label{prop:merom_resolvent}
The resolvent operator has a meromorphic continuation to $\C$ as 
a family of continuous operators
\[
 R(\lambda): \Gamma^\infty(\mathcal V) \to \mathcal D'(S^*\mathcal M,\mathcal V).
\]
Given a pole $\lambda_0$ of order $J$, the resolvent takes the form
\[
 R(\lambda) = R_H(\lambda) + \sum_{j=1}^J\frac{(-\mathbf X-\lambda_0)^{j-1}\Pi_{\lambda_0}}{(\lambda-\lambda_0)^j},
\]
where $R_H(\lambda):\Gamma^\infty(\mathcal V) \to \mathcal D'(\mathcal M,\mathcal V)$ is 
a holomorphic familiy of continuous operators and 
$\Pi_{\lambda_0}:\Gamma^\infty(\mathcal V) \to \mathcal D'(\mathcal M,\mathcal V)$ is a finite 
rank operator. Furthermore, the range of the residue operator is given by 
\bq\label{eq:range_pi0}
 \textup{Ran}(\Pi_{\lambda_0}) = \{s\in\mathcal D'(S^*\mathcal M,\mathcal V): (-\mathbf X - \lambda_0)^Js=0, 
 \textup{WF}(s)\subset E_+^*\}.
\eq
Conversely, if for some $\lambda_0\in\C$ there is $s\in\mathcal D'(S^*\mathcal M,\mathcal V)\setminus\{0\}$
such that $\tu{WF(s)}\subset E_+^*$ and
$(\mathbf X+\lambda_0)^ks=0$ for some $k\in \N$, then $\lambda_0$ is a pole of $R(\lambda)$ 
and $s\in \tu{Ran}(\Pi_{\lambda_0})$.
\end{prop}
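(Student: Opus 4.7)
The plan is to follow the standard microlocal/anisotropic Sobolev-space approach developed by Faure--Sjöstrand and extended to vector bundles by Dyatlov--Zworski and Dyatlov--Guillarmou, adapted here to the twisted operator $\mathbf X = \nabla_X$. First, since $\nabla$ is compatible with the Hermitian metric and $X$ is divergence-free with respect to the Liouville measure, $\mathbf X$ is formally antisymmetric on $\L^2(S^\ast\mathcal M, \mathcal V)$; hence its closure is skew-adjoint, its spectrum lies on $i\R$, and the resolvent $R(\lambda)=(-\mathbf X-\lambda)^{-1}$ exists as a bounded operator on $\L^2$ for $\Re(\lambda)>0$, given by the absolutely convergent integral $R(\lambda)f = \int_0^\infty e^{-\lambda t}e^{t\mathbf X}f\,dt$. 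This establishes the right half-plane piece.

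For the meromorphic continuation, I would construct an anisotropic Sobolev space $\mathcal H_{NG}$ on sections of $\mathcal V$ adapted to the Anosov splitting: a microlocal order function $m\in C^\infty(T^\ast(S^\ast\mathcal M))$ negative on a conic neighbourhood of $E_-^\ast$ and positive on a conic neighbourhood of $E_+^\ast$, with large parameter $N\gg 1$, then define $\mathcal H_{NG}$ via a scalar pseudodifferential operator whose principal symbol is $\langle\xi\rangle^{Nm}$, acting componentwise in a local trivialization of $\mathcal V$. The key microlocal estimates (radial source/sink estimates at $E_\pm^\ast$ and propagation along the Hamilton flow on $E_0^\ast$) go through for $\mathbf X$ since the lifted geodesic vector field has the same principal symbol as $X\otimes \mathrm{id}$, and the bundle structure only contributes subprincipal terms absorbed into the Fredholm analysis. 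This yields that $-\mathbf X-\lambda:\mathcal D_{NG}\to \mathcal H_{NG}$ is Fredholm of index zero on the strip $\Re(\lambda)>-CN$, with the resolvent continuing meromorphically there; letting $N\to \infty$ gives a meromorphic family on all of $\C$ mapping $\Gamma^\infty(\mathcal V)$ to $\mathcal D'(S^\ast\mathcal M,\mathcal V)$. The Laurent expansion at a pole follows from standard Gohberg--Sigal / Ribes--Keller theory for meromorphic Fredholm families, giving the stated form with a finite-rank residue $\Pi_{\lambda_0}$ and $(-\mathbf X-\lambda_0)^{j-1}\Pi_{\lambda_0}$ as higher polar coefficients, since these arise from the nilpotent part of $-\mathbf X-\lambda_0$ on the generalized kernel in $\mathcal H_{NG}$.

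For the characterization \eqref{eq:range_pi0}, one inclusion follows because elements of $\mathrm{Ran}(\Pi_{\lambda_0})\subset \mathcal H_{NG}$ are generalized eigenvectors annihilated by $(-\mathbf X-\lambda_0)^J$, and elliptic regularity together with propagation of singularities along the Hamiltonian flow of the principal symbol of $\mathbf X$ (which is $\sigma_1(X)$) forces the wavefront set into $E_+^\ast$: outside $E_+^\ast$ the radial/propagation estimates upgrade regularity to smooth, while at $E_+^\ast$ the sink estimate traps the wavefront. Conversely, given $s\in \mathcal D'(S^\ast\mathcal M,\mathcal V)\setminus\{0\}$ with $\mathrm{WF}(s)\subset E_+^\ast$ and $(\mathbf X+\lambda_0)^k s=0$, the wavefront condition ensures $s\in\mathcal H_{NG}$ for $N$ large enough, so $s$ is a generalized eigenvector of the Fredholm realization, and the Laurent expansion then forces $\lambda_0$ to be a pole with $s\in\mathrm{Ran}(\Pi_{\lambda_0})$.

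The main obstacle I expect is the bundle-valued propagation and radial estimates: one must verify carefully that the subprincipal symbol contributions coming from the connection $\nabla$ do not spoil the threshold condition in the radial source/sink estimates at $E_\pm^\ast$, and that the choice of order function $m$ can be made uniform enough to cover any prescribed strip. This is technical but known to be handled by the approach in \cite{DG16}, so I would invoke that framework rather than redo the microlocal calculus from scratch.
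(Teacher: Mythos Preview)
Your proposal is correct and aligns with the paper's approach: the paper does not give an independent proof of this proposition but simply cites \cite{DG16} (specifically Theorem~1, equations (3.44), (3.55), and (0.12) there) for the meromorphic continuation, the Laurent structure at a pole, and the characterization of $\mathrm{Ran}(\Pi_{\lambda_0})$. Your sketch is essentially a summary of the argument in \cite{DG16}, so you are in fact supplying more detail than the paper itself does.
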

\begin{proof}
 The meromorphic continuation of the resolvent on vector bundles is a consequence of \cite[Theorem 1]{DG16}. The continuation of the resolvent in the scalar case or on particular vector bundles has been previously shown in \cite{Liv04, FSj11, DZ16}. The structure of the resolvent in a neighborhood of a pole is given in \cite[eq (3.44)(3.55)]{DG16}. The characterization of $\textup{Ran}(\Pi_{\lambda_0})$ in \eqref{eq:range_pi0} is given in \cite[(0.12)]{DG16}.
\end{proof}

\begin{definition}
 We call a pole $\lambda$ of $R(\cdot)$ a \emph{Pollicott-Ruelle resonance on $\mathcal V$}. For  
 $\lambda_0\in \C$ we call 
 \[
  \Res_{\mathcal V}(\lambda_0) :=\{s\in\mathcal D'(S^*\mathcal M,\mathcal V): 
  (-\mathbf X-\lambda_0) s=0, \tu{WF}(s)\subset E_+^*\}
 \]
 the space of \emph{Pollicott-Ruelle resonant states on $\mathcal V$} and for $k\in\N$
 \[
  {\Res_{\mathcal V}}^k(\lambda_0):=\{s\in\mathcal D'(S^*\mathcal M,\mathcal V): 
  (-\mathbf X-\lambda_0)^k s=0, \tu{WF}(s)\subset E_+^*\}
 \]
the space of \emph{generalized Pollicott-Ruelle resonant states on $\mathcal V$} of rank $k$. 
\end{definition}
\begin{rem}~

\begin{enumerate}[leftmargin=*]
 \item  By Proposition~\ref{prop:merom_resolvent}, $\lambda_0\in \C$ is a Pollicott-Ruelle resonance  
 iff $\Res_{\mathcal V}(\lambda_0)\neq 0$.
 \item If $J$ is such that 
 ${\Res_{\mathcal V}}^{J-1}(\lambda_0) \subsetneq {\Res_{\mathcal V}}^J(\lambda_0)
 = {\Res_{\mathcal V}}^{J+1}(\lambda_0)$, then the resolvent has a pole of order $J$.
In this case, there are distributions $s_1,\ldots, s_J$, $s_k\in {\Res_{\mathcal V}}^{k}(\lambda_0)\setminus\{0\}$
such that $s_k=(-\mathbf X-\lambda_0)s_{k+1}$. We say that  $\lambda_0$ lies in a 
\emph{Jordan block of size $J$}. 
\item For any $\lambda_0\in\C$ with $\Re(\lambda_0)>0$ we know that $R(\lambda)$ is
holomorphic in a neighborhood of $\lambda_0$ and conclude $\Res_{\mathcal V}(\lambda_0)=\{0\}$.
\end{enumerate}
\end{rem}
Besides the resonant states we want to define \emph{co-resonant states}. Therefore, let 
$\mathcal V^*$ be the dual bundle of $\mathcal V$. The scalar product on the fibers induces
an antilinear bundle isomorphism
\[
\mathbf d: \Abb{\mathcal V}{\mathcal V^*}{v\in\mathcal V_p}{\langle\cdot,v\rangle_p\in\mathcal (V_p)*}
\]
which allows to transfer the connection on $\mathcal V$ to $\mathcal V^*$ by setting 
\[
 \nabla^*_Y s^*:= \mathbf d(\nabla_Y\mathbf d^{-1}s^*) \tu{ for }s^*\in \Gamma^\infty(\mathcal V^*), 
 Y\in\Gamma^\infty(T(S^*\mathcal M)).
\]
If we define $\mathbf X^*:=\nabla^*_{-X}$, then this is the dual differential operator of $\mathbf X$
in the sense that for $s\in\Gamma^\infty(\mathcal V)$, $t^*\in\Gamma^\infty(\mathcal V^*)$
\[
 \int_{S^*\mathcal M} \langle t^*,\mathbf X s\rangle_{\mathcal V^*_p,\mathcal V_p} d\mu_{\tu{Liouv}}(p)
 =\int_{S^*\mathcal M} \langle \mathbf X^*t^*, s\rangle_{\mathcal V^*_p,\mathcal V_p} d\mu_{\tu{Liouv}}(p).
\]
We define the space of \emph{generalized Pollicott-Ruelle co-resonant states of rank k} as
\[
  {\Res^*_{\mathcal V}}^k(\lambda_0):=\{s^*\in\mathcal D'(S^*\mathcal M,\mathcal V^*): 
  (-\mathbf X^*-\lambda_0)^k s^*=0, \tu{WF}(s^*)\subset E_-^*\}
\]
and we again denote the space of Pollicott-Ruelle co-resonant states by $\Res^*_{\mathcal V}(\lambda_0)
:={\Res^*_{\mathcal V}}^1(\lambda_0)$
\begin{lem}
 $\Res^*_{\mathcal V}(\lambda_0)\neq 0$ iff $\lambda_0$ is a Pollicott-Ruelle resonance.
 Furthermore, if $s\in\Res_{\gam \mathcal V}(\lambda_0)$, $t^*\in\Res^*_{\mathcal V}(\lambda_0)$,
 then the distribution $t^*s \in\mathcal D'(S^*\mathcal M)$ obtained by fiber-wise pairing is invariant under the geodesic flow.
\end{lem}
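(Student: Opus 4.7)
The plan is to exploit the duality between $-\mathbf X$ on $\mathcal V$ and $-\mathbf X^*$ on $\mathcal V^*$ furnished by the displayed integration pairing over the flow-invariant Liouville measure, and to combine this with Proposition~\ref{prop:merom_resolvent} applied to a suitable time-reversed setup on the dual bundle.

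For the iff statement, I would apply Proposition~\ref{prop:merom_resolvent} to the time-reversed geodesic flow $\tilde\varphi_t:=\varphi_{-t}$ on $S^*\mathcal M$, lifted to $\mathcal V^*$ through the dual connection $\nabla^*$. This reversed flow is still Anosov, but its stable and unstable subbundles are swapped: $\tilde E_+=E_-$ and $\tilde E_-=E_+$. Its lift to $\mathcal V^*$ is generated by $\nabla^*_{-X}=\mathbf X^*$, so Proposition~\ref{prop:merom_resolvent} produces a meromorphic family $\tilde R(\lambda):=(-\mathbf X^*-\lambda)^{-1}:\Gamma^\infty(\mathcal V^*)\to\mathcal D'(S^*\mathcal M,\mathcal V^*)$ whose poles $\lambda_0$ are exactly those for which the space of distributional solutions of $(-\mathbf X^*-\lambda_0)s^*=0$ with wavefront set contained in $\tilde E_+^*=E_-^*$ is non-trivial. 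By definition this space is $\Res^*_{\mathcal V}(\lambda_0)$. It then remains to compare the poles of $R(\lambda)$ and $\tilde R(\lambda)$. On the common $\L^2$-domain of holomorphy $\Re(\lambda)>0$, the displayed pairing identity and skew-symmetry of $\mathbf X$ yield
\[
\int_{S^*\mathcal M}\langle\tilde R(\lambda)t^*,s\rangle\,d\mu_{\textup{Liouv}}=\int_{S^*\mathcal M}\langle t^*,R(\lambda)s\rangle\,d\mu_{\textup{Liouv}}
\]
for all $s\in\Gamma^\infty(\mathcal V)$, $t^*\in\Gamma^\infty(\mathcal V^*)$, so the Schwartz kernels of $\tilde R(\lambda)$ and $R(\lambda)$ on $S^*\mathcal M\times S^*\mathcal M$ are related by the coordinate flip. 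Unique meromorphic continuation propagates this transpose identification, and transposition preserves the pole set; hence $R(\lambda)$ and $\tilde R(\lambda)$ have the same poles, which, together with the previous step and Proposition~\ref{prop:merom_resolvent}, proves the iff.

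For the second statement, I would first verify that the fiberwise pairing $t^*s$ makes sense as an element of $\mathcal D'(S^*\mathcal M)$: since $\textup{WF}(s)\subset E_+^*$, $\textup{WF}(t^*)\subset E_-^*$, and the two subbundles intersect only in the zero section of $T^*(S^*\mathcal M)$, H\"ormander's criterion for the product of two distributions applies. The flow-invariance $X(t^*s)=0$ then follows by a Leibniz computation, which is valid in the distributional sense precisely because of the wavefront transversality just used. Using $\nabla^*_X=-\mathbf X^*$ together with the eigenvalue relations $\mathbf X s=-\lambda_0 s$ and $\mathbf X^* t^*=-\lambda_0 t^*$, one obtains
\[
X(t^*s)=\langle\nabla^*_Xt^*,s\rangle+\langle t^*,\nabla_Xs\rangle=\lambda_0(t^*s)-\lambda_0(t^*s)=0.
\]
The main obstacle I foresee is the transpose identification of $R(\lambda)$ and $\tilde R(\lambda)$ after meromorphic continuation, because the extended operators only map $\Gamma^\infty$ into $\mathcal D'$ rather than acting boundedly on $\L^2$; propagating the pairing identity beyond the $\L^2$ half-plane requires a standard but careful propagation-of-identities argument for meromorphic families of continuous operators, for which no new tool beyond Proposition~\ref{prop:merom_resolvent} is needed.
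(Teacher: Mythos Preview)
Your proposal is correct and follows essentially the same approach as the paper: apply Proposition~\ref{prop:merom_resolvent} to the time-reversed flow lifted to $\mathcal V^*$, use the pairing identity $\int\langle\tilde R(\lambda)t^*,s\rangle\,d\mu_{\textup{Liouv}}=\int\langle t^*,R(\lambda)s\rangle\,d\mu_{\textup{Liouv}}$ to identify the pole sets, invoke wavefront transversality to define $t^*s$, and compute invariance via the Leibniz rule. The paper's proof is slightly more terse but the logic is identical.
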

\begin{proof}
 If we reverse the time direction of the geodesic flow, i.e., if we replace $X$ by $-X$, then
 the flow is still Anosov and the role of stable and unstable bundle are interchanged. We
 can thus apply Proposition~\ref{prop:merom_resolvent} to the time-revesed flow and the bundle 
 $\mathcal V^*$. Then we conclude 
 that $\Res^*_{\mathcal V}(\lambda_0)\neq 0$ iff $\lambda_0$ is a pole of 
 $R^*(\lambda) = (-\mathbf X^*-\lambda)$. This in turn implies that there are sections 
 $t^*\in\Gamma^\infty(\mathcal V^*), s\in\Gamma^\infty(\mathcal V)$ such that the  meromorphic function 
 \[
    \int_{S^*\mathcal M} \langle R^*(\lambda) t^*, s\rangle_{\mathcal V^*_p,\mathcal V_p} 
    d\mu_{\tu{Liouv}}(p)  =\int_{S^*\mathcal M} \langle  t^*, R(\lambda)s\rangle_{\mathcal V^*_p,\mathcal V_p} 
    d\mu_{\tu{Liouv}}(p)  
 \]
has a pole in $\lambda_0$. Now the expression on the right hand side implies that 
$R(\lambda)=\Gamma^\infty(\mathcal V)\to\mathcal D'(S^*\mathcal M,\mathcal V)$ has a pole
at $\lambda_0$, so $\lambda_0$ is a Pollicott-Ruelle resonance. 
The product $t^*s$ is well defined because of the transversal wave front sets and the invariance
follows from 
\[
 X t^*s = (\nabla^*_Xt^*)s + t^*(\nabla_X s) = \lambda_0t^*s - \lambda_0t^*s=0.
\]
\end{proof}
\begin{rem}
 For scalar Pollicott-Ruelle resonances, invariant distributions of the type $t^*s$
 play an important role in \cite{GHW18b} for the study of quantum ergodic properties. It is
 very likely that vector-valued pairings of resonant and co-resonant states are interesting
 objects to study vector-valued generalizations of these results. 
\end{rem}
\subsection{Symmetric spaces}\label{sec:setupnot}
Let $G$ be a connected non-compact semisimple real Lie group of\footnote{Many of the constructions presented in the following have straightforward generalizations to the higher rank case.} real rank $1$ with finite center, and $\g$ its Lie algebra. Let $\theta:\g\to \g$ be a Cartan involution, $\mathfrak{B}:\g\times \g\to \R$ the Killing form, and define an inner product $\eklm{\cdot,\cdot}$ on $\g$ by $(X,Y)\mapsto -\mathfrak{B}(X,\theta Y)$. We will frequently identify $\g=\g^\ast$ using this inner product and write $\norm{\cdot}$ for the associated norm on $\g$ and $\g^\ast$. Let $\k\subset \g$ be the eigenspace of the involution $\theta$ with eigenvalue $1$ and $\p\subset \g$ the eigenspace with eigenvalue $-1$. Choose a maximal abelian subalgebra $\aL\subset \p$. That $G$ has real rank $1$ means that $\dim \aL=1$. Let $\Sigma\subset \aL^\ast$ denote the set of restricted roots with respect to $\aL$. Let $\mathscr{W}=\mathscr{W}(\Sigma)$ be the Weyl group generated by the reflections at the hyperplanes perpendicular to the restricted root vectors. Choose a set $\Sigma_+\subset \Sigma$ of positive restricted roots and define
\[
\nL^\pm:=\bigoplus_{\alpha \in \pm\Sigma_+}\g_\alpha,
\]
where $\g_\alpha=\{X\in \g: [H,X]=\alpha(H)X\;\forall\;H\in \aL\}$ is the root space associated to the root $\alpha$. 
Then one has $\nL^\pm=\theta \nL^\mp$. As $G$ has real rank $1$, there is only one reduced root in $\Sigma_+$ and only one non-trivial element in  the Weyl group $\mathscr{W}$. We write $\aL^\ast_{\C}:=\aL^\ast\otimes_\R \C$ for the complexification of the real vector space $\aL^\ast$. 

\begin{definition}We denote the unique non-trivial element in $\mathscr{W}$ by $w_0$, the unique reduced root in $\Sigma_+$ by $\alpha_0$, and by \bq
H_0\in \aL\label{eq:H0}
\eq the element corresponding to 
\bq
\nu_0:=\frac{\alpha_0}{\norm{\alpha_0}}\label{eq:nu0}
\eq 
under the isomorphism  $\aL\to \aL^\ast$ provided by the Killing form or, equivalently, by the inner product.
\end{definition}
The decomposition of $\g$ into restricted root spaces can be written as the Bruhat decomposition  
$$\g=\m\oplus \nL^+\oplus \aL\oplus \nL^-\qquad \text{orthogonally},$$
where $\m=Z_\k(\aL)$ is the centralizer of $\aL$ in $\k$. 
We also obtain Iwasawa and opposite Iwasawa decompositions of the Lie algebra 
$$\g=\k\oplus\aL\oplus\nL^+= \k\oplus\aL\oplus \nL^- \qquad \text{not orthogonally in general}$$
and on the group level:
$$G=KAN^+= KAN^-.$$
Here $N^\pm\subset G$ are the analytic subgroups with Lie algebras $\nL^\pm$, $K\subset G$ is the analytic subgroup with Lie algebra $\k$ and is maximal compact in $G$, and $A\subset G$ is the analytic subgroup with Lie algebra $\aL$. For each group element $g\in G$ we now have unique Iwasawa ($+$) and opposite Iwasawa ($-$) decompositions
\begin{align}\begin{split}
g&=k^+(g)a^+(g)n^+(g)=k^+(g)\exp(H^+(g))n^+(g)\\
&=k^-(g)a^-(g)n^-(g)=k^-(g)\exp(H^-(g))n^-(g).\label{eq:iwasawadecomp}
\end{split}
\end{align}
This provides us with maps
\[
k^\pm: G\to K,\qquad a^\pm: G\to A,\qquad H^\pm: G\to \aL,\qquad n^\pm: G\to N^\pm,
\]
where $\exp\circ H^\pm=a^\pm$, $\exp$ being the exponential map on $\aL$. 
In addition, define the group $$M:=Z_K(\aL)=Z_K(A),$$
where $K$ acts on $\aL$ by the adjoint action, so that $\m=\text{Lie(M)}$. The groups $N^\pm$ are normalized by $A$ and $M$. Furthermore, the respective exponential maps provide diffeomorphisms $A\cong \aL$, $N^\pm\cong \nL^\pm$. We write $\log$ for the inverse maps.

Besides $M$, let us introduce also the group $M':=N_K(\aL)$, the normalizer of $K$ in $\aL$. Then it is a fundamental result in Lie theory that the Weyl group can be identified with the quotient group
\bq
\mathscr{W}=M'/M.\label{eq:Weyl}
\eq
We define an action of $\mathscr{W}$ on the set $\hat M$ of equivalence classes of irreducible $M$-representations by setting
\bq
w[\tau]:= [m'_w\tau],\quad m'_w\tau(m):=\tau((m'_w)^{-1}m\,m'_w),\quad m\in M,w\in \mathscr{W},[\tau]\in \hat M,\label{eq:actionW111}
\eq
where $m'_w\in M'$ is an arbitrary representant of $w$ in $\mathscr{W}=M'/M$. It is easy to check that the definition of $w[\tau]$ is independent of the choices of $m_w$ and the representant $\tau$ of $[\tau]$. 

\subsubsection{The geodesic vector field}
Given $H\in \aL$, let us write $\X_H$ for the fundamental vector field on $G/M$ associated to $H$ by the action of $A$ on $G/M$ by right multiplication $A\owns a\mapsto (gM\mapsto gaM)$. This action is well-defined due to the definition of $M$ as the centralizer of $A$ in $K$.  One then easily checks that $\X_H$ is invariant under the left regular $G$-action \eqref{eq:actionVF}. There is a distinguished  vector field $\mathfrak{X}_{H_0}\in \Gamma^\infty(E_0)$  associated to the element $H_0\in \aL$ from \eqref{eq:H0}. 
As $\nL^+,\nL^-$ and $\aL$ are each $\Ad(M)$-invariant, $T(G/M)$ splits into a product bundle
\bq
T(G/M)=E_0\oplus E_+\oplus E_-,\qquad 
 E_0=G\times_{\Ad(M)}\aL,\qquad E_\pm= G\times_{\Ad(M)}\nL^\pm.\label{eq:splitting}
\eq
We define the dual splitting $T^*(G/M)=E_0^\ast\oplus E_+^\ast\oplus E_-^\ast$ by the  fiber-wise relations
\bq
 E^\ast_0(E_+\oplus E_-)=0,\qquad E^\ast_\pm(E_0\oplus E_\mp)=0.\label{eq:dualsplitting}
\eq
Note that our convention (\ref{eq:dualsplitting}) differs from that in \cite{GHW18b}. 
\begin{rem}[$G/M$ as a unit cosphere bundle]\label{rem:geodvf}We can regard $G/M$ as the unit cosphere bundle $S^\ast(G/K)$ over the negatively curved Riemannian symmetric space $G/K$. The splitting (\ref{eq:splitting}) then corresponds to the Anosov splitting of the tangent bundle of $S^\ast(G/K)$. The vector field $\mathfrak{X}_{H_0}$ corresponds to the generating vector field of the geodesic flow on $S^\ast(G/K)$. In this picture, the  choice $\Sigma_+$ of a positive restricted root system corresponds to fixing a positive time direction for the geodesic flow. \end{rem}
\subsubsection{Relevant measures and the special element $\rho$}\label{sec:integration1}
By \cite[Chapter VIII]{knapp} the groups $G$, $K$, $A$, and $N^\pm$ are each unimodular. In contrast, the groups $AN^\pm$ are not unimodular, the modular functions being given by
\bq
\Delta_{AN^\pm}(an)=e^{\pm2\rho(\log a)},\quad an\in AN^\pm,\qquad \rho=\frac{1}{2}\sum_{\alpha \in \Sigma_+}(\dim \g_\alpha)\alpha\quad\in \aL^\ast.\label{eq:rho}
\eq
Choose bi-invariant Haar measures $dk$ and $dm$ on $K$ and $M$, respectively, normalized such that $\vol K=\vol M=1$, and a bi-invariant Haar measure $dg$ on $G$. By \cite[Thm.\ 8.36]{knapp} there are unique $G$-invariant measures $d(gM)$ and $d(gK)$ on the homogeneous spaces $G/M$ and $G/K$, respectively, such that for all $f\in \mathrm{C}_c(G)$ we have
\begin{align}
\int_G f(g)\d g&= \int_{G/M}\int_M f(gm) \d m \d (gM)= \int_{G/K}\int_K f(gk) \d k \d (gK).\label{eq:measureGK}
\end{align}
Similarly, there is a unique $K$-invariant measure $d(kM)$ on $K/M$ with the property
\bq
\int_K f(k)\d k= \int_{K/M}\int_M f(km) \d m \d (kM)\qquad \forall\; f\in \mathrm{C}(K).\label{eq:measureKM}
\eq

\subsubsection{The boundary at infinity}\label{sec:boundary}
The subspace $K/M\subset G/M$ can be regarded as the \emph{boundary at infinity} of the Riemannian symmetric space $G/K$, see \cite[Section 3.3]{GHW18b}. 
From the opposite Iwasawa decomposition \eqref{eq:iwasawadecomp} we obtain a surjective map
\bq	
B:G/M\to K/M,\qquad gM\mapsto k^-(g)M,\label{eq:endpointmap}
\eq
which is well-defined as $M$ normalizes $N$ and centralizes $A$. The map $B$ is equivariant with respect to the $G$-action on $G/M$ given by
\bq
g'(gM):=g'gM,\qquad g,g' \in G,\label{eq:Gactionsbase}
\eq
and the $G$-action on $K/M$ given by
\bq
g(kM):=k^-(g k)M,\qquad g \in G,\;k\in K.\label{eq:GactionsbaseK}
\eq
That this is a well-defined $G$-action is straightforward to check using the opposite Iwasawa decomposition. 
\begin{rem}\label{rem:differentB}Note that we use the opposite Iwasawa decomposition in the definitions of the map $B$ and the action \eqref{eq:GactionsbaseK}, in contrast to \cite[Section 3.3]{GHW18b} where the minimal parabolic subgroup $P=MAN$ is used.  The \emph{initial point map} $B_-$ occurring in \cite[(3.1)]{GHW18b} therefore differs from our map $B$. The two conventions are equivalent while ours has the notational advantage that we do not have to deal with an additional Weyl group element in many computations.
\end{rem}
\begin{lem}\label{lem:factorizationGM}
The map
\bqn
F:G/M\longrightarrow G/K\times K/M,\qquad 
gM\longmapsto (gK,B(gM)),
\eqn
is a $G$-equivariant diffeomorphism, where $G/M$ is equipped with the $G$-action \eqref{eq:Gactionsbase} given by left multiplication, and $G/K\times K/M$ is equipped with the diagonal action formed by the left multiplication $G$-action on $G/K$ and the $G$-action \eqref{eq:GactionsbaseK} on $K/M$. The inverse map is given by
\bqn
F^{-1}:G/K\times K/M\longrightarrow G/M, \qquad 
(gK,kM)\longmapsto gk^-(g^{-1}k)M.
\eqn
It fulfills for each $f\in C_c(G/M)$ the transformation formula
\bq
\intop_{G/M}f(gM)\d(gM) =\intop_{G/K}\intop_{K/M}f(F^{-1}(gK,kM))e^{2 \rho(H^-(g^{-1}k))}\d(kM)\d(gK).\label{eq:intstatement}
\eq
\end{lem}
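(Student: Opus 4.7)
The proof splits naturally into three parts: well-definedness of $F^{-1}$ on cosets together with mutual inversion and $G$-equivariance, then the integration formula, and finally the tracking of signs in the Jacobian.

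\emph{Diffeomorphism and equivariance.} I would first check that the candidate inverse is well-defined on cosets, i.e., that $gk_0\cdot k^-((gk_0)^{-1}(km))M = gk^-(g^{-1}k)M$ for all $k_0\in K$, $m\in M$. This reduces to the two elementary identities $k^-(k_0^{-1}y) = k_0^{-1}k^-(y)$ and $k^-(ym) = k^-(y)m$, both consequences of uniqueness of the opposite Iwasawa decomposition combined with the fact that $M$ normalizes $N^-$ and centralizes $A$. For $F^{-1}\circ F = \mathrm{id}$, note that $g^{-1}k^-(g) = n^-(g)^{-1}a^-(g)^{-1}\in AN^-$, whose $K$-factor is trivial, so $F^{-1}(gK, k^-(g)M) = gM$. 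For $F\circ F^{-1} = \mathrm{id}$, set $y := g^{-1}k$ and rewrite $gk^-(y) = k\cdot n^-(y)^{-1}a^-(y)^{-1} = k\cdot a^-(y)^{-1}\cdot (a^-(y)n^-(y)^{-1}a^-(y)^{-1})$, whose last factor lies in $N^-$ since $A$ normalizes $N^-$; uniqueness of the opposite Iwasawa decomposition then yields $k^-(gk^-(y)) = k$, hence $F(gk^-(y)M) = (gK, kM)$. For $G$-equivariance the $G/K$-component is immediate, and the $K/M$-component reduces to $k^-(g_0g) = k^-(g_0 k^-(g))$, again by uniqueness applied to $g_0g = g_0 k^-(g)\cdot a^-(g)n^-(g)$ and the observation that right multiplication by elements of $AN^-$ leaves the $K$-factor invariant. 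Smoothness of $F$ and $F^{-1}$ is inherited from smoothness of the opposite Iwasawa projection maps.

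\emph{Integration formula.} My approach is to exploit $G$-invariance. Since $F$ is a $G$-equivariant diffeomorphism and $d(gM)$ is $G$-invariant, the pushforward $F_*d(gM)$ is a $G$-invariant Radon measure on $G/K\times K/M$ for the diagonal action. I would then show independently that $e^{2\rho(H^-(g^{-1}k))}d(gK)\,d(kM)$ is also $G$-invariant. This requires two ingredients: the Jacobian of the $G$-action $kM\mapsto k^-(g_0k)M$ on $K/M$, which is a Poisson-type factor ultimately traceable to the modular function $\Delta_{AN^-}(an) = e^{-2\rho(\log a)}$ from \eqref{eq:rho}; and the Iwasawa additivity identity $H^-(g_0y) = H^-(g_0k^-(y)) + H^-(y)$, itself an immediate consequence of uniqueness obtained by rewriting $g_0y = g_0 k^-(y)\cdot a^-(y)n^-(y)$ and moving the $A$-factor past the $N^-$-factor. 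Combining these shows that the $\rho$-weight exactly compensates the non-invariance of the product measure under $G$. Since $G/K\times K/M\cong G/M$ is a transitive $G$-space (with stabilizer $M$ at $(eK, eM)$), invariant Radon measures are unique up to a scalar; matching both sides at the identity coset via the Lie-algebraic decomposition $\g/\m = \aL\oplus\nL^-\oplus(\k/\m)$ together with the normalizations \eqref{eq:measureGK}, \eqref{eq:measureKM} pins the constant down to $1$.

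\emph{Main obstacle.} The delicate point is computing the exact sign and argument of the Jacobian of the $G$-action on $K/M$ under the opposite Iwasawa convention; unlike in the standard Poisson-kernel setup one works with $H^-$ rather than $H^+$, and the positive sign of $2\rho$ in \eqref{eq:intstatement} must be reconciled against the negative sign appearing in $\Delta_{AN^-}$. A cleaner and more concrete alternative is to bypass the invariance argument altogether: reduce both sides of \eqref{eq:intstatement} to integrals over $G$ via \eqref{eq:measureGK}, decompose Haar measure using the opposite Iwasawa $G = KAN^-$ (which gives $dg = e^{-2\rho(\log a)}\,dk\,da\,dn^-$ via a short calculation with the modular function of $AN^-$), parameterize $G/K\cong AN^-$ accordingly, and then track explicitly how the coordinates $(k, a, n^-)$ transform into $(gK, kM)$ under $F$. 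In either approach the core step is the same piece of modular-function bookkeeping, and once that is carried out the remainder of the proof is routine.
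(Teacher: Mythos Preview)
Your proposal is correct and follows essentially the same approach as the paper. The paper's own proof is extremely terse: it notes that $F$ is well-defined, smooth, and $G$-equivariant because $B$ is, defers the transformation formula entirely to \cite[Proof of Prop.~4.3]{GHW18b}, and dismisses the verification that $F^{-1}$ is well-defined and inverse to $F$ as ``a tedious but straightforward computation using the opposite Iwasawa decomposition.'' You have simply supplied those details---the Iwasawa identities $k^-(k_0^{-1}y)=k_0^{-1}k^-(y)$, $k^-(ym)=k^-(y)m$, $k^-(g_0g)=k^-(g_0k^-(g))$, and the additivity $H^-(g_0y)=H^-(g_0k^-(y))+H^-(y)$ are exactly the ingredients one needs---and your two strategies for the integration formula (invariance plus uniqueness of invariant measures, or direct Haar decomposition $dg=e^{-2\rho(\log a)}\,dk\,da\,dn^-$) are the standard routes and are presumably what lies behind the cited reference.
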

\begin{proof}The map $F$ is well-defined because $M\subset K$, it is smooth and $G$-equivariant because $B$ is smooth and $G$-equivariant. The proof of the transformation formula is given in \cite[Proof of Prop.\ 4.3]{GHW18b} without referring to the inverse. That $F^{-1}$ is well-defined and indeed the inverse of $F$ is a tedious but straightforward computation using the opposite Iwasawa decomposition.
\end{proof}
\subsection{Associated vector bundles}\label{sec:homvb}

Let $V$ be a finite-dimensional complex vector space and $\tau:M\to \text{End}(V)$ a complex representation. 
Writing $G\times_{\tau} V:=G\times V/\sim$ with $(gm,v)\sim (g,\tau(m)v)$ for $m\in M$, we define the associated homogeneous vector bundle
\bq
\V_{\tau}:=(G\times_{\tau} V,\pi_{\V_{\tau}}),\qquad \pi_{\V_{\tau}}([g,v])=gM
\eq
over $G/M$. Its total space carries the $G$-action
\bq
\gamma[g,v]:=[\gamma g,v],\qquad [g,v]\in G\times_{\tau} V,\quad \gamma\in G,\label{eq:GactionV}
\eq
which is a lift of the left-$G$-action $\gamma(gM):=\gamma gM$ on the base and consequently induces the following \emph{left regular $G$-action} action on smooth sections:
\bq
({\gamma} s)(gM):={\gamma}\big(s\big({\gamma}^{-1}gM\big)\big),\qquad s\in \Gamma^\infty(\V_{\tau}),\quad \gamma,g\in G.\label{eq:GactionVsections}
\eq
We will frequently identify a smooth section $s\in \Gamma^\infty(\V)$ with a smooth right-$M$-equivariant function $\bar s\in \Cinft(G,V)$, the right-$M$-equivariance meaning that $\bar s(gm)=\tau(m^{-1})\bar s(g)$. With respect to this identification, the left regular action (\ref{eq:GactionVsections}) reads \bq
({\gamma}\bar s)(g)=\bar s({\gamma}^{-1}g),\qquad g,{\gamma}\in G.  \label{eq:leftregular}
\eq
\begin{rem}\label{rem:dual}The dual vector bundle $\V_\tau^\ast$ can be canonically identified with the homogeneous vector bundle $\V_{\tau^\ast}$ associated to the dual representation $\tau^\ast:M\to \mathrm{End}(V^\ast)$ induced by $\tau$.
\end{rem}

\subsubsection{The canonical connection and the lifted geodesic vector field}
There is a canonical connection on $\V_{\tau}$ that we denote by
\bq
\nabla: \Gamma^\infty(\V_{\tau})\to \Gamma^\infty(\V_{\tau}\otimes T^\ast(G/M))\cong \Gamma^\infty(\mathrm{hom}(T(G/M),\V_{\tau})).
\eq
To describe how $\nabla$ is defined, let us regard a section $s\in \Gamma^\infty(\V)$ as a right-$M$-equivariant function $\bar s\in \Cinft(G,V)$. Moreover, by \eqref{eq:splitting} we can regard a vector field $\X\in \Gamma^\infty(T(G/M))$ as a right-$M$-equivariant function $\bar \X\in \Cinft(G,\nL^+\oplus \aL\oplus {\nL^-})$, that is, $\bar \X(gm)=\Ad(m^{-1})\bar \X(g)$. Then $\nabla$ is defined by the covariant derivative
\begin{align}\begin{split}
\nabla(s)(gM)(\X):=\nabla_{\X}(s)(gM):=\frac{d}{dt}\Big|_{t=0}\bar s\big(ge^{t\bar \X(g)}\big)\label{eq:covariantderiv}
\end{split}
\end{align}
Being a connection, $\nabla$ fulfills
 \bq
\nabla(fs)=df\otimes s+f\nabla(s)\qquad \forall\; f\in \Cinft(G/M),\; s\in \Gamma^\infty(\V_{\tau}).\label{eq:Xextension}
\eq
We obtain analogously as in \eqref{eq:leftregular}  the left regular $G$-action
\bq
\overline{\gamma \X}(g):=\bar \X({\gamma}^{-1}g),\qquad g,{\gamma}\in G,\;\X\in \Gamma^\infty(T(G/M)).\label{eq:actionVF}
\eq
It is straightforward to check that, with respect to the left regular $G$-actions \eqref{eq:GactionVsections} and \eqref{eq:actionVF} on $\Gamma^\infty(\V_\tau)$ and $\Gamma^\infty(T(G/M))$, the connection $\nabla$ transforms according to
\bq
g(\nabla_\X s)=\nabla_{g\X}(gs)\qquad \forall\;g\in G,\;\X\in \Gamma^\infty(T(G/M)),\;s\in \Gamma^\infty(\V_\tau).\label{eq:transf}
\eq
From this one easily deduces that the covariant derivatives $\nabla_{\X_H}$ with $H\in \aL$ commute with the left regular $G$-action \eqref{eq:GactionVsections} on $\Gamma^\infty(\V_\tau)$. Remark \ref{rem:geodvf} motivates the following
\begin{definition}\label{def:liftedgeodvf}
The \emph{lifted geodesic vector field} is
\bq
\Xbf:=\nabla_{\mathfrak{X}_{H_0}}: \Gamma^\infty(\V_{\tau})\to \Gamma^\infty(\V_{\tau}).
\eq
\end{definition}
From (\ref{eq:Xextension}) we conclude that the lifted geodesic vector field satisfies
\bq
\Xbf(fs)=(\mathfrak{X}_{H_0}f)s+f\Xbf(s)\label{eq:Xextensionrk1}
\eq
for all $f\in \Cinft(S(G/K))\cong \Cinft(G/M)$ and  $s\in \Gamma^\infty(\V_{\tau})$, which justifies its name. 

\subsubsection{Extension to distributional sections}
By standard duality constructions we extend the left regular $G$-action \eqref{eq:GactionVsections} as well as the covariant derivatives $ \nabla_{\mathfrak{X}}$ and the operator $\Xbf$ 
by duality to operators acting on distributional sections in $\D'(G/M,\V_{\tau})$. 
For the definition of distributional sections and technical aspects, see Appendix \ref{sec:WF}. With respect to the left regular $G$-actions on $\D'(G/M,\V_{\tau})$ and $\Gamma^\infty(T(G/M))$, the connection $\nabla$ then has the transformation property
\bq
g(\nabla_\X s)=\nabla_{g\X}(gs)\label{eq:transf2}
\eq
for all $g\in G$, $\X\in \Gamma^\infty(T(G/M))$, $s\in \D'(G/M,\V_{\tau})$, as a consequence of \eqref{eq:transf}.
\subsubsection{The boundary vector bundle and its pullback}\label{sec:boundarybundle}
We call the restriction of $\V_{\tau}$ to $K/M\subset G/M$ the \emph{boundary vector bundle}, and denote it by $\V^{\mathcal{B}}_{\tau}$. It is a homogeneous vector bundle associated to $\tau$: $${\V^{\mathcal{B}}_{\tau}} =(K\times_{\tau} V,\pi_{\V^{\mathcal{B}}_{\tau}}),\qquad \pi_{\V^{\mathcal{B}}_{\tau}}([k,v])=kM.$$ 
  Pulling back $\V^{\mathcal{B}}$ along the map $B$ from \eqref{eq:endpointmap}, we obtain the pullback vector bundle $$B^\ast{\V^{\mathcal{B}}_{\tau}}:=(B^\ast(K\times_{\tau} V),\pi_{B^\ast{\V^{\mathcal{B}}_{\tau}}})$$ over $G/M$ with total space
\begin{align}
\nonumber B^\ast(K\times_{\tau} V)&=\{(gM,[k,v])\in G/M\times (K\times_{\tau} V): B(gM)=\pi_{\V^{\mathcal{B}}_{\tau}}([k,v])\}\\
&=\{(gM,[k,v])\in G/M\times (K\times_{\tau} V): k^-(g)M=kM\}\label{eq:pullbackbundle1}
\end{align}
and bundle projection $\pi_{B^\ast{\V^{\mathcal{B}}_{\tau}}}(gM,[k,v])=gM.$ 
The total space $K\times_{\tau} V$ of $\V^{\mathcal{B}}_{\tau}$ carries the $G$-action
\bq 
g[k,v]:=[k^-(gk),v],\qquad g\in G,\;k\in K,\label{eq:Gactionstotal}
\eq
that lifts the $G$-action \eqref{eq:GactionsbaseK} on the base space $K/M$, and the total space $B^\ast(K\times_{\tau} V)$ of $B^\ast{\V^{\mathcal{B}}_{\tau}}$ carries the $G$-action
\bq
{\gamma}(gM,[k,v]):=({\gamma}gM,[k^-({\gamma}k),v]),\qquad
g,\gamma\in G,\;k\in K,\;v\in V,\label{eq:Gactionstotal2}
\eq
that lifts the $G$-action \eqref{eq:Gactionsbase} on the base space $G/M$. Consequently we get induced actions on smooth sections:
\begin{align}
(\gamma s)(kM)&:={\gamma}\big(s\big(\gamma^{-1}(kM)\big)\big),\qquad s\in \Gamma^\infty(\V^{\mathcal{B}}_{\tau}),\label{eq:Gactionstotalsections1}\\
({\gamma}s)(gM,[k,v])&:={\gamma}\big(s\big({\gamma}^{-1}(gM,[k,v])\big)\big),\qquad s\in \Gamma^\infty(B^\ast{\V^{\mathcal{B}}_{\tau}}).
\label{eq:Gactionstotalsections2}
\end{align} 
If we consider a section $s\in \Gamma^\infty(\V^{\mathcal{B}}_{\tau})$ as a right-$M$-equivariant smooth function $\bar s:K\to V$, the action \eqref{eq:Gactionstotalsections1} corresponds to assigning to $\bar s$ for any $g\in G$ the right-$M$-equivariant smooth function $\overline{g s}:K\to V$ given by
\bq
\overline{gs}(k)=\bar s(k^-(g^{-1}k)),\qquad g\in G,\;k\in K.\label{eq:Gactionsectionslanglands}
\eq
\begin{lem}\label{lem:bundleiso}The maps
\begin{align*}
\mathscr{T}:B^\ast(K\times_{\tau} V)&\longrightarrow G\times_{\tau} V&\mathscr{S}:G\times_{\tau} V&\longrightarrow B^\ast(K\times_{\tau} V)\\
(gM,[k,v])&\longmapsto [gk^-(g)^{-1}k,v] &[g,v]&\longmapsto (gM,[k^-(g),v])
\end{align*}
define an isomorphism of vector bundles 
\bq
B^\ast\V_{\tau}^{\mathcal{B}}\stackrel[\mathscr{S}]{\mathscr{T}}{\longrightleftarrows} \V_\tau\label{eq:bundleiso}
\eq
that is $G$-equivariant with respect to the $G$-actions (\ref{eq:GactionV}) and (\ref{eq:Gactionstotal}).
\end{lem}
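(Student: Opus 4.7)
The plan is to check, in order: (i) that both $\mathscr{T}$ and $\mathscr{S}$ are well-defined on the relevant equivalence classes and satisfy the bundle-constraint defining $B^\ast\V_\tau^{\mathcal{B}}$; (ii) that they are inverse to each other; (iii) that they cover $\mathrm{id}_{G/M}$; (iv) that they are fiber-wise linear; and (v) that they intertwine the two $G$-actions (\ref{eq:GactionV}) and (\ref{eq:Gactionstotal2}). Steps (iii) and (iv) are immediate from the formulas, so the real work is in (i), (ii), (v).

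For (i) and (ii), the key computation is the behavior of the opposite Iwasawa projections under right multiplication by $M$: since $M \subset K$ centralizes $A$ and normalizes $N^-$, one has $k^-(gm) = k^-(g)\,m$, $a^-(gm) = a^-(g)$, $n^-(gm) = m^{-1} n^-(g)\,m$ for every $m \in M$. Using this, the independence of $\mathscr{T}$ from the representative $g$ of $gM$ reduces to a cancellation $gm\cdot k^-(gm)^{-1}\cdot k = gm\cdot m^{-1}k^-(g)^{-1}\cdot k = g\cdot k^-(g)^{-1}\cdot k$, and its independence from the representative $(k,v)$ of $[k,v]$ is absorbed into the $M$-equivalence in $G\times_\tau V$. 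The condition $k^-(g)M = kM$ defining $B^\ast(K\times_\tau V)$ produces an $m\in M$ with $k = k^-(g)\,m$, and substituting this into the formula for $\mathscr{T}$ gives $\mathscr{T}(gM,[k,v]) = [gm,v]$, which both makes it visible that $\pi_{\V_\tau}\circ\mathscr{T}(gM,[k,v]) = gM$ and makes the identity $\mathscr{S}\circ\mathscr{T} = \mathrm{id}$ a one-line verification; the identity $\mathscr{T}\circ\mathscr{S} = \mathrm{id}$ is immediate.

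The main obstacle, and the step that really uses the structure theory, is (v). For the $G$-action on $B^\ast(K\times_\tau V)$ in \eqref{eq:Gactionstotal2} one must compare $\mathscr{T}(\gamma(gM,[k,v])) = [\gamma g\cdot k^-(\gamma g)^{-1}\cdot k^-(\gamma k),v]$ with $\gamma\mathscr{T}(gM,[k,v]) = [\gamma g\cdot k^-(g)^{-1}\cdot k,v]$. Write $\gamma k^-(g) = k'a'n'$ in opposite Iwasawa form, so that $k' = k^-(\gamma k^-(g))$. Then from $\gamma g = \gamma k^-(g)\,a^-(g)\,n^-(g) = k'\,a'a^-(g)\,(a^-(g)^{-1}n'a^-(g))\,n^-(g)$, which lies in $k' A N^-$, we obtain $k^-(\gamma g) = k'$; and from $k = k^-(g)\,m$ the same calculation applied to $\gamma k$ yields $k^-(\gamma k) = k'\,m$. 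Substituting, both sides of the desired equality collapse to $[\gamma g m,v]$, which establishes $G$-equivariance.

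Finally, the verification that $\mathscr{T}$ and $\mathscr{S}$ are smooth, together with their fiber-wise linearity already noted, promotes the set-theoretic bijection to an isomorphism of smooth vector bundles; smoothness follows because $g\mapsto k^-(g)$ is smooth on $G$ by the Iwasawa decomposition, and all other operations are group multiplications, inversions, and quotient projections.
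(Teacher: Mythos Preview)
Your proof is correct and follows essentially the same route as the paper's: both hinge on the Iwasawa identity $k^-(gm)=k^-(g)m$ for $m\in M$ to check well-definedness, and on an Iwasawa manipulation to verify $G$-equivariance. Your systematic use of the simplified form $\mathscr{T}(gM,[k,v])=[gm,v]$ (where $k=k^-(g)m$) streamlines the inverse and equivariance checks compared to the paper, which instead computes $\mathscr{S}\circ\mathscr{T}$ directly and leaves the equivariance reduction $k^-(\gamma g)^{-1}k^-(\gamma k)=k^-(g)^{-1}k$ implicit; but these are cosmetic differences, not a genuinely different approach.
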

\begin{proof}
The map $\mathscr{T}$ is well-defined since one has for arbitrary $m,m'\in M$
\begin{align*}
\mathscr{T}(gmM,[km',(m')^{-1}v])=[gmk^-(gm)^{-1}km',(m')^{-1}v]&=[gm(k^-(g)m)^{-1}k,v]\\
&=[gk^-(g)^{-1}k,v]\\
&=\mathscr{T}(gM,[k,v]),
\end{align*}
where we used that the opposite Iwasawa decomposition \eqref{eq:iwasawadecomp} satisfies $k^-(gm)=k^-(g)m$. Using (\ref{eq:pullbackbundle1}) it is easy to check that $\mathscr{T}$ is a morphism of vector bundles. Moreover, $\mathscr{T}$ is $G$-equivariant: 
\begin{align*}
\mathscr{T}({\gamma}(gM,[k,v]))=\mathscr{T}({\gamma}gM,[k^-({\gamma}k),v]))=[{\gamma}gk^-({\gamma}g)^{-1}k({\gamma}k),v]&=[{\gamma}gk^-(g)^{-1}k,v]\\
&={\gamma}\mathscr{T}(gM,[k,v]).
\end{align*}
To get from the second to the third line we used (\ref{eq:pullbackbundle1}) and again the property $k^-(gm)=k^-(g)m$, $m\in M$, of the opposite Iwasawa decomposition. Similarly, we check that the maps $\mathscr{S}$ are well-defined and $G$-equivariant. It remains to show that $\mathscr{T}$, $\mathscr{S}$ are inverses of each other. We compute
\begin{align*}
\mathscr{T}\big(\mathscr{S}([g,v])\big)&=\mathscr{T}(gM,[k^-(g),v])
=[gk^-(g)^{-1}k^-(g),v]
=[g,v],\\
\mathscr{S}\big(\mathscr{T}(gM,[k,v])\big)&=\mathscr{S}_+([gk^-(g)^{-1}k,v])=(gk^-(g)^{-1}kM,[k^-(gk^-(g)^{-1}k),v])=(gM,[k,v]),
\end{align*}
where we used (\ref{eq:pullbackbundle1}) in the last step. Indeed, (\ref{eq:pullbackbundle1}) implies that $k^-(g)^{-1}k\in M$, and by a stanard Iwasawa calculation we get since $M$ normalizes $N^-$ and elements of $M$ commute with elements of $A$, we get 
$k^-(gk^-(g)^{-1}k)=k$ as required.  
\end{proof}

\section{Pollicott-Ruelle resonances on associated vector bundles}\label{sec:res_on_VB}
We begin with technical preparations, which mainly involve generalizing concepts and definitions of \cite[Section 3]{GHW18b} from the scalar case to the vector-valued case. 
\begin{definition}\label{def:repruelle1stband}
For $\lambda \in \aL^\ast_{\C}$ we define\footnote{Note that our set $\mathcal{R}(\lambda)$ agrees with the set $\mathcal{R}_-(\lambda)$ in \cite{GHW18b} in the case of a trivial line bundle.} 
\bqn
\mathcal{R}(\lambda):=\{s\in \D'(G/M,\V_{\tau}):(\Xbf + \lambda(H_0))(s)=0,\;\nabla_{\X}(s)=0\quad\forall\; \X\in \Gamma^\infty(E_-)\}.
\eqn
\end{definition}
Due to the fact that $\dim \aL=1$, one gets in the special case $\lambda=0$ 
\bq
\mathcal{R}(0)=\{s\in \D'(G/M,\V_{\tau}):\nabla_{\X}(s)=0\quad\forall\; \X\in \Gamma^\infty(E_0\oplus E_-)\}.\label{eq:R0}
\eq
\begin{rem}\label{rem:Gactionsrestr}
One easily verifies that for each $\lambda \in \aL^\ast_\C$ the subset $\mathcal{R}(\lambda)\subset\D'(G/M,\V_{\tau})$ is $G$-invariant with respect to the left regular $G$-action,  so the latter restricts to a $G$-action on $\mathcal{R}(\lambda)$. We call that action again simply the \emph{left regular $G$-action}.
\end{rem}
\begin{rem}[Topologies on the introduced spaces]In this paper, we equip all the introduced subspaces of spaces of distributional sections with the subspace topology induced by the standard topology on the spaces of distributional sections (weak convergence w.r.t.\ the pairing with test sections). 
\end{rem}

We can relate the sets $\mathcal{R}(\lambda)$ for $\lambda\neq 0$ to $\mathcal{R}(0)$ as follows. Define $\Phi\in \Cinft(G/M)$ as the function corresponding to the right-$M$-invariant function 
\bq
\bar \Phi(g):= e^{-\nu_0(H^-(g))}\label{eq:defphi}
\eq
with the notation \eqref{eq:iwasawadecomp}. As in  \cite[(3.8), (3.9)]{GHW18b}, it is easy to check using the opposite Iwasawa decomposition that the function $\Phi$ has the properties\footnote{Recall Remark \ref{rem:geodvf} in this context.}
\bq
\mathfrak{X}_{H_0}\Phi=-\Phi,\qquad \X_- \Phi=0\qquad\forall\;\X_-\in \Gamma^\infty(E_-).\label{eq:Phiproperties}
\eq
\begin{lem}\label{lem:homeo293509} For each $\lambda \in \aL_\C^\ast$ there is an isomorphism of topological vector spaces
\begin{align*}
\mathcal{R}(\lambda)&\longrightleftarrows \mathcal{R}(0),\\
s&\longmapsto \Phi^{-\lambda(H_0)}s,\\
\Phi^{\lambda(H_0)}s&\longmapsfrom s.
\end{align*}
\end{lem}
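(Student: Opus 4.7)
The proof plan is a direct verification based on the Leibniz rule for the canonical connection and the defining properties \eqref{eq:Phiproperties} of $\Phi$. Since $\Phi$ is a smooth, strictly positive function on $G/M$ (it is the exponential of a smooth real-valued function), the complex power $\Phi^z$ is a well-defined smooth positive function for every $z\in\C$. Multiplication by such a smooth function is a continuous linear endomorphism of $\D'(G/M,\V_\tau)$, so both candidate maps $s\mapsto \Phi^{-\lambda(H_0)}s$ and $s\mapsto\Phi^{\lambda(H_0)}s$ are continuous linear maps on $\D'(G/M,\V_\tau)$. Since $\Phi^{\lambda(H_0)}\Phi^{-\lambda(H_0)}=1$ pointwise, they are mutually inverse as maps on all of $\D'(G/M,\V_\tau)$; thus it remains only to check that they interchange the subspaces $\mathcal R(\lambda)$ and $\mathcal R(0)$.

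First I would verify that if $s\in\mathcal R(\lambda)$, then $s':=\Phi^{-\lambda(H_0)}s$ lies in $\mathcal R(0)$. Using the extension of the Leibniz rule \eqref{eq:Xextension} from smooth sections to distributional sections (which is standard, given that multiplication by $\Phi^{-\lambda(H_0)}$ is smooth), for any $\X_-\in\Gamma^\infty(E_-)$ one computes
\bqn
\nabla_{\X_-}(s') = \bigl(\X_-\Phi^{-\lambda(H_0)}\bigr)\,s + \Phi^{-\lambda(H_0)}\nabla_{\X_-}s = -\lambda(H_0)\,\Phi^{-\lambda(H_0)-1}(\X_-\Phi)\,s,
\eqn
which vanishes by \eqref{eq:Phiproperties}, as the second term already vanishes since $s\in\mathcal R(\lambda)$. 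Similarly, applying the Leibniz rule to $\Xbf=\nabla_{\mathfrak X_{H_0}}$ and using $\mathfrak X_{H_0}\Phi=-\Phi$ from \eqref{eq:Phiproperties}, I get $\mathfrak X_{H_0}\Phi^{-\lambda(H_0)}=\lambda(H_0)\Phi^{-\lambda(H_0)}$, so
\bqn
\Xbf(s') = \lambda(H_0)\,\Phi^{-\lambda(H_0)}s + \Phi^{-\lambda(H_0)}\Xbf(s) = \lambda(H_0)\,\Phi^{-\lambda(H_0)}s - \lambda(H_0)\,\Phi^{-\lambda(H_0)}s = 0,
\eqn
using that $\Xbf s=-\lambda(H_0)s$. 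By the characterization \eqref{eq:R0} of $\mathcal R(0)$, this shows $s'\in\mathcal R(0)$.

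The reverse inclusion is the same computation carried out with $\lambda$ replaced by $-\lambda$: for $s\in\mathcal R(0)$ one finds that $s'':=\Phi^{\lambda(H_0)}s$ satisfies $\nabla_{\X_-}s''=0$ for all $\X_-\in\Gamma^\infty(E_-)$ and $\Xbf(s'')=-\lambda(H_0)s''$, so $s''\in\mathcal R(\lambda)$. Combined with the continuity of multiplication by smooth functions, this gives the claimed isomorphism of topological vector spaces. There is no real obstacle here; the only subtle point worth flagging is that the Leibniz rule must be invoked in the distributional setting, which is justified because the multiplier $\Phi^{\pm\lambda(H_0)}$ is genuinely smooth, and because $\nabla_{\X}$ and $\Xbf$ are defined on $\D'(G/M,\V_\tau)$ by duality with formally adjoint differential operators of exactly this Leibniz form.
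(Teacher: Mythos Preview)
Your proof is correct and follows essentially the same approach as the paper: a direct Leibniz-rule computation using the properties \eqref{eq:Phiproperties} of $\Phi$, together with continuity of multiplication by smooth functions. The only difference is cosmetic---the paper does a single computation with two parameters $\lambda_1,\lambda_2$ and then specializes to $(\lambda,-\lambda)$ and $(0,\lambda)$, whereas you write out the two directions separately; also note a small slip in wording: for complex $z$ the function $\Phi^{z}$ is smooth and nowhere-vanishing rather than ``positive''.
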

\begin{proof}Fix $\lambda_1,\lambda_2 \in \aL_\C^\ast$ and let $s\in  \mathcal{R}(\lambda_1)\subset \D'(G/M,\V_{\tau})$, which means that
\bq
(\Xbf + \lambda_1(H_0))(s)=0,\;\nabla_{\X_-}(s)=0\;\forall\; \X_-\in \Gamma^\infty(E_-),\label{eq:vanish12424}
\eq
see Definition \ref{def:repruelle1stband}. 
We compute using (\ref{eq:vanish12424}), (\ref{eq:Xextensionrk1}), and (\ref{eq:Xextension}) 
\begin{align*}
\Xbf\big(\Phi^{\lambda_2(H_0)}s\big)&=\mathfrak{X}_{H_0} \big(\Phi^{\lambda_2(H_0)}\big)s+\Phi^{\lambda_2(H_0)}\Xbf(s)\\
&=-\lambda_2(H_0)\Phi^{\lambda_2(H_0)}s-\lambda_1(H_0)\Phi^{\lambda_2(H_0)}s\\
&=-(\lambda_2(H_0)+\lambda_1(H_0))\Phi^{\lambda_2(H_0)}s,
\end{align*}
\[
\nabla_{\X_-}\big(\Phi^{\lambda_2(H_0)}s\big)=\mathfrak{X}_- \big(\Phi^{\lambda_2(H_0)}\big)s+\Phi^{\lambda_2(H_0)}\nabla_{\X_-}(s)=0\qquad \forall\;\X_-\in \Gamma^\infty(E_-).
\]
The claim follows by setting $(\lambda_1,\lambda_2)=(\lambda,-\lambda)$ or  $(\lambda_1,\lambda_2)=(0,\lambda)$, respectively, and taking into account that multiplication of distributional sections with smooth functions is a continuous operation.
\end{proof}
\begin{lem}\label{lem:WF} For each $\lambda\in \aL^\ast_{\C}$ one has
\[
s\in \mathcal{R}(\lambda)\implies \mathrm{WF}(s)\subset E_+^\ast.
\]
Here $\mathrm{WF}(s)$ is the wave front set of the distributional section $s$, see Appendix \ref{sec:WF}.
\end{lem}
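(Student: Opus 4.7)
The plan is to deduce the wave-front inclusion from microlocal elliptic regularity applied to each of the first-order differential operators annihilating $s$. Since the wave front set of a distributional section of a vector bundle is defined by passing to local trivializations, and since in any such trivialization a covariant derivative $\nabla_{\mathfrak X}$ is of the form $\mathfrak X \cdot \mathrm{Id}_V + A$ with $A\in\Cinft(U,\mathrm{End}(V))$ arising from the Christoffel symbols, the principal symbol of $\nabla_{\mathfrak X}$ at a covector $(x,\xi)\in T^\ast(G/M)\setminus 0$ is simply $i\xi(\mathfrak X(x))\mathrm{Id}_V$. Hence $\nabla_{\mathfrak X}$ is an elliptic first-order system at $(x,\xi)$ exactly when $\xi(\mathfrak X(x))\neq 0$. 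The same applies to $\Xbf+\lambda(H_0)=\nabla_{\mathfrak X_{H_0}}+\lambda(H_0)\mathrm{Id}$, which is a zeroth-order perturbation and has principal symbol $i\xi(\mathfrak X_{H_0}(x))\mathrm{Id}_V$.

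Now I would use microlocal elliptic regularity for systems: if $s\in \mathcal R(\lambda)$ and $(x,\xi)\in \mathrm{WF}(s)$, then for every $\mathfrak X\in\Gamma^\infty(E_-)$ the equation $\nabla_{\mathfrak X}s=0$ forbids $\nabla_{\mathfrak X}$ from being elliptic at $(x,\xi)$, i.e.\ $\xi(\mathfrak X(x))=0$. As $\mathfrak X$ ranges over $\Gamma^\infty(E_-)$ (which spans $E_-(x)$ at every point, e.g.\ via the global frames coming from the $G$-equivariant identification $E_-=G\times_{\mathrm{Ad}(M)}\nL^-$), this forces $\xi|_{E_-(x)}=0$. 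By the dual splitting relations \eqref{eq:dualsplitting}, the annihilator of $E_-$ is precisely $E_0^\ast\oplus E_+^\ast$, so $\mathrm{WF}(s)\subset E_0^\ast\oplus E_+^\ast$. Applying the same argument to the single equation $(\Xbf+\lambda(H_0))s=0$, and recalling that $\mathfrak X_{H_0}$ spans $E_0$ (since $\dim\aL=1$), gives $\mathrm{WF}(s)\subset E_+^\ast\oplus E_-^\ast$.

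Intersecting the two inclusions yields
\[
\mathrm{WF}(s)\subset (E_0^\ast\oplus E_+^\ast)\cap (E_+^\ast\oplus E_-^\ast)=E_+^\ast,
\]
which is the claim. The only mildly delicate point is to verify that microlocal elliptic regularity, classically stated for scalar pseudodifferential operators, transfers to the present vector-bundle setting; but because the connection contributes only a smooth zeroth-order matrix in each trivialization, the standard parametrix construction for elliptic systems (or equivalently, the observation that the wave-front set of a section is the union of the wave-front sets of its components in any local trivialization) applies verbatim. No propagation statement is needed — only the pointwise elliptic estimate — so the argument is essentially the same as in the scalar case treated in \cite{GHW18b}.
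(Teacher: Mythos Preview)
Your proof is correct and follows essentially the same approach as the paper: compute the principal symbols of the operators $\Xbf+\lambda(H_0)$ and $\nabla_{\X_-}$ as scalar multiples of the identity, use microlocal elliptic regularity to deduce that $\mathrm{WF}(s)$ lies in the intersection of their characteristic sets, and identify this intersection as $(E_+^\ast\oplus E_-^\ast)\cap(E_0^\ast\oplus E_+^\ast)=E_+^\ast$. Your added remarks on the reduction to the scalar case via local trivializations and on the absence of any propagation argument are correct but go slightly beyond what the paper spells out.
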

\begin{proof}By definition, $s\in  \D'(G/M,\V_{\tau})$ is an element of $\mathcal{R}(\lambda)$ if $s$ fulfills 
\bq
\big(\Xbf+\lambda(H)\big)(s)=0\quad\forall\;H\in \aL\setminus\{0\},\qquad \nabla_{\X_-}(s)=0\;\forall\;\X_-\in\Gamma^\infty(E_-).\label{eq:vanish1}
\eq
Now, the principal symbol of an operator of the form $\Xbf+\lambda(H)$ agrees with the principal symbol of $\Xbf$. Using \eqref{eq:Xextension}, one easily computes that the principal symbol $\mathrm{psmb}(\Xbf)$ of $\Xbf$ is the vector bundle homomorphism $T^*(G/M)\to \mathrm{end}(\V_{\tau})$  given by 
\bqn
\mathrm{psmb}(\Xbf)(\xi)([g,v])=i\xi({\mathfrak{X}_{H_0}}(gM))\cdot [g,v],\qquad [g,v]\in \V_{\tau},
\quad \xi\in T^\ast_{gM}(G/M).
\eqn
Here $\mathrm{end}(\V_{\tau})$ is the endomorphism bundle associated to $\V_{\tau}$. Consequently, $\mathrm{psmb}(\Xbf)$ is fiber-wise invertible precisely at the points in $T^*(G/M)\setminus\mathrm{Ann}(\mathfrak{X}_{H_0})$, where
\[
\mathrm{Ann}(\mathfrak{X}_{H_0})=\{(gM,\xi)\in T^*(G/M): \xi(\mathfrak{X}_{H_0}(gM))=0\}
\]
is the set of covectors annihilating the vectors in the image of the geodesic vector field $\mathfrak{X}_{H_0}$. Since that image spans fiber-wise $E_0$, such a covector lies in the annihilator of $E_0$, which is $E^\ast_+\oplus E^\ast_-$. Arguing analogously for $\nabla_{\X_-}$, $\X_-\in\Gamma^\infty(E_-)$, instead of $\Xbf\equiv \nabla_{\mathfrak{X}_{H_0}}$, we deduce that a covector at which the principal symbols of all the obtained operators is not invertible must lie in the annihilator of $E_-$, which is $E_0^\ast\oplus E_+^\ast$. In total, we get that a covector at which the principal symbols of all the operators $\nabla_{\X_-}$, $\X_-\in \Gamma^\infty(E_-)$, and $\Xbf+\lambda(H)$ are not invertible must be an element of $(E^\ast_+\oplus E^\ast_-)\cap (E_0^\ast\oplus E_+^\ast)=E_+^\ast$. The fibers of $E_+^\ast$ are closed subspaces of the fibers of $T^*(G/M)$, which implies that the fibers of the complement $T^*(G/M)\setminus E_+^\ast$ are open cones. 
It follows that no $\xi\in T^*(G/M)\setminus E_+^\ast$ is an element of $\mathrm{WF}(s)$.
\end{proof}

\subsection{Passing to distributional sections of the boundary vector bundle}
We now consider distributional sections of the boundary vector bundle and their wave front sets\footnote{For the technical aspects, see Appendix \ref{sec:WF} with $G$ replaced by $K$, taking into account that in contrast to the measure $d(gM)$ on $G/M$ the measure $d(kM)$ on $K/M$ is not $G$-invariant.}. 
Our goal consists in establishing a one-to-one correspondence between the elements of the sets $\mathcal{R}(0)$ from (\ref{eq:R0}) and the distributional sections in $\D'(K/M,\V^{\mathcal{B}}_{\tau})$. 
\begin{lem}\label{lem:extcont}
Let $\iota:K/M\to G/M$ be the inclusion and identify $B^\ast\V_{\tau}^{\mathcal{B}}$ with $\V_\tau$ via the $G$-equivariant isomorphism (\ref{eq:bundleiso}). Then $B$ and $\iota$ induce  well-defined continuous pullback maps
 \[
 B^\ast:\D'(K/M,\V^{\mathcal{B}}_{\tau})\longrightarrow \mathcal{R}(0),\qquad
\iota^\ast:\mathcal{R}(0)\longrightarrow \D'(K/M,\V^{\mathcal{B}}_{\tau}).
\]
\end{lem}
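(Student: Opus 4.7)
The plan is to treat the two pullback maps using different techniques: for $B^*$ a submersion pullback, for $\iota^*$ H\"ormander's pullback theorem combined with the wave front bound of Lemma \ref{lem:WF}.

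For $B^*$, Lemma \ref{lem:factorizationGM} exhibits $B$ as the second projection $G/K\times K/M\to K/M$ composed with the diffeomorphism $F$, and hence as a submersion. Pullback of distributional sections along a submersion is well-defined and continuous, and combined with the $G$-equivariant identification $B^*\V^{\mathcal B}_\tau\cong \V_\tau$ from Lemma \ref{lem:bundleiso} this yields a continuous map $B^*\colon \D'(K/M,\V^{\mathcal B}_\tau)\to \D'(G/M,\V_\tau)$. It remains to verify that the image lies in $\mathcal R(0)$. Since $\mathcal R(0)$ is closed in $\D'(G/M,\V_\tau)$ (as the intersection of kernels of the continuous operators $\nabla_\X$) and smooth sections are dense, it suffices to check this for $s\in \Gamma^\infty(\V^{\mathcal B}_\tau)$. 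Writing $s$ as a right-$M$-equivariant function $\bar s\colon K\to V$, the pullback $B^*s$ corresponds via $\mathscr T$ to $\bar s\circ k^-\colon G\to V$. A direct Iwasawa computation, using that $A$ normalises $N^-$ and commutes with $M$, yields $k^-(g\exp(tY))=k^-(g)$ for every $Y\in \aL\oplus \nL^-$ and $g\in G$, so \eqref{eq:covariantderiv} gives $\nabla_\X(B^*s)=0$ for all $\X\in \Gamma^\infty(E_0\oplus E_-)$.

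For $\iota^*$, I invoke H\"ormander's pullback theorem: pullback along the embedding $\iota$ is well-defined and continuous on the space of distributional sections whose wave front set avoids the conormal bundle $N^*(\iota)\subset T^*(G/M)|_{K/M}$. By Lemma \ref{lem:WF}, every $s\in\mathcal R(0)$ satisfies $\mathrm{WF}(s)\subset E_+^*$, so the task reduces to verifying the transversality $E_+^*|_{K/M}\cap N^*(\iota)=0$. By $K$-equivariance it suffices to check this at $eM$. Decomposing $\g/\m\cong \aL\oplus\nL^+\oplus\nL^-$ (Bruhat) and using that $\theta$ acts as $-1$ on $\aL$ and swaps $\nL^+$ with $\nL^-$, one obtains $\k=\m\oplus\{X+\theta X:X\in \nL^+\}$, so $\k/\m$ sits inside $\nL^+\oplus\nL^-$ as a diagonal subspace. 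Now any $\xi\in E_+^*|_{eM}$ annihilates $\aL\oplus\nL^-$; if additionally $\xi\in N^*(\iota)|_{eM}$, then $\xi(X+\theta X)=\xi(X)=0$ for all $X\in \nL^+$, forcing $\xi=0$. This establishes the required transversality, and continuity of the resulting map $\iota^*\colon \mathcal R(0)\to\D'(K/M,\V^{\mathcal B}_\tau)$ then follows from the general continuity of H\"ormander pullback on $\D'_{E_+^*}(G/M,\V_\tau)$.

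The main obstacle is the wave front calculation for $\iota^*$. The crucial step is the identification of $\k/\m$ as a diagonal subspace of $\nL^+\oplus\nL^-$ via the Cartan involution, which is exactly the feature that prevents $E_+^*$ from meeting $N^*(\iota)$. Once this transversality is in hand, both pullback maps are essentially forced by standard distributional calculus on manifolds together with the Iwasawa decomposition.
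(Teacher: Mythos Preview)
Your approach coincides with the paper's: submersion pullback for $B^\ast$ together with a density argument to land in $\mathcal R(0)$, and H\"ormander's pullback theorem combined with the wave front bound of Lemma~\ref{lem:WF} for $\iota^\ast$. Your transversality computation $E_+^\ast|_{K/M}\cap N^\ast(\iota)=0$ via the diagonal embedding $\k/\m\hookrightarrow \nL^+\oplus\nL^-$ is in fact more explicit than what the paper writes out.

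There is, however, one point you gloss over. You conclude continuity of $\iota^\ast$ by invoking ``the general continuity of H\"ormander pullback on $\D'_{E_+^\ast}$''. But H\"ormander's theorem yields continuity with respect to the topology of $\D'_\Gamma$ (which, in addition to weak convergence, imposes uniform decay of localized Fourier transforms in closed cones disjoint from $\Gamma$), whereas the lemma---under the paper's standing convention---asks for continuity in the plain weak topology that $\mathcal R(0)$ inherits from $\D'(G/M,\V_\tau)$. A priori the former is strictly finer. The paper closes this gap by arguing that on $\mathcal R(0)$ the two topologies agree: every $s\in\mathcal R(0)$ is annihilated by all covariant derivatives in the $E_0\oplus E_-$ directions, and a local partial integration in those directions shows that the extra H\"ormander seminorms (testing decay in cones away from a $\Gamma$ containing $E_+^\ast$ in its interior) are automatically controlled once weak convergence holds. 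Without this step your continuity claim for $\iota^\ast$ is incomplete.
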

\begin{proof}First, we check that the map $B^\ast$ is well-defined on smooth sections (i.e.,\ its image is contained in the specified target space). To this end we compute for $s\in \Gamma^\infty(\V^{\mathcal{B}}_{\tau})$, regarded as a right-$M$-equivariant function $\bar s:K\to V$, and $\X\in \Gamma^\infty(E_0\oplus E_-)$
\bq
\nabla(B^\ast s)(\X)=\nabla_\X(s\circ k^-)=\frac{d}{dt}\Big|_{t=0}\bar s\Big(k^-\big(g\underbrace{e^{t\X(g)}}_{\in AN^-}\big)\Big)=\frac{d}{dt}\Big|_{t=0}\bar s\big(k^-(g))\big)=0.\label{eq:vanish11}
\eq
Being the projections onto the factor $K$ in the product manifold $G=K\times A\times N^-$, the maps $k^-$ are submersions. This implies that the induced maps $B:G/M\to K/M$ are also submersions. Therefore, the pullbacks $B^\ast$ have unique continuous extensions to $\D'(K/M,\V^{\mathcal{B}}_{\tau})$, see \cite[Thm.\ 6.1.2]{hoermanderI}. As $\Gamma^\infty({\V^{\mathcal{B}}_{\tau}})$ is dense in $\D'(K/M,\V^{\mathcal{B}}_{\tau})$, and $\mathcal{R}(0)\subset \D'(G/M,\V_{\tau})$ is closed (being the intersection of inverse images of $0$ of continuous maps), we conclude by continuity that the image of the extension of $B^\ast$ to $\D'(K/M,\V^{\mathcal{B}}_{\tau})$ is still contained in $\mathcal{R}(0)$.  To see the existence of the pullback $\iota^\ast$ we observe that the conormal bundle of $K/M$ in $T^\ast(G/M)$ intersects $E^\ast_-$ only in the zero section. By Lemma \ref{lem:WF} the wave front sets of distributions in $\mathcal{R}(0)$ are contained in $E^\ast_+$, therefore they are disjoint from the conormal bundle of $K/M$ in $T^\ast(G/M)$ and \cite[Corollary 8.2.7]{hoermanderI} implies the existence of the pullbacks $\iota^\ast$. The map $\iota^\ast$ is continuous on the domain $\mathcal{R}(0)$ when the latter is equipped with the subspace topology induced by any of the spaces $\D'_{\Gamma}(G/M,\V_\tau)$ from \cite[Definition 8.2.2]{hoermanderI} for a fiber-wise conical subset $\Gamma\subset T^\ast(G/M)$ with $\Gamma\supset E^\ast_+$ and disjoint from the conormal bundle of $K/M$ in $T^\ast(G/M)$. Let us call that topology on $\mathcal{R}(0)$ for a choice of $\Gamma$ the $\Gamma$\emph{-topology}. The corresponding target space $\D'_{\iota^\ast\Gamma}(K/M,\V^{\mathcal{B}}_{\tau})\subset \D'(K/M,\V^{\mathcal{B}}_{\tau})$, as defined in \cite[(8.2.4)]{hoermanderI}, agrees for any $\Gamma\supset E^\ast_+$ with $\D'(K/M,\V^{\mathcal{B}}_{\tau})$ because one has $\iota^\ast(E_+^\ast)= T^\ast(K/M)$.  For a general $\Gamma\supset E^\ast_+$, the $\Gamma$-topology on $\mathcal{R}(0)$ might be stronger than the standard subspace topology on $\mathcal{R}(0)$ induced from $\D'(G/M,\V_\tau)$. However, when we choose $\Gamma$ so that $E^\ast_+$ is contained in the interior of $\Gamma$ in $T^\ast(G/M)$, then the $\Gamma$-topology and the standard subspace topology on $\mathcal{R}(0)$ agree. To see this, choose such a $\Gamma$. By definition all distributional sections in $\mathcal{R}(0)$ are not only smooth but even constant in the directions $E_0\oplus E_-$. The annihilator of this bundle is precisely $E^\ast_+$, which is contained in the interior of $\Gamma$ and therefore has no touching point, except at the zero section, with any closed cone that is disjoint with $\Gamma$. Therefore, we can use local partial integration with respect to the directions corresponding to $E_0\oplus E_-$ to show that any sequence $\{u_n\}_{n\in\N}\subset \mathcal{R}(0)$ fulfills locally property (ii)' in \cite[Definition 8.2.2]{hoermanderI}, which means that the condition (ii)'  is vacuous and the two topologies on $\mathcal{R}(0)$ agree. 
\end{proof}
\begin{rem}\label{rem:actiononsections2}
We call the $G$-action on $\D'(K/M,\V^{\mathcal{B}}_{\tau})$ obtained by dualizing the $G$-action (\ref{eq:Gactionstotalsections1}) on $\Gamma^\infty(\V^{\mathcal{B}}_{\tau})$ again simply the \emph{left regular $G$-action}.
\end{rem}
With these preparations, we can now state the main result of this section.
\begin{prop}\label{prop:pullbackiso}
The pullback maps from Lemma \ref{lem:extcont} form a $G$-equivariant isomorphism of topological vector spaces
$$\mathcal{R}(0)\stackrel[B^\ast]{\iota^\ast}{\longrightleftarrows} \D'(K/M,\V^{\mathcal{B}}_{\tau}),$$
where $\mathcal{R}(0)$ and $\D'(K/M,\V^{\mathcal{B}}_{\tau})$ are each equipped with the left regular $G$-action as introduced in Remarks \ref{rem:Gactionsrestr} and \ref{rem:actiononsections2}.
\end{prop}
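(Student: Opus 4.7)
The plan is to verify that $B^\ast$ and $\iota^\ast$ are mutually inverse and $G$-equivariant, working first on smooth sections and then extending by continuity.

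Under the identification of smooth sections of $\V_\tau$ and $\V^{\mathcal{B}}_\tau$ with right-$M$-equivariant functions $\bar s\colon G\to V$ and $\bar t\colon K\to V$ respectively, I would check the two composition identities directly on smooth sections. For $t\in\Gamma^\infty(\V^{\mathcal{B}}_\tau)$, the pullback $B^\ast t$ corresponds via the isomorphism $\mathscr{T}$ of Lemma~\ref{lem:bundleiso} to the function $g\mapsto\bar t(k^-(g))$; restricting through $\iota^\ast$ and using that $k^-(k)=k$ for $k\in K$ (immediate from the opposite Iwasawa decomposition $k = k\cdot e\cdot e$) recovers $\bar t$, so $\iota^\ast B^\ast t = t$. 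Conversely, for a smooth $s\in\mathcal{R}(0)$, the defining conditions $\nabla_\X s=0$ for all $\X\in\Gamma^\infty(E_0\oplus E_-)$ translate via \eqref{eq:covariantderiv} into right $AN^-$-invariance of $\bar s$, whence $\bar s(g)=\bar s(k^-(g))$ by the opposite Iwasawa decomposition; this is exactly the statement $B^\ast\iota^\ast s = s$.

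The identity $\iota^\ast\circ B^\ast = \tu{id}$ on all of $\D'(K/M,\V^{\mathcal{B}}_\tau)$ then follows from its smooth version together with the density of $\Gamma^\infty(\V^{\mathcal{B}}_\tau)$ in $\D'(K/M,\V^{\mathcal{B}}_\tau)$ and the continuity of both pullbacks from Lemma~\ref{lem:extcont}. For the opposite identity $B^\ast\circ\iota^\ast=\tu{id}$ on $\mathcal{R}(0)$, I expect the main obstacle to be that smooth sections inside $\mathcal{R}(0)$ need not be dense in $\mathcal{R}(0)$ for its subspace topology, so a direct density argument is unavailable. I would instead prove that $\iota^\ast$ is injective on $\mathcal{R}(0)$; combined with the already established $\iota^\ast B^\ast\iota^\ast = \iota^\ast$, this yields $B^\ast\iota^\ast s=s$ for every $s\in\mathcal{R}(0)$. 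To establish injectivity, I would transport the picture along the $G$-equivariant diffeomorphism $F\colon G/M\to G/K\times K/M$ of Lemma~\ref{lem:factorizationGM}: since $B=\pi_2\circ F$ with $\pi_2$ the projection onto $K/M$, the bundle $\V_\tau$ corresponds via $\mathscr{T}$ to $\pi_2^\ast\V^{\mathcal{B}}_\tau$, and a dimension count ($\dim A + \dim N^-$ on each side) shows that the leaves of the foliation tangent to $E_0\oplus E_-$ are mapped diffeomorphically onto the fibers of $\pi_2$. Hence any distribution in $\mathcal{R}(0)$ corresponds to a distributional section of $\pi_2^\ast\V^{\mathcal{B}}_\tau$ that is constant along the fibers of $\pi_2$, i.e., to the pullback $\pi_2^\ast t$ of a unique $t\in\D'(K/M,\V^{\mathcal{B}}_\tau)$, and $\iota^\ast s=0$ then forces $t=0$, hence $s=0$.

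Finally, the $G$-equivariance is straightforward. The identity $k^-(g_1 g_2) = k^-(g_1 k^-(g_2))$ modulo $M$, which holds by uniqueness of the opposite Iwasawa decomposition, shows that $B$ intertwines the $G$-actions \eqref{eq:Gactionsbase} and \eqref{eq:GactionsbaseK}. Combined with the $G$-equivariance of $\mathscr{T}$ from Lemma~\ref{lem:bundleiso}, this makes $B^\ast$ $G$-equivariant on smooth sections and, by continuity and density, on all of $\D'(K/M,\V^{\mathcal{B}}_\tau)$; as the inverse of an equivariant bijection, $\iota^\ast$ is then automatically $G$-equivariant.
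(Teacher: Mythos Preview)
Your proof is correct, and the treatment of $\iota^\ast B^\ast=\mathrm{id}$ and of $G$-equivariance matches the paper's. The real difference is in how you establish $B^\ast\iota^\ast=\mathrm{id}$ on $\mathcal{R}(0)$. The paper argues directly: by the defining equations of $\mathcal{R}(0)$, the lift $\bar s$ of any $s\in\mathcal{R}(0)$ satisfies $\bar s|_{Kan}=\bar s|_K$ for every $an\in AN^-$ (these restrictions being well-defined by the wave front set condition from Lemma~\ref{lem:WF}); since $\{KanM\}_{an\in AN^-}$ covers $G/M$ and $\iota\circ B$ restricts to the identity on $K/M$, the identity $B^\ast\iota^\ast s=s$ reduces to an equality on $K/M$, which is immediate. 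You instead pass through the diffeomorphism $F$ of Lemma~\ref{lem:factorizationGM} and observe that the leaves of $E_0\oplus E_-$ coincide with the fibers of $B$, hence correspond under $F$ to the fibers of $\pi_2$; a distribution constant along those fibers is then a pullback from $K/M$, which gives surjectivity of $B^\ast$ (equivalently, injectivity of $\iota^\ast$), and the identity follows algebraically from $\iota^\ast B^\ast\iota^\ast=\iota^\ast$. Both routes work; the paper's is slightly more self-contained because the wave front set bound is already in hand and avoids invoking the standard but nontrivial fact that fiber-constant distributions on a product are pullbacks, while yours makes the underlying product geometry more visible. Your dimension count is a shortcut: what you really use is that $B=\pi_2\circ F$ and that the leaves of $E_0\oplus E_-$ are exactly the fibers of $B$, which is immediate from the opposite Iwasawa decomposition.
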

\begin{proof}
The $G$-equivariance of $B^\ast$ follows from the $G$-equivariance of the isomorphism (\ref{eq:bundleiso}).  Since $B\circ \iota=\mathrm{id}_{K/M}$, we immediately get $$\iota^\ast\circ B^\ast=\mathrm{id}_{\D'(K/M,\V^{\mathcal{B}}_{\tau})}.$$
To consider the reversed composition, we observe that by definition of $\mathcal{R}(0)$ and $\nabla$ we have
\bq
\bar s|_{Kan}=\bar s|_K\qquad \forall\;an\in AN^-,\;s\in \mathcal{R}(0),\label{eq:vanishing}
\eq
where $\bar s \in \D'(G,V)$ is the lift of $s$ to an $M$-invariant distribution on $G$ with values in $V$, and the restrictions of the distributions $\bar s$ to the sets $Kan\subset G$ are well-defined due to the wave front set condition involved in the definition of $\mathcal{R}(0)$. Now, by the opposite  Iwasawa decomposition, $\{Kan\}_{an\in AN^-}$ covers all of $G$, hence $\{Kan M\}_{an\in AN^-}$ covers all of $G/M$, which means that one has for every $s\in \mathcal{R}(0)$
\[
B^\ast\circ\iota^\ast s=s\iff (B^\ast\circ\iota^\ast s)|_{Kan M}=s|_{Kan M}\quad \forall\; an\in AN^-.
\]
Here, analogously as above for the lifts $\bar s$, the restrictions of the distributions $s$ to the sets $KanM\subset G/M$ are well-defined. As we already know that $B^\ast\circ\iota^\ast$ maps into $\mathcal{R}(0)$, we deduce from \eqref{eq:vanishing} for every $s\in \mathcal{R}(0)$
\bq
B^\ast\circ\iota^\ast s=s\iff (B^\ast\circ\iota^\ast s)|_{K}=s|_{K}.\label{eq:38208950}
\eq
It now suffices to note that the composition $\iota\circ B:G/M\to G/M$, $gM\mapsto k^-(g)M$, restricts to the identity map on $K\subset G/M$, which implies
\[
B^\ast\circ\iota^\ast s|_{K}=\mathrm{id}_K^\ast s|_{K}=s_K\qquad \forall\;s\in \mathcal{R}(0),
\]
and we get with \eqref{eq:38208950} the desired result $B^\ast\circ\iota^\ast =\mathrm{id}_{\mathcal{R}(0)}$. The $G$-equivariance of $\iota^\ast$ now also follows, for it is the inverse of the $G$-equivariant map $B^\ast$.
\end{proof}
\begin{cor}\label{cor:dense}
The subspace $\mathcal{R}(\lambda)\cap\Gamma^\infty(\V_\tau)$ is dense in $\mathcal{R}(\lambda)$ for each $\lambda\in \aL_\C^\ast$.
\end{cor}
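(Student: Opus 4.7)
The plan is to reduce to the elementary density of smooth sections in the distributional sections over the compact manifold $K/M$, by transporting along the two isomorphisms already established: the multiplication isomorphism $\mathcal{R}(\lambda)\cong \mathcal{R}(0)$ from Lemma~\ref{lem:homeo293509} and the pullback isomorphism $\mathcal{R}(0)\cong \D'(K/M,\V^{\mathcal{B}}_\tau)$ from Proposition~\ref{prop:pullbackiso}. The key observation is that both isomorphisms are implemented by smooth geometric data (a smooth nowhere-vanishing multiplier and a smooth submersion), hence they restrict to bijections on the subspaces of smooth sections.

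First I would reduce to the case $\lambda = 0$. Since $\bar\Phi(g)=e^{-\nu_0(H^-(g))}>0$ with $\Phi\in\Cinft(G/M)$, the complex power $\Phi^{\lambda(H_0)}:=e^{\lambda(H_0)\log\Phi}$ is a smooth nowhere-vanishing function on $G/M$. Multiplication by it and by $\Phi^{-\lambda(H_0)}$ therefore preserves the subspace of smooth sections in both directions, so the homeomorphism of Lemma~\ref{lem:homeo293509} restricts to a bijection
\bqn
\mathcal{R}(\lambda)\cap \Gamma^\infty(\V_\tau)\;\longleftrightarrow\; \mathcal{R}(0)\cap \Gamma^\infty(\V_\tau).
\eqn
Because multiplication by a smooth function is continuous on distributional sections, density in $\mathcal{R}(0)$ transfers to density in $\mathcal{R}(\lambda)$.

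Second I would fix $s\in\mathcal{R}(0)$ and set $t:=\iota^\ast s\in\D'(K/M,\V^{\mathcal{B}}_\tau)$. Since $K/M$ is a compact manifold, $\Gamma^\infty(\V^{\mathcal{B}}_\tau)$ is dense in $\D'(K/M,\V^{\mathcal{B}}_\tau)$ by standard mollification, so one can pick a sequence $t_n\in\Gamma^\infty(\V^{\mathcal{B}}_\tau)$ with $t_n\to t$. Setting $s_n:=B^\ast t_n$, continuity of $B^\ast$ from Proposition~\ref{prop:pullbackiso} yields $s_n\to B^\ast\iota^\ast s = s$ in $\mathcal{R}(0)$. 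To conclude, I must verify that each $s_n$ lies in $\Gamma^\infty(\V_\tau)$; this is automatic, as $B:G/M\to K/M$ is a smooth submersion and the bundle isomorphism $\mathscr{T}$ of Lemma~\ref{lem:bundleiso} is smooth, so the pullback of a smooth section under the identification \eqref{eq:bundleiso} is again smooth.

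There is no substantial obstacle: the argument is essentially a bookkeeping exercise, given the heavy lifting done in Lemma~\ref{lem:homeo293509} and Proposition~\ref{prop:pullbackiso}. The only point requiring mild attention is ensuring that the two isomorphisms restrict to the smooth subspaces, which is handled by the positivity of $\Phi$ (making the complex power smooth and invertible) and by the smoothness of $B$ (making $B^\ast$ preserve smoothness on the nose, before any distributional extension is needed).
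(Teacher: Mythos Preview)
Your proposal is correct and follows essentially the same approach as the paper: reduce to $\lambda=0$ via the smooth multiplier $\Phi^{\pm\lambda(H_0)}$ from Lemma~\ref{lem:homeo293509}, then use that $B^\ast$ is a homeomorphism carrying the dense subspace $\Gamma^\infty(\V^{\mathcal{B}}_\tau)\subset\D'(K/M,\V^{\mathcal{B}}_\tau)$ into $\mathcal{R}(0)\cap\Gamma^\infty(\V_\tau)$. The paper's proof is just a more compressed version of your argument.
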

\begin{proof}
Since multiplication by $\Phi^{-\lambda(H_0)}$ maps $\mathcal{R}(\lambda)\cap\Gamma^\infty(\V_\tau)$ to $\mathcal{R}(0)\cap\Gamma^\infty(\V_\tau)$, it suffices by Lemma \ref{lem:homeo293509} to prove the statement for $\lambda=0$. However, the statement for $\lambda=0$ follows from Proposition \ref{prop:pullbackiso}. Indeed, $\Gamma^\infty(\V^{\mathcal{B}}_{\tau})$ is dense in $\D'(K/M,\V^{\mathcal{B}}_{\tau})$, and the linear homeomorphism $B^\ast$ maps smooth sections to smooth sections, so that $B^\ast(\Gamma^\infty(\V^{\mathcal{B}}_{\tau}))\subset \mathcal{R}(0)\cap\Gamma^\infty(\V_\tau)$ is dense in $\mathcal{R}(0)$.
\end{proof}
\subsection{Pollicott-Ruelle resonances and resonant states}
In order to define Pollicott-Ruelle resonances and related objects, we first pass to compact quotients of the spaces $G/K$ and $G/M$.

\subsubsection{Transition to a compact locally symmetric space}
Let $\Gamma\subset G$ be a cocompact torsion-free discrete subgroup. Due to the equivariance of the projection of the homogeneous vector bundle $\V_{\tau}$ with respect to the $G$-action (\ref{eq:GactionV}) we get an induced associated vector bundle $\Gamma\backslash\V_{\tau}$ over $\Gamma\backslash G/M$ with total space $\Gamma\backslash(G\times_{\tau}V)$. The situation is similar for the bundles $E_0$, $E_\pm$, leading to induced bundles $\Gamma\backslash E_0$ and $\Gamma\backslash E_\pm$ over $\Gamma\backslash G/M$. Identifying $T(\Gamma\backslash G/M)$ with $\Gamma\backslash T(G/M)$ we obtain the splitting
\[
T(\Gamma\backslash G/M)=\Gamma\backslash (E_0\oplus E_+\oplus E_-)=\Gamma\backslash E_0\oplus \Gamma\backslash E_+\oplus \Gamma\backslash E_-,
\]
and analogously for $T^\ast(\Gamma\backslash G/M)$. A vector field $\X$ on $\Gamma\backslash G/M$ can be regarded as a left-$\Gamma$-invariant, right-$M$-equivariant smooth function $\bar \X\in \Cinft(G,\nL^+\oplus \aL\oplus {\nL^-})$, and from (\ref{eq:covariantderiv}) it is easy to see that $\nabla_\X$ is left-$\Gamma$-equivariant for left-$\Gamma$-invariant vector fields $\X$. Thus, $\nabla$ induces a connection ${_\Gamma}\nabla$ on $\Gamma\backslash\V_{\tau}$, with associated covariant derivatives ${_\Gamma}\nabla_{\X}$, where $\X\in \Gamma^\infty(T(\Gamma\backslash G/M))$, and the covariant derivatives ${_\Gamma}\nabla_{\X}$ extend by duality to operators on distributional sections of $\Gamma\backslash\V_{\tau}$. 
We obtain this way the induced lifted geodesic vector field\footnote{Recall that the geodesic vector field $\mathfrak{X}_{H_0}\in \Gamma^\infty(T(G/M))$ corresponds to the constant function $\overline{\mathfrak{X}}_{H_0}\in\Cinft(G,\nL^+\oplus \aL\oplus {\nL^-})$ with value $H_0$ which is trivially a left-$\Gamma$-invariant function.}
 \[
{_\Gamma\Xbf}:\D'(\Gamma\backslash G/M,\Gamma\backslash \V_{\tau})\to \D'(\Gamma\backslash G/M,\Gamma\backslash \V_{\tau}).
\]
\begin{definition}\label{def:quotientR}By analogy to Definition \ref{def:repruelle1stband}, we define  on the quotient $\gam G/M$
\bqn
{_\Gamma}\mathcal{R}(\lambda):=\big\{s\in \D'(\gam G/M,\gam\V_{\tau}):\big({_\Gamma\Xbf} + \lambda(H)\big)(s)=0,\;{_\Gamma}\nabla_{\X}(s)=0\;\forall\;\X\in \Gamma^\infty(\Gamma\backslash E_-)\big\}.
\eqn
\end{definition}
We then have
\bq
 {_\Gamma}\mathcal{R}(\lambda)\cong {^\Gamma}\mathcal{R}(\lambda):=\{s\in \mathcal{R}(\lambda): \gamma s=s\;\forall\; \gamma\in \Gamma\},\label{eq:quotientR}
\eq
the isomorphism being defined by considering $\Gamma$-invariant distributional sections of $\V_{\tau}$ as distributional sections of $\gam\V_{\tau}$, and vice versa. 
\subsubsection{Definition of Pollicott-Ruelle resonances on associated vector bundles, (generalized) resonant states, and the \emph{first band}}\label{sec:defruelle}
\begin{definition}\label{def:ruelle1stband}
We call $\lambda \in \C$ a \emph{Pollicott-Ruelle resonance on $\gam\V_\tau$} if the set
\begin{align*}
\mathrm{Res}_{\,\Gamma\backslash\V_{\tau}}(\lambda)&:=\{s\in \D'(\Gamma\backslash G/M,\Gamma\backslash \V_{\tau}):({_\Gamma}\Xbf + \lambda)(s)=0,\;\mathrm{WF}(s)\subset \Gamma\backslash E_+^\ast\}
\end{align*}
of \emph{resonant states on $\gam\V_{\tau}$ of the resonance $\lambda$} is non-trivial, i.e., contains non-zero elements.  We say that  
a Pollicott-Ruelle resonance $\lambda$ on $\gam\V_\tau$ is \emph{in the first band} or a \emph{first band resonance} if the set
\[
\mathrm{Res}^{0}_{\,\gam\V_{\tau}}(\lambda):=\{s\in \mathrm{Res}_{\,\Gamma\backslash\V_{\tau}}(\lambda):{_\Gamma}\nabla_{\X}(s)=0\;\forall\;\X\in \Gamma^\infty(\Gamma\backslash E_-)\}
\]
of \emph{first band resonant states of the resonance $\lambda$} is non-trivial\footnote{Caution: not all resonant states of a first band resonance have to be first band resonant states.}. For $k\in \{1,2,3,\ldots\}$ we define
\begin{align*}
\mathrm{Res}_{\,\Gamma\backslash\V_{\tau}}\hspace*{-0.5em}{^k}(\lambda)&:=\{s\in \D'(\Gamma\backslash G/M,\Gamma\backslash \V_{\tau}):({_\Gamma}\Xbf + \lambda)^k(s)=0,\;\mathrm{WF}(s)\subset \Gamma\backslash E_+^\ast\},
\end{align*}
the sets of \emph{generalized\footnote{Note that $\mathrm{Res}_{\,\Gamma\backslash\V_{\tau}}\hspace*{-0.5em}{^1}(\lambda)=\mathrm{Res}_{\,\Gamma\backslash\V_{\tau}}(\lambda)$, so generalized resonant states of rank $1$ are just resonant states.} resonant states of rank $k$}, and the sets
\begin{align*}
\mathrm{Res}^{0}_{\,\Gamma\backslash\V_{\tau}}\hspace*{-0.25em}{^k}(\lambda)&:=\{s\in \mathrm{Res}_{\,\Gamma\backslash\V_{\tau}}\hspace*{-0.5em}{^k}(\lambda):{_\Gamma}\nabla_{\X}(s)=0\;\forall\;\X\in \Gamma^\infty(\Gamma\backslash E_-)\}
\end{align*}
of \emph{generalized first band resonant states of rank $k$}. We say that there is a \emph{Jordan block} of size $j\in\{1,2,3,\ldots\}$ for the resonance $\lambda$ with a \emph{Jordan basis} formed by generalized resonant states $s_1,\ldots,s_j$ if one has
\[
s_k\in \mathrm{Res}_{\,\Gamma\backslash\V_{\tau}}\hspace*{-0.5em}{^k}(\lambda)\setminus\{0\}\quad\forall\;k\in\{1,\ldots,j\},\qquad ({_\Gamma}\Xbf +\lambda)(s_{k})=s_{k-1}\quad\forall\;k\in\{2,\ldots,j\}.
\]
A \emph{first band Jordan block} is a Jordan block as above with
$s_k\in \mathrm{Res}^{0}_{\,\Gamma\backslash\V_{\tau}}\hspace*{-0.25em}{^k}(\lambda)\setminus\{0\}$ for all $k$. 
A Jordan block of size $j$ is \emph{non-trivial} if $j\geq 2$. 
\end{definition}
If $s$ is a distributional section of the bundle $\gam\V_{\tau}$ and we identify it with a $\Gamma$-equivariant distributional section $\tilde s$ of $\V_{\tau}$,  one has $\mathrm{WF}(s)=\gam \mathrm{WF}(\tilde s)$. Thus, it follows from Lemma \ref{lem:WF} and the $\Gamma$-invariance of the operators $\Xbf $ and $\nabla_{\X}$, $X\in \Gamma^\infty(E_\pm)$, that the wave front set properties required in the definition of Pollicott-Ruelle resonant states are automatically fulfilled for resonant states in the first band, leading to
\begin{rem}\label{rem:ident23590332532636}
With the element $H_0\in \aL$ from \eqref{eq:H0}, one has for each $\lambda\in\aL_\C^\ast$
\bqn
\mathrm{Res}^{0}_{\,\gam\V_{\tau}}(\lambda(H_0))\cong{^\Gamma}\mathcal{R}(\lambda),
\eqn
where the isomorphism is established by considering a distributional section of the bundle $\gam\V_{\tau}$ as a $\Gamma$-equivariant distributional section of $\V_{\tau}$.
\end{rem}
Let us finally mention two easy yet important naturality properties. 
\begin{lem}\label{lem:naturality}If $\tau,\tau'$ are two equivalent complex $M$-representations, then there are for every $\lambda \in \C,\;k\in \N$ linear isomorphisms
\begin{align*}
\mathrm{Res}_{\,\Gamma\backslash\V_{\tau}}(\lambda)&\cong\mathrm{Res}_{\,\Gamma\backslash\V_{\tau'}}(\lambda), & \mathrm{Res}_{\,\Gamma\backslash\V_{\tau}}\hspace*{-0.5em}{^k}(\lambda)&\cong \mathrm{Res}_{\,\Gamma\backslash\V_{\tau'}}\hspace*{-0.5em}{^k}(\lambda),\\
\mathrm{Res}^{0}_{\,\gam\V_{\tau}}(\lambda)&\cong\mathrm{Res}^{0}_{\,\gam\V_{\tau'}}(\lambda), & \mathrm{Res}^{0}_{\,\Gamma\backslash\V_{\tau}}\hspace*{-0.25em}{^k}(\lambda)&\cong \mathrm{Res}^{0}_{\,\Gamma\backslash\V_{\tau'}}\hspace*{-0.25em}{^k}(\lambda).
\end{align*}
\end{lem}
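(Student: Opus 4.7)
The plan is to reduce everything to a single natural bundle isomorphism induced by a representation intertwiner. Fix an $M$-equivariant linear isomorphism $\Phi\colon V\to V'$ implementing the equivalence $\tau\cong\tau'$, so that $\Phi\circ\tau(m)=\tau'(m)\circ\Phi$ for all $m\in M$. The formula $[g,v]\mapsto[g,\Phi v]$ is well-defined on the associated bundle $\V_\tau=G\times_\tau V$ precisely because of this intertwining property, and it defines a smooth $G$-equivariant isomorphism of vector bundles $\widetilde\Phi\colon\V_\tau\to\V_{\tau'}$ covering the identity on $G/M$. Being $G$-equivariant, $\widetilde\Phi$ is in particular $\Gamma$-equivariant and hence descends to a smooth isomorphism $\gam\V_\tau\to\gam\V_{\tau'}$ covering the identity on $\gam G/M$.

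Next, I would verify that $\widetilde\Phi$ intertwines every structure entering Definition \ref{def:ruelle1stband}. Viewing sections of $\V_\tau$ as right-$M$-equivariant functions $\bar s\colon G\to V$ and analogously for $\V_{\tau'}$, the isomorphism corresponds to post-composition with $\Phi$, i.e.\ $\bar s\mapsto\Phi\circ\bar s$. The formula \eqref{eq:covariantderiv} for the canonical connection involves only the right regular action of $G$ on $\Cinft(G,V)$, which trivially commutes with post-composition by a fixed linear map. Hence $\widetilde\Phi$ intertwines the canonical connections, and in particular the lifted geodesic vector fields and the covariant derivatives $\nabla_\X$ for every $\X\in\Gamma^\infty(T(G/M))$. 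Passing to the quotient by $\Gamma$, the same holds for $\Gam\Xbf$ and $\Gam\nabla_\X$ acting on distributional sections of $\gam\V_\tau$ and $\gam\V_{\tau'}$.

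Finally, since $\widetilde\Phi$ is a smooth vector bundle isomorphism covering the identity on the base, the induced pushforward extends to a topological linear isomorphism on distributional sections that preserves wave front sets (as the latter are defined in terms of local trivializations and the base-point location). Combining this with the intertwining relations from the previous step shows that $\widetilde\Phi_\ast$ sends solutions of $(\Gam\Xbf+\lambda)^k s=0$ to solutions of the analogous equation on $\gam\V_{\tau'}$, sends $\Gam\nabla_\X$-annihilators to $\Gam\nabla_\X$-annihilators, and preserves the condition $\mathrm{WF}(s)\subset\gam E_+^\ast$. It therefore restricts to linear isomorphisms between all four pairs of spaces in the claim, with inverse induced by $\Phi^{-1}$.

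No serious obstacle is expected: the argument is a straightforward naturality check. The only subtlety is to remember that $\widetilde\Phi$ covers the identity of the base (ensuring literal preservation of wave front sets) and that the canonical connection is determined entirely by the right regular action together with the representation viewed as acting on the fibre, so its invariance under post-composition by $\Phi$ is automatic.
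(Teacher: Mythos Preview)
Your proof is correct and follows essentially the same approach as the paper: an intertwining operator between $\tau$ and $\tau'$ induces a $G$-equivariant vector bundle isomorphism $\V_\tau\to\V_{\tau'}$ compatible with the canonical connections, from which the claimed isomorphisms follow. The paper's proof is a two-sentence sketch of exactly this idea, whereas you have spelled out the details (well-definedness, compatibility with $\nabla$, preservation of wave front sets).
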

\begin{proof}
From the above definition, it is obvious that if there is a vector bundle isomorphism $\V_{\tau}\to\V_{\tau'}$, compatible with the canonical connections $\nabla$ and $\nabla'$ on $\V_{\tau}$ and $\V_{\tau'}$, respectively, then the claim holds. It now suffices to observe that an intertwining operator  establishing an equivalence between $\tau$ and $\tau'$ induces precisely such a vector bundle isomorphism $\V_{\tau}\to\V_{\tau'}$.
\end{proof}
\begin{lem}\label{lem:naturality2}If $\tau,\tau'$ are two complex $M$-representations and $\tau\oplus \tau'$ is their direct sum, then there are for every $\lambda \in \C,\;k\in \N$ linear isomorphisms
\begin{alignat*}{5}
\mathrm{Res}_{\,\Gamma\backslash\V_{\tau\oplus\tau'}}(\lambda)&\cong\mathrm{Res}_{\,\Gamma\backslash\V_{\tau}\oplus \Gamma\backslash\V_{\tau'}}(\lambda)&&\cong\mathrm{Res}_{\,\Gamma\backslash\V_{\tau}}(\lambda)&&\oplus\mathrm{Res}_{\,\Gamma\backslash\V_{\tau'}}(\lambda),\\
\mathrm{Res}^{0}_{\,\gam\V_{\tau\oplus\tau'}}(\lambda)&\cong\mathrm{Res}^{0}_{\Gamma\backslash\V_{\tau}\oplus \Gamma\backslash\V_{\tau'}}(\lambda)&&\cong\mathrm{Res}^{0}_{\,\gam\V_{\tau}}(\lambda)&&\oplus\mathrm{Res}^{0}_{\,\gam\V_{\tau'}}(\lambda),\\
\mathrm{Res}_{\,\Gamma\backslash\V_{\tau\oplus\tau'}}\hspace*{-0.5em}{^k}(\lambda)&\cong\mathrm{Res}_{\Gamma\backslash\V_{\tau}\oplus \Gamma\backslash\V_{\tau'}}\hspace*{-0.5em}{^k}(\lambda)&&\cong \mathrm{Res}_{\,\Gamma\backslash\V_{\tau}}\hspace*{-0.5em}{^k}(\lambda)&&\oplus\mathrm{Res}_{\,\Gamma\backslash\V_{\tau'}}\hspace*{-0.5em}{^k}(\lambda),\\
\mathrm{Res}^{0}_{\,\Gamma\backslash\V_{\tau\oplus\tau'}}\hspace*{-0.25em}{^k}(\lambda)&\cong \mathrm{Res}^{0}_{\Gamma\backslash\V_{\tau}\oplus \Gamma\backslash\V_{\tau'}}\hspace*{-0.25em}{^k}(\lambda) &&\cong \mathrm{Res}^{0}_{\,\Gamma\backslash\V_{\tau}}\hspace*{-0.25em}{^k}(\lambda)&&\oplus\mathrm{Res}^{0}_{\,\Gamma\backslash\V_{\tau'}}\hspace*{-0.25em}{^k}(\lambda).
\end{alignat*}
\end{lem}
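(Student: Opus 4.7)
The strategy is to reduce all four claimed decompositions to one structural fact: the bundle $\mathbf V_{\tau \oplus \tau'}$ is canonically $G$-isomorphic over $G/M$ to the Whitney sum $\mathbf V_\tau \oplus \mathbf V_{\tau'}$ via $[g,(v,v')] \mapsto ([g,v],[g,v'])$, and this isomorphism intertwines the canonical connections. Well-definedness follows because $M$-equivariance of $\tau \oplus \tau'$ is equivalent to joint $M$-equivariance of $\tau$ and $\tau'$; smoothness and $G$-equivariance with respect to \eqref{eq:GactionV} are immediate from the definition. From the defining formula \eqref{eq:covariantderiv}, a section of $\mathbf V_{\tau \oplus \tau'}$ viewed as a pair $(\bar s_1, \bar s_2)$ of right-$M$-equivariant $V$- and $V'$-valued functions on $G$ has componentwise covariant derivative, so the canonical connection on the left corresponds precisely to the direct sum of the canonical connections on the right. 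This compatibility descends to the quotient by $\Gamma$ and extends by duality to distributional sections, which yields the first equivalence in each line of the lemma.

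For the second equivalence in each line, I would invoke the general fact that for any Whitney sum $\mathcal W \oplus \mathcal W'$ of smooth Hermitian bundles over $\Gamma\backslash G/M$ equipped with the direct sum of connections, a distributional section is a unique pair $(s,s')$, the lifted geodesic vector field and the covariant derivatives $\nabla_\X$ act componentwise, and the wave front set satisfies $\mathrm{WF}(s,s') = \mathrm{WF}(s) \cup \mathrm{WF}(s')$. Consequently, each defining relation appearing in Definition \ref{def:ruelle1stband}---annihilation by $({_\Gamma}\mathbf X + \lambda)^k$, vanishing of ${_\Gamma}\nabla_\X$ for every $\X \in \Gamma^\infty(\Gamma\backslash E_-)$, and containment of the wave front set in $\Gamma\backslash E_+^\ast$---holds for a pair $(s,s')$ if and only if it holds for each component separately. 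Splitting a pair into its two components thus yields the direct sum decomposition at the level of resonant state spaces, and similarly for their generalized and first band variants.

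I do not foresee any genuine obstacle: every step is a routine verification from the definitions, the only mildly technical point being the standard compatibility of wave front sets with Whitney sums, which is immediate from the local and linear nature of the wave front set. The lemma should be regarded as a bookkeeping tool that, together with Lemma \ref{lem:naturality}, allows the rest of the article to reduce the analysis of resonant state spaces to the case of an irreducible representation $\tau$.
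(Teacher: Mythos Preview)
Your proposal is correct and follows essentially the same approach as the paper's proof, which consists of the single observation that the canonical connections on $\Gamma\backslash\V_{\tau\oplus\tau'}$ and $\Gamma\backslash\V_{\tau}\oplus \Gamma\backslash\V_{\tau'}$ are the direct sum of the individual canonical connections under the obvious identifications. You have simply unpacked in detail what the paper leaves implicit---the explicit bundle isomorphism, the componentwise action of the covariant derivatives, and the compatibility with wave front sets---so there is no substantive difference in strategy.
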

\begin{proof}
It suffices to observe that the canonical connections on $\Gamma\backslash\V_{\tau\oplus\tau'}$ and $\Gamma\backslash\V_{\tau}\oplus \Gamma\backslash\V_{\tau'}$ are the sum of the individual canonical connections on $\Gamma\backslash\V_{\tau}$, $\Gamma\backslash\V_{\tau'}$ under the obvious necessary identifications.
\end{proof}
\subsection{Description of (generalized) resonant states by their transformation behavior at the boundary}\label{sec:princseries}
We would like to have a precise description of the image of the embedding
\bq
\mathrm{Res}^{0}_{\,\gam\V_{\tau}}(\lambda(H_0))\cong{^\Gamma}\mathcal{R}(\lambda)\stackrel{\text{incl.}}{\hooklongrightarrow}\mathcal{R}(\lambda)\mathrel{\mathop{\longrightarrow}^{\cdot\Phi^{-\lambda(H_0)}}_{\cong}}\mathcal{R}(0)\mathrel{\mathop{\longrightarrow}^{\iota^\ast}_{\cong}}\D'(K/M,\V^{\mathcal{B}}_{\tau})\label{eq:composite}
\eq
provided by Lemma \ref{lem:homeo293509} and Proposition \ref{prop:pullbackiso}. This would be a trivial problem if the isomorphism
\bq
Q_{\lambda}:\mathcal{R}(\lambda)\mathrel{\mathop{\longrightarrow}^{\cdot\Phi^{-\lambda(H_0)}}_{\cong}}\mathcal{R}(0)\mathrel{\mathop{\longrightarrow}^{\iota^\ast}_{\cong}}\D'(K/M,\V^{\mathcal{B}}_{\tau})\label{eq:composite2}
\eq
were $\Gamma$-equivariant with respect to the $G$-actions on $\mathcal{R}(\lambda)$ and $\D'(K/M,\V^{\mathcal{B}}_{\tau})$ that we introduced so far $-$ then, the image of \eqref{eq:composite} would simply consist of the $\Gamma$-invariant distributions in $\D'(K/M,\V^{\mathcal{B}}_{\tau})$. That is, however, not the case. While the final map $\iota^\ast$ is $G$-equivariant by Proposition \ref{prop:pullbackiso}, the preceding multiplication with $\Phi^{-\lambda(H_0)}$ is not $\Gamma$-equivariant because the function $\Phi$ is not $\Gamma$-invariant. Indeed, by \eqref{eq:defphi} and\footnote{Recall Remark \ref{rem:differentB}.} \cite[(3.12)]{GHW18b} we have for $g,\gamma\in G$
\bq
\bar \Phi(\gamma g)=e^{-\nu_0(H^-(\gamma k^-(g)))}\bar \Phi(g).\label{eq:Phitransf}
\eq
The idea is now to introduce a new $G$-action on $\D'(K/M,\V^{\mathcal{B}}_{\tau})$ to get equivariance of the map \eqref{eq:composite2}. This leads us to the study of so-called \emph{principal series representations}.
\subsubsection{Principal series representations}
As explained in \cite[Chapter 7]{knapp2}, the principal  series representations of the Lie group $G$ can be realized in three different ways or ``pictures''. As we look for a new $G$-action on $\D'(K/M,\V^{\mathcal{B}}_{\tau})$, we focus on the so-called \emph{compact picture}.  The principal series representation of $G$ associated to the $M$-representation $\tau$ and an element $\lambda \in \aL_\C^\ast$ acts on smooth sections by\footnote{We follow Olbrich's convention as in \cite[between Satz 2.8 and Satz 2.9]{olbrichdiss}. In \cite[p.\ 169]{knapp2}, the definition differs from ours in such a way that $\lambda$ is replaced by $-\lambda$.}
\bq
\pi^{\tau,\lambda}_\mathrm{comp}(g)(s)(kM):=e^{(\lambda + \rho)(H^-(g^{-1}k))}(gs)(kM)\label{eq:compactp},\quad s\in \Gamma^\infty(\V^\B_\tau),\; kM\in K/M,
\eq
and extends by continuity\footnote{Note that one could alternatively extend the definition by duality to distributional sections. One would then arrive at a definition which differs from our continuity extension in such a way that $\lambda$ is replaced by $-\lambda$ because of the transformation behavior of the measure $d(kM)$ on $K/M$ which is not $G$-invariant.} to a representation $\pi^{\tau,\lambda}_\mathrm{comp}:G\to \mathrm{End}(\D'(K/M,\V^{\mathcal{B}}_{\tau}))$. Here the element $\rho$ was defined in \eqref{eq:rho}, and $gs$ refers to the action \eqref{eq:Gactionsectionslanglands}. The significance of the principal series representations for our purposes becomes clear from 
\begin{lem}\label{lem:ruellerep}
For each $\lambda\in \aL^\ast_\C$, the isomorphism $Q_{\lambda}$ from \eqref{eq:composite2} is $G$-equivariant when $\mathcal{R}(\lambda)$ carries the left regular $G$-action and $\D'(K/M,\V^{\mathcal{B}}_{\tau})$ is equipped with the  principal series representation $\pi^{\tau,-\lambda-\rho}_\mathrm{comp}$. Consequently, the embedding \eqref{eq:composite} induces an isomorphism
\bq
{^\Gamma}Q_{\lambda}:\mathrm{Res}^{0}_{\,\gam\V_{\tau}}(\lambda(H_0))\stackrel{\cong}{\longrightarrow}{^\Gamma}\mathrm{H}^{-\infty}_{\tau,-\lambda-\rho},\label{eq:ruellerepiso}
\eq
where ${^\Gamma}\mathrm{H}^{-\infty}_{\tau,-\lambda-\rho}\subset \D'(K/M,\V^{\mathcal{B}}_{\tau})$ denotes the $\Gamma$-invariant distributional vectors in the principal series representation $\pi^{\tau,-\lambda-\rho}_\mathrm{comp}$.
\end{lem}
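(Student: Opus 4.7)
The plan is to verify the $G$-equivariance of $Q_\lambda$ by a direct cocycle calculation; the isomorphism \eqref{eq:ruellerepiso} then follows by passing to $\Gamma$-invariants on both sides and invoking the identification $\mathrm{Res}^0_{\gam\V_\tau}(\lambda(H_0)) \cong {^\Gamma}\mathcal{R}(\lambda)$ of Remark~\ref{rem:ident23590332532636}. Since $\iota^\ast$ is already $G$-equivariant by Proposition~\ref{prop:pullbackiso}, the only point to check is that the failure of the operator $s \mapsto \Phi^{-\lambda(H_0)}s$ to commute with the left regular action is exactly the cocycle defining $\pi^{\tau,-\lambda-\rho}_{\mathrm{comp}}$.

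First I would reduce to smooth sections using Corollary~\ref{cor:dense} and work with the right-$M$-equivariant lift $\bar s \in C^\infty(G,V)$. For $\gamma\in G$ and $s\in\mathcal R(\lambda)\cap\Gamma^\infty(\V_\tau)$, a pointwise computation gives
\[
\overline{\Phi^{-\lambda(H_0)}(\gamma s)}(g) = \left[\frac{\bar\Phi(\gamma^{-1}g)}{\bar\Phi(g)}\right]^{\lambda(H_0)} \overline{\gamma(\Phi^{-\lambda(H_0)} s)}(g),
\]
and inserting \eqref{eq:Phitransf} with $g$ replaced by $\gamma^{-1}g$ turns the prefactor into $e^{\lambda(H_0)\,\nu_0(H^-(\gamma k^-(\gamma^{-1}g)))}$. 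The crucial simplification occurs at $g=k\in K$, where the geometric identity $H^-(\gamma k^-(\gamma^{-1}k)) = -H^-(\gamma^{-1}k)$ reduces the prefactor to $e^{-\lambda(H^-(\gamma^{-1}k))}$; here I use the rank-one relation $\lambda(H_0)\nu_0(H) = \lambda(H)$ for $H\in\aL$, which holds because $\nu_0(H_0) = \|\nu_0\|^2 = 1$. This is precisely the cocycle $e^{((-\lambda-\rho)+\rho)(H^-(\gamma^{-1}k))}$ of $\pi^{\tau,-\lambda-\rho}_{\mathrm{comp}}(\gamma)$ read off from \eqref{eq:compactp}. Applying the $G$-equivariant restriction $\iota^\ast$ then yields $Q_\lambda(\gamma s) = \pi^{\tau,-\lambda-\rho}_{\mathrm{comp}}(\gamma)\, Q_\lambda(s)$ for all smooth $s$.

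The missing identity $H^-(\gamma k^-(\gamma^{-1}k)) = -H^-(\gamma^{-1}k)$ itself is obtained by writing the opposite Iwasawa decomposition $\gamma^{-1}k = k^-(\gamma^{-1}k)\,a^-(\gamma^{-1}k)\,n^-(\gamma^{-1}k)$, rearranging to $\gamma k^-(\gamma^{-1}k) = k\cdot[n^-(\gamma^{-1}k)]^{-1}[a^-(\gamma^{-1}k)]^{-1}$, and using that $A$ normalizes $N^-$ to move the $A$-factor to the front, so the opposite Iwasawa $A$-component of $\gamma k^-(\gamma^{-1}k)$ is $[a^-(\gamma^{-1}k)]^{-1}$. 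Extension from smooth to distributional $s$ is then immediate by continuity of all operations involved (multiplication by smooth functions, the restriction to $K$ which is valid on $\mathcal R(0)$ thanks to the wave front estimate of Lemma~\ref{lem:WF}, and the $G$-actions on distributions defined by duality) combined with the density of $\mathcal R(\lambda)\cap \Gamma^\infty(\V_\tau)$ in $\mathcal R(\lambda)$ from Corollary~\ref{cor:dense}. Finally, restricting $Q_\lambda$ to $\Gamma$-invariants on both sides produces ${^\Gamma}Q_\lambda$. The main potential obstacle is the sign and scalar bookkeeping that turns the ambient cocycle $\bar\Phi(\gamma^{-1}g)/\bar\Phi(g)$ into the compact-picture cocycle on $K/M$; this reduces entirely to the geometric identity above and the rank-one normalization aligning $\nu_0$ and $H_0$.
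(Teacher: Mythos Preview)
Your argument is correct and follows essentially the same approach as the paper: both compute the cocycle picked up by multiplication with $\Phi^{-\lambda(H_0)}$ upon restriction to $K$ and identify it with the principal series cocycle $e^{-\lambda(H^-(\gamma^{-1}k))}$. The paper differs only in minor bookkeeping: rather than passing through \eqref{eq:Phitransf} and the Iwasawa identity $H^-(\gamma k^-(\gamma^{-1}k)) = -H^-(\gamma^{-1}k)$, it simply uses $\bar\Phi(k)=1$ together with the definition $\bar\Phi(\gamma^{-1}k)=e^{-\nu_0(H^-(\gamma^{-1}k))}$, which gives the prefactor in one step; and it argues directly with distributional $s$ via \eqref{eq:vanishing} rather than first reducing to smooth sections by Corollary~\ref{cor:dense}.
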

\begin{proof}
By definition of $\Phi$ one has
\bq
\bar \Phi(k)=1\qquad \forall\;k\in K,\label{eq:1k}
\eq
and \eqref{eq:vanishing} then yields for $s\in \mathcal{R}(\lambda)$
\begin{align*}
\bar \Phi(g)^{-\lambda(H_0)}\bar s(g)&=\big(\bar \Phi^{-\lambda(H_0)}\bar s\big)(g)=\big(\bar \Phi^{-\lambda(H_0)}\bar s\big)(k^-(g))=\underbrace{\bar \Phi(k^-(g))^{-\lambda(H_0)}}_{=1}\bar s(k^-(g))\\
&=\bar s(k^-(g))\qquad\forall\; g\in G.
\end{align*}
In particular, we get
\begin{align}\begin{split}
\bar \Phi(\gamma^{-1}k)^{-\lambda(H_0)}\bar s(\gamma^{-1}k)
&=\bar s(k^-(\gamma^{-1}k))\\
&=\underbrace{\Phi(k^-(\gamma^{-1}k))^{-\lambda(H_0)}}_{=1}\bar s(k^-(\gamma^{-1}k))\\
&\equiv \gamma\Big(\iota^\ast\big(\Phi^{-\lambda(H_0)}s\big)\Big)(kM)\qquad\forall\;k\in K,\;\gamma \in G.\end{split}\label{eq:inv2K}
\end{align}
With the relations \eqref{eq:Phitransf}, \eqref{eq:1k}, and \eqref{eq:inv2K} we compute for $s\in \mathcal{R}(\lambda)$, $\gamma\in G$, $k\in K$
\begin{align*}
\overline{\iota^\ast\big(\Phi^{-\lambda(H_0)}(\gamma s)\big)}(k)&=\underbrace{\bar\Phi(k)^{-\lambda(H_0)}}_{=1}\bar s(\gamma^{-1}k)\\
&=\bar \Phi(\gamma^{-1}k)^{\lambda(H_0)}\,\overline{\gamma\big(\iota^\ast\big(\Phi^{-\lambda(H_0)}s\big)\big)}(k)\\
&=\Big(e^{-\nu_0(H^-(\gamma^{-1} k))}\underbrace{\bar \Phi(k)}_{=1}\Big)^{\lambda(H_0)}\,\overline{\gamma\big(\iota^\ast\big(\Phi^{-\lambda(H_0)}s\big)\big)}(k)\\
&=e^{-\lambda(H_0)\nu_0(H^-(\gamma^{-1} k))}\,\overline{\gamma\big(\iota^\ast\big(\Phi^{-\lambda(H_0)}s\big)\big)}(k)\\
&=e^{-\lambda(H^-(\gamma^{-1} k))}\,\overline{\gamma\big(\iota^\ast\big(\Phi^{-\lambda(H_0)}s\big)\big)}(k),
\end{align*}
where we used the definition of $\nu_0\in \aL^\ast$ as the element dual to $H_0\in \aL$ in the last step. Recalling Remark \ref{rem:ident23590332532636}, the proof is finished.
\end{proof}
\subsubsection{Characterization of first band Jordan blocks}
Just as Lemma \ref{lem:ruellerep} establishes a one-to-one correspondence between first band Pollicott-Ruelle resonant states and those distributional sections of the boundary vector bundle $\V^{\mathcal{B}}_{\tau}$ that transform under the action of the group $\Gamma$ according to a principal series representation, there is a description of \emph{generalized} first band Pollicott-Ruelle resonant states in terms of distributional sections of $\V^{\mathcal{B}}_{\tau}$.
\begin{prop}\label{prop:jordanchar}For a Pollicott-Ruelle resonance $\lambda\in \C$ in the first band and generalized first band resonant states  $s_0,\ldots,s_{j-1}\in\bigcup_{k\geq 1}\mathrm{Res}^{0}_{\,\Gamma\backslash\V_{\tau}}\hspace*{-0.25em}{^k}(\lambda)\setminus\{0\}$, $j\in \{1,2,\ldots\}$, the following two statements are equivalent.\begin{enumerate}[leftmargin=*]
\item The family $s_0,\ldots,s_{j-1}$ forms a Jordan basis for a first band Jordan block. 
\item The distributional sections $\omega_0,\ldots,\omega_{j-1}\in \D'(K/M,\V^{\mathcal{B}}_{\tau})$ defined by
\[
\omega_k:= \iota^\ast\bigg(\Phi^{-\lambda}\sum_{l=0}^{k}\frac{(\log \Phi)^{k-l}}{(k-l)!}\pi_\Gamma^\ast(s_l)\bigg),\quad k\in \{0,\ldots,j-1\},
\]
where  $\pi_\Gamma:G/M\to \gam G/M$ is the canonical projection, $\Phi\in \Cinft(G/M)$ was defined in \eqref{eq:defphi}, and $\iota^\ast$ is as in Proposition \ref{prop:pullbackiso}, have the transformation property
\[
\gamma\, \omega_k=N_{\gamma}^{-\lambda}\sum_{l=0}^k\frac{(\log N_{\gamma})^{k-l}}{(k-l)!}\omega_l\qquad \forall\; k\in \{1,\ldots,j-1\},\;\gamma\in \Gamma,
\]
where $\gamma$ acts by the principal series representation \eqref{eq:compactp} and $N_{\gamma}\in \Cinft(K/M)$ is induced by the $M$-invariant function $\bar N_{\gamma}\in \Cinft(K)$ defined by
\bq
\bar N_{\gamma}(k)=e^{-\nu_0(H^-(\gamma k))}\label{eq:Ndef}.
\eq
\end{enumerate}
\end{prop}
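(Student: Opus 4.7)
The plan is to lift everything from $\gam G/M$ to $G/M$, reformulate both conditions as properties of closely related distributions lying in $\mathcal R(0)$, and reduce to a combinatorial identity involving the logarithmic cocycle for $\Phi$. Concretely, I would lift each $s_k$ to $\tilde s_k := \pi_\Gamma^\ast s_k\in\D'(G/M,\V_\tau)$; the first band condition becomes $\nabla_{\X_-}\tilde s_k=0$ for all $\X_-\in\Gamma^\infty(E_-)$, and the Jordan basis condition in (1) reads $(\Xbf+\lambda)\tilde s_k = \tilde s_{k-1}$ with $\tilde s_{-1}:=0$. Setting
\bqn
\tilde u_k := \Phi^{-\lambda}\sum_{l=0}^{k}\frac{(\log\Phi)^{k-l}}{(k-l)!}\tilde s_l\qquad\text{on }G/M,
\eqn
one has $\omega_k = \iota^\ast\tilde u_k$ (because $\Phi|_K=1$ collapses the sum to the $l=k$ term), and by Proposition \ref{prop:pullbackiso} membership $\tilde u_k\in\mathcal R(0)$ is equivalent to $\tilde u_k = B^\ast\omega_k$.

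For $(1)\Rightarrow(2)$: using \eqref{eq:Phiproperties} ($\X_{H_0}\Phi=-\Phi$ and $\X_-\Phi=0$), a direct telescoping computation shows $\Xbf\tilde u_k=0$: the term $-(k-l)\Phi^{-\lambda}(\log\Phi)^{k-l-1}\tilde s_l$ produced when $\X_{H_0}$ hits $(\log\Phi)^{k-l}$ cancels, after the shift $l\mapsto l-1$, against the $\Phi^{-\lambda}(\log\Phi)^{k-l}\tilde s_{l-1}$ term coming from $(\Xbf+\lambda)\tilde s_l=\tilde s_{l-1}$, while the $\lambda\Phi^{-\lambda}(\log\Phi)^{k-l}\tilde s_l$ terms cancel against the $-\lambda\Phi^{-\lambda}(\log\Phi)^{k-l}\tilde s_l$ arising from $\Xbf\tilde s_l = -\lambda\tilde s_l + \tilde s_{l-1}$. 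Combined with $\nabla_{\X_-}\tilde u_k=0$ (immediate from the first band condition and $\X_-\Phi=0$), this yields $\tilde u_k\in\mathcal R(0)$. To extract the transformation formula I would apply the cocycle \eqref{eq:Phitransf} in the form $\log\bar\Phi(\gamma^{-1}g) = \log\bar N_{\gamma^{-1}}(k^-(g)) + \log\bar\Phi(g)$, expand $(\log\bar\Phi(\gamma^{-1}g))^{k-l}$ by the binomial theorem using the $\Gamma$-invariance of $\tilde s_l$, and restrict to $g=k\in K$; since $\log\bar\Phi(k)=0$ collapses the resulting double sum to a single layer, one obtains exactly the claimed identity, with the $N_\gamma^{-\lambda}$ prefactor originating from $\bar\Phi(\gamma^{-1}k)^{-\lambda}$.

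For $(2)\Rightarrow(1)$: set $\tilde v_k := B^\ast\omega_k\in\mathcal R(0)$; a computation symmetric to the previous paragraph shows that the hypothesized transformation of $\omega_k$ translates, upon application of $B^\ast$ and use of the same cocycle, into the identity $\tilde u_k=\tilde v_k$ on $G/M$, so $\tilde u_k\in\mathcal R(0)$ and in particular $\Xbf\tilde u_k=0$. Running the forward telescoping computation in reverse, without yet assuming the Jordan relation, yields
\bqn
0 = \Xbf\tilde u_k = \Phi^{-\lambda}\sum_{l=0}^k \frac{(\log\Phi)^{k-l}}{(k-l)!}\bigl((\Xbf+\lambda)\tilde s_l - \tilde s_{l-1}\bigr),
\eqn
and by the linear independence of $1,\log\Phi,\ldots,(\log\Phi)^k$ as smooth functions on $G/M$ (since $\log\Phi$ is non-locally-constant), each coefficient must vanish, which is precisely the Jordan chain relation, and descent to $\gam\V_\tau$ completes (1).

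The main obstacle is the combinatorial matching of the two triangular structures---the Jordan chain shift on $G/M$ versus the twisted principal-series shift on $K/M$---through the binomial expansion of $(\log\bar N_{\gamma^{-1}}(k^-(g))+\log\bar\Phi(g))^{k-l}$ and the subsequent re-indexing of the double sum. Once this identity is verified, both implications follow from essentially the same telescoping mechanism run in opposite directions; a secondary subtlety is the linear-independence argument used to extract the Jordan chain from $\Xbf\tilde u_k=0$, which rests on $\log\Phi$ being non-locally-constant.
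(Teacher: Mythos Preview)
Your outline for $(1)\Rightarrow(2)$ is correct and is precisely the mechanism the paper invokes by deferring to \cite{GHW18b}: the telescoping computation shows $\tilde u_k\in\mathcal R(0)$, and the binomial expansion of the cocycle identity $\log\bar\Phi(\gamma^{-1}g)=\log\bar N_{\gamma^{-1}}(k^-(g))+\log\bar\Phi(g)$ followed by restriction to $K$ reorganises $\tilde u_k(\gamma^{-1}g)$ into the claimed triangular transformation of the $\omega_p$.

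The gap is in $(2)\Rightarrow(1)$. Your ``symmetric computation'' does not yield the identity $\tilde u_k=\tilde v_k$. Applying $B^\ast$ to $(2)$ and using the cocycle shows only that $\tilde v_k:=B^\ast\omega_k$ obeys the transformation rule
\[
\bar w_k(\gamma^{-1}g)=\bar N_{\gamma^{-1}}(k^-(g))^{-\lambda}\sum_{p\leq k}\frac{(\log\bar N_{\gamma^{-1}}(k^-(g)))^{k-p}}{(k-p)!}\bar w_p(g),
\]
and the point you are overlooking is that $\tilde u_k$ satisfies this \emph{unconditionally}: it follows from the $\Gamma$-invariance of the $\tilde s_l$ and the cocycle alone, with no Jordan relation needed. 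So $(2)$ only tells you that $d_k:=\tilde u_k-\tilde v_k$ obeys the same rule, vanishes on $K$, and is annihilated by $\nabla_{\X_-}$. That forces $d_k=0$ on $\Gamma KN^-$, a countable union of codimension-one submanifolds, but not on all of $G/M$. The injectivity of $\iota^\ast$ in Proposition~\ref{prop:pullbackiso} is only available on $\mathcal R(0)$, and $\tilde u_k\in\mathcal R(0)$ is exactly the conclusion you need.

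A secondary issue: the ``linear independence of $1,\log\Phi,\dots,(\log\Phi)^k$'' step is not valid as phrased, because the coefficients $r_l$ are distributions, not scalars (for instance $t\cdot\delta_0'+1\cdot\delta_0=0$ on $\R$ with nonzero coefficients). It becomes correct if you run it as an induction on $k$: once $r_0=\cdots=r_{k-1}=0$ is known, $\Xbf\tilde u_k=\Phi^{-\lambda}r_k$, and then $\Xbf\tilde u_k=0$ gives $r_k=0$ directly. But this still presupposes $\Xbf\tilde u_k=0$, which is the missing step above.
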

\begin{proof}Recalling \eqref{eq:Phitransf} and taking into account Remark \ref{rem:differentB}, the proof is completely analogous to \cite[proof of Proposition 3.6]{GHW18b}.
\end{proof}

\section{First band resonant states and generalized common eigen\-spaces}\label{sec:fbpoisson}
In this section we establish a correspondence between first band Pollicott-Ruelle resonances and certain generalized eigenvalues, as well as between the corresponding first band resonant states and generalized eigensections.  The main technical tool is a vector-valued Poisson transform developed  by Olbrich in his dissertation \cite{olbrichdiss}. 
From now on we suppose that the $M$-representation $\tau$ is irreducible.  Let $\sigma:K\to \mathrm{End}(W)$ be an irreducible unitary representation of the group $K$ on a finite-dimensional Hermitian vector space $W$ such that $[\tau]$ occurs 
in the restriction $\sigma|_M$, and let $\W_\sigma$ be the vector bundle over $G/K$ associated to $\sigma$. For a section $s\in \Gamma^\infty(\W_\sigma)$, let us write $\bar s$ for the corresponding right-$K$-equivariant function $G\to W$. Furthermore, in the following we will make use of the action of $K$ on $\mathrm{End}(W)$ given by
\bq
kE:= \sigma(k)\circ E \circ \sigma(k)^{-1},\qquad k\in K,\; E\in \mathrm{End}(W)\label{eq:endaction}
\eq
and the action of the Weyl group $\mathscr{W}$ on $\mathrm{End}_M(W)$ given by
\bq
wE:=m'_w E,\qquad w=m'_wM\in \mathscr{W},\;E\in \mathrm{End}_M(W).\label{eq:endWaction}
\eq
Finally, as in the previous Section, $\Gamma\subset G$ will denote a cocompact torsion-free discrete subgroup.
\subsection{The Poisson transform and generalized common eigen\-spaces}
Before we introduce Olbrich's vector-valued Poisson transform, let us briefly compare it to other available vector-valued Poisson transforms and provide  some references for background information.
\subsubsection{Comparison between Olbrich's and other vector-valued Poisson transforms}\label{sec:poissonadv}
A scalar Poisson transform maps distributions (or hyperfunctions) on $K/M$ to functions on $G/K$ that are joint eigenfunctions of the algebra of invariant differential operators. Broadly and informally speaking, a vector-valued Poisson transform relates distributional sections of the boundary vector bundle $\V^{\mathcal{B}}_{\tau}$ over $K/M$ to sections of a bundle of the form $\W_\sigma$ over $G/K$ which are eigensections of invariant differential operators on $\W_\sigma$. There exist several different vector-valued Poisson transforms in the literature that differ in the generality of the allowed vector bundles $\V^{\mathcal{B}}_{\tau}$ and also in the question which differential operators on $\W_\sigma$ have to be considered. We want to give a short overview and explain why we work with Olbrich's Poisson transform. 

As we are interested in general vector-valued Pollicott-Ruelle resonant states of the geodesic flow our initial focus lies on the vector bundle $\V_\tau$ over $G/M$ and the corresponding boundary vector bundle $\V^{\mathcal{B}}_{\tau}$ over $K/M$. We therefore want to start with an arbitrary irreducible unitary $M$-representation $\tau$ (indeed any associated vector bundle can be decomposed into a sum of irreducibles).
This is precisely the setting of Olbrich's vector-valued Poisson transform.  The drawback of this level of generality is that one has to generalize the concept of a joint eigenspace of a commutative family of differential operators to a certain \emph{generalized common eigenspace} of the algebra $D(G,\sigma)$ of invariant differential operators on $\W_\sigma$. These generalized common eigenspaces do not consist of true joint eigensections in the usual sense unless one makes additional assumptions, such as those of Section \ref{sec:specialcase}. 

There are other vector-valued Poisson transforms  that achieve to avoid this difficulty at the cost of working with less general bundles $\V^{\mathcal{B}}_{\tau}$: 

For $G$ of arbitrary real rank and a fixed irreducible unitary $K$-representation $\sigma$, Yang \cite{yang} extends the classical results of  Helgason \cite{helgason70a,helgason76} (see also \cite[II.4.4.D]{helgason84}) and Kashiwara et al.\ \cite{kashiwara78} from the scalar to the vector-valued setting, with a Poisson transform that maps distributional sections of a boundary vector bundle bijectively onto the space of joint eigensections of the commutative subalgebra $\mathscr{D}(Z(\mathcal{U}(\g)))\subset D(G,\sigma)$ arising from the center of the universal enveloping algebra of $\g$, cf.\ Section \ref{sec:studysmb}. In particular, Yang avoids generalizing the notion of a common eigenspace.  His approach is in some sense contrary to ours as he starts with a $K$-representation $\sigma$ and then considers boundary vector bundles associated to the $M$-representations given for appropriate $\Lambda\in \g_\C^\ast$ by the direct sum of all irreducible $M$-representations occurring in $\sigma|_M$ with infinitesimal character $\Lambda$. 
We recommend the introduction of Yang's paper for further reading regarding the variety of available vector-valued Poisson transforms. 

The rank one case of Yang's setting is studied by van der Ven  \cite{vanderven}. For $\g$ of real rank one, i.e.,\ as in our article, the algebra $\mathscr{D}(Z(\mathcal{U}(\g)))$ is generated by the Casimir operator, which is geometrically the Bochner Laplacian associated to the canonical connection on $\W_\sigma$. Thus, the common eigenspaces reduce to eigenspaces of this operator.

Gaillard \cite{gaillard1,gaillard2} considers the interesting special case of differential forms over real and complex hyperbolic spaces of arbitrary dimension. He obtains very precise invertibility results for a Poisson transform acting on currents or hyperforms on the boundary at infinity.  

Finally, Dyatlov, Faure, and Guillarmou \cite{dfg} prove invertibility of a particular Poisson transform acting on distributional sections of certain\footnote{More precisely, the bundles resulting from restricting the tensor bundles occurring in Section \ref{sec:band} to $K/M$ in the special case where $G=\SO(n+1,1)_0$. In this case, the construction of the tensor bundles is less complicated as $\alpha_0$ is the only positive restricted root.} tensor bundles over the boundary at infinity of a real hyperbolic space of arbitrary dimension. As we already mentioned in the introduction, their results were a main motivation for the present article.

\subsubsection{Olbrich's vector-valued Poisson transform}

The Olbrich-Poisson transform, in the following simply called  \emph{Poisson transform}, associated to the irreducible unitary $M$-representation $\tau$, the compatible\footnote{Recall that this means that $[\tau]$ occurs in $\sigma|_M$.} irreducible unitary $K$-representation $\sigma$, and an element $\lambda \in \aL^\ast_\C$ can be defined as the map 
\bq
\mathbb{P}^{\sigma}_{\tau,\lambda}:\mathrm{Hom}_M(V,W)\otimes_\C\D'(K/M,\V^{\mathcal{B}}_{\tau})\to \Gamma^\infty(\W_\sigma)\label{eq:Poisson}
\eq
obtained by continuous\footnote{As already pointed out for the principal series representations, one could alternatively extend the definition by duality to distributional sections, which would flip the sign of $\lambda$ due to the transformation behavior of the measure $d(kM)$ on $K/M$.} extension of the map on $\mathrm{Hom}_M(V,W)\otimes_\C\Gamma^\infty(\V_\tau^\B)$ given by
\bq
\overline{\mathbb{P}^{\sigma}_{\tau,\lambda}(h\otimes \omega)}(g):=\int_K e^{(-\lambda+\rho)(H^-(g^{-1}k))}\sigma(k^-(g^{-1}k))h(\bar \omega(k))\d k\label{eq:Poissoneq}
\eq
and extended linearly. To understand the significance of the Poisson transform, we need to make a few additional definitions. First of all, note that $\Gamma^\infty(\W_\sigma)$ carries a left regular $G$-action defined analogously as the left regular $G$-action \eqref{eq:GactionVsections} on $\Gamma^\infty(\V_\tau)$.
The associative complex algebra with unit consisting of the $G$-invariant differential operators on smooth sections of $\W_\sigma$ is then defined as
\[
D(G,\sigma):=\big\{D:\Gamma^\infty(\W_\sigma)\to \Gamma^\infty(\W_\sigma)\;\text{linear differential operator}\;|\;D(gs)=gD(s)\;\forall\; g\in G\big\}.
\]
This algebra is non-commutative in general. By \cite[Folg.\ 2.5]{olbrichdiss}, it is a finitely generated algebra with at most $N=\dim \mathrm{End}_M(W)-1$ generators.
\begin{example}The (positive) Bochner Laplacian $\Delta$ associated to the canonical connection on $\W_\sigma$ over  the Riemannian symmetric space $G/K$ is always an element of $D(G,\sigma)$. \label{ex:bochner}
\end{example} The notion of a common eigenspace of a commuting family of differential operators is generalized in the following
\begin{definition}[{Generalized common eigenspace, {\cite[Def.\ 2.1]{olbrichdiss}}}]\label{def:eigenspaces}Let $\chi:D(G,\sigma)\to \mathrm{End}(U)$ be a finite-dimensional complex representation of the algebra $D(G,\sigma)$ on a complex vector space $U$. We define the generalized common eigenspace $E_\chi(\mathcal W_\sigma)\subset \Gamma^\infty(\mathcal W_\sigma)$ of $D(G,\sigma)$ associated to $\chi$ by
\begin{align*}
E_\chi(\mathcal W_\sigma):&=\{\mathscr{H}(u):u\in U,\;\mathscr{H}\in \mathrm{Hom}_{D(G,\sigma)}(U,\Gamma^\infty(\W_\sigma))\}\\
&=\{\mathscr{H}(u):u\in U,\;\mathscr{H}:U\to\Gamma^\infty(\W_\sigma)\;\mathrm{linear},\; D\mathscr{H}(u)=\mathscr{H}(\chi(D)u)\;\forall\;D\in D(G,\sigma)\}.
\end{align*}
\end{definition}
It is straightforward to check that for equivalent representations $\chi,\chi'$ of $D(G,\sigma)$ the resulting generalized eigenspaces are equal, i.e.,  $E_\chi(\mathcal W_\sigma)=E_{\chi'}(\mathcal W_\sigma)$.

\begin{rem}\label{rem:elemobs}The space $E_\chi(\W_\sigma)$ has obviously the following two properties:\begin{itemize}[leftmargin=*]
\item If for some operator $D\in D(G,\sigma)$ the endomorphism $\chi(D)$ is a scalar multiple of the identity, all sections in $E_\chi(\W_\sigma)$ are eigensections of the operator $D$.
\item If $\chi$ is a $1$-dimensional representation, all the endomorphisms $\chi(D)$ are scalar multiples of the identity, so the statement above applies to every $D\in D(G,\sigma)$. Consequently, the generalized common eigenspace $E_\chi(\W_\sigma)$ consists entirely of eigensections of all operators in $D(G,\sigma)$ in this case.\end{itemize}
\end{rem}

\subsubsection{The relevant representations}
The representations of $D(G,\sigma)$ to which we shall apply the above constructions are those considered in the following
\begin{prop}[{{\cite[Def.\ 2.10 and (9)]{olbrichdiss}}}]\label{prop:existencechi}
For each $\lambda\in \aL_\C^\ast$ there is an algebra representation
\bq
\chi_{\tau,\lambda}:D(G,\sigma)\to \mathrm{End}(\mathrm{Hom}_M(V,W))\label{eq:chirep}
\eq
that is irreducible if the principal series representation $\pi^{\tau,\lambda}_\mathrm{comp}$ is irreducible, and if $\tau'$ is an irreducible $M$-representation equivalent to $\tau$, then $\chi_{\tau',\lambda}$ is equivalent to $\chi_{\tau,\lambda}$. \qed
\end{prop}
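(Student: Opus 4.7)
The plan is to define $\chi_{\tau,\lambda}(D)$, for each $D \in D(G,\sigma)$, by transporting the action of $D$ on the image of the Poisson transform back to an endomorphism of the finite-dimensional multiplicity space $\mathrm{Hom}_M(V,W)$. The mechanism is a Frobenius reciprocity that identifies the space of $G$-equivariant maps from the distributional principal series into $\Gamma^\infty(\W_\sigma)$ with $\mathrm{Hom}_M(V,W)$ itself, via $h \mapsto \mathbb{P}^{\sigma}_{\tau,\lambda}(h \otimes \cdot)$.

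First I would verify that $\mathbb{P}^{\sigma}_{\tau,\lambda}$ is $G$-equivariant when its domain is equipped with $\mathrm{id} \otimes \pi^{\tau,\lambda}_{\mathrm{comp}}$ and its target with the left regular $G$-action. This is a direct computation from the defining integral formula \eqref{eq:Poissoneq}, using the cocycle identities of the opposite Iwasawa decomposition exactly as in the proof of Lemma \ref{lem:ruellerep}, followed by continuous extension to distributional sections. Consequently each slice map $\varphi_h := \mathbb{P}^{\sigma}_{\tau,\lambda}(h \otimes \cdot)$ is a $G$-equivariant map from the principal series into $\Gamma^\infty(\W_\sigma)$. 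For any $D \in D(G,\sigma)$ and any $h \in \mathrm{Hom}_M(V,W)$, the composition $D \circ \varphi_h$ is again a $G$-intertwiner into $\Gamma^\infty(\W_\sigma)$, since $D$ commutes with the $G$-action. Invoking Frobenius reciprocity, there exists a unique $h' \in \mathrm{Hom}_M(V,W)$, depending linearly on $h$, such that $D \circ \varphi_h = \varphi_{h'}$; set $\chi_{\tau,\lambda}(D)(h) := h'$, so that
\[
D \circ \mathbb{P}^{\sigma}_{\tau,\lambda}(h \otimes \omega) = \mathbb{P}^{\sigma}_{\tau,\lambda}\bigl(\chi_{\tau,\lambda}(D)(h) \otimes \omega\bigr).
\]
The algebra homomorphism property $\chi_{\tau,\lambda}(D_1 D_2) = \chi_{\tau,\lambda}(D_1)\chi_{\tau,\lambda}(D_2)$ and $\chi_{\tau,\lambda}(\mathrm{id}) = \mathrm{id}$ follow from associativity of composition and the uniqueness in Frobenius reciprocity.

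For the irreducibility assertion, suppose $\pi^{\tau,\lambda}_{\mathrm{comp}}$ is irreducible. Then the image of $\mathbb{P}^{\sigma}_{\tau,\lambda}$ is, as a $G$-module, a direct sum of copies of this single irreducible, with multiplicity space $\mathrm{Hom}_M(V,W)$ on which $D(G,\sigma)$ acts via $\chi_{\tau,\lambda}$; irreducibility of $\chi_{\tau,\lambda}$ then follows from a Jacobson/double commutant type argument showing that the action of $D(G,\sigma)$ exhausts the commutant $\mathrm{End}(\mathrm{Hom}_M(V,W))$ of the irreducible factor. Independence under equivalence of $\tau$ is immediate: an intertwiner $T:V \to V'$ induces an isomorphism $\mathrm{Hom}_M(V',W) \to \mathrm{Hom}_M(V,W)$, $h' \mapsto h' \circ T$, together with an isomorphism of the associated boundary bundles (cf.\ Lemma \ref{lem:naturality}), and these jointly intertwine the Poisson transforms, hence intertwine $\chi_{\tau,\lambda}$ and $\chi_{\tau',\lambda}$.

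The main obstacle is the Frobenius reciprocity step in the distributional setting: one must verify both that the slice map $h \mapsto \varphi_h$ is injective, so that $\chi_{\tau,\lambda}(D)$ is well-defined, and that every $G$-equivariant map from the principal series into $\Gamma^\infty(\W_\sigma)$ is captured by a Poisson transform. Injectivity follows from boundary asymptotics of the Poisson kernel, while surjectivity is, at the level of $(\mathfrak{g},K)$-modules, a standard consequence of Frobenius reciprocity for induced representations; the care required to extend this continuously to the distributional category is precisely what is developed in \cite{olbrichdiss}.
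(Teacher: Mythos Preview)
Your approach is correct in outline but takes a genuinely different route from the paper's. The paper does not prove this proposition directly (it ends with $\qed$ and cites Olbrich), and the actual construction of $\chi_{\tau,\lambda}$ is deferred to Lemma~\ref{lem:smbaction}: there $\chi_{\tau,\lambda}(D)$ is defined as post-composition with $\mathrm{smb}_I(D)(\lambda)$, where $\mathrm{smb}_I$ is a purely algebraic symbol map built from the Poincar\'e--Birkhoff--Witt/Iwasawa decomposition $\mathcal{U}(\g)=(\mathcal{U}(\k)\cdot\mathcal{U}(\aL))\oplus(\mathcal{U}(\g)\cdot\nL^-)$ together with the $\rho$-shift $v_\rho$. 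No Poisson transform, no Frobenius reciprocity, no analysis enters; the construction is manifestly well-defined for every $\lambda$ and yields explicit polynomial dependence on $\lambda$.

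You instead \emph{define} $\chi_{\tau,\lambda}$ through the Poisson transform, so that the compatibility relation of Lemma~\ref{lem:Poissonequivariant} becomes a tautology rather than a theorem. This is conceptually clean, and the equivalence under $\tau\sim\tau'$ and the irreducibility via a double-commutant argument are reasonable. The cost is exactly the gap you identify: you must know that $h\mapsto\varphi_h$ is a bijection onto the space of $G$-intertwiners from the principal series to $\Gamma^\infty(\W_\sigma)$, uniformly in $\lambda$. Injectivity does hold (evaluate at $eK$ and project onto the $\sigma$-isotypic $K$-type), but surjectivity in the distributional category is non-trivial and is precisely the kind of statement the paper wants to \emph{deduce} later, not assume. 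More importantly, the explicit algebraic form $\chi_{\tau,\lambda}(D)(h)=\mathrm{smb}_I(D)(\lambda)\circ h$ is what drives all subsequent computations in the paper (Proposition~\ref{prop:laplacesmb}, Corollary~\ref{cor:ordinaryeigenspaces}, Lemma~\ref{lem:evensmb}, Theorem~\ref{thm:es2}); your definition does not supply that formula, so you would still need the $\mathrm{smb}_I$ construction afterwards to proceed.
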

The construction of $\chi_{\tau,\lambda}$ will be explicitly given in Lemma~\ref{lem:smbaction}. For the moment only its existence and not the precise form is important.
\begin{definition}\label{def:eigencomnot}
We shall use the notation
\begin{align}
\nonumber E_{[\tau],\lambda}(\W_\sigma)&:=E_{\chi_{\tau,\lambda}}(\W_\sigma),\\
{^\Gamma}E_{[\tau],\lambda}(\W_\sigma)&:=\{s\in E_{[\tau],\lambda}(\W_\sigma): \gamma s=s\;\forall\; \gamma\in \Gamma\},\label{eq:not3950}\\
E_{[\tau],\lambda}(\Gamma\backslash\W_\sigma)&:=\big\{s\in \Gamma^\infty(\Gamma\backslash\W_\sigma) \,|\; s\; \mathrm{is\; induced\; by\;} \tilde s\in {^\Gamma}E_{[\tau],\lambda}(\W_\sigma)\big\}.\label{eq:EpmGamma}
\end{align}
\end{definition}
A tight relation between generalized common eigenspaces associated to the representations $\chi_{\tau,\lambda}$ and the Poisson transform is given by the following result.
\begin{lem}[{{\cite[Lemma.\ 3.3]{olbrichdiss}}}]\label{lem:Poissonequivariant}
For every $\lambda\in \aL_\C^\ast$ one has the relation
\begin{multline*}
\mathbb{P}^{\sigma}_{\tau,\lambda}\big(\chi_{\tau,\lambda}(D)(h)\otimes \pi^{\tau,\lambda}_\mathrm{comp}(g)(\omega)\big)=D(g\, \mathbb{P}^{\sigma}_{\tau,\lambda}(h\otimes \omega))\\
 \forall\; h\in \mathrm{Hom}_M(V,W),\;\omega\in \D'(K/M,\V^{\mathcal{B}}_{\tau}),\;D\in D(G,\sigma),\;g\in G.
\end{multline*}
In particular, the image of the Poisson transform $\mathbb{P}^{\sigma}_{\tau,\lambda}$ is contained in $E_{[\tau],\lambda}(\W_\sigma)$. 
\qed
\end{lem}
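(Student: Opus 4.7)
The proof naturally splits into two parts: first showing $G$-equivariance of $\mathbb{P}^\sigma_{\tau,\lambda}$ in its second argument, then deducing how $D\in D(G,\sigma)$ acts through it. The second part is where the representation $\chi_{\tau,\lambda}$ gets pinned down.

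For the $G$-equivariance, fix $h\in\mathrm{Hom}_M(V,W)$ and consider the map $\mathcal{I}_h:\omega\mapsto \mathbb{P}^\sigma_{\tau,\lambda}(h\otimes\omega)$. By continuity and density of smooth sections, it suffices to verify on $\omega\in\Gamma^\infty(\V^\B_\tau)$ that $\mathcal{I}_h(\pi^{\tau,\lambda}_\mathrm{comp}(g)\omega)=g\,\mathcal{I}_h(\omega)$. Starting from \eqref{eq:Poissoneq} and inserting the compact-picture formula \eqref{eq:compactp} together with \eqref{eq:Gactionsectionslanglands}, one obtains an integral over $K$ whose integrand involves $e^{(-\lambda+\rho)(H^-(g'^{-1}k))}\cdot e^{(\lambda+\rho)(H^-(g^{-1}k))}$ and $\bar\omega(k^-(g^{-1}k))$. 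The substitution $k\mapsto gk$, combined with the cocycle identities of the Iwasawa decomposition and the transformation formula \eqref{eq:intstatement} for $d(kM)$ under left $G$-translation (which produces the compensating factor $e^{2\rho(H^-)}$), converts this into $\overline{\mathcal{I}_h(\omega)}(g^{-1}g')$. This is the direct vector-valued analogue of the standard Helgason-type computation for scalar Poisson kernels.

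For the $D$-part, observe that Part 1 exhibits $\mathcal{I}_h$ as a $G$-equivariant linear map from the principal series representation $(\D'(K/M,\V^\B_\tau),\pi^{\tau,\lambda}_\mathrm{comp})$ into $(\Gamma^\infty(\W_\sigma),L)$. Since $D\in D(G,\sigma)$ commutes by definition with the left regular $G$-action on $\Gamma^\infty(\W_\sigma)$, the composition $D\circ \mathcal{I}_h$ is again a $G$-equivariant map of the same source and target. By the Frobenius-type description of intertwiners from principal series into $\Gamma^\infty(\W_\sigma)$ that underlies Olbrich's setup (such intertwiners are parametrized by $\mathrm{Hom}_M(V,W)$ via $h'\mapsto \mathcal{I}_{h'}$), there exists a unique element, which one then denotes $\chi_{\tau,\lambda}(D)(h)\in\mathrm{Hom}_M(V,W)$, satisfying
\[
D\circ \mathcal{I}_h=\mathcal{I}_{\chi_{\tau,\lambda}(D)(h)}.
\]
Evaluating at $g=e$ this reads $D(\mathbb{P}^\sigma_{\tau,\lambda}(h\otimes\omega))=\mathbb{P}^\sigma_{\tau,\lambda}(\chi_{\tau,\lambda}(D)(h)\otimes\omega)$, which combined with Part 1 yields the full identity of the lemma. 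The fact that $D\mapsto \chi_{\tau,\lambda}(D)$ is an algebra homomorphism is automatic: $(D_1D_2)\circ \mathcal{I}_h=D_1\circ(D_2\circ \mathcal{I}_h)=D_1\circ \mathcal{I}_{\chi_{\tau,\lambda}(D_2)(h)}=\mathcal{I}_{\chi_{\tau,\lambda}(D_1)\chi_{\tau,\lambda}(D_2)(h)}$.

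The main obstacle is the parametrization claim used in Part 2, namely that every $G$-intertwiner from the principal series into $\Gamma^\infty(\W_\sigma)$ is of the form $\mathcal{I}_{h'}$ for a unique $h'\in\mathrm{Hom}_M(V,W)$. Injectivity $h'\mapsto \mathcal{I}_{h'}$ follows by evaluating the integral at $g=e$: one obtains a $K$-average that reduces, via the fact that $\bar\omega$ ranges over all $M$-equivariant smooth $V$-valued functions on $K$, to the statement that $h'=0$ whenever $\mathcal{I}_{h'}=0$. Surjectivity is the vector-valued Frobenius reciprocity built into the definition of $\chi_{\tau,\lambda}$ in Proposition \ref{prop:existencechi} and made explicit in Lemma \ref{lem:smbaction}, so at this stage one invokes that construction. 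Finally, the ``in particular'' assertion is immediate from the main identity with $g=e$: the linear map $\mathscr{H}:\mathrm{Hom}_M(V,W)\to \Gamma^\infty(\W_\sigma)$, $h\mapsto \mathbb{P}^\sigma_{\tau,\lambda}(h\otimes\omega)$, satisfies $D\mathscr{H}(h)=\mathscr{H}(\chi_{\tau,\lambda}(D)(h))$ for all $D\in D(G,\sigma)$, so by Definition \ref{def:eigenspaces} its range lies in $E_{\chi_{\tau,\lambda}}(\W_\sigma)=E_{[\tau],\lambda}(\W_\sigma)$.
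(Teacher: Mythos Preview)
The paper does not supply a proof of this lemma at all: it is stated with a citation to Olbrich's dissertation and closed with a bare \qed. So there is no in-paper argument to compare against; one can only assess your proposal on its own merits and against what Olbrich's setup (as summarized in Sections~\ref{sec:studysmb}--\ref{sec:decompuniv}) suggests the intended proof to be.

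Your Part~1 (the $G$-equivariance $\mathcal{I}_h\circ\pi^{\tau,\lambda}_{\mathrm{comp}}(g)=g\circ\mathcal{I}_h$) is the standard Poisson-kernel computation and is fine in outline; the cocycle identities for $H^-$ and $k^-$ together with the Jacobian of the $G$-action on $K/M$ do exactly what you say.

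Part~2 is where there is a genuine gap. You argue: $D\circ\mathcal{I}_h$ is again a $G$-intertwiner, hence of the form $\mathcal{I}_{h'}$ for a unique $h'$, and you then \emph{denote} this $h'$ by $\chi_{\tau,\lambda}(D)(h)$. But $\chi_{\tau,\lambda}$ is already defined in Proposition~\ref{prop:existencechi}, and the content of the lemma is precisely that \emph{this specific} $\chi_{\tau,\lambda}$ governs the intertwining relation. Your argument establishes that \emph{some} representation of $D(G,\sigma)$ on $\mathrm{Hom}_M(V,W)$ does, provided the Frobenius parametrization holds; it does not identify that representation with the one the paper has in hand. Your attempt to close this by invoking Lemma~\ref{lem:smbaction} is misplaced on two counts: first, Lemma~\ref{lem:smbaction} asserts the explicit $\mathrm{smb}_I$ formula for $\chi_{\tau,\lambda}$, not the surjectivity of $h\mapsto\mathcal{I}_h$ onto the space of $G$-intertwiners; second, even granting surjectivity, you would still owe the verification that your abstract $h'$ equals $\mathrm{smb}_I(D)(\lambda)\circ h$.

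The direct route, and the one that matches Olbrich's framework as laid out in Section~\ref{sec:decompuniv}, bypasses Frobenius reciprocity entirely. Write $D=\mathscr{D}(u)$ with $u\in\mathcal{U}(\g)^K$ and decompose $u$ via \eqref{eq:birkhoffwitt} as $u=p_-(u)+u'$ with $u'\in\mathcal{U}(\g)\cdot\nL^-$. Applying $D$ to the Poisson integral \eqref{eq:Poissoneq} by differentiating in $g$, the $u'$-part annihilates the integrand because the kernel $g\mapsto e^{(-\lambda+\rho)(H^-(g^{-1}k))}\sigma(k^-(g^{-1}k))$ is right-$N^-$-invariant, while the $p_-(u)$-part acts through $\mathcal{U}(\aL)$ (producing the scalar $(-\lambda+\rho)(\cdot)$, hence after the $v_\rho$-shift exactly the evaluation at $\lambda$) and through $\sigma\circ\mathrm{opp}$ on the $\mathcal{U}(\k)$-factor. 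This yields directly
\[
D\,\overline{\mathbb{P}^{\sigma}_{\tau,\lambda}(h\otimes\omega)}(g)=\overline{\mathbb{P}^{\sigma}_{\tau,\lambda}\big(\mathrm{smb}_I(D)(\lambda)\circ h\otimes\omega\big)}(g),
\]
which is the lemma with $g=e$ and, by Lemma~\ref{lem:smbaction}, with the paper's $\chi_{\tau,\lambda}$. Combined with your Part~1 this gives the full statement. In short: the missing step is the concrete action of $D$ on the Poisson kernel via the Iwasawa PBW decomposition, not an abstract intertwiner count.
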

As a direct consequence we get a statement complementing Lemma \ref{lem:ruellerep}.
\begin{cor}\label{cor:poisson1}
For each element $\lambda\in \aL_\C^\ast$, the Poisson transform $\mathbb{P}^{\sigma}_{\tau,\lambda}$ induces maps
\begin{align}
\mathcal{P}^{\sigma}_{\tau,\lambda}:\mathrm{Hom}_M(V,W)\otimes_\C\D'(K/M,\V^{\mathcal{B}}_{\tau})&\longrightarrow E_{[\tau],\lambda}(\W_\sigma)\label{eq:poissonnoGamma}\\
{^\Gamma}\mathcal{P}^{\sigma}_{\tau,\lambda}:\mathrm{Hom}_M(V,W)\otimes_\C{^\Gamma}\mathrm{H}^{-\infty}_{\tau,\lambda}&\longrightarrow {^\Gamma} E_{[\tau],\lambda}(\W_\sigma)\cong E_{[\tau],\lambda}(\Gamma\backslash\W_\sigma)\label{eq:poissonGamma}
\end{align}
which we also call Poisson transforms. Depending on the context, ${^\Gamma}\mathcal{P}^{\sigma}_{\tau,\lambda}$ will be regarded as a map with codomain ${^\Gamma} E_{[\tau],\lambda}(\W_\sigma)$ or $E_{[\tau],\lambda}(\Gamma\backslash\W_\sigma)$ in the following.  \qed
\end{cor}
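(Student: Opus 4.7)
The plan is to derive both asserted maps essentially directly from the intertwining identity in Lemma \ref{lem:Poissonequivariant}, supplemented by minor bookkeeping regarding $\Gamma$-invariance and descent to the quotient.

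For the first map \eqref{eq:poissonnoGamma}, the image $\mathbb{P}^{\sigma}_{\tau,\lambda}(\mathrm{Hom}_M(V,W)\otimes_\C\D'(K/M,\V^{\mathcal{B}}_{\tau}))$ is already known to lie in $E_{[\tau],\lambda}(\W_\sigma)$ by the concluding sentence of Lemma \ref{lem:Poissonequivariant}. To exhibit this explicitly in the framework of Definition \ref{def:eigenspaces}, set $U:=\mathrm{Hom}_M(V,W)$ and, for each fixed $\omega\in \D'(K/M,\V^{\mathcal{B}}_{\tau})$, define the linear map $\mathscr{H}_\omega:U\to \Gamma^\infty(\W_\sigma)$ by $h\mapsto \mathbb{P}^{\sigma}_{\tau,\lambda}(h\otimes \omega)$. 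Evaluating Lemma \ref{lem:Poissonequivariant} at $g=e$ gives $D\mathscr{H}_\omega(h)=\mathscr{H}_\omega(\chi_{\tau,\lambda}(D)h)$ for all $D\in D(G,\sigma)$, so that $\mathscr{H}_\omega\in \mathrm{Hom}_{D(G,\sigma)}(U,\Gamma^\infty(\W_\sigma))$. Hence every section $\mathbb{P}^{\sigma}_{\tau,\lambda}(h\otimes \omega)=\mathscr{H}_\omega(h)$ is of the form admitted by Definition \ref{def:eigenspaces}, and $\mathcal{P}^{\sigma}_{\tau,\lambda}$ is simply the corestriction of $\mathbb{P}^{\sigma}_{\tau,\lambda}$ to the target $E_{[\tau],\lambda}(\W_\sigma)$.

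For the second map \eqref{eq:poissonGamma}, the key step is to verify $\Gamma$-invariance of the image. Fix $h\in \mathrm{Hom}_M(V,W)$ and $\omega\in {^\Gamma}\mathrm{H}^{-\infty}_{\tau,\lambda}$. Applying Lemma \ref{lem:Poissonequivariant} with $D$ the identity operator and $g=\gamma\in \Gamma$ yields
\[
\gamma\, \mathbb{P}^{\sigma}_{\tau,\lambda}(h\otimes \omega)=\mathbb{P}^{\sigma}_{\tau,\lambda}\big(h\otimes \pi^{\tau,\lambda}_\mathrm{comp}(\gamma)\omega\big)=\mathbb{P}^{\sigma}_{\tau,\lambda}(h\otimes \omega),
\]
the last equality using the $\Gamma$-invariance of $\omega$. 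Linear extension to the full tensor product $\mathrm{Hom}_M(V,W)\otimes_\C{^\Gamma}\mathrm{H}^{-\infty}_{\tau,\lambda}$ then shows that the image is contained in ${^\Gamma}E_{[\tau],\lambda}(\W_\sigma)$.

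The final identification ${^\Gamma}E_{[\tau],\lambda}(\W_\sigma)\cong E_{[\tau],\lambda}(\gam\W_\sigma)$ follows from the standard correspondence between $\Gamma$-invariant smooth sections of the homogeneous bundle $\W_\sigma$ and smooth sections of the induced bundle $\gam\W_\sigma$, together with the observation that every $D\in D(G,\sigma)$ is $G$-invariant, hence in particular $\Gamma$-invariant, and therefore descends to a well-defined differential operator on $\gam\W_\sigma$ that is compatible with the generalized common eigenspace structure of Definition \ref{def:eigenspaces}: the defining intertwining condition on the linear map $\mathscr{H}$ is purely algebraic and is preserved under the descent. In summary, the entire statement is a formal consequence of Lemma \ref{lem:Poissonequivariant}; no substantial obstacle arises, the main effort being to unwind the definitions and to match the intertwining identity with Definition \ref{def:eigenspaces}.
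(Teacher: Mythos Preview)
Your proof is correct and follows exactly the approach the paper intends: the corollary is marked with \qed\ in the paper precisely because it is an immediate consequence of Lemma \ref{lem:Poissonequivariant}, and you have simply spelled out the details (specializing to $g=e$ for the first map and to $D=\mathrm{id}$, $g=\gamma$ for the second) that the paper leaves implicit.
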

A well-known basic fact in representation theory is
\begin{lem}\label{lem:homhom}
The dimension of the complex vector space $\mathrm{Hom}_M(V,W)$ is given by \[\dim_\C \mathrm{Hom}_M(V,W)=[\sigma|_M:\tau].\]
\end{lem}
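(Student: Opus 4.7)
The plan is to apply the standard decomposition argument together with Schur's lemma. First, I would observe that since $M$ is compact and $W$ is finite-dimensional, the restriction $\sigma|_M:M\to \mathrm{End}(W)$ decomposes as a finite direct sum of irreducibles: picking a complete set of representatives $\{\tau_i:M\to \mathrm{End}(V_i)\}$ of pairwise inequivalent irreducible $M$-representations that actually occur in $\sigma|_M$, there is an $M$-equivariant isomorphism
\[
W \;\cong\; \bigoplus_i V_i^{\oplus m_i}, \qquad m_i := [\sigma|_M : \tau_i].
\]
By definition of the multiplicity notation, $[\sigma|_M:\tau]$ equals $m_{i_0}$, where $i_0$ is the unique index (if any) with $[\tau_{i_0}]=[\tau]$; if no such $i_0$ exists, $[\sigma|_M:\tau]=0$.

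Next, I would use the functoriality of $\mathrm{Hom}_M$ together with this decomposition to obtain
\[
\mathrm{Hom}_M(V,W) \;\cong\; \bigoplus_i \mathrm{Hom}_M(V,V_i)^{\oplus m_i}.
\]
Then Schur's lemma for irreducible unitary (hence completely reducible) representations of the compact group $M$ gives $\mathrm{Hom}_M(V,V_i)\cong \C$ when $[\tau]=[\tau_i]$ (any nonzero intertwiner is an isomorphism of irreducibles, and two such intertwiners differ by a scalar) and $\mathrm{Hom}_M(V,V_i)=0$ otherwise. Putting these pieces together one gets
\[
\dim_\C \mathrm{Hom}_M(V,W) \;=\; m_{i_0} \;=\; [\sigma|_M:\tau],
\]
with both sides being zero if $[\tau]$ does not appear in $\sigma|_M$.

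There is really no hard step here; the only subtlety worth flagging is that the decomposition of $W|_M$ relies on complete reducibility of finite-dimensional representations of the compact Lie group $M$ (which follows either from unitarity of $\sigma|_M$ with respect to the given Hermitian structure, or from Weyl's theorem / averaging), and the Schur's lemma step uses that $V$ and the $V_i$ are irreducible. Both hypotheses are explicitly in force in the setup of Section~\ref{sec:fbpoisson}, so the statement follows immediately.
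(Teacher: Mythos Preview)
Your proof is correct and is the standard argument via decomposition into irreducibles and Schur's lemma. The paper does not actually prove this lemma: it introduces it with the phrase ``A well-known basic fact in representation theory is'' and states the result without further justification, so there is no paper proof to compare against beyond the implicit appeal to exactly the kind of standard reasoning you have written out.
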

\subsection{Natural pushforward maps}\label{sec:pushfwd}
Let $\Pi:G/M\to G/K$ and ${^\Gamma}\Pi:\gam G/M\to \gam G/K$ be the canonical projections. Every homomorphism $h\in \mathrm{Hom}_M(V,W)$ induces canonically vector bundle morphisms 
\begin{equation}\label{eq:homh}
h_\Pi:\Abb{ \V_\tau}{\W_\sigma,}{{[g,v]}}{{[g,h(v)],}} \quad\qquad ^\Gamma h_\Pi:\Abb{\gam \V_\tau}{ \gam \W_\sigma,}{\Gamma [g,v]}{\Gamma [g,h(v)].}
\end{equation}
\begin{definition}[Natural pullbacks and pushforwards]\label{def:pushforwards}Given $h\in \mathrm{Hom}_M(V,W)$, let $h_\Pi:\V_\tau \to \W_\sigma$ and  ${^\Gamma}h_{\Pi}:\gam\V_\tau \to \gam\W_\sigma$ be the canonically induced vector bundle morphisms as described above.\begin{itemize}[leftmargin=*]
\item For a section $t\in \Gamma^\infty(\W^\ast_\sigma)$, corresponding to a right-$K$-equivariant smooth function $\bar t: G\to W^\ast$, we define the \emph{pullback of $t$ along $h_\Pi$}  as the section $h_\Pi^\ast (t)\in\Gamma^\infty(\V^\ast_\tau)$ corresponding to the right-$M$-equivariant  function 
$
h^\ast\circ \bar t: G\to V^\ast,
$ 
where $h^\ast\in\mathrm{Hom}_M(W^\ast,V^\ast)$ is the adjoint of $h$.
\item Analogously, we define $^ \Gamma h_\Pi^\ast: \Gamma^\infty(\gam\W_\sigma^\ast)\to \Gamma(\gam\V_\tau^\ast)$.
\item By duality, we define the pushforward maps
\begin{align}\begin{split}
{h_\Pi}_\ast :  \D'(G/M,\V_\tau)&\to \D'(G/K,\W_\sigma),\\
{{^\Gamma}h_{\Pi}}_\ast: \D'(\gam G/M,\gam\V_\tau)&\to\D'(\gam G/K,\gam \W_\sigma).\label{eq:pushfwd}\end{split}
\end{align}
\end{itemize}

\end{definition}
With these preparations we can prove the main result of this section.
\begin{thm}[Pushforwards of first band resonant states are generalized eigensections]\label{thm:main}For each $\lambda\in \aL_\C^\ast$ and $h\in \mathrm{Hom}_M(V,W)$, the natural pushforward ${{^\Gamma}h_{\Pi}}_\ast$ bi-restricts to a linear map
\bq
{{^\Gamma}h_{\Pi}}_\ast: 
\mathrm{Res}^{0}_{\,\gam\V_{\tau}}(\lambda(H_0))\longrightarrow E_{[\tau],-\lambda-\rho}(\Gamma\backslash\W_\sigma)\label{eq:Possnat3}
\eq
which is injective iff the Poisson transform ${^\Gamma}\mathcal{P}^{\sigma}_{\tau,-\lambda-\rho}(h\otimes_\C\cdot)$ is injective. Moreover, the obtained linear map
\begin{align}\begin{split}
\mathrm{Hom}_M(V,W)\otimes_\C\mathrm{Res}^{0}_{\,\gam\V_{\tau}}(\lambda(H_0))&\longrightarrow E_{[\tau],-\lambda-\rho}(\Gamma\backslash\W_\sigma),\\
h\otimes s&\longmapsto {{^\Gamma}h_{\Pi}}_\ast(s),\label{eq:Possnat4444}\end{split}
\end{align}
is surjective iff the Poisson transform ${^\Gamma}\mathcal{P}^{\sigma}_{\tau,-\lambda-\rho}$ is surjective.
\end{thm}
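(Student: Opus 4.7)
The plan is to reduce the theorem to a single identity: on $\mathcal{R}(\lambda)$, the natural pushforward $h_{\Pi*}$ coincides with the Olbrich-Poisson transform applied to $h$ and the boundary distribution $Q_\lambda(\cdot)$ from Lemma \ref{lem:ruellerep}. More precisely, I will prove that for every $s \in \mathcal{R}(\lambda)$,
\bqn
h_{\Pi*}(s) \;=\; \mathbb{P}^{\sigma}_{\tau,-\lambda-\rho}\bigl(h \otimes Q_\lambda(s)\bigr) \qquad \text{in }\;\Gamma^\infty(\W_\sigma).
\eqn
Once this is established, the theorem follows immediately: $Q_\lambda$ is a linear topological isomorphism (Lemma \ref{lem:ruellerep}), so injectivity statements transfer; the right-hand side lies in $E_{[\tau],-\lambda-\rho}(\W_\sigma)$ by Lemma \ref{lem:Poissonequivariant}; and restricting to $\Gamma$-invariants and passing through the identifications in Definition \ref{def:eigencomnot} and Remark \ref{rem:ident23590332532636} yields the bi-restriction \eqref{eq:Possnat3} as well as the stated equivalences of injectivity and surjectivity with those of ${^\Gamma}\mathcal{P}^{\sigma}_{\tau,-\lambda-\rho}$.

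To prove the identity, I will first compute $h_{\Pi*}(s)$ for smooth $s \in \mathcal{R}(\lambda) \cap \Gamma^\infty(\V_\tau)$ by the duality Definition \ref{def:pushforwards} and the integration formula \eqref{eq:intstatement} of Lemma \ref{lem:factorizationGM}. A direct calculation using the right-$K$-equivariance of sections of $\W_\sigma^\ast$ and the change of variables $F^{-1}(gK,kM) = gk^-(g^{-1}k)M$ gives the explicit representative
\bqn
\overline{h_{\Pi*}s}(g) \;=\; \int_{K/M} e^{2\rho(H^-(g^{-1}k))}\,\sigma(k^-(g^{-1}k))\,h\bigl(\bar s(g\,k^-(g^{-1}k))\bigr)\,d(kM).
\eqn
Next, for $s \in \mathcal{R}(\lambda)$ the argument in the proof of Lemma \ref{lem:ruellerep} (combined with \eqref{eq:vanishing} and the normalization $\bar\Phi|_K \equiv 1$) yields the boundary representation $\bar s(g) = e^{-\lambda(H^-(g))}\bar\omega(k^-(g))$ with $\omega = Q_\lambda(s)$. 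Substituting $g\,k^-(g^{-1}k)$ into this and using the two opposite-Iwasawa identities
\bqn
H^-\!\bigl(g\,k^-(g^{-1}k)\bigr) = -H^-(g^{-1}k), \qquad k^-\!\bigl(g\,k^-(g^{-1}k)\bigr) = k,
\eqn
which follow from a short calculation based on the fact that $A$ normalizes $N^-$, the integrand collapses to exactly the integrand of $\mathbb{P}^{\sigma}_{\tau,-\lambda-\rho}(h\otimes\omega)$ as defined in \eqref{eq:Poissoneq}, using that $-(-\lambda-\rho)+\rho = \lambda + 2\rho$ and that the integrand is $M$-invariant.

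Having the identity on smooth sections, I will extend it to all of $\mathcal{R}(\lambda)$ by continuity: by Corollary \ref{cor:dense} the smooth subspace is dense, while both $h_{\Pi*}$ and $\mathbb{P}^{\sigma}_{\tau,-\lambda-\rho}(h\otimes\cdot)\circ Q_\lambda$ are continuous in the appropriate weak topologies. Restriction to $\Gamma$-invariants then gives a commutative diagram identifying ${{^\Gamma}h_\Pi}_* = {^\Gamma}\mathcal{P}^\sigma_{\tau,-\lambda-\rho}(h\otimes {^\Gamma}Q_\lambda(\cdot))$ on $\mathrm{Res}^0_{\gam\V_\tau}(\lambda(H_0))$, from which all four claims of the theorem follow by pure linear algebra, since $\mathrm{id}_{\mathrm{Hom}_M(V,W)}\otimes {^\Gamma}Q_\lambda$ is a linear isomorphism.

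The main obstacle will be the explicit computation in Step 1, and in particular keeping track of the various $\sigma$-factors, modular factors $e^{2\rho(H^-(g^{-1}k))}$ from Lemma \ref{lem:factorizationGM}, and sign conventions in the Olbrich principal series parameter $-\lambda-\rho$ versus the geometric $\lambda$ that arises from the resonance equation. Everything else is formal once the identity is in place.
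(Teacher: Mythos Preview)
Your proposal is correct and follows essentially the same approach as the paper: both reduce the theorem to the single identity $h_{\Pi*}(s) = \mathcal{P}^{\sigma}_{\tau,-\lambda-\rho}(h\otimes Q_\lambda(s))$ on $\mathcal{R}(\lambda)$, prove it for smooth sections using Lemma~\ref{lem:factorizationGM} and the same two opposite-Iwasawa identities $H^-(g\,k^-(g^{-1}k)) = -H^-(g^{-1}k)$ and $k^-(g\,k^-(g^{-1}k)) = k$, and then extend by density via Corollary~\ref{cor:dense}. The only cosmetic difference is that you compute the pushforward explicitly and match it to the Poisson integral, whereas the paper starts from the Poisson side paired with a test section and works toward the pushforward.
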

\begin{rem}We will see in Theorem \ref{thm:main2} that all generalized eigensections on $\gam G/K$ are smooth, so Theorem \ref{thm:main} implies that the natural pushforwards of first band resonant states on $\gam\V_{\tau}$ are smooth sections of $\Gamma\backslash\W_\sigma$.
\end{rem}
\begin{proof}[Proof of Theorem \ref{thm:main}]
By Lemma \ref{lem:ruellerep}  and Corollary \ref{cor:poisson1}, it suffices to prove
\bqn
{^\Gamma}\mathcal{P}^{\sigma}_{\tau, -\lambda-\rho }(h\otimes {^\Gamma}Q_{\lambda}(s))={{^\Gamma}h_{\Pi}}_\ast(s)\qquad \forall\; s\in \mathrm{Res}^{0}_{\,\gam\V_{\tau}}(\lambda(H_0)).
\eqn
This identity is just the restriction of the statement
\bq
\mathcal{P}^{\sigma}_{\tau, -\lambda-\rho }(h\otimes Q_{\lambda}(s))={h_\Pi}_\ast(s)\qquad \forall\; s\in \mathcal{R}(\lambda)\label{eq:Possnat1}\\
\eq
 to the $\Gamma$-invariant elements in $\mathcal{R}(\lambda)$, therefore we only need to prove \eqref{eq:Possnat1}. To this end, we recall Corollary \ref{cor:dense} which says that $\Gamma^\infty(\V_\tau)\cap\mathcal{R}(\lambda)$ is dense in $\mathcal{R}(\lambda)$. As the maps $\mathcal{P}^{\sigma}_{\tau, -\lambda-\rho }$, $Q_{\lambda}$, and ${h_\Pi}_\ast$ are continuous, \eqref{eq:Possnat1} is equivalent to
\bq
\mathcal{P}^{\sigma}_{\tau, -\lambda-\rho }(h\otimes Q_{\lambda}(s))={h_\Pi}_\ast(s)\qquad \forall\; s\in \Gamma^\infty(\V_\tau)\cap\mathcal{R}(\lambda).\label{eq:Possnat4}
\eq
It remains to prove \eqref{eq:Possnat4}. Let $s\in \Gamma^\infty(\V_\tau)\cap\mathcal{R}(\lambda)$ and choose a test section $t\in \Gamma^\infty_c(\W_\sigma^\ast)$. With the defining relations  \eqref{eq:Poissoneq},  \eqref{eq:composite2}, and \eqref{eq:defphi}, we get
\begin{align*}
\mathcal{P}^{\sigma}_{\tau, -\lambda-\rho }(h\otimes Q_{\lambda}(s))(t)&=\int_{G/K}\eklm{\mathcal{P}^{\sigma}_{\tau, -\lambda-\rho }(h\otimes Q_{\lambda}(s))(gK),t(gK)}\d(gK)\\
&=\int_{G}\int_{K}e^{(\lambda+ 2\rho)(H^-(g^{-1}k))} \eklm{\sigma(k^-(g^{-1}k))h(\bar s(k)),\bar t(g)}\d k\d g.
\end{align*}
Here we used that $\overline{Q_{\lambda}(s)}(k)=s(k)$  for all $k\in K$. As $Q_{\lambda}(s)\in \mathcal{R}(0)$, we have by \eqref{eq:vanishing} and the identity $k^-(gk^-(g^{-1}k))=k$, which one checks easily using the opposite Iwasawa decomposition, the formula
\[
\bar s(k)=\bar \Phi(gk^-(g^{-1}k))^{-\lambda(H_0)}\bar s(gk^-(g^{-1}k))\qquad \forall\; k\in K.
\]
On the other hand, with the identity $H^-(gk^-(g^{-1}k))=-H^-(g^{-1}k)$ that one checks again easily using the opposite Iwasawa decomposition, we compute
\[
\bar \Phi(gk^-(g^{-1}k))^{-\lambda(H_0)}=e^{\lambda(H^-(gk^-(g^{-1}k)))}=e^{-\lambda(H^-(g^{-1}k))}.
\]
This yields
\begin{align*}
\mathcal{P}^{\sigma}_{\tau, -\lambda-\rho }(h\otimes Q_{\lambda}(s))(t)&=\int_{G}\int_{K}e^{2\rho(H^-(g^{-1}k))}\eklm{\sigma(k^-(g^{-1}k))h(\bar s(gk^-(g^{-1}k))),\bar t(g)} \d k\d g.
\end{align*}
In order to use Lemma \ref{lem:factorizationGM}, we employ the trivial identities
\begin{align*}
\sigma(k^-(g^{-1}k))&=\sigma(g^{-1}gk^-(g^{-1}k)),\\
\bar t(g)&= t(gK)=t(gk^-(g^{-1}k)K)=t(\Pi(gk^-(g^{-1}k)M)).
\end{align*}
Applying the Lemma, we finally get
  \begin{align*}
\mathcal{P}^{\sigma}_{\tau, -\lambda-\rho }(h\otimes Q_{\lambda}(s))(t)&=\int_{G/M} \eklm{h(s(gM)),t(\Pi(gM)))}\d (gM)\\
&=\int_{G/M} \eklm{s(gM),h^\ast(t(\Pi(gM)))}\d (gM)\\
&\equiv\int_{G/M} \eklm{s(gM),h_{\Pi}^\ast(t)(gM)}\d (gM)\qquad \equiv\; {h_{\Pi}}_\ast(s)(t).
\end{align*}
\end{proof}
\subsection{Bijectivity and injectivity of the Poisson transform}\label{sec:regres}
Theorem \ref{thm:main} gives us a strong motivation to study the injectivity and surjectivity of the Poisson transform, depending on the parameter $\lambda\in \C$. Considering the relevance of this problem we shall introduce a notation that is adapted to it. When dealing with bijectivity questions it is convenient to enlarge the domain of the Poisson transform $\mathcal{P}^{\sigma}_{\tau,\lambda}$, as defined in \eqref{eq:poissonnoGamma}. Indeed, $\mathcal{P}^{\sigma}_{\tau,\lambda}$  extends by continuity to a map 
\bq
\mathscr{P}^{\sigma}_{\tau,\lambda}:\mathrm{Hom}_M(V,W)\otimes_\C\Gamma^{-\omega}(K/M,\V^{\mathcal{B}}_{\tau})\longrightarrow E_{[\tau],\lambda}(\W_\sigma)\label{eq:PoissonHF}
\eq
acting on hyperfunction-sections of $\V^{\mathcal{B}}_{\tau}$ instead of distributional sections. Hereby it is not important to know how the space $\Gamma^{-\omega}(K/M,\V^{\mathcal{B}}_{\tau})$ is technically defined. It suffices for our purposes to know that $\Gamma^{-\omega}(K/M,\V^{\mathcal{B}}_{\tau})$ is a topological vector space containing the space $\D'(K/M,\V^{\mathcal{B}}_{\tau})$ of distributional sections, and therefore also the space $\Gamma^\infty(\V^{\mathcal{B}}_{\tau})$ of smooth sections, as a dense subset. It is then clear that the principal series representation $\pi^{\tau,\lambda}_\mathrm{comp}$ extends by continuity to $\Gamma^{-\omega}(K/M,\V^{\mathcal{B}}_{\tau})$, and that $\mathscr{P}^{\sigma}_{\tau,\lambda}$ has the analogous equivariance property as stated in Lemma \ref{lem:Poissonequivariant}. Consequently, we get an induced extended Poisson transform 
\bq
{^\Gamma}\mathscr{P}^{\sigma}_{\tau,\lambda}: \mathrm{Hom}_M(V,W)\otimes_\C{^\Gamma}\mathrm{H}^{-\omega}_{\tau,\lambda}\longrightarrow {^\Gamma} E_{[\tau],\lambda}(\W_\sigma)\cong E_{[\tau],\lambda}(\Gamma\backslash\W_\sigma),\label{eq:PoissonHFGamma}
\eq
where ${^\Gamma}\mathrm{H}^{-\omega}_{\tau,\lambda}$ denotes the $\Gamma$-invariant hyperfunction-vectors in the principal series representation $\pi^{\tau,\lambda}_\mathrm{comp}$. Note that the extended transforms \eqref{eq:PoissonHF}, \eqref{eq:PoissonHFGamma} still have the same codomains as the original Poisson transforms  $\mathcal{P}^{\sigma}_{\tau,\lambda}$, ${^\Gamma}\mathcal{P}^{\sigma}_{\tau,\lambda}$. The point of the extension to hyperfunction-sections is that on these larger domains the transforms have a higher chance of being surjective. 
We shall need the extended Poisson transforms $\mathscr{P}^{\sigma}_{\tau,\lambda}$, ${^\Gamma}\mathscr{P}^{\sigma}_{\tau,\lambda}$ mainly as tools to deduce properties of  $\mathcal{P}^{\sigma}_{\tau,\lambda}$, ${^\Gamma}\mathcal{P}^{\sigma}_{\tau,\lambda}$. \begin{definition}\label{def:regular}For $\lambda\in \C$ we say that
\begin{itemize}[leftmargin=*]
\item $\lambda$  is \emph{weakly regular} if the Poisson transform $\mathcal{P}^{\sigma}_{\tau,-\lambda\nu_0-\rho}(h\otimes_\C \cdot)$ is injective for at least one $h\in \mathrm{Hom}_M(V,W)$.
\item $\lambda$  is \emph{semi-regular} if the Poisson transform  $\mathcal{P}^{\sigma}_{\tau,-\lambda\nu_0-\rho}$ is injective.
\item $\lambda$  is \emph{regular} if the extended Poisson transform $\mathscr{P}^{\sigma}_{\tau,-\lambda\nu_0-\rho}$ is bijective.
\item $\lambda$  is \emph{$\Gamma$-regular} if the Poisson transform  ${^\Gamma}\mathcal{P}^{\sigma}_{\tau,-\lambda\nu_0-\rho}$ is bijective.
\end{itemize}
\end{definition}
We negate $\lambda\nu_0$ and shift by the element $\rho$ in the above definition because the same occurs in Theorem \ref{thm:main}. \begin{rem}From Lemma \ref{lem:transferbijectivity} below it follows that a regular parameter is $\Gamma$-regular. \end{rem}
\begin{thm}[Rough regularity result]\label{thm:rough} There is a number $N_{\sigma}\in \N$, independent of $\tau$, and a finite set $A_{\sigma,\tau}\subset \R$ such that for every $\lambda \in \C\setminus\frac{1}{N_{\sigma}}\Z$ the parameter $\lambda-\norm{\rho}$ is semi-regular and for every $\lambda \in \C\setminus(A_{\sigma,\tau}\cup\frac{1}{N_{\sigma}}\Z)$ the parameter $\lambda-\norm{\rho}$ is regular.  
\end{thm}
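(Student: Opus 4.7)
The plan is to bootstrap the result from the bijectivity criteria established by Olbrich in \cite{olbrichdiss}. The starting point is that Olbrich expresses injectivity/bijectivity of $\mathscr{P}^{\sigma}_{\tau,\mu}$ on hyperfunction vectors in terms of a meromorphic endomorphism-valued function on $\aL^\ast_\C$ built out of scalar factors of Gamma-quotient type whose arguments are linear in $\mu$. Concretely, injectivity of the distributional Poisson transform on the image of $h\otimes(\cdot)$ for a fixed $h\in\mathrm{Hom}_M(V,W)$ is equivalent to $\pi^{\tau,\mu}_{\mathrm{comp}}$ having no nontrivial proper $G$-subrepresentation of hyperfunction vectors on which $\mathscr{P}^{\sigma}_{\tau,\mu}$ vanishes; bijectivity (i.e.\ regularity) additionally requires a cokernel-type factor to be non-vanishing.

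First I would collect from Olbrich the explicit form of these factors. The only $\lambda$-dependence enters through expressions of the form $\lambda + a_i$ and $\lambda - a_i$ in Gamma arguments, where the real numbers $a_i$ are built from the highest weights of $\sigma|_M$-constituents and from $\rho$. Since only finitely many such weights occur and each $a_i$ is a rational number with denominator dividing some integer determined purely by $\sigma$ (the common denominator of the highest-weight data of the $K$-type $\sigma$, independent of $\tau$), I would define $N_\sigma$ to be this common denominator. With the substitution $\mu = -(\lambda-\|\rho\|)\nu_0-\rho = -\lambda\nu_0$ from the definition of regularity (using $\rho = \|\rho\|\nu_0$ in rank one), all poles and zeros of the scalar factors lie in $\tfrac{1}{N_\sigma}\Z$ as a subset of $\R\subset\aL^\ast_\C$ identified via $\nu_0$.

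Next I would prove semi-regularity for $\lambda\notin\tfrac{1}{N_\sigma}\Z$. Away from this set, none of the Gamma factors that govern the reducibility of $\pi^{\tau,-\lambda\nu_0}_{\mathrm{comp}}$ can vanish or blow up, so the principal series is irreducible by the standard rank-one reducibility criterion (cf.\ \cite{knapp2}), and Olbrich's injectivity criterion then applies uniformly for all $h$, giving injectivity of $\mathcal{P}^\sigma_{\tau,-\lambda\nu_0}$. For regularity I would further examine the finite additional ``cokernel factor''. On the imaginary axis this factor is automatically non-vanishing by unitarity/standard intertwining-operator bounds, so any obstruction to surjectivity on hyperfunctions sits on the real axis; the factor is itself a finite product of Gamma-quotient expressions in $\lambda$, and its zeros on $\R\setminus\tfrac{1}{N_\sigma}\Z$ form a finite set, which I would take as $A_{\sigma,\tau}$.

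The main obstacle will be the bookkeeping of the precise factors: one has to carefully translate Olbrich's parameter conventions (infinitesimal character, highest weights, normalization of the Knapp-Stein intertwiner) into the conventions fixed in Section~\ref{sec:setupnot} and verify that the $\tau$-dependence drops out of the denominator controlling the ``generic'' bad set, so that $N_\sigma$ truly depends only on $\sigma$. Everything else is then a routine consequence of the meromorphy of Gamma factors and of Olbrich's bijectivity theorem.
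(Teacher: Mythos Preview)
Your overall strategy matches the paper's: establish that the principal series $\pi^{\tau,\mu}_{\mathrm{comp}}$ is irreducible outside a lattice $\tfrac{1}{N_\sigma}\Z$, deduce injectivity of the Poisson transform from irreducibility, and then invoke Olbrich's bijectivity theorem to handle surjectivity outside a further finite exceptional set. Where you diverge is in the level of explicitness. The paper does not attempt any Gamma-factor bookkeeping: it simply cites \cite[Lemma~B2]{kroetzkuitopdamschlichtkrull} for the existence of $N_\sigma$ independent of $\tau$, uses its own Lemma~\ref{lem:injcrit} (irreducibility of $\pi^{\tau,\lambda}_{\mathrm{comp}}$ plus irreducibility of $\chi_{\tau,\lambda}$ forces the kernel of $\mathscr{P}^\sigma_{\tau,\lambda}$ to be all-or-nothing, hence zero) for semi-regularity, and then cites Olbrich's main result \cite[Thm.~4.34]{olbrichdiss} in the form ``outside a finite set $A_{\sigma,\tau}$, the extended Poisson transform is not injective if it is not surjective'' to upgrade injectivity to bijectivity.

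Your route through the explicit meromorphic factors would in principle yield the same conclusion and might even give more concrete information about $N_\sigma$ and $A_{\sigma,\tau}$, but it is substantially more work, and the point you flag as the main obstacle---showing that the common denominator governing reducibility really depends only on $\sigma$---is precisely what the paper outsources to \cite{kroetzkuitopdamschlichtkrull}. If you want to follow the paper's cleaner line, replace the Gamma-factor analysis by direct appeals to those two references and to Lemma~\ref{lem:injcrit}; if you prefer your explicit route, be aware that carrying it out rigorously amounts to reproving parts of those cited results.
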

\begin{rem}We say \emph{rough} regularity result because much more precise results than Theorem \ref{thm:rough} can be proved, see \cite{olbrichdiss}. However, this requires a lot of additional technical machinery, not only in the proof but also in the statements. For the sake of clarity, we do not discuss the problem of optimizing Theorem \ref{thm:rough} in this paper. \end{rem}
We prepare the proof of Theorem \ref{thm:rough} in two Lemmas. 
\begin{lem}\label{lem:transferbijectivity}
Suppose that $\mathscr{P}^{\sigma}_{\tau,\lambda}$ is injective. Then all $\Gamma$-invariant hyperfunction-vectors in the principal series representation $\pi^{\tau,\lambda}_\mathrm{comp}$ are in fact distributional vectors. More precisely, one has \bq
{^\Gamma}\mathrm{H}^{-\omega}_{\tau,\lambda}={^\Gamma}\mathrm{H}^{-\infty}_{\tau,\lambda},\label{eq:gammahypdistr}
\eq and consequently
$ 
{^\Gamma}\mathscr{P}^{\sigma}_{\tau,\lambda}={^\Gamma}\mathcal{P}^{\sigma}_{\tau,\lambda}.
$ 
\end{lem}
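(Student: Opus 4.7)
The plan is to leverage the assumed injectivity of $\mathscr{P}^{\sigma}_{\tau,\lambda}$ together with the compactness of $\gam G/K$ and ellipticity in order to collapse both ${^\Gamma}\mathrm{H}^{-\omega}_{\tau,\lambda}$ and ${^\Gamma}\mathrm{H}^{-\infty}_{\tau,\lambda}$ into one and the same finite-dimensional space of generalized eigensections on the compact quotient. Concretely, I fix $\omega \in {^\Gamma}\mathrm{H}^{-\omega}_{\tau,\lambda}$ and $h \in \mathrm{Hom}_M(V,W)$, and form $s_h := \mathscr{P}^{\sigma}_{\tau,\lambda}(h\otimes\omega) \in \Gamma^\infty(\W_\sigma) \cap E_{[\tau],\lambda}(\W_\sigma)$. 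The equivariance of Lemma \ref{lem:Poissonequivariant} extends by continuity to hyperfunction-vectors, so the $\Gamma$-invariance of $\omega$ forces $s_h$ to be $\Gamma$-invariant and hence to descend to a smooth section on $\gam G/K$ lying in $E_{[\tau],\lambda}(\gam\W_\sigma)$.

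The second step is to show that $E_{[\tau],\lambda}(\gam\W_\sigma)$ is finite-dimensional via ellipticity. The Bochner Laplacian $\Delta$ from Example \ref{ex:bochner} belongs to $D(G,\sigma)$, so $\chi_{\tau,\lambda}(\Delta)$ is an endomorphism of the finite-dimensional space $\mathrm{Hom}_M(V,W)$. Letting $p$ denote its minimal polynomial, Definition \ref{def:eigenspaces} forces $p(\Delta) s = 0$ for every $s \in E_{[\tau],\lambda}(\W_\sigma)$. As $p(\Delta)$ is a nonzero elliptic operator and $\gam G/K$ is compact, its kernel on $\Gamma^\infty(\gam\W_\sigma)$ is finite-dimensional, and hence so is $E_{[\tau],\lambda}(\gam\W_\sigma)$. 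The hypothesis that $\mathscr{P}^{\sigma}_{\tau,\lambda}$ is injective restricts trivially to an injection
\[
{^\Gamma}\mathscr{P}^{\sigma}_{\tau,\lambda}: \mathrm{Hom}_M(V,W)\otimes_\C{^\Gamma}\mathrm{H}^{-\omega}_{\tau,\lambda}\hookrightarrow E_{[\tau],\lambda}(\gam\W_\sigma),
\]
so ${^\Gamma}\mathrm{H}^{-\omega}_{\tau,\lambda}$ is itself finite-dimensional.

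To conclude the equality ${^\Gamma}\mathrm{H}^{-\omega}_{\tau,\lambda} = {^\Gamma}\mathrm{H}^{-\infty}_{\tau,\lambda}$, I would compare the Poisson images of both spaces in the common finite-dimensional target $E_{[\tau],\lambda}(\gam\W_\sigma)$. The inclusion ${^\Gamma}\mathrm{H}^{-\infty}_{\tau,\lambda} \subseteq {^\Gamma}\mathrm{H}^{-\omega}_{\tau,\lambda}$ is tautological, and by injectivity of both restricted Poisson transforms it suffices to show that every $s_h$ in fact lies in the image of ${^\Gamma}\mathcal{P}^{\sigma}_{\tau,\lambda}$. This is the step I expect to be the main obstacle: it requires a boundary-value statement asserting that smooth generalized eigensections on $\gam G/K$, which by cocompactness of $\Gamma$ lift to uniformly bounded sections on $G/K$, admit distributional rather than merely hyperfunction-type boundary values at the sphere at infinity $K/M$. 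This is in the spirit of Kashiwara--Oshima boundary value theorems and is presumably available through additional technical machinery of Olbrich's thesis \cite{olbrichdiss}. Once it is settled, the identity ${^\Gamma}\mathscr{P}^{\sigma}_{\tau,\lambda} = {^\Gamma}\mathcal{P}^{\sigma}_{\tau,\lambda}$ follows immediately from unfolding the definitions.
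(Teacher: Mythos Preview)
Your approach is essentially the same as the paper's, and you have correctly identified both the crux and its source. The paper's proof invokes \cite[Satz~4.13]{olbrichdiss}, which says that the distributional Poisson transform $\mathcal{P}^{\sigma}_{\tau,\lambda}$ maps $\mathrm{Hom}_M(V,W)\otimes_\C \D'(K/M,\V^{\mathcal{B}}_{\tau})$ onto exactly those generalized eigensections of moderate growth. Since $\Gamma$ is cocompact, every $\Gamma$-invariant generalized eigensection is trivially of moderate growth, so ${^\Gamma}I:=\mathrm{im}(\mathscr{P}^{\sigma}_{\tau,\lambda})\cap {^\Gamma}E_{[\tau],\lambda}(\W_\sigma)$ lies in the image of the \emph{distributional} transform. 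Injectivity of $\mathscr{P}^{\sigma}_{\tau,\lambda}$ then forces the preimage to consist of distributions, giving the equality. This is precisely the ``boundary-value statement'' you anticipated, and your attribution to Olbrich's thesis is on the mark.

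One remark on your write-up: the finite-dimensionality detour (your second paragraph, via the minimal polynomial of $\chi_{\tau,\lambda}(\Delta)$ and ellipticity) is correct but unnecessary. The paper's argument never uses that ${^\Gamma}\mathrm{H}^{-\omega}_{\tau,\lambda}$ is finite-dimensional; injectivity of $\mathscr{P}^{\sigma}_{\tau,\lambda}$ combined with the moderate-growth characterization of the distributional image already yields the equality of preimages directly. You can safely drop that paragraph.
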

\begin{proof}Olbrich's result \cite[Satz 4.13]{olbrichdiss} says that the restriction of $\mathscr{P}^{\sigma}_{\tau,\lambda}$ to distributional sections, which is by definition $\mathcal{P}^{\sigma}_{\tau,\lambda}$, maps $\mathrm{Hom}_M(V,W)\otimes_\C \D'(K/M,\V^{\mathcal{B}}_{\tau})$ into the subset of $E_{[\tau],\lambda}(\W_\sigma)$ given by all generalized eigensections that are of a very particular \emph{moderate growth}, see \cite[inequality in first half of Satz 4.13]{olbrichdiss}. Now, as the group $\Gamma$ is cocompact, every $\Gamma$-invariant generalized eigensection in $E_{[\tau],\lambda}(\W_\sigma)$ is trivially of moderate growth in that sense because it corresponds to a smooth section on the compact quotient $\gam G/K$.  Under the assumption that $\mathscr{P}^{\sigma}_{\tau,\lambda}$ is injective, define
\[
{^\Gamma}I:= \mathrm{im}\big(\mathscr{P}^{\sigma}_{\tau,\lambda}\big)\cap {{^\Gamma}E_{[\tau],\lambda}}(\W_\sigma),
\]
so that ${\mathscr{P}^{\sigma}_{\tau,\lambda}}^{-1}$ is defined on the set ${^\Gamma}I$. Then we have by  the $G$-equivariance of  ${\mathscr{P}^{\sigma}_{\tau,\lambda}}$ and the argument above
$$\mathrm{Hom}_M(V,W)\otimes_\C{^\Gamma}\mathrm{H}^{-\omega}_{\tau,\lambda}={\mathscr{P}^{\sigma}_{\tau,\lambda}}^{-1}\big({^\Gamma}I\big)=\mathrm{Hom}_M(V,W)\otimes_\C {^\Gamma}\mathrm{H}^{-\infty}_{\tau,\lambda},$$
which implies \eqref{eq:gammahypdistr} provided that $\mathrm{Hom}_M(V,W)\neq \{0\}$. As we assume $[\sigma|_M:\tau]\geq 1$ globally in this section, one has $\mathrm{Hom}_M(V,W)\neq \{0\}$ by Lemma \ref{lem:homhom}.
\end{proof}
\begin{lem}\label{lem:injcrit}Suppose that the principal series representation $\pi^{\tau,\lambda}_\mathrm{comp}$ is irreducible. Then the extended Poisson transform $\mathscr{P}^{\sigma}_{\tau,\lambda}$ is injective. Moreover, one then has
\bq
\dim \mathrm{im}\big(\mathscr{P}^{\sigma}_{\tau,\lambda}\big)\geq [\sigma|_M:\tau]^2.\label{eq:dimim}
\eq
\end{lem}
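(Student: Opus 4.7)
The plan is to exploit the $G$-equivariance of $\mathscr{P}^{\sigma}_{\tau,\lambda}$ from Lemma~\ref{lem:Poissonequivariant} together with the assumed irreducibility of $\pi^{\tau,\lambda}_\mathrm{comp}$. For each $h \in \mathrm{Hom}_M(V,W)$, I introduce the slice map $\mathscr{P}_h := \mathscr{P}^{\sigma}_{\tau,\lambda}(h \otimes \cdot)\colon \Gamma^{-\omega}(K/M, \V^{\B}_\tau) \to E_{[\tau],\lambda}(\W_\sigma)$. By Lemma~\ref{lem:Poissonequivariant}, $\mathscr{P}_h$ intertwines $\pi^{\tau,\lambda}_\mathrm{comp}$ with the left regular $G$-action on sections of $\W_\sigma$. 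Since $\pi^{\tau,\lambda}_\mathrm{comp}$ is irreducible, its invariant subspaces are trivial, so $\mathscr{P}_h$ is either identically zero or injective.

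The next step is to rule out $\mathscr{P}_h \equiv 0$ for $h \neq 0$. Evaluating $\mathscr{P}_h(\omega)$ at $eK$ for smooth $\omega \in \Gamma^\infty(\V^{\B}_\tau)$ and using $H^-(k) = 0$ and $k^-(k) = k$ for $k \in K$, the defining formula \eqref{eq:Poissoneq} reduces to
\[
\overline{\mathscr{P}_h(\omega)}(e) = \int_K \sigma(k)\, h(\bar\omega(k))\, dk \in W.
\]
A direct check shows that $\omega \mapsto \overline{\mathscr{P}_h(\omega)}(e)$ is a $K$-equivariant linear map from the induced representation $\mathrm{Ind}_M^K \tau$ (realized on smooth vectors of $\V^{\B}_\tau$) to $\sigma$, and as $h$ varies, this assignment is exactly the classical Frobenius reciprocity isomorphism $\mathrm{Hom}_M(V, W) \xrightarrow{\cong} \mathrm{Hom}_K(\mathrm{Ind}_M^K \tau, \sigma)$. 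Hence for $h \neq 0$ we obtain a non-zero $K$-intertwiner, producing some smooth $\omega$ with $\overline{\mathscr{P}_h(\omega)}(e) \neq 0$. Thus $\mathscr{P}_h \not\equiv 0$, and by the dichotomy above $\mathscr{P}_h$ is injective.

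To deduce injectivity of the full transform, fix a basis $h_1, \ldots, h_d$ of $\mathrm{Hom}_M(V, W)$ with $d = [\sigma|_M : \tau]$ (Lemma~\ref{lem:homhom}). Then $\ker(\mathscr{P}^{\sigma}_{\tau,\lambda})$ is a $G$-invariant subspace of $d$ copies of the irreducible $\pi^{\tau,\lambda}_\mathrm{comp}$, and a Schur-type argument shows that every such sub-module has the form $V_0 \otimes \pi^{\tau,\lambda}_\mathrm{comp}$ for some linear subspace $V_0 \subset \mathrm{Hom}_M(V, W)$. Any $v \in V_0$ then satisfies $\mathscr{P}_v \equiv 0$, so the previous paragraph forces $V_0 = \{0\}$, giving injectivity of $\mathscr{P}^{\sigma}_{\tau,\lambda}$.

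By injectivity, the image decomposes as an internal direct sum $\bigoplus_{i=1}^d \mathrm{im}(\mathscr{P}_{h_i})$ with each summand $G$-isomorphic to $\pi^{\tau,\lambda}_\mathrm{comp}$. By Frobenius reciprocity on the $K$-side, the $K$-isotypic component of type $\sigma$ in $\pi^{\tau,\lambda}_\mathrm{comp}$ has multiplicity $d$, so the $\sigma$-isotype of $\mathrm{im}(\mathscr{P}^{\sigma}_{\tau,\lambda})$ has multiplicity $\geq d^2$, yielding $\dim \mathrm{im}(\mathscr{P}^{\sigma}_{\tau,\lambda}) \geq d^2 \dim \sigma \geq d^2$. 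The most delicate step is the Schur-type argument, which in strict rigor must be carried out at the level of the underlying $(\g, K)$-module of $K$-finite vectors (where the classical Schur lemma applies) and then transferred to the hyperfunction completion by density, using that the kernel is automatically closed in the hyperfunction topology.
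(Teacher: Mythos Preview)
Your argument is correct but proceeds along a different axis than the paper's. The paper does not slice the Poisson transform along $h$; instead it uses the full $D(G,\sigma)\times G$-equivariance from Lemma~\ref{lem:Poissonequivariant}. Since $\pi^{\tau,\lambda}_\mathrm{comp}$ is irreducible, Proposition~\ref{prop:existencechi} says $\chi_{\tau,\lambda}$ is irreducible on $\mathrm{Hom}_M(V,W)$, so the tensor product $\chi_{\tau,\lambda}\otimes\pi^{\tau,\lambda}_\mathrm{comp}$ is an irreducible $D(G,\sigma)\times\C G$-module. The kernel is invariant for this joint action, hence trivial once one knows the map is non-zero; non-vanishing (and the inequality~\eqref{eq:dimim}) is simply cited from \cite[Thm.~3.6]{olbrichdiss}.

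Your route uses only the $G$-action. The price is that you must (i) prove $\mathscr{P}_h\not\equiv 0$ for every non-zero $h$, which you do cleanly via the Frobenius isomorphism at $eK$, and (ii) classify $G$-submodules of a finite multiplicity space, which as you note needs Schur at the $(\g,K)$-module level and a density/closedness argument to pass back to hyperfunctions. The payoff is that your argument is self-contained: it does not invoke the irreducibility of $\chi_{\tau,\lambda}$ or the cited result of Olbrich, and it yields the stronger bound $\dim\mathrm{im}\geq d^2\dim\sigma$ rather than just $d^2$. The paper's approach is shorter precisely because the $D(G,\sigma)$-action absorbs the multiplicity space into a single irreducible, so one non-vanishing check suffices instead of one per $h$.
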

\begin{proof}
The inequality \eqref{eq:dimim} is shown in \cite[Thm.\ 3.6]{olbrichdiss}. As we demand $[\sigma|_M:\tau]\neq 0$, \eqref{eq:dimim} shows that $\mathscr{P}^{\sigma}_{\tau,\lambda}$ is not the zero map. 
By Lemma \ref{lem:Poissonequivariant} and the density of $\D'(K/M,\V^{\mathcal{B}}_{\tau})$ in $\Gamma^{-\omega}(K/M,\V^{\mathcal{B}}_{\tau})$, the extended Poisson transform \eqref{eq:PoissonHF} fulfills
\begin{multline}
\mathscr{P}^{\sigma}_{\tau,\lambda}\big(\chi_{\tau,\lambda}(D)(h)\otimes \pi^{\tau,\lambda}_\mathrm{comp}(g)(\omega)\big)=D(g\, \mathbb{P}^{\sigma}_{\tau,\lambda}(h\otimes \omega))\\
 \forall\; h\in \mathrm{Hom}_M(V,W),\;\omega\in \Gamma^{-\omega}(K/M,\V^{\mathcal{B}}_{\tau}),\;D\in D(G,\sigma),\;g\in G,\label{eq:152519201252}
\end{multline}
so $\ker \mathscr{P}^{\sigma}_{\tau,\lambda}$ is a $D(G,\sigma)\times G$-invariant subspace of $\mathrm{Hom}_M(V,W)\otimes_\C\Gamma^{-\omega}(K/M,\V^{\mathcal{B}}_{\tau})$. By Proposition \ref{prop:existencechi}, the $D(G,\sigma)$-representation $\chi_{\tau,\lambda}$ is irreducible, so when extending $\pi^{\tau,\lambda}_\mathrm{comp}$ to an irreducible representation of the group algebra $\C G$, the representation $\chi_{\tau,\lambda}\otimes \pi^{\tau,\lambda}_\mathrm{comp}$ of the algebra $D(G,\sigma)\times \C G$ is irreducible (cp.\ \cite[proof of Lemma 4.29]{olbrichdiss}). From this we conclude that the only $D(G,\sigma)\times G$-invariant subspaces of $\mathrm{Hom}_M(V,W)\otimes_\C\Gamma^{-\omega}(K/M,\V^{\mathcal{B}}_{\tau})$ are $\{0\}$ and the entire space. Thus, that space is either entirely equal to the kernel of $\mathscr{P}^{\sigma}_{\tau,\lambda}$, or the kernel is zero. Because we already know that $\mathscr{P}^{\sigma}_{\tau,\lambda}$ is not the zero map, only the possibility $\ker \mathscr{P}^{\sigma}_{\tau,\lambda}=\{0\}$ remains.
 \end{proof}
\begin{proof}[Proof of Theorem \ref{thm:rough}]
According to \cite[Lemma B2]{kroetzkuitopdamschlichtkrull}, there exists an $N_{\sigma}\in \N$, independent of $\tau$ and $\Gamma$, such that $\pi^{\tau,\lambda}_\mathrm{comp}$ is irreducible when $\lambda(H_0) \not\in \frac{1}{N_{\sigma}}\Z$. By Lemma \ref{lem:injcrit}, this implies that  $\mathscr{P}^{\sigma}_{\tau,\lambda}$ is injective for $\lambda(H_0) \not\in \frac{1}{N_{\sigma}}\Z$. Thus, $\mathcal{P}^{\sigma}_{\tau,\lambda}$ is injective as the restriction of the injective map $\mathscr{P}^{\sigma}_{\tau,\lambda}$, and the first statement of Theorem \ref{thm:rough} is proved. To prove also the second statement, we use the main result of Olbrich's dissertation \cite[Thm.\ 4.34]{olbrichdiss}, which implies that there is a finite set $A_{\sigma,\tau}\subset \R$, independent of  $\Gamma$, such that for $\lambda(H_0)\not\in A_{\sigma,\tau}$ the extended Poisson transform $\mathscr{P}^{\sigma}_{\tau,\lambda}$ is not injective if it is not surjective. This implies that $\mathscr{P}^{\sigma}_{\tau,\lambda}$ is bijective for $\lambda(H_0)\not\in A_{\sigma,\tau}\cup\frac{1}{N_{\sigma}}\Z$.
\end{proof}

\subsection{Study of the relevant generalized common eigen\-spaces}\label{sec:studysmb}
The natural pushforward maps from Theorem \ref{thm:main} relate the first band Pollicott-Ruelle resonant states on the vector bundle $\gam \V_\tau$ with generalized common eigenspaces $E_{[\tau],\lambda}(\Gamma\backslash\W_\sigma)$ associated to the algebra $D(G,\sigma)$ of $G$-invariant differential operators acting on sections of the vector bundle $\W_\sigma$. This motivates a more detailed study of these generalized eigenspaces and  the algebra $D(G,\sigma)$, which we shall carry out in this section using the techniques developed in \cite{olbrichdiss}. The main idea is to construct for every operator $D\in D(G,\sigma)$ a polynomial function $$\mathrm{smb}_I(D):\aL^\ast\to\mathrm{End}_M(W)$$
that allows us to describe the action of the finite-dimensional representation $\chi_{\tau,\lambda}$ explicitly. The subscript $I$ stands for Iwasawa, and the notation $\mathrm{smb}_I$ is supposed to indicate that  $D\mapsto \mathrm{smb}_I(D)$ can be regarded as a kind of symbol map that we construct using the opposite Iwasawa decomposition. 

\subsubsection{Invariant differential operators and the universal enveloping algebra}\label{sec:diffopunivers} The $\Ad(K)$-action on $\g$ extends to a $K$-action on the universal enveloping algebra $\mathcal{U}(\g)$. By extension of the infinitesimal action of the Lie algebra $\g$, the $K$-invariant elements in  $\mathcal{U}(\g)$ act on $\Gamma^\infty(\W_\sigma)$ by differential operators. Let us denote this action by
\[
\mathscr{D}:\mathcal{U}(\g)^K \owns u\longmapsto \mathscr{D}(u):\Gamma^\infty(\W_\sigma)\to \Gamma^\infty(\W_\sigma).
\]
Then one has $\mathscr{D}(\mathcal{U}(\g)^K)\subset D(G,\sigma)$. By \cite[Section 2.2]{olbrichdiss} we have in fact
\[
\mathscr{D}(\mathcal{U}(\g)^K)=D(G,\sigma),
\]
which means that every $G$-invariant differential operator is obtained by the action of at least one $K$-invariant element in the universal enveloping algebra. 
\begin{example}[Casimir elements and the Bochner Laplacian]\label{ex:casimir}The (positive) \emph{Casimir operator}  $\mathcal{C}_{\g}$ associated to the Lie algebra $\g$ is
\bq \label{eq:def_casimir}
\mathcal{C}_{\g}=\mathscr{D}(-\Omega_\g),\qquad \Omega_\g=\sum_{i=1}^{\dim \g}X_i\overline X_i \in Z(\mathcal{U}(\g))\subset\mathcal{U}(\g),
\eq
where $\Omega_\g$ is called \emph{Casimir element}. Here $X_i$ is an arbitrary basis of $\g$ and $\overline X_i$ the associated dual basis of $\g$  w.r.t\ the Killing form $\mathfrak{B}$.
In addition we introduce the Casimir elements of the compact groups $K$ and $M$ with respect to the $\Ad(K)$-invariant inner product $\eklm{\cdot,\cdot}|_{\k\times \k}=-\mathfrak{B}|_{\k\times \k}$ and the $\Ad(M)$-invariant inner product $\eklm{\cdot,\cdot}|_{\m\times \m}=-\mathfrak{B}|_{\m\times \m}$, respectively:
\bq
\Omega_{\k}:=\sum_{i=1}^{\dim \mathfrak{k}}K_i^2\in \mathcal{U}(\k)^K,\qquad 
\Omega_{\m}:=\sum_{i=1}^{\dim \mathfrak{m}}M_i^2\in \mathcal{U}(\m)^M,\label{eq:OmegaM}
\eq
where $K_i$ and $M_i$ are  arbitrary orthonormal bases of $\k$ and $\m$ with respect to  $\eklm{\cdot,\cdot}|_{\k\times \k}$ and $\eklm{\cdot,\cdot}|_{\m\times \m}$, respectively. As in \cite[Prop.\ 5.24]{knapp} for $\Omega_\g$, one sees that $\Omega_{\k}$ and $\Omega_{\m}$ 
are independent of the choices of bases. The operators \begin{align}
\mathcal{C}_{\k}^\sigma&:=\sigma(\Omega_{\k})\in \mathrm{End}_{K}(W)=\mathrm{span}_{\C}(\mathrm{id}_W)\label{eq:casimirK1}\\
\mathcal{C}_{\m}^\sigma&:=\sigma(\Omega_{\m})\in \mathrm{End}_{M}(W)\label{eq:casimirM1}
\end{align}
are therefore well-defined. Note that $\mathcal{C}_{\k}^\sigma$ is just a multiple of the identity by Schur's Lemma.   One can show 
that the Bochner Laplacian $\Delta$ from Example \ref{ex:bochner} is obtained as
\bq
\Delta=-\mathscr{D}(\Omega_\g+\Omega_\k)=\mathcal{C}_{\g}+\mathscr{D}(-\Omega_\k)
=\mathcal{C}_{\g}-c_\sigma\cdot  \mathrm{id}_{\,\Gamma^\infty(\W_\sigma)},\label{eq:bochnerD}
\eq
which means that the Bochner Laplacian is the action of the Casimir element in $\mathcal{U}(\g)$ shifted by a scalar $c_\sigma\in \R$ known as the \emph{Casimir invariant} of the representation $\sigma$, defined by $\mathcal{C}_{\k}^\sigma=c_\sigma\cdot \mathrm{id}_W$. We shall determine $c_\sigma$ explicitly in Corollary \ref{cor:eigDelta1}.
\end{example}
\subsubsection{Consequences of the Iwasawa decomposition on the description of generalized common eigenspaces} \label{sec:decompuniv}
The opposite Iwasawa decomposition $\g=\k\oplus \aL\oplus \nL^-$ induces by the Theorem of Poincar\'e-Birkhoff-Witt \cite[III., Thm.\ 3.8]{knapp} a decomposition as linear spaces\footnote{The symbol ``$\cdot$'' refers here to the multiplication in $\mathcal{U}(\g)$.}
\bq
\mathcal{U}(\g)=\big(\mathcal{U}(\k)\cdot \mathcal{U}(\aL)\big) \oplus \big(\mathcal{U}(\g)\cdot \nL^-\big).\label{eq:birkhoffwitt}
\eq
By refining the Cartan decomposition as in \eqref{eq:refinedcartan}, one sees that for $X\in \k,H\in\aL$ the Lie bracket $[X,H]$ is an element of the orthogonal complement $\aL^{\perp_\p}\subset \p$ of $\aL$ in $\p$, which implies that the natural surjective map $\mathcal{U}(\k)\otimes \mathcal{U}(\aL)\to \mathcal{U}(\k)\cdot \mathcal{U}(\aL)$, $X\otimes H\mapsto X\cdot H$, is injective and thus an isomorphism of vector spaces. Composing it with the flip $\mathcal{U}(\k)\otimes \mathcal{U}(\aL)\cong\mathcal{U}(\aL)\otimes \mathcal{U}(\k)$, $X\otimes H\mapsto H\otimes X$, we obtain a vector space isomorphism
\[
I:\mathcal{U}(\k)\cdot \mathcal{U}(\aL)\cong \mathcal{U}(\aL)\otimes \mathcal{U}(\k).
\]
Let $p_-:\mathcal{U}(\g)\to \mathcal{U}(\k)\cdot \mathcal{U}(\aL)$ be the projection onto the first summand in \eqref{eq:birkhoffwitt}. By \cite[3.5.6.(2)]{wallach} the composition $I\circ p_-$ restricts to an antihomomorphism of algebras
\[
I\circ p_-|_{\mathcal{U}(\g)^K}:\mathcal{U}(\g)^K\to \mathcal{U}(\aL)\otimes \mathcal{U}(\k)^M.
\]
We can use $\rho$ from \eqref{eq:rho} to define automorphisms $v_{\rho}:\mathcal{U}(\aL)\to \mathcal{U}(\aL)$ by requiring
\bq
v_{\rho}(H)=H+\rho(H),\qquad H\in \aL.\label{eq:vrho}
\eq
Similarly as in Section \ref{sec:diffopunivers}, consider the antihomomorphism
 $
\mathrm{opp}:\mathcal{U}(\k)^M\to \mathcal{U}(\k)^M
$ 
induced by the involution $X\mapsto -X$ on $\k$. Then we get an algebra homomorphism
\[
\widetilde{\mathrm{smb}}_I:=(v_{\rho}\otimes(\sigma\circ\mathrm{opp}))\circ I\circ p_-|_{\mathcal{U}(\g)^K}: \mathcal{U}(\g)^K\to S(\aL)\otimes_\C\mathrm{End}_M(W),
\]
where we used that $\mathcal{U}(\aL)=S(\aL)$ because $\aL$ is abelian\footnote{Here $S(\aL)$ denotes the symmetric tensor algebra.}. Now, since $S(\aL)\otimes_\C\mathrm{End}_M(W)$ is canonically isomorphic to the algebra of polynomial functions $\aL^\ast\to \mathrm{End}_M(W)$, we can consider $\widetilde{\mathrm{smb}}_I$ as an algebra homomorphism
\[
\widetilde{\mathrm{smb}}_I: \mathcal{U}(\g)^K\to \{\text{polynomial functions }\aL^\ast\to \mathrm{End}_M(W)\}
\]
with the property that $\widetilde{\mathrm{smb}}_I(u)(\aL^\ast)\subset \sigma(\mathcal{U}(\k)^M)$ for all $u\in \mathcal{U}(\g)^K$. 
The significance of this homomorphism lies in the following observations of Olbrich. 
\begin{lem}[{\cite[Satz 2.8, Satz 2.13]{olbrichdiss}}]\label{lem:smbaction}
The assignment
\[
D(G,\sigma)\owns D\longmapsto \mathrm{smb}_I(D):=\widetilde{\mathrm{smb}}_I(u),\quad u\in \mathcal{U}(\g),\;\mathscr{D}(u)=D,
\]
is a well-defined injective algebra homomorphism
\[
\mathrm{smb}_I:D(G,\sigma)\hookrightarrow \{\mathrm{polynomial\;functions\;}\aL^\ast\to \mathrm{End}_M(W)\}.
\]
Moreover, for each $\lambda\in \aL^\ast_\C$ and $D\in D(G,\sigma)$, the action of the finite-dimensional representation $\chi_{\tau,\lambda}:D(G,\sigma)\to \mathrm{End}(\mathrm{Hom}_M(V,W))$ from \eqref{eq:chirep} on $D$ can be described as
\bq
\chi_{\tau,\lambda}(D)=\text{post-composition with }\,\mathrm{smb}_I(D)(\lambda).\label{eq:repsmb2}
\eq
Stated in terms of formulas, \eqref{eq:repsmb2} means
\[
\chi_{\tau,\lambda}(D)(h)=\mathrm{smb}_I(D)(\lambda)\circ h\qquad \forall\;h\in \mathrm{Hom}_M(V,W).
\]
Here the polynomial function $\mathrm{smb}_I(D)$ is understood to be extended from $\aL^\ast$ to $\aL^\ast_\C$. 
\qed
\end{lem}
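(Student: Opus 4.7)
The plan is to derive the lemma from one central identity: for every $u\in\mathcal{U}(\g)^K$, $h\in\mathrm{Hom}_M(V,W)$, $\omega\in\D'(K/M,\V^{\mathcal{B}}_\tau)$, and $\lambda\in\aL_\C^\ast$,
\begin{equation}
\mathscr{D}(u)\,\mathbb{P}^\sigma_{\tau,\lambda}(h\otimes\omega)\;=\;\mathbb{P}^\sigma_{\tau,\lambda}\bigl(\widetilde{\mathrm{smb}}_I(u)(\lambda)\circ h\otimes\omega\bigr).\tag{$\ast$}
\end{equation}
Once $(\ast)$ is established, everything else follows rapidly: well-definedness of $\mathrm{smb}_I$ holds because the right-hand side of $(\ast)$ depends only on $D=\mathscr{D}(u)$, and one can recover the endomorphism $\widetilde{\mathrm{smb}}_I(u)(\lambda)\in\mathrm{End}_M(W)$ from the family $\bigl(\widetilde{\mathrm{smb}}_I(u)(\lambda)\circ h\bigr)_{h}$ via injectivity of $\mathscr{P}^\sigma_{\tau,\lambda}$ at generic $\lambda$ (Lemma~\ref{lem:injcrit}) together with a polynomial-in-$\lambda$ argument; the algebra homomorphism property of $\mathrm{smb}_I$ is inherited from the fact that $I\circ p_-|_{\mathcal{U}(\g)^K}$ is an algebra antihomomorphism (Wallach's result cited before its definition), which is re-flipped by the antihomomorphism $\sigma\circ\mathrm{opp}$ and combined with the automorphism $v_\rho$; injectivity of $\mathrm{smb}_I$ follows because $\mathrm{smb}_I(D)=0$ implies via $(\ast)$ that $D$ annihilates all Poisson transforms, forcing $D=0$; and the representation formula \eqref{eq:repsmb2} is obtained by comparing $(\ast)$ with Lemma~\ref{lem:Poissonequivariant} and again invoking generic injectivity of the Poisson transform.

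To establish $(\ast)$, observe that both sides transform identically under the combined $G$-action by left translation on $G/K$ and by $\pi^{\tau,\lambda}_\mathrm{comp}$ on $\omega$, thanks to the $G$-invariance of $\mathscr{D}(u)$ on the left and Lemma~\ref{lem:Poissonequivariant} on the right. It therefore suffices to compare both sides at $g=e$. By~\eqref{eq:Poissoneq}, the value at $e$ reduces to $\int_K\sigma(k)\,h(\bar\omega(k))\,\d k$, since $H^-(k)=0$ and $k^-(k)=k$ for $k\in K$. Expanding $u$ in a PBW basis adapted to $\g=\k\oplus\aL\oplus\nL^-$, I would split $u=p_-(u)+v$ with $p_-(u)\in\mathcal{U}(\k)\cdot\mathcal{U}(\aL)$ and $v\in\mathcal{U}(\g)\nL^-$, and then evaluate the right-regular action of $u$ on the Poisson integral at $e$. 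The contribution from $v$ is computed by differentiating the Poisson kernel $e^{(-\lambda+\rho)(H^-(g^{-1}k))}\sigma(k^-(g^{-1}k))$ in $\nL^-$-directions at $g=e$, using the first-order Iwasawa formulas $\tfrac{d}{dt}|_{t=0}H^-(e^{-tY}k)=-(\Ad(k^{-1})Y)_\aL$ and $\tfrac{d}{dt}|_{t=0}k^-(e^{-tY}k)=-k\cdot(\Ad(k^{-1})Y)_\k$ for $Y\in\g$. These derivatives inject the $\rho$-shift into the $\aL$-eigenvalue: the $\mathcal{U}(\aL)$-factor of $p_-(u)$, when read off via the kernel's $\aL$-transformation behaviour, contributes $v_\rho$-shifted polynomial eigenvalues in the spectral parameter, exactly the discrepancy between the naive eigenvalue $\lambda(H)$ and the true horocyclic eigenvalue $(\lambda+\rho)(H)$ stemming from the non-unimodularity of $AN^-$ recorded in~\eqref{eq:rho}. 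The contribution from the $\mathcal{U}(\k)^M$-factor is, by the right-$K$-equivariance $\bar s(ge^{tX})=\sigma(e^{-tX})\bar s(g)$ for $X\in\k$, precisely post-composition with $\sigma\circ\mathrm{opp}$ on the $W$-valued integrand. Summing both contributions yields $(\ast)$.

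The main obstacle is the careful bookkeeping of the $\rho$-shift and of the cross-terms arising in the PBW expansion: differentiating along interleaved $\mathcal{U}(\k)$, $\mathcal{U}(\aL)$, and $\nL^-$ monomials produces many off-diagonal contributions, and one must verify, using the $K$-invariance of $u$ after averaging against the Poisson kernel, that all such terms collectively combine into post-composition by the single $\mathrm{End}_M(W)$-valued polynomial $\widetilde{\mathrm{smb}}_I(u)(\lambda)$. This is essentially the computation carried out by Olbrich in \cite[Satz~2.8, Satz~2.13]{olbrichdiss}; once the $\rho$-shift is tracked correctly, the equivalence $\chi_{\tau',\lambda}\simeq\chi_{\tau,\lambda}$ for equivalent $\tau'\sim\tau$ asserted in Proposition~\ref{prop:existencechi} also follows formally from $(\ast)$ together with Lemma~\ref{lem:homhom}, since an intertwiner $V\to V'$ induces a bijection $\mathrm{Hom}_M(V,W)\to\mathrm{Hom}_M(V',W)$ compatible with post-composition by $\widetilde{\mathrm{smb}}_I(u)(\lambda)$.
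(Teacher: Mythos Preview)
The paper gives no proof of this lemma; it simply cites Olbrich's thesis and ends with a \qed. So I am comparing your sketch against what such a proof must contain, not against anything written in the paper.

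Your overall plan---prove the identity $(\ast)$ and read off everything else---is the right shape, and $(\ast)$ is indeed essentially Olbrich's Lemma~3.3 specialized to the explicit formula for $\chi_{\tau,\lambda}$. Two points, however, are genuine gaps.

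\textbf{Recovering $\widetilde{\mathrm{smb}}_I(u)(\lambda)$ from post-composition.} You claim one can recover the endomorphism $\widetilde{\mathrm{smb}}_I(u)(\lambda)\in\mathrm{End}_M(W)$ from the family $\bigl(\widetilde{\mathrm{smb}}_I(u)(\lambda)\circ h\bigr)_h$ with $h\in\mathrm{Hom}_M(V,W)$. For a \emph{fixed} irreducible $\tau$ this is false: the map $\mathrm{End}_M(W)\to\mathrm{End}(\mathrm{Hom}_M(V,W))$, $E\mapsto(E\circ\,\cdot\,)$, has kernel consisting of all $E$ vanishing on the $[\tau]$-isotypic component of $W$, which is nontrivial whenever $\sigma|_M$ contains more than one $M$-type. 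Your argument for well-definedness (and likewise for injectivity of $\mathrm{smb}_I$) therefore needs either (i) to let $\tau$ run over all irreducible constituents of $\sigma|_M$, so that the images of the various $\mathrm{Hom}_M(V_\tau,W)$ span $W$, or (ii)---much cleaner---a direct algebraic argument: $\ker\mathscr{D}=\mathcal{U}(\g)^K\cap\mathcal{U}(\g)\ker(\sigma|_{\mathcal{U}(\k)})$, and $\widetilde{\mathrm{smb}}_I$ kills this because the $\mathcal{U}(\k)$-part is sent through $\sigma\circ\mathrm{opp}$.

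\textbf{The computation of $(\ast)$ at $g=e$.} Your sketch breaks $u=p_-(u)+v$ and asserts that the $\mathcal{U}(\k)$-factor acts by $\sigma\circ\mathrm{opp}$ via right-$K$-equivariance while the $\nL^-$-part produces the $\rho$-shift. Neither is quite right in the compact picture at $g=e$. First, the individual PBW terms $R(\kappa_i\alpha_i)\bar s$ are \emph{not} right-$K$-equivariant (only their sum $R(u)\bar s$ is, since $u\in\mathcal{U}(\g)^K$), so you cannot evaluate $R(\kappa_i)$ at $e$ as $\sigma(\mathrm{opp}(\kappa_i))$ term-by-term. Second, for $Y\in\nL^-$ the derivative $R(Y)\bar s(e)$ involves $(\Ad(k^{-1})Y)_\aL$ and $(\Ad(k^{-1})Y)_\k$ inside the $K$-integral and does not vanish; the $\rho$-shift is already encoded in the exponent $(-\lambda+\rho)$ of the kernel and is captured by $v_\rho$, not by the $\nL^-$-piece. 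The clean route---and the one implicit in Olbrich's setup---is to pass to the induced picture: extend $\bar\omega$ to $\tilde\omega:G\to V$ with $\tilde\omega(gman)=e^{(-\lambda-\rho)(\log a)}\tau(m)^{-1}\tilde\omega(g)$, so that $R(Y)\tilde\omega=0$ for $Y\in\nL^-$ and $R(H)\tilde\omega=(-\lambda-\rho)(H)\tilde\omega$ for $H\in\aL$, and rewrite $\overline{\mathbb{P}(h\otimes\omega)}$ as a $K$-average of $h\circ\tilde\omega$; then the decomposition $u=p_-(u)+v$ acts exactly as $\widetilde{\mathrm{smb}}_I$ predicts, with $v$ contributing zero and the $\aL$-eigenvalue matching $v_\rho$ after the usual sign bookkeeping.
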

This description of the representations $\chi_{\tau,\lambda}$ has the following useful implication.
\begin{cor}\label{cor:ordinaryeigenspaces}
For some $D\in D(G,\sigma)$ and $\lambda\in\aL^\ast_\C$, suppose that one has $$\mathrm{smb}_I(D)(\lambda)\in \sigma(\mathcal{U}(\m)^M)\subset \mathrm{End}_M(W).$$ 
Then the generalized common eigenspace $E_{[\tau],\lambda}(\W_\sigma)$ is a subspace of an ordinary eigenspace of $D$, the eigenvalue $\mu_{\sigma,\tau,\lambda}(D)$ being given by 
$
\tau(u)=\mu_{\sigma,\tau,\lambda}(D)\cdot \mathrm{id}_V$,  
where $u\in\mathcal{U}(\m)^M$ is an arbitrary element with $\mathrm{smb}_I(D)(\lambda)=\sigma(u)$.
\end{cor}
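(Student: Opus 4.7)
The plan is to reduce the corollary to the observation in Remark \ref{rem:elemobs} that a generalized common eigenspace of $D(G,\sigma)$ sits inside an ordinary eigenspace of any operator $D$ on which $\chi_{\tau,\lambda}$ acts as a scalar. So the whole task is to pin down that scalar in terms of $\tau(u)$, starting from Lemma \ref{lem:smbaction} which gives $\chi_{\tau,\lambda}(D)(h) = \mathrm{smb}_I(D)(\lambda)\circ h$ for $h \in \mathrm{Hom}_M(V,W)$.

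First I would fix $u \in \mathcal{U}(\m)^M$ with $\sigma(u) = \mathrm{smb}_I(D)(\lambda)$, as granted by the hypothesis. Since $u$ is $\Ad(M)$-invariant, its image $\tau(u) \in \mathrm{End}(V)$ commutes with $\tau(M)$; irreducibility of $\tau$ together with Schur's lemma then forces $\tau(u) = \mu \cdot \mathrm{id}_V$ for some scalar $\mu \in \C$, and we set $\mu_{\sigma,\tau,\lambda}(D):=\mu$. Next I would verify that $\mu$ does not depend on the choice of $u$: if $u,u' \in \mathcal{U}(\m)^M$ both satisfy $\sigma(u)=\sigma(u')=\mathrm{smb}_I(D)(\lambda)$, then $\sigma(u-u') = 0$, and since $\tau$ is (equivalent to) a subrepresentation of $\sigma|_M$ by the standing compatibility assumption, the restriction of $\sigma(u-u')$ to the $\tau$-isotypic component of $W$ equals (up to equivalence) $\tau(u-u')$, which therefore vanishes; hence $\tau(u)=\tau(u')$.

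The core computation is then to evaluate $\chi_{\tau,\lambda}(D)$. For any $h \in \mathrm{Hom}_M(V,W)$, $M$-equivariance of $h$ gives $h\circ \tau(X) = \sigma(X)\circ h$ for all $X \in \m$, and by extending as an algebra homomorphism this lifts to
\[
h\circ \tau(u) = \sigma(u)\circ h \qquad \forall\, u \in \mathcal{U}(\m).
\]
Combining this identity with Lemma \ref{lem:smbaction} and the equation $\tau(u)=\mu\cdot \mathrm{id}_V$, I would conclude
\[
\chi_{\tau,\lambda}(D)(h) = \mathrm{smb}_I(D)(\lambda)\circ h = \sigma(u)\circ h = h\circ \tau(u) = \mu\cdot h,
\]
so that $\chi_{\tau,\lambda}(D) = \mu\cdot \mathrm{id}_{\mathrm{Hom}_M(V,W)}$.

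With this scalar description in hand, the first bullet of Remark \ref{rem:elemobs} applies verbatim: every section in $E_\chi(\W_\sigma) = E_{[\tau],\lambda}(\W_\sigma)$ is an eigensection of $D$ with eigenvalue $\mu = \mu_{\sigma,\tau,\lambda}(D)$, which is exactly the claim. There is no real obstacle in the argument; the only slightly non-formal point is the well-definedness of $\mu_{\sigma,\tau,\lambda}(D)$, and that is handled cleanly by the compatibility $[\sigma|_M:\tau]\ge 1$ which is a standing assumption in this section.
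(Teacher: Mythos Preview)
Your proof is correct and follows essentially the same route as the paper: reduce via Remark~\ref{rem:elemobs} and Lemma~\ref{lem:smbaction} to showing that post-composition with $\mathrm{smb}_I(D)(\lambda)=\sigma(u)$ acts on $\mathrm{Hom}_M(V,W)$ by a scalar, and establish that scalar via Schur's lemma and the $M$-equivariance of $h$. The only cosmetic difference is that you phrase the key step through the intertwining identity $\sigma(u)\circ h = h\circ \tau(u)$ and apply Schur on $V$, whereas the paper argues that the image of $h$ sits in the $[\tau]$-isotypic part of $W$ on which $\sigma|_M(u)$ acts by a scalar; you also add the well-definedness check for $\mu_{\sigma,\tau,\lambda}(D)$, which the paper leaves implicit.
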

\begin{proof}Suppose that $\mathrm{smb}_I(D)(\lambda)\in \sigma(\mathcal{U}(\m)^M)$. 
By Remark \ref{rem:elemobs} and Lemma \ref{lem:smbaction}, it suffices to show that there is a number $\mu_{\sigma,\tau,\lambda}(D)\in \C$ with
\bq
\mathrm{smb}_I(D)(\lambda)\circ h=\mu_{\sigma,\tau,\lambda}(D)\cdot h\qquad \forall\; h\in \mathrm{Hom}_M(V,W).\label{eq:rel24789}
\eq 
For any element $u\in \mathcal{U}(\m)\subset\mathcal{U}(\k)$ one has $\sigma(u)\circ h=\sigma|_M(u)\circ h$. On any irreducible component $\sigma|_M(u)$
acts by a scalar, and as $h$ is $M$-equivariant its range has to be contained in those irreducible components that are equivalent to $(\tau,V)$. Consequently, we get \eqref{eq:rel24789} with some $\mu_{\sigma,\tau,\lambda}(D)\in \C$.
\end{proof}
\subsection{Application to the Bochner Laplacian}\label{sec:smbID}
In the following we work out in detail the constructions from the previous section for the Bochner Laplacian $\Delta\in D(G,\sigma)$.
\begin{prop}\label{prop:laplacesmb}For the Bochner Laplacian $\Delta\in D(G,\sigma)$, the polynomial function $\mathrm{smb}_I(\Delta)$ defined in Lemma \ref{lem:smbaction} is given by
 \bqn
\mathrm{smb}_I(\Delta)(\lambda)=\big(\norm{\rho}^2 -\lambda(H_0)^2-c_\sigma \big)\,\mathrm{id}_W + \mathcal{C}_{\m}^\sigma\;\in\mathrm{End}_M(W),\quad \lambda \in\aL^\ast.
\eqn
Here $\rho$ and $\mathcal{C}_{\m}^\sigma$ have been defined in  \eqref{eq:rho} and \eqref{eq:casimirM1}, respectively, and $c_\sigma\in \R$ is the \emph{Casimir invariant} of the representation $\sigma$, as defined in the paragraph after \eqref{eq:bochnerD}.
\end{prop}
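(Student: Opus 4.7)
I plan to apply the algebra map $\widetilde{\mathrm{smb}}_I$ directly to a well-chosen preimage of $\Delta$ under $\mathscr{D}$ and then to collect terms. Using \eqref{eq:bochnerD} we have $\Delta=\mathcal{C}_\g-c_\sigma\,\mathrm{id}=\mathscr{D}(-\Omega_\g-\Omega_\k)$, hence by linearity
\bqn
\mathrm{smb}_I(\Delta)=-\widetilde{\mathrm{smb}}_I(\Omega_\g)-\widetilde{\mathrm{smb}}_I(\Omega_\k).
\eqn
Since $\Omega_\k\in\mathcal{U}(\k)$ lies purely in the $\mathcal{U}(\k)$-factor of $\mathcal{U}(\k)\cdot\mathcal{U}(\aL)$, satisfies $p_-(\Omega_\k)=\Omega_\k$, and is fixed by $\mathrm{opp}$ (being a sum of squares), we get $\widetilde{\mathrm{smb}}_I(\Omega_\k)=\sigma(\Omega_\k)=\mathcal{C}_\k^\sigma=c_\sigma\mathrm{id}_W$ at once, so the real task is to compute $\widetilde{\mathrm{smb}}_I(\Omega_\g)$.

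For that I choose a basis of $\g$ adapted to the Bruhat decomposition: the unit vector $H_0$ spans $\aL$; $\{M_i\}$ is an orthonormal basis of $\m$ with respect to $\langle\cdot,\cdot\rangle|_\m$; and for each $\alpha\in\Sigma_+$ I pick an orthonormal basis $\{X_{\alpha,k}\}_k$ of $\g_\alpha$ with respect to $\langle\cdot,\cdot\rangle$. Setting $Y_{\alpha,k}:=-\theta X_{\alpha,k}\in\g_{-\alpha}$, a short verification shows that $\{Y_{\alpha,k}\}_k$ is then the $\mathfrak{B}$-dual basis of $\{X_{\alpha,k}\}_k$, while the $\mathfrak{B}$-dual of $M_i$ is $-M_i$ since $\mathfrak{B}$ is negative definite on $\k$. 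Inserting this into the defining formula of $\Omega_\g$ yields
\bqn
\Omega_\g=-\Omega_\m+H_0^2+\sum_{\alpha\in\Sigma_+}\sum_k\big(X_{\alpha,k}Y_{\alpha,k}+Y_{\alpha,k}X_{\alpha,k}\big).
\eqn

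To compute $p_-(\Omega_\g)$, observe that $Y_{\alpha,k}\in\nL^-$, so each monomial $X_{\alpha,k}Y_{\alpha,k}$ is already in $\mathcal{U}(\g)\cdot\nL^-$ and is killed by $p_-$. Commuting the reversed product gives $Y_{\alpha,k}X_{\alpha,k}=X_{\alpha,k}Y_{\alpha,k}-[X_{\alpha,k},Y_{\alpha,k}]$, so $p_-(Y_{\alpha,k}X_{\alpha,k})=-[X_{\alpha,k},Y_{\alpha,k}]\in\g_0=\aL\oplus\m$. The crucial sub-claim, which I expect to be the main obstacle, is that the $\m$-component of $[X_{\alpha,k},Y_{\alpha,k}]=-[X_{\alpha,k},\theta X_{\alpha,k}]$ vanishes. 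Writing $[X_{\alpha,k},\theta X_{\alpha,k}]=a+m$ with $a\in\aL$ and $m\in\m$, I apply $\theta$: since $\theta=-\mathrm{id}$ on $\aL\subset\p$ and $\theta=\mathrm{id}$ on $\m\subset\k$, one side gives $\theta(a+m)=-a+m$, while the other side gives $\theta[X_{\alpha,k},\theta X_{\alpha,k}]=[\theta X_{\alpha,k},X_{\alpha,k}]=-(a+m)$, forcing $m=0$. The $\aL$-component is identified from $\mathfrak{B}(H,[X_{\alpha,k},Y_{\alpha,k}])=\alpha(H)\,\mathfrak{B}(X_{\alpha,k},Y_{\alpha,k})=\alpha(H)$, so it equals the vector $H^\alpha\in\aL$ dual to $\alpha$ under $\mathfrak{B}$. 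Summing over $k$ and over $\alpha\in\Sigma_+$ and invoking the definition \eqref{eq:rho} of $\rho$ yields $\sum_{\alpha,k}[X_{\alpha,k},Y_{\alpha,k}]=\sum_{\alpha}m_\alpha H^\alpha=2H^\rho$, hence
\bqn
p_-(\Omega_\g)=-\Omega_\m+H_0^2-2H^\rho.
\eqn

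The flip $I$ acts trivially as each summand is already a pure factor in $\mathcal{U}(\k)$ or $\mathcal{U}(\aL)$. Applying $v_\rho\otimes(\sigma\circ\mathrm{opp})$: the $-\Omega_\m$ summand yields $-\sigma(\Omega_\m)=-\mathcal{C}_\m^\sigma$; the two $\aL$-summands, viewed as polynomial functions on $\aL^\ast$, transform by translation under $v_\rho$ and at $\lambda\in\aL^\ast$ produce $(\lambda(H_0)+\rho(H_0))^2-2(\lambda(H^\rho)+\rho(H^\rho))$. The rank-one identities $H^\rho=\|\rho\|H_0$ and $\rho(H_0)=\|\rho\|$ collapse this polynomial to $\lambda(H_0)^2-\|\rho\|^2$, so $\widetilde{\mathrm{smb}}_I(\Omega_\g)(\lambda)=(\lambda(H_0)^2-\|\rho\|^2)\mathrm{id}_W-\mathcal{C}_\m^\sigma$. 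Combining with $\widetilde{\mathrm{smb}}_I(\Omega_\k)=c_\sigma\mathrm{id}_W$ gives the claimed formula for $\mathrm{smb}_I(\Delta)(\lambda)$.
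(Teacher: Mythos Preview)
Your proof is correct and follows essentially the same route as the paper's: both start from $\Delta=\mathscr{D}(-\Omega_\g-\Omega_\k)$, compute $p_-(\Omega_\g)$ via a root-adapted basis, identify the brackets $[X_{\alpha,k},\theta X_{\alpha,k}]$ as elements of $\aL$, sum to $-2H_\rho$, and then apply $v_\rho$ using the rank-one identities $H_\rho=\|\rho\|H_0$, $\rho(H_0)=\|\rho\|$. The only cosmetic differences are that the paper works with an orthonormal basis $N_i$ of $\nL^-$ (and the Cartan-adapted combinations $\tfrac{1}{\sqrt 2}(N_i\pm\theta N_i)$) and cites \cite[Prop.\ 6.52a]{knapp} for $[N_i,\theta N_i]=-H_{\alpha_i}$, whereas you pick $X_{\alpha,k}\in\g_\alpha$ directly and supply the $\theta$-parity argument yourself to see that $[X_{\alpha,k},\theta X_{\alpha,k}]\in\p\cap\g_0=\aL$.
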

\begin{proof}
Recall from \eqref{eq:bochnerD} in Example \ref{ex:casimir}  that the Bochner Laplacian is obtained as
\bq
\Delta=-\mathscr{D}(\Omega_\g+\Omega_\k)=\mathcal{C}_{\g}+\mathscr{D}(-\Omega_\k).
\eq 
We now have by definition of $\mathrm{smb}_I$
\[
\mathrm{smb}_I(\Delta)\equiv\widetilde{\mathrm{smb}}_I(-\Omega_\g-\Omega_\k)=v_{\rho}\otimes(\sigma\circ\mathrm{opp}))\circ p_-(-\Omega_\g-\Omega_\k).
\]
To compute this polynomial more explicitly, we refine the Cartan decomposition according to
\bq
\g=\k\oplus\p=\m\oplus \m^{\perp_\k}\oplus\aL\oplus \aL^{\perp_\p},\label{eq:refinedcartan}
\eq
 where $\m^{\perp_\k}\subset \k$ is the orthogonal complement of $\m$ in $\k$ and $\aL^{\perp_\p}\subset \p$ is the orthogonal complement of $\aL$ in $\p$. We then choose a basis for $\g$ of the form 
\begin{multline}
\{X_1,\ldots,X_{\dim \g}\}:=\Big\{H_0,M_1,\ldots,M_{\dim \m},
\frac{1}{\sqrt{2}}\big(N_1+\theta N_1\big) ,\ldots,\frac{1}{\sqrt{2}}\big(N_{\dim \nL^-}+\theta N_{\dim \nL^-}\big),\\
\frac{1}{\sqrt{2}}\big(N_1-\theta N_1\big) ,\ldots,\frac{1}{\sqrt{2}}\big(N_{\dim \nL^-}-\theta N_{\dim \nL^-}\big)\Big\},\label{eq:basis1}
\end{multline}
where $\{M_1,\ldots,M_{\dim \m}\}$ is an orthonormal basis of $\m$, $H_0$ the unit vector in $\aL$ introduced in \eqref{eq:H0}, and $\{N_1,\ldots,N_{\dim \nL^-}\}$ an orthonormal basis of $\nL^-$ consisting of restricted-root vectors, meaning that for each $i\in \{1,\ldots,\dim \nL^-\}$ there is a restricted root $\alpha_i\in -\Sigma_{+}$ such that the vector $N_i$ lies in the restricted-root space $\g_{\alpha_i}$. Note that the elements to the right of $M_{\dim \m}$ in the first line of \eqref{eq:basis1} form an orthonormal basis of $\m^{\perp_\k}$ and the elements in the last line of \eqref{eq:basis1} form an orthonormal basis of $\aL^{\perp_\p}$. 
By \eqref{eq:def_casimir} the Casimir element in $\mathcal{U}(\g)$ can be expressed by
\begin{align*}
\Omega_\g&=H_0^2 - \sum_{i=1}^{\dim \m}M_i^2 - \frac{1}{2}\sum_{i=1}^{\dim \nL^-}(N_i+\theta N_i)^2 +  \frac{1}{2}\sum_{i=1}^{\dim \nL^-}(N_i-\theta N_i)^2\\
&=H_0^2 - \sum_{i=1}^{\dim \m}M_i^2 - \sum_{i=1}^{\dim \nL^-}\big(\theta N_i N_i+N_i\theta N_i\big).
\end{align*}
Note that we have
\bq
p_-\bigg(H_0^2 - \sum_{i=1}^{\dim \m}M_i^2\bigg)=H_0^2 - \sum_{i=1}^{\dim \m}M_i^2,\qquad 
p_-(\Omega_\k)=\Omega_\k,\label{eq:proj00}
\eq
since $\Omega_k$ and $H_0^2 - \sum_{i=1}^{\dim \m}M_i^2$ are elements in $\mathcal{U}(\aL)\cdot\mathcal{U}(\k)$. On the other hand, because $\theta N_i N_i\in \mathcal{U}(\g)\otimes\nL^-$, we get 
\bq
p_-\big(\theta N_i N_i\big)=0\qquad \forall\; i\in \{1,\ldots,\dim \nL^-\}\label{eq:proj01}.
\eq
To deal with the summands $N_i\theta N_i$ we write $N_i\theta N_i=[N_i,\theta N_i]+\theta N_iN_i$ 
and apply a basic Lie-theoretic result \cite[Prop.\ 6.52a]{knapp} which says in our context that
\[
[N_i,\theta N_i]=-H_{\alpha_{i}}\in \aL,
\]
where $H_{\alpha_{i}}\in \aL$ is the element corresponding to $\alpha_{i}$ under the isomorphism $\aL\cong \aL^\ast$ provided by the inner product.  We thus have
\[
N_i\theta N_i=\underbrace{-H_{\alpha_{i}}}_{\in\,\aL\subset \mathcal{U}(\aL)\oplus \mathcal{U}(\k)}+\quad\underbrace{\theta N_iN_i}_{\in\, \mathcal{U}(\g)\otimes\nL^-},
\]
from which we read off that $
p_-\big(N_i\theta N_i\big)=-H_{\alpha_{i}}$ for all $i\in \{1,\ldots,\dim \nL^-\}$.  
Combining this with \eqref{eq:proj00} and  \eqref{eq:proj01},  we conclude
\bqn
p_-(\Omega_\g)=H_0^2 + \sum_{i=1}^{\dim \nL^-}H_{\alpha_{i}}- \sum_{i=1}^{\dim \m}M_i^2.
\eqn
Now, for any element $\lambda\in\aL^\ast$ one has
\[
\lambda\bigg(\sum_{i=1}^{\dim \nL^-}H_{\alpha_{i}}\bigg)=\eklm{\sum_{i=1}^{\dim \nL^-}\alpha_{i},\lambda}=\eklm{\sum_{\alpha\in -\Sigma_+}(\dim \g_\alpha) \alpha,\lambda}= -2\eklm{\rho,\lambda}=- 2\lambda(H_\rho),
\]
where $H_\rho\in \aL$ is the vector dual to $\rho\in \aL^\ast$ in the sense that $\rho=\eklm{H_\rho,\cdot}$. This shows
\bqn
p_-(\Omega_\g)=H_0^2 -2H_\rho- \sum_{i=1}^{\dim \mathfrak{m}}M_i^2.
\eqn
Due to the relations 
\begin{align*}
v_{\rho}(H_0^2)&=v_{\rho}(H_0)^2=(H_0+\rho(H_0))^2,\qquad 
v_{\rho}(H_\rho)=H_\rho+\rho(H_\rho)=H_\rho+\norm{\rho}^2,\\
\mathrm{opp}\big(M_i^2\big)&=\mathrm{opp}(M_i)^2=(-M_i)^2=M_i^2\qquad \forall\; i\in \{1,\ldots,\dim \m\}
\end{align*}
and the similarly checked identity $\mathrm{opp}(\Omega_\k)=\Omega_\k$, we arrive at
\begin{align*}
\mathrm{smb}_I(\Delta)&=-(v_{\rho}\otimes(\sigma\circ\mathrm{opp}))\circ p_-(\Omega_\g+\Omega_\k)\\
&=-\big((H_0+\rho(H_0))^2-2H_\rho-2\norm{\rho}^2 \big)\,\mathrm{id}_W+\underbrace{\sum_{i=1}^{\dim \mathfrak{m}}\sigma(M_i)\circ\sigma(M_i)}_{\equiv \mathcal{C}_{\m}^\sigma}\;-\;\sigma(\Omega_\k)\\
&=-\big((H_0+\rho(H_0))^2-2H_\rho-2\norm{\rho}^2 \big)\,\mathrm{id}_W + \mathcal{C}_{\m}^\sigma - \mathcal{C}_{\k}^\sigma.
\end{align*}
To finish the proof, it suffices to observe 
$
 \rho(H_0)=\norm{\rho}$, $H_\rho=\norm{\rho}H_0
$ 
which holds because $\{H_0\}$ is an orthonormal basis of $\aL$ and $\rho$ is a positive linear combination of positive restricted roots.\end{proof}
Proposition \ref{prop:laplacesmb} and Corollary \ref{cor:ordinaryeigenspaces} imply that each of the generalized common eigenspaces $E_{[\tau],\lambda}(\W_\sigma)$ introduced in Definition \ref{def:eigenspaces}  is a subspace of an ordinary eigenspace of the Bochner Laplacian $\Delta\in D(G,\sigma)$. In order to give a formula for the associated eigenvalues, we first need some representation theoretic preparations. 
\subsubsection{Casimir invariants and highest weights}\label{sec:invariantsweights}
In the following we consider root systems of complexified compact Lie algebras; see \cite[p.\ 253-254]{knapp} for their definition. Let $\t^\k\subset \k$, $\t^\m\subset \m$ be maximal abelian subalgebras of the compact Lie algebras $\k$ and $\m$, respectively, and choose systems $\Delta^\k_+$, $\Delta^\m_+$ of positive roots in the respective root systems $\Delta(\k_\C,\t^\k_\C)$ and $\Delta(\m_\C,\t^\m_\C)$. Analogously as $\rho$ was defined in \eqref{eq:rho}, define
\bq
\delta_\k:=\frac{1}{2}\sum_{\alpha \in \Delta^\k_+}\alpha\;\in i(\t^\k)^\ast\subset {\t^\k_\C}^\ast,\qquad 
\delta_\m:=\frac{1}{2}\sum_{\alpha \in \Delta^\m_+}\alpha\;\in i(\t^\m)^\ast\subset {\t^\m_\C}^\ast.\label{eq:rhom}
\eq
By definition, the compact group $K$ is connected. In contrast, the compact group $M$ can be disconnected in general\footnote{For example, if $G=\SL(2,\R)$ with $K=\SO(2)$, one has $M\cong\Z_2$.}. Let $M_0$ be the identity component of $M$. Then $\tau|_{M_0}$ is a representation of $M_0$ on $V$ that might be reducible in general. Therefore, let
\[
V=V^0_1\oplus \cdots\oplus V^0_{N_0},\qquad \tau|_{M_0}=\tau^0_1\oplus\cdots\oplus \tau^0_{N_0}
\]
be the decomposition of $\tau|_{M_0}$ into irreducible $M_0$-representations, i.e., $\tau^0_j$ is a representation $M_0\to \mathrm{End}(V^0_j)$. The theorem of the highest weight for compact connected Lie groups \cite[Thm.\ 5.110]{knapp} says that to the equivalence classes of irreducible representations $[\sigma]\in \hat K$ and $[\tau^0_j]\in \hat M_0$ there correspond highest weights\footnote{In common literature one finds also another notation convention in which the imaginary unit is absorbed into the definition of a highest weight, which is then an element of a real instead of a purely imaginary dual Lie algebra.}    $$\omega_{[\sigma]}\in i(\t^\k)^\ast\subset {\t^\k_\C}^\ast,\qquad \omega_{[\tau^0_j]}\in i(\t^\m)^\ast\subset {\t^\m_\C}^\ast$$
with respect to the orderings in $i(\t^\k)^\ast$ and $i(\t^\m)^\ast$ that we fixed by choosing the positive systems $\Delta^\k_+$ and $\Delta^\m_+$. 
Let now $c_\sigma$, $c_\tau\in \R$ be the \emph{Casimir invariants} of the $K$-repre\-sentation $\sigma$ and the $M$-representation $\tau$ with respect to the bilinear forms $\eklm{\cdot,\cdot}|_{\k\times \k}$ and $\eklm{\cdot,\cdot}|_{\m\times \m}$, respectively, defined by the relations
\bq
\sigma(\Omega_{\k})=c_\sigma\cdot\mathrm{id}_W,\qquad \tau(\Omega_{\m})=c_\tau\cdot\mathrm{id}_V,\label{eq:defcs}
\eq
where the Casimir elements $\Omega_\k\in \mathcal{U}(\k)^K$, $\Omega_\m\in \mathcal{U}(\m)^M$ have been defined in \eqref{eq:OmegaM}. In addition, denote for $j\in \{1,\ldots,N_0\}$ by $c_{\tau^0_j}\in \R$ the Casimir invariant of the $M_0$-representation $\tau_j^0$ with respect to the bilinear form $\eklm{\cdot,\cdot}|_{\m\times \m}$, defined by
\[
\tau^0_j(\Omega_{\m})=c_{\tau_j^0}\cdot\mathrm{id}_{V^0_j}.
\]
\begin{lem}\label{lem:casimirinvariants}
The Casimir invariants with respect to $\eklm{\cdot,\cdot}|_{\m\times \m}$ of the irreducible $M$-representation $\tau$ and all the irreducible $M_0$-re\-pre\-sen\-tations $\tau^0_j$ agree. They are given by
\[
c_\tau=c_{\tau_j^0}=\norm{i\delta_\m}^2-\big\Vert i\omega_{[\tau^0_j]}+i\delta_\m\big\Vert^2,
\]
which is the same number for all $j\in \{1,\ldots,N_0\}$. Similarly, the Casimir invariant with respect to $\eklm{\cdot,\cdot}|_{\k\times \k}$ of the irreducible $K$-representation $\sigma$ is given by
\[
c_\sigma=\norm{i\delta_\k}^2-\big\Vert i\omega_{[\sigma]}+i\delta_\k\big\Vert^2.
\]
\end{lem}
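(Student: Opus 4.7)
The plan has three steps. First, I would treat the representation $\tau$ of the (possibly disconnected) compact group $M$. The key observation is that $\Omega_\m = \sum_i M_i^2$ lies in $\mathcal{U}(\m)^M$: since the inner product $\eklm{\cdot,\cdot}|_{\m\times\m} = -\mathfrak{B}|_{\m\times\m}$ is $\Ad(M)$-invariant (because $\mathfrak{B}$ is $\Ad$-invariant and $\Ad(M)$ preserves $\m$), the Casimir element is invariant under the full $\Ad(M)$-action, not merely $\Ad(M_0)$. Consequently, $\tau(\Omega_\m)$ commutes with every $\tau(m)$, $m\in M$, and Schur's lemma (applied to the irreducible $M$-representation $\tau$) gives $\tau(\Omega_\m) = c_\tau \cdot \mathrm{id}_V$. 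Restricting this identity to each $M_0$-invariant subspace $V^0_j \subset V$ yields $\tau^0_j(\Omega_\m) = c_\tau \cdot \mathrm{id}_{V^0_j}$, so $c_{\tau^0_j} = c_\tau$ for every $j \in \{1,\ldots, N_0\}$.

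Second, I would invoke the Freudenthal formula for the Casimir eigenvalue on an irreducible representation of a compact connected Lie group. Both $K$ (connected by assumption) and $M_0$ (the identity component) are compact and connected, so the theorem of the highest weight applies. With our conventions, namely that $\eklm{\cdot,\cdot}$ is positive definite on $\k$ and $\m$, the Casimir has the form $\sum X_i^2$ for an orthonormal basis, and highest weights live in $i\mathfrak{t}^*$, the formula reads
\[
\pi(\Omega) = \bigl(\|i\delta\|^2 - \|i\omega_{[\pi]} + i\delta\|^2\bigr)\,\mathrm{id},
\]
with $\omega_{[\pi]}$ the highest weight and $\delta$ half the sum of positive roots. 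The sign is consistent with unitarity: each $\pi(X_i)$ is skew-Hermitian, whence $\pi(\Omega)\leq 0$, while $\|i\omega_{[\pi]}+i\delta\|^2 \geq \|i\delta\|^2$, with equality only for the trivial representation. A quick sanity check with $K = \SU(2)$ and its spin-$\tfrac12$ representation gives $\pi(\Omega_\k) = -\tfrac38\,\mathrm{id}$ and $\|i\delta\|^2 - \|i\omega + i\delta\|^2 = \tfrac18 - \tfrac12 = -\tfrac38$, confirming both formula and sign. Applying this formula to the irreducible $K$-representation $\sigma$ gives the stated expression for $c_\sigma$.

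Finally, applying the same Freudenthal formula to each irreducible $M_0$-subrepresentation $\tau^0_j$ of $\tau|_{M_0}$ yields $c_{\tau^0_j} = \|i\delta_\m\|^2 - \|i\omega_{[\tau^0_j]} + i\delta_\m\|^2$, and combining with Step~1 gives the asserted formula for $c_\tau$. It is worth noting that the highest weights $\omega_{[\tau^0_j]}$ for different $j$ need not coincide: the adjoint action of $M/M_0$ on $\m$ may permute the subspaces $V^0_j$ and correspondingly permute their highest weights (possibly through Weyl group elements). However, the equality $c_{\tau^0_j} = c_\tau$ forces the expression $\|i\omega_{[\tau^0_j]} + i\delta_\m\|^2$ to be $j$-independent, which is consistent with the $\Ad(M)$-invariance of the inner product. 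The main technical obstacle I anticipate is the careful bookkeeping of sign conventions in Freudenthal's formula, as these depend sensitively on whether one uses $\pm\mathfrak{B}$, whether the Casimir is built from orthonormal bases or mutually dual bases, and whether highest weights are taken in $\mathfrak{t}^*$ or $i\mathfrak{t}^*$; once the conventions are aligned with those used throughout the paper, the computation is routine.
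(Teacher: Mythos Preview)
Your proposal is correct and follows essentially the same approach as the paper's proof: both use Schur's lemma for the irreducible $M$-representation $\tau$ to see that $\tau(\Omega_\m)$ is a scalar, observe that its restriction to each $V^0_j$ gives $\tau^0_j(\Omega_\m)$ (since $\Omega_\m$ lies in $\mathcal{U}(\m)$ and only the infinitesimal action matters), and then invoke the standard highest-weight formula for the Casimir eigenvalue on the connected groups $K$ and $M_0$ (the paper cites \cite[Prop.~4 on p.~358]{bourbaki7-9} for what you call Freudenthal's formula). The only difference is cosmetic ordering: the paper first computes each $c_{\tau^0_j}$ and then argues they coincide, whereas you first establish coincidence and then compute.
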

\begin{proof}
As the $M$-invariant Casimir element $\Omega_\m\in \mathcal{U}(\m)^M$ and the $M$-invariant bilinear form $\eklm{\cdot,\cdot}|_{\m\times \m}$ are both also $M_0$-invariant, a basic result in representation theory \cite[Prop.\ 4 on p.\ 358]{bourbaki7-9} says that
\[
c_{\tau^0_j}=\norm{i\delta_\m}^2-\Vert i\omega_{[\tau^0_j]}+i\delta_\m\Vert^2,\qquad j\in \{1,\ldots,N_0\}.
\]
Now, since the infinitesimal actions of $M$ and $M_0$ on $\m$ agree, we have
\bq
\tau(\Omega_\m)=\tau_1^0(\Omega_\m)\oplus\cdots\oplus\tau_{N_0}^0(\Omega_\m):V\to V,\label{eq:82951029501}
\eq
where $\tau(\Omega_\m)$ as well as the $\tau^0_{j}(\Omega_\m)$ act as scalar multiples of the identity on their corresponding vector spaces. The relation \eqref{eq:82951029501} can be valid only if all the scalars are the same. This proves the first half of the Lemma. The formula for $c_{\sigma}$ is another direct application of \cite[Prop.\ 4 on p.\ 358]{bourbaki7-9} since $K$ is connected and $\eklm{\cdot,\cdot}|_{\k\times \k}$, $\Omega_{\k}$ are both $K$-invariant.
\end{proof}
With the preparations from the previous subsection, we can prove
\begin{cor}[{Compare \cite[Lemma 2.14]{olbrichdiss}}]\label{cor:eigDelta1}For every $\lambda \in \aL^\ast_\C$ one has\
\[
E_{[\tau],\lambda}(\W_\sigma)\subset \mathrm{Eig}(\Delta,\mu_{[\sigma],[\tau],\lambda}),\qquad E_{[\tau],\lambda}(\gam\W_\sigma)\subset \mathrm{Eig}(\Delta_\Gamma,\mu_{[\sigma],[\tau],\lambda}),
\]
where the right hand sides denote the eigenspaces of the Bochner Laplacian $\Delta\in D(G,\sigma)$ and the induced Bochner Laplacian $\Delta_\Gamma$, acting on smooth sections of $\gam\W_\sigma$, with the eigenvalue
\bq
\mu_{[\sigma],[\tau],\lambda}=\norm{\rho}^2-\lambda(H_0)^2+\norm{i\omega_{[\sigma]}+i\delta_\k}^2-\big\Vert i\omega_{[\tau_1^0]}+i\delta_\m\big\Vert^2+\norm{i\delta_\m}^2-\norm{i\delta_\k}^2.\label{eq:evlaplace}
\eq
Here $\rho$ and $\delta_\k,\delta_\m\in i\k^\ast$ have been defined in  \eqref{eq:rho} and \eqref{eq:rhom}, respectively, and $\omega_{[\sigma]},\omega_{[\tau_1^0]}\in i\k^\ast$ are the highest weights associated to $[\sigma]\in \hat K$ and $[\tau_1^0]\in \hat M_0$ as introduced in Section \ref{sec:invariantsweights}. 
\end{cor}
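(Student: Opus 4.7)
The plan is to assemble this corollary directly from three earlier results: Proposition~\ref{prop:laplacesmb}, which computes $\mathrm{smb}_I(\Delta)$ explicitly; Corollary~\ref{cor:ordinaryeigenspaces}, which turns a symbol lying in $\sigma(\mathcal{U}(\m)^M)$ into an ordinary eigenvalue statement; and Lemma~\ref{lem:casimirinvariants}, which expresses the Casimir invariants $c_\sigma$ and $c_\tau$ in terms of highest weights.

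First I would observe that by Proposition~\ref{prop:laplacesmb} we have
\[
\mathrm{smb}_I(\Delta)(\lambda) = \bigl(\norm{\rho}^2 - \lambda(H_0)^2 - c_\sigma\bigr)\,\mathrm{id}_W + \mathcal{C}_\m^\sigma \;\in\; \mathrm{End}_M(W).
\]
The key observation is that this endomorphism manifestly lies in the image $\sigma(\mathcal{U}(\m)^M)$: indeed, $\mathrm{id}_W = \sigma(1)$ and $\mathcal{C}_\m^\sigma = \sigma(\Omega_\m)$ with $1,\Omega_\m \in \mathcal{U}(\m)^M$, so we may choose
\[
u := \bigl(\norm{\rho}^2 - \lambda(H_0)^2 - c_\sigma\bigr)\cdot 1 + \Omega_\m \;\in\; \mathcal{U}(\m)^M.
\]
The hypothesis of Corollary~\ref{cor:ordinaryeigenspaces} is therefore met, which yields the inclusion $E_{[\tau],\lambda}(\W_\sigma) \subset \mathrm{Eig}(\Delta,\mu_{[\sigma],[\tau],\lambda})$ with eigenvalue determined by $\tau(u) = \mu_{[\sigma],[\tau],\lambda}\cdot\mathrm{id}_V$.

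Next I would compute $\tau(u)$ explicitly. By linearity and the definition of the Casimir invariant $c_\tau$ via $\tau(\Omega_\m) = c_\tau\cdot\mathrm{id}_V$ from \eqref{eq:defcs}, this gives
\[
\mu_{[\sigma],[\tau],\lambda} = \norm{\rho}^2 - \lambda(H_0)^2 - c_\sigma + c_\tau.
\]
Now I substitute the highest-weight expressions from Lemma~\ref{lem:casimirinvariants}, namely $c_\sigma = \norm{i\delta_\k}^2 - \norm{i\omega_{[\sigma]} + i\delta_\k}^2$ and $c_\tau = \norm{i\delta_\m}^2 - \bigl\Vert i\omega_{[\tau_1^0]} + i\delta_\m\bigr\Vert^2$ (Lemma~\ref{lem:casimirinvariants} guarantees that the choice of $j$ in $\tau_j^0$ is immaterial, justifying the choice $j=1$ in the statement). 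Rearranging yields exactly formula~\eqref{eq:evlaplace}.

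Finally, for the quotient statement I would argue as follows: every element of $D(G,\sigma)$ is by definition $G$-invariant and hence descends to a well-defined differential operator on $\Gamma\backslash\W_\sigma$; in particular $\Delta$ descends to $\Delta_\Gamma$. By the defining relation \eqref{eq:EpmGamma}, any $s \in E_{[\tau],\lambda}(\Gamma\backslash\W_\sigma)$ lifts to a $\Gamma$-invariant $\tilde s \in {^\Gamma}E_{[\tau],\lambda}(\W_\sigma) \subset E_{[\tau],\lambda}(\W_\sigma)$, and the already-established eigenvalue equation $\Delta \tilde s = \mu_{[\sigma],[\tau],\lambda}\tilde s$ descends to $\Delta_\Gamma s = \mu_{[\sigma],[\tau],\lambda} s$, giving the second inclusion. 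No step presents a serious obstacle; the whole argument is essentially bookkeeping, with the one mild subtlety being to notice that the symbol $\mathrm{smb}_I(\Delta)(\lambda)$ lands in the subalgebra $\sigma(\mathcal{U}(\m)^M) \subset \mathrm{End}_M(W)$ so that Corollary~\ref{cor:ordinaryeigenspaces} applies.
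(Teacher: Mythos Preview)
Your proof is correct and follows essentially the same route as the paper: combine Proposition~\ref{prop:laplacesmb} with Corollary~\ref{cor:ordinaryeigenspaces} to get the eigenvalue $\norm{\rho}^2-\lambda(H_0)^2+c_\tau-c_\sigma$, then invoke Lemma~\ref{lem:casimirinvariants} for the highest-weight expression, and finally use \eqref{eq:EpmGamma} for the $\Gamma$-quotient. Your write-up is slightly more explicit (spelling out the element $u\in\mathcal{U}(\m)^M$), but the logic is identical.
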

\begin{proof}
Corollary \ref{cor:ordinaryeigenspaces} and Proposition \ref{prop:laplacesmb} yield 
$
E_{[\tau],\lambda}(\W_\sigma)\subset \mathrm{Eig}(\Delta,\mu_{[\sigma],[\tau],\lambda})
$ 
with \[
\mu_{[\sigma],[\tau],\lambda}=\norm{\rho}^2-\lambda(H_0)^2+c_{\tau}-c_{\sigma},
\]
where the Casimir invariants $c_{\tau}$ and $c_{\sigma}$ were introduced in \eqref{eq:defcs}. The claimed formula for the eigenvalue now follows from Lemma \ref{lem:casimirinvariants}. To get the statement on the $\Gamma$-quotient, it suffices to recall \eqref{eq:EpmGamma}.
\end{proof}
\begin{cor}\label{cor:eigfinite}The generalized common eigenspaces
$E_{[\tau],\lambda}(\gam\W_\sigma)$ are finite-dimen\-sional and consist of smooth sections of $\gam\W_\sigma$.
\end{cor}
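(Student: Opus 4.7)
The plan is to reduce the corollary to standard elliptic theory on a compact manifold. Since $\Gamma\subset G$ is cocompact and torsion-free, the double quotient $\Gamma\backslash G/K$ is a compact smooth manifold, and $\Gamma\backslash\W_\sigma$ is a smooth Hermitian vector bundle over it. The Bochner Laplacian $\Delta_\Gamma$ descending from $\Delta\in D(G,\sigma)$ is a formally self-adjoint, positive elliptic second-order differential operator on $\Gamma^\infty(\Gamma\backslash\W_\sigma)$.

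First, I would invoke Corollary \ref{cor:eigDelta1} to obtain the inclusion
\bqn
E_{[\tau],\lambda}(\Gamma\backslash\W_\sigma)\subset \ker_{\Gamma^\infty(\Gamma\backslash\W_\sigma)}\!\big(\Delta_\Gamma-\mu_{[\sigma],[\tau],\lambda}\big).
\eqn
The smoothness claim is then built into the very definition: by Definition \ref{def:eigenspaces} and \eqref{eq:EpmGamma}, every element of $E_{[\tau],\lambda}(\Gamma\backslash\W_\sigma)$ is by construction a smooth section of $\Gamma\backslash\W_\sigma$. Alternatively, elliptic regularity for $\Delta_\Gamma$ would give the same conclusion starting from merely distributional eigensections.

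Second, for finite-dimensionality I would apply the classical spectral theorem for formally self-adjoint elliptic operators on sections of a Hermitian vector bundle over a compact manifold without boundary: the unique self-adjoint extension of $\Delta_\Gamma$ in $\L^2(\Gamma\backslash\W_\sigma)$ has purely discrete spectrum with finite-dimensional eigenspaces consisting of smooth sections. Since $E_{[\tau],\lambda}(\Gamma\backslash\W_\sigma)$ sits inside the eigenspace of $\Delta_\Gamma$ with eigenvalue $\mu_{[\sigma],[\tau],\lambda}$, it is itself finite-dimensional.

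I do not anticipate any serious obstacle here: all the substantive work has been absorbed into the identification of a single eigenvalue of $\Delta_\Gamma$ annihilating $E_{[\tau],\lambda}(\Gamma\backslash\W_\sigma)$, which was carried out via the symbol map $\mathrm{smb}_I$ in Proposition \ref{prop:laplacesmb} and Corollary \ref{cor:eigDelta1}. The present statement is a direct combination of that inclusion with cocompactness of $\Gamma$ and the spectral theory of elliptic operators on compact manifolds.
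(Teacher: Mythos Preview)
Your proposal is correct and matches the paper's own proof essentially line for line: invoke Corollary~\ref{cor:eigDelta1} to embed $E_{[\tau],\lambda}(\Gamma\backslash\W_\sigma)$ into an eigenspace of the elliptic operator $\Delta_\Gamma$ on the compact manifold $\Gamma\backslash G/K$, then appeal to finite-dimensionality of eigenspaces and elliptic regularity. Your additional remark that smoothness is already built into Definition~\ref{def:eigenspaces} is a valid observation, though the paper simply cites elliptic regularity.
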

\begin{proof}
By Corollary \ref{cor:eigDelta1}, the  $E_{[\tau],\lambda}(\gam\W_\sigma)$ are subspaces of eigenspaces of $\Delta_\Gamma$. As an elliptic operator on the compact manifold $\gam G/K$, $\Delta_\Gamma$ has finite-dimensional eigenspaces. By elliptic regularity, the eigenvectors of $\Delta_\Gamma$ are smooth sections.
\end{proof}
As an immediate consequence of Theorem \ref{thm:main} and Corollary \ref{cor:eigDelta1} we obtain
\begin{thm}\label{thm:main2}
Let $\lambda\in \C$ be a Pollicott-Ruelle resonance on $\gam\V_\tau$ in the first band. Then, for every first band resonant state $s\in \mathrm{Res}^{0}_{\,\gam\V_{\tau}}(\lambda)$, the section
\[
{{^\Gamma}h_{\Pi}}_\ast(s)\in E_{[\tau], -\lambda\nu_0-\rho }(\Gamma\backslash\W_\sigma)
\]
is an eigensection of the Bochner Laplacian $\Delta_\Gamma$ with the eigenvalue
\[
\norm{\rho}^2-(\lambda+\norm{\rho})^2+\norm{i\omega_{[\sigma]}+i\delta_\k}^2-\big\Vert i\omega_{[\tau_1^0]}+i\delta_\m\big\Vert^2+\norm{i\delta_\m}^2-\norm{i\delta_\k}^2.
\]
In particular, ${{^\Gamma}h_{\Pi}}_\ast(s)$ is a smooth section.
\qed
\end{thm}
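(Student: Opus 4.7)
The plan is to assemble the statement by directly combining Theorem \ref{thm:main} (pushforwards of first band resonant states land in generalized common eigenspaces) with Corollary \ref{cor:eigDelta1} (every generalized common eigenspace $E_{[\tau],\mu}(\gam\W_\sigma)$ sits inside a genuine Bochner-Laplace eigenspace with eigenvalue $\mu_{[\sigma],[\tau],\mu}$ given by \eqref{eq:evlaplace}).

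First I would translate between the two conventions for the spectral parameter. A Pollicott-Ruelle resonance is a complex number $\lambda\in\C$, whereas the parameters in Sections \ref{sec:princseries}--\ref{sec:studysmb} live in $\aL^\ast_\C$. The canonical bridge is to send $\lambda \mapsto \lambda\nu_0 \in \aL^\ast_\C$, since by \eqref{eq:nu0} and the definition of $H_0$ one has $\nu_0(H_0)=1$, hence $(\lambda\nu_0)(H_0)=\lambda$. With this identification, Theorem \ref{thm:main} (applied to $\lambda\nu_0$) gives us the inclusion
\[
{{^\Gamma}h_{\Pi}}_\ast\bigl(\mathrm{Res}^{0}_{\,\gam\V_{\tau}}(\lambda)\bigr)\subset E_{[\tau],\,-\lambda\nu_0-\rho}(\gam\W_\sigma),
\]
which is precisely the codomain appearing in the statement.

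Next I would invoke Corollary \ref{cor:eigDelta1} with parameter $\mu:=-\lambda\nu_0-\rho\in\aL^\ast_\C$. That corollary asserts that $E_{[\tau],\mu}(\gam\W_\sigma)$ is contained in the eigenspace of $\Delta_\Gamma$ for the eigenvalue $\mu_{[\sigma],[\tau],\mu}$ given by \eqref{eq:evlaplace}. The only thing left is to evaluate $\mu(H_0)^2$ explicitly: using $\nu_0(H_0)=1$ and the identity $\rho(H_0)=\|\rho\|$ (already recorded at the end of the proof of Proposition \ref{prop:laplacesmb}, since $\{H_0\}$ is an orthonormal basis of $\aL$ and $\rho$ is proportional to $\nu_0$), one computes $\mu(H_0)=-\lambda-\|\rho\|$, whence $\mu(H_0)^2=(\lambda+\|\rho\|)^2$. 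Substituting this into \eqref{eq:evlaplace} produces exactly the eigenvalue stated in the theorem, with the $-\lambda(H_0)^2$ term replaced by $-(\lambda+\|\rho\|)^2$.

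The smoothness assertion is then immediate from Corollary \ref{cor:eigfinite}: every element of $E_{[\tau],\mu}(\gam\W_\sigma)$ is a smooth section, because the Bochner Laplacian on the compact quotient $\gam G/K$ is elliptic, so elliptic regularity upgrades distributional eigensections to smooth ones. There is really no obstacle here; the entire proof is a bookkeeping exercise packaging the two main results of the preceding subsections. The only mildly delicate point is the conversion between $\lambda\in\C$ and $\lambda\nu_0\in\aL^\ast_\C$ and the resulting shift by $\|\rho\|$, but once that identification is made the eigenvalue formula in the theorem matches \eqref{eq:evlaplace} term by term.
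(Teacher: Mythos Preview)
Your proposal is correct and matches the paper's approach exactly: the paper presents this theorem as an ``immediate consequence of Theorem \ref{thm:main} and Corollary \ref{cor:eigDelta1}'' with no further argument, and your write-up simply makes explicit the parameter conversion $\lambda\leftrightarrow\lambda\nu_0$ and the evaluation $(-\lambda\nu_0-\rho)(H_0)=-(\lambda+\|\rho\|)$ that the paper leaves implicit. The appeal to Corollary \ref{cor:eigfinite} for smoothness is likewise what the paper intends.
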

\begin{cor}\label{cor:re_lambda_-rho}
 Let $\mathcal V_\tau$ be the vector bundle 
 associated to a (not necessarily irreducible) finite-dimensional unitary $M$-representation $\tau$. If $\lambda\in\C$ is a first band Pollicott-Ruelle resonance on $\gam\mathcal V_\tau$, then either $\Im(\lambda)=0$ or $\Re(\lambda)=-\norm{\rho}$.
\end{cor}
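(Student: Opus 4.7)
The plan is to reduce to the irreducible case via Lemma \ref{lem:naturality2} and then read off the restriction on $\lambda$ from the explicit Bochner-eigenvalue formula in Theorem \ref{thm:main2}, exploiting self-adjointness of the Bochner Laplacian on the compact quotient. First, decompose $\tau=\bigoplus_j \tau_j$ into irreducible unitary $M$-representations; by Lemma \ref{lem:naturality2} one has $\mathrm{Res}^{0}_{\,\gam\V_{\tau}}(\lambda)\cong\bigoplus_j \mathrm{Res}^{0}_{\,\gam\V_{\tau_j}}(\lambda)$, so if $\lambda$ is a first band resonance on $\gam\V_\tau$ it is one on $\gam\V_{\tau_j}$ for at least one $j$, and we may assume $\tau$ irreducible. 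Next, I would pick an irreducible unitary $K$-representation $\sigma$ compatible with $\tau$ in the sense $[\sigma|_M:\tau]\geq 1$; such a $\sigma$ always exists since, by Frobenius reciprocity, the non-zero $K$-representation $\mathrm{Ind}_M^K\tau$ contains some irreducible component $\sigma$, and for this $\sigma$ one has $[\sigma|_M:\tau]\geq 1$.

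Assume now that $\Im(\lambda)\neq 0$; we have to show $\Re(\lambda)=-\norm{\rho}$. The key observation is that the exceptional set of potentially non-semi-regular parameters appearing in Theorem \ref{thm:rough} is contained in $-\norm{\rho}+\frac{1}{N_\sigma}\Z\subset\R$, so every non-real $\lambda$ is automatically semi-regular. By Definition \ref{def:regular} this means that $\mathcal{P}^{\sigma}_{\tau,-\lambda\nu_0-\rho}$ is injective, and a fortiori its restriction $^\Gamma\mathcal{P}^{\sigma}_{\tau,-\lambda\nu_0-\rho}(h\otimes\cdot)$ to $\Gamma$-invariants and to a fixed non-zero slice is injective for every non-zero $h\in\mathrm{Hom}_M(V,W)$; the latter space is non-trivial by the compatibility of $\tau,\sigma$ together with Lemma \ref{lem:homhom}. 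Theorem \ref{thm:main} then yields injectivity of the natural pushforward
\[
{{^\Gamma}h_{\Pi}}_\ast : \mathrm{Res}^{0}_{\,\gam\V_{\tau}}(\lambda) \longrightarrow E_{[\tau],-\lambda\nu_0-\rho}(\Gamma\backslash\W_\sigma).
\]

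For the final step, pick any non-zero $s\in\mathrm{Res}^{0}_{\,\gam\V_{\tau}}(\lambda)$; by the injectivity just established, ${{^\Gamma}h_{\Pi}}_\ast(s)$ is a non-zero smooth eigensection of the Bochner Laplacian $\Delta_\Gamma$ on the Hermitian bundle $\gam\W_\sigma$ over the compact Riemannian manifold $\gam G/K$, with eigenvalue
\[
\mu=\norm{\rho}^2-(\lambda+\norm{\rho})^2+\norm{i\omega_{[\sigma]}+i\delta_\k}^2-\big\Vert i\omega_{[\tau_1^0]}+i\delta_\m\big\Vert^2+\norm{i\delta_\m}^2-\norm{i\delta_\k}^2,
\]
by Theorem \ref{thm:main2}. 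Since $\Delta_\Gamma$ is self-adjoint in $\L^2(\gam G/K,\gam\W_\sigma)$, we have $\mu\in\R$; as all remaining terms on the right-hand side are manifestly real, this forces $(\lambda+\norm{\rho})^2\in\R$. Writing $\lambda+\norm{\rho}=a+ib$ with $a,b\in\R$, the identity $(a+ib)^2=a^2-b^2+2iab\in\R$ gives $ab=0$, and the hypothesis $b=\Im(\lambda)\neq 0$ then forces $a=\Re(\lambda)+\norm{\rho}=0$, i.e.\ $\Re(\lambda)=-\norm{\rho}$. The hard part of the plan is ensuring that the pushforward does not vanish identically on $\mathrm{Res}^{0}_{\,\gam\V_{\tau}}(\lambda)$ for \emph{every} non-real $\lambda$, and this is precisely where the confinement of the exceptional set in Theorem \ref{thm:rough} to the real axis does all the work.
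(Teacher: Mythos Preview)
Your argument is correct and follows the same route as the paper's proof: reduce to irreducible $\tau$, invoke Theorem~\ref{thm:main2}, and use that the Bochner Laplacian on the compact quotient has only real eigenvalues to force $(\lambda+\norm{\rho})^2\in\R$. The one point you make explicit that the paper leaves tacit is the non-vanishing of the pushforward for non-real $\lambda$, which you handle correctly via Theorem~\ref{thm:rough} (the exceptional set sits in $\R$, so every non-real $\lambda$ is semi-regular and hence ${{^\Gamma}h_{\Pi}}_\ast$ is injective by Theorem~\ref{thm:main}).
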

\begin{proof}
 By decomposition into irreducibles, we can reduce the problem to 
 an irreducible representation. The statement then follows from 
 Theorem~\ref{thm:main2} and the fact that all eigenvalues of the Bochner Laplacian are real-valued. 
\end{proof}
\section{Band structure}\label{sec:band}
In the following we show how the description of the first band of vector-valued 
Pollicott-Ruelle resonances from Theorem~\ref{thm:main2} allows to deduce a result of a general
band structure. In this section, let $\tau$ be a fixed arbitrary finite-dimensional 
unitary representation of $M$. As
we do not require irreducibility of $\tau$, $\mathcal V_\tau$ is an arbitrary associated vector bundle over $G/M$. We shall prove
\begin{thm}\label{thm:band}
If $\lambda\in \C$  is a Pollicott-Ruelle resonance on  $\gam \mathcal V_\tau$, then either $\Im(\lambda) = 0$ or 
$\Re(\lambda) \in - \|\rho\| - \N_0\|\alpha_0\|$. Furthermore, if 
$s\in \tu{Res}_{\gam\mathcal V_\tau}(-\|\rho\| + ir)$, $r\in \R\setminus\{0\}$, then 
for all $\X \in \Gamma^\infty(\gam E_-)$ we have $\X s=0$, i.e.,
\begin{equation}\label{eq:first_band_is_first_band}
\tu{Res}_{\gam\mathcal V_\tau}(-\|\rho\| + ir) = \tu{Res}^0_{\gam\mathcal V_\tau}(-\|\rho\| + ir).
\end{equation}
\end{thm}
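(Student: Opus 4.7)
The plan is to adapt the horocycle operator technique of \cite{dfg} to the vector-valued setting, using differential operators in the unstable directions to shift Pollicott-Ruelle resonances into the first band, where Corollary \ref{cor:re_lambda_-rho} applies. Lifting a distributional section $s\in \D'(\gam G/M,\gam\V_\tau)$ to a right-$M$-equivariant distribution $\bar s\in \D'(\gam G,V)^M$, the resonance equation $({_\Gamma}\Xbf+\lambda)s=0$ reads $R_{H_0}\bar s=-\lambda\bar s$, where $R_X$ denotes the left-invariant vector field on $\gam G$ generated by $X\in \g$. Using the local triviality of the principal $M$-bundle $\gam G\to \gam G/M$ together with an $M$-invariant cutoff, one verifies that the first band condition $\nabla_\X s=0$ for all $\X\in \Gamma^\infty(\gam E_-)$ is equivalent to $R_{\X_0}\bar s=0$ for every $\X_0\in \nL^-$.

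The key algebraic input is that for $\X_0\in \g_\alpha\subset \nL^-$ with $\alpha\in -\Sigma_+$, so $\alpha(H_0)\in\{-\|\alpha_0\|,-2\|\alpha_0\|\}$, left-invariance gives $[R_{H_0},R_{\X_0}]=R_{[H_0,\X_0]}=\alpha(H_0)\, R_{\X_0}$, whence
\[
R_{H_0}(R_{\X_0}\bar s)=-\bigl(\lambda+|\alpha(H_0)|\bigr)\, R_{\X_0}\bar s
\]
whenever $R_{H_0}\bar s=-\lambda\bar s$. The derivatives $R_{\X_0}\bar s$ are not individually $M$-equivariant, so I would package basis derivatives into the horocycle operator
\[
T_\alpha\bar s:=\sum_i R_{\X_i}\bar s\otimes \X_i^*,\qquad \{\X_i\}\text{ basis of }\g_\alpha,
\]
and check using the identity $R_{\X}R_m=R_m R_{\Ad(m^{-1})\X}$ that $T_\alpha\bar s$ is right-$M$-equivariant with respect to $\tau_\alpha:=\tau\otimes (\Ad|_{\g_\alpha})^*$, which is unitarisable since $M$ is compact. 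Since differential operators preserve wave front sets, $T_\alpha$ restricts to a well-defined linear map
\[
T_\alpha:\tu{Res}_{\gam\V_\tau}(\lambda)\longrightarrow \tu{Res}_{\gam\V_{\tau_\alpha}}\bigl(\lambda+|\alpha(H_0)|\bigr),
\]
and $T_\alpha\bar s=0$ iff $R_{\X_0}\bar s=0$ for all $\X_0\in \g_\alpha$.

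Iterating horocycle operators $T_\alpha$, with possibly changing $\alpha\in\{-\alpha_0,-2\alpha_0\}$ at each step, produces resonant states on successively twisted bundles at resonances shifted by non-negative integer multiples of $\|\alpha_0\|$. The iteration must terminate in finitely many steps, since otherwise one would obtain resonances with arbitrarily large real part, contradicting the holomorphy of the resolvent on $\{\Re(\lambda)>0\}$ from Proposition \ref{prop:merom_resolvent}. At the last non-zero stage one obtains a non-zero \emph{first band} resonant state on a twisted bundle $\gam\V_{\tau'}$ at resonance $\lambda+k_0\|\alpha_0\|$ for some $k_0\in \N_0$. Corollary \ref{cor:re_lambda_-rho} applied on $\gam\V_{\tau'}$ forces $\Im(\lambda+k_0\|\alpha_0\|)=0$ or $\Re(\lambda+k_0\|\alpha_0\|)=-\|\rho\|$, which pulls back to $\Im(\lambda)=0$ or $\Re(\lambda)\in -\|\rho\|-\N_0\|\alpha_0\|$, proving the first assertion. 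For \eqref{eq:first_band_is_first_band}, suppose $s\in\tu{Res}_{\gam\V_\tau}(-\|\rho\|+ir)$ with $r\neq 0$ were to satisfy $T_\alpha\bar s\neq 0$ for some $\alpha$. Then $T_\alpha\bar s$ would be a non-zero resonant state on $\gam\V_{\tau_\alpha}$ at $-\|\rho\|+|\alpha(H_0)|+ir$, whose non-zero imaginary part $r$ and real part $-\|\rho\|+|\alpha(H_0)|>-\|\rho\|$ contradict the first assertion applied to $\gam\V_{\tau_\alpha}$. Hence $T_\alpha\bar s=0$ for every admissible $\alpha$, equivalently $\nabla_\X s=0$ for all $\X\in \Gamma^\infty(\gam E_-)$.

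The principal technical obstacle is setting up the horocycle operator $T_\alpha$ and its iterates rigorously while tracking the correct $M$-representation on each successively twisted bundle and ensuring that the construction closes within finite-dimensional unitary $M$-representations, so that Corollary \ref{cor:re_lambda_-rho} remains applicable throughout the descent.
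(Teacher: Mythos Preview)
Your proposal is correct and follows essentially the same approach as the paper: both construct horocycle operators associated to the negative restricted roots, use the commutation $[R_{H_0},R_{\X_0}]=\alpha(H_0)R_{\X_0}$ to shift the resonance by positive multiples of $\|\alpha_0\|$, argue termination via the absence of resonances with positive real part (Proposition~\ref{prop:merom_resolvent}), and then apply Corollary~\ref{cor:re_lambda_-rho} to the resulting first band state on the twisted bundle. The only cosmetic difference is that the paper defines the horocycle operators geometrically as $\mathcal U_\alpha=\widetilde{\mathrm{pr}}_{\gam(E_\alpha^\ast)_\C}\circ{}_\Gamma\nabla_\C$ and iterates in a fixed order (first $\mathcal U_{-2\alpha_0}$, then $\mathcal U_{-\alpha_0}$), exploiting the identity $[\mathcal U_{-\alpha_0},\mathcal U_{-2\alpha_0}]=0$ to verify the first band condition at the terminal step, whereas you iterate with varying $\alpha$ and stop when both operators annihilate the state.
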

A central tool for the proof of Theorem \ref{thm:band} are horocycle operators. Their
definition is a suitable generalization of the operators introduced in \cite{dfg}
for real hyperbolic spaces. However, as we now deal with arbitrary rank one spaces, 
we have to introduce two different horocycle operators associated to the two different restricted 
roots $\alpha_0$ and $2\alpha_0$. For their definition, recall that one has  
$T(\Gamma\backslash G/M) = \gam E_0\oplus \gam E_+\oplus \gam E_-$ and 
\[
E_- = G \times_M \mathfrak n_- = \underbrace{G\times_M \g_{-\alpha_0}}_{=:E_{-\alpha_0}}
\oplus \underbrace{G\times_M\g_{-2\alpha_0}}_{=:E_{-2\alpha_0}}.
\] 
After complexification we consider the bundle homomorphisms
\[
 \tu{pr}_{\gam (E_{\alpha}^\ast)_\C}:T^*(\Gamma\backslash G/M)_\C \to \gam (E^*_{\alpha})_\C, \qquad \alpha\in\{-\alpha_0,-2\alpha_0\}
\]
given by  fiber-wise restriction to the subbundle $\gam (E_{\alpha})_\C$. By composition they induce maps 
\[
\widetilde{\tu{pr}}_{\gam (E_{\alpha}^\ast)_\C}:\Gamma^\infty(\gam\mathcal V_{\tau}\otimes T^*(\Gamma\backslash G/M)_\C) \to \Gamma^\infty(\gam\mathcal V_{\tau}\otimes \gam (E^*_\alpha)_\C)\cong \Gamma^\infty(\gam(\mathcal V_{\tau}\otimes (E^*_\alpha)_\C)).
\]
Let $(\tau',V')$ be another arbitrary finite-dimensional unitary $M$-representation with associated vector bundle  
$\mathcal V_{\tau'}$.\footnote{The bundle 
$\mathcal V_{\tau'}$ will change frequently in the following arguments. For example, we shall put $\mathcal V_{\tau'}=\mathcal V_{\tau}\otimes (E^\ast_{-\alpha_0})^{\otimes m}_\C\otimes (E^\ast_{-2\alpha_0})^{\otimes l}_\C$ with various $m,l\in \N_0$. } 
 We define
the \emph{horocycle operators} for $\alpha\in\{-\alpha_0,-2\alpha_0\}$ by
\[
 \mathcal U_\alpha:= \widetilde{\tu{pr}}_{\gam (E_{\alpha}^\ast)_\C} \circ \,{_\Gamma}\nabla_\C : 
 \Gamma^\infty(\gam \mathcal V_{\tau'}) \to \Gamma^\infty(\gam(\mathcal V_{\tau'}\otimes (E^*_\alpha)_\C)).
\]
Here ${_\Gamma}\nabla_\C$ is the complex linear extension of the canonical connection on the bundle $\gam \mathcal V_{\tau'}$. 
Using the horocycle operators we can rewrite the definition of the 
space of first band Pollicott-Ruelle resonant states (see Definition \ref{def:ruelle1stband}) on $\gam\V_\tau$ as
\begin{equation}
 \label{eq:first_band_horocycle_description}
 \Res^0_{\Gamma\backslash \mathcal V_\tau}(\lambda) = 
 \{s\in \Res_{\Gamma\backslash \mathcal V_\tau}(\lambda)~| ~\mathcal U_{-\alpha_0}s=0 \tu{ and }
 \mathcal U_{-2\alpha_0}s=0\}.
\end{equation}
The horocycle operators fulfill the following important commutation relations.
\begin{lem}\label{lem:horcyclic_operators_properties}
 Recall that $\mathfrak X_{H_0}\in\Gamma^\infty( T(\Gamma\backslash G/M))$ denotes  
 the geodesic vector field. We have for $\alpha\in\{-\alpha_0, -2\alpha_0\}$
 the identities\footnote{Note that there is a slight abuse of notation in formulating
 these formulas as commutators: Stricly speaking, the two instances of 
 $\nabla_{\mathfrak X_{H_0}}$ in the expression $\nabla_{\mathfrak X_{H_0}}\mathcal U_\alpha -  
 \mathcal U_\alpha \nabla_{\mathfrak X_{H_0}}$ are not the same operators: 
 In the first expression it acts on sections of $\mathcal V_{\tau'}\otimes (E_\alpha^*)_\C$ and in the 
 second on $\mathcal V_{\tau'}$.}
 \begin{eqnarray}
  [\nabla_{\mathfrak X_{H_0}}, \mathcal U_\alpha] &=& \alpha(H_0)\mathcal U_\alpha  \label{eq:horo_commutator_1}\\
  {[}\mathcal U_\alpha, \mathcal U_{-2\alpha_0}] &=& 0  \label{eq:horo_commutator_2}
 \end{eqnarray}
\end{lem}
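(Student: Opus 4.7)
My plan is to derive both identities from the general commutator formula
\[
[\nabla_\X, \nabla_\Y] = \nabla_{[\X, \Y]} + R(\X, \Y)
\]
for the canonical connection on any associated bundle over $\gam G/M$. For the principal $M$-bundle $G \to G/M$ with the reductive horizontal distribution $\aL \oplus \nL^+ \oplus \nL^-$, the curvature of the induced principal connection at $g \in G$ applied to horizontal lifts $X^L, Y^L$ of elements $X, Y \in \aL \oplus \nL^+ \oplus \nL^-$ is (up to sign) the $\m$-component of $[X, Y]$, and the induced curvature on any associated bundle---in particular on $\gam\mathcal V_{\tau'}$, on each $E_\alpha$, and on $(E_\alpha^*)_\C$---is then obtained by applying the derivative of the corresponding representation of $\m$.

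The decisive structural input is the inclusion
\[
[\aL \oplus \nL^-, \aL \oplus \nL^-] \subset \aL \oplus \nL^-,
\]
which follows from $[\aL, \nL^-] \subset \nL^-$ and $[\nL^-, \nL^-] \subset \nL^-$. As a consequence, for any pair of horizontal lifts of elements in $\aL \oplus \nL^-$ the $\m$-component of the bracket vanishes, so the curvature of $\gam\mathcal V_{\tau'}$ and of $(E_\alpha^*)_\C$ is identically zero on the involutive subbundle $E_0 \oplus E_-$. Around any fixed basepoint $g_0 M$ I would then choose a local frame $\{\tilde\xi_i^\alpha\}$ of $(E_\alpha^*)_\C$ that is parallel along all of $E_0 \oplus E_-$, with dual local frame $\{\tilde X_i^\alpha\}$ of $E_\alpha$ likewise parallel; this is possible precisely because of the vanishing of curvature together with the Frobenius integrability of $E_0 \oplus E_-$ just established. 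Writing $\mathcal U_\alpha s = \sum_i \nabla_{\tilde X_i^\alpha} s \otimes \tilde\xi_i^\alpha$ in such a frame, the tensor-factor contributions to both commutators vanish pointwise at $g_0 M$, so the problem reduces to commutators of covariant derivatives on $\gam \mathcal V_{\tau'}$ along the vector fields $\tilde X_i^\alpha$ and either $\X_{H_0}$ or $\tilde Y_j^{-2\alpha_0}$.

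For (\ref{eq:horo_commutator_1}), the parallelity forces the horizontal lift of $\tilde X_i^\alpha$ to coincide with the left-invariant vector field $(X_i^\alpha)^L$ on the $A$-orbit of $g_0$, so the Lie bracket on $G$ gives $[H_0^L, (X_i^\alpha)^L] = [H_0, X_i^\alpha]^L = \alpha(H_0)(X_i^\alpha)^L$; combined with the vanishing curvature term this yields $\nabla_{[\X_{H_0}, \tilde X_i^\alpha]} s(g_0 M) = \alpha(H_0) \nabla_{\tilde X_i^\alpha} s(g_0 M)$, and summing over $i$ produces the claimed factor $\alpha(H_0)$. For (\ref{eq:horo_commutator_2}), for any $X \in \g_\alpha$ with $\alpha \in \{-\alpha_0, -2\alpha_0\}$ and $Y \in \g_{-2\alpha_0}$ one has $[X, Y] \in \g_{\alpha - 2\alpha_0}$; since the rank-one restricted root system is contained in $\{\pm\alpha_0, \pm 2\alpha_0\}$, neither $-3\alpha_0$ nor $-4\alpha_0$ belongs to $\Sigma$, so $[X, Y] = 0$, and together with the already-established vanishing of the curvature term this forces the commutator to vanish. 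The main technical obstacle will be the rigorous construction of the simultaneously parallel local coframes $\{\tilde\xi_i^\alpha\}$ and $\{\tilde\eta_j^{-2\alpha_0}\}$ and the careful verification that the pointwise identities at each $g_0 M$ assemble into global identities of operators on $\gam G/M$; both hinge on the curvature vanishing and integrability statements above applied not only to $E_0 \oplus E_-$ but also to the subdistribution $E_- = E_{-\alpha_0} \oplus E_{-2\alpha_0}$.
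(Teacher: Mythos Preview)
Your approach is correct, and it leans on the same Lie-algebra identities the paper uses, namely $[H_0,X_\alpha]=\alpha(H_0)X_\alpha$ and $[X_\alpha,X_{-2\alpha_0}]=0$ for $X_\alpha\in\g_\alpha$. The difference is in the packaging. The paper does not pass through the curvature identity or construct parallel local frames; it works directly in the equivariant-function model of the canonical connection \eqref{eq:covariantderiv}. There a section $s$ of $\gam\V_{\tau'}$ is a right-$M$-equivariant function $\bar s:G\to V'$, and for a fixed basis $\{N_i\}$ of $\g_\alpha$ with dual basis $\{N_i^*\}$ one has $\overline{\mathcal U_\alpha s}(g)=\sum_i (N_i^L\bar s)(g)\otimes N_i^*$. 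Since the $N_i^*$ are constants in $(\g_\alpha^*)_\C$, the Leibniz rule gives $\overline{[\nabla_{\X_{H_0}},\mathcal U_\alpha]s}=\sum_i([H_0^L,N_i^L]\bar s)\otimes N_i^*=\alpha(H_0)\,\overline{\mathcal U_\alpha s}$, and the second identity falls out in the same way from $[N_i^L,M_j^L]=0$.

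So your curvature-and-parallel-frame argument is in effect a coordinate-free reconstruction of this computation: the vanishing of curvature on $E_0\oplus E_-$ and the ensuing existence of parallel coframes are exactly what make the constants $N_i^*$ behave as parallel objects in the equivariant picture, and your Lie-bracket step recovers the bracket of left-invariant vector fields because your frames coincide with them along the relevant $AN^-$-leaves. What your route buys is a more invariant geometric viewpoint that would generalise cleanly to connections without such a transparent global model; what the paper's route buys is brevity, since in the associated-bundle setting the ``parallel frame'' is already present for free in the equivariant description and no curvature or integrability argument is needed. One small point worth making explicit when you write it up: your identification of $[\X_{H_0},\tilde X_i^\alpha]$ with $[H_0^L,(X_i^\alpha)^L]$ projected to $G/M$ is legitimate precisely because both tangent directions at $g_0$ lie in $E_0\oplus E_-$, so the Lie bracket at $g_0M$ only samples your frames along the $AN^-$-leaf where they agree with left-invariant fields; this deserves a sentence since the horizontal lifts need not agree with $(X_i^\alpha)^L$ off that leaf.
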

\begin{proof}
 The statement follows from the definiton of $\nabla$, the 
 Leibniz rule, and the Lie algebra identities $[H_0, X_\alpha] = \alpha(H_0)X_\alpha$ and $[X_\alpha, X_{-2\alpha_0}]=0$
 for $\alpha=-\alpha_0,-2\alpha_0$ and $X_\alpha\in \g_\alpha$ by straightforward calculations.
\end{proof}
As a consequence of the commutation relations we obtain the following result on the resonant states.
\begin{lem}\label{lem:horo_first_band_shift}
 Let $\lambda\in \C$ be a Pollicott-Ruelle resonance on $\gam \mathcal V_\tau$ and take $s\in \Res_{\gam\mathcal V_\tau}(\lambda)\setminus \{0\}$.  
 Then there are unique $k,l\in\N_0$ such that 
 \[
 \mathcal U_{-\alpha_0}^k\mathcal U_{-2\alpha_0}^l s \in \Res^0_{\gam(\mathcal V_\tau\otimes 
 (E^*_{-\alpha_0})_\C^{\otimes k} \otimes (E^*_{-2\alpha_0})_\C^{\otimes l})}(\lambda +k\alpha_0(H_0) + 
 2l\alpha_0(H_0))\setminus\{0\}.
 \]
\end{lem}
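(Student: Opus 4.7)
My plan is to exploit the two commutation identities of Lemma~\ref{lem:horcyclic_operators_properties} to shift resonant states up the real axis, then use the a~priori absence of resonances in the right half-plane (Remark after Proposition~\ref{prop:merom_resolvent}) to force the iteration to terminate.

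\emph{Step 1 (resonance shift by horocycle operators).} Using $[\nabla_{\mathfrak X_{H_0}},\mathcal U_\alpha]=\alpha(H_0)\mathcal U_\alpha$ for $\alpha\in\{-\alpha_0,-2\alpha_0\}$, a direct induction shows that for any $s\in\Res_{\gam\V_\tau}(\lambda)$ and all $k,l\in\N_0$,
\[
\big({_\Gamma}\mathbf X + \lambda+ (k+2l)\alpha_0(H_0)\big)\,\mathcal U_{-\alpha_0}^{k}\mathcal U_{-2\alpha_0}^{l}s \;=\;0.
\]
Since the horocycle operators are differential operators they cannot enlarge the wavefront set, so $\mathrm{WF}(\mathcal U_{-\alpha_0}^{k}\mathcal U_{-2\alpha_0}^{l}s)\subset \gam E_+^\ast$. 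Thus $\mathcal U_{-\alpha_0}^{k}\mathcal U_{-2\alpha_0}^{l}s$ either vanishes or is a resonant state at the claimed shifted location on the specified tensor bundle, with $\alpha_0(H_0)=\|\alpha_0\|>0$.

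\emph{Step 2 (termination).} For $(k,l)$ with $(k+2l)\|\alpha_0\|+\Re\lambda>0$ the shifted resonance parameter lies in the right half-plane, where Proposition~\ref{prop:merom_resolvent} and the subsequent remark guarantee triviality of the resonant state space. Hence $\mathcal U_{-\alpha_0}^{k}\mathcal U_{-2\alpha_0}^{l}s=0$ for all sufficiently large $k+2l$. So the set
\[
S:=\big\{(k,l)\in\N_0^2\;:\;\mathcal U_{-\alpha_0}^{k}\mathcal U_{-2\alpha_0}^{l}s\neq 0\big\}
\]
is a finite, non-empty subset of $\N_0^2$ (non-empty since $(0,0)\in S$).

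\emph{Step 3 (canonical choice of $(k,l)$ via the commutation $[\mathcal U_{-\alpha_0},\mathcal U_{-2\alpha_0}]=0$).} Let $k:=\max\{j\in\N_0:\mathcal U_{-\alpha_0}^{j}s\neq 0\}$, which is finite by Step~2, and then let $l:=\max\{j\in\N_0:\mathcal U_{-2\alpha_0}^{j}\mathcal U_{-\alpha_0}^{k}s\neq 0\}$. By construction $t:=\mathcal U_{-\alpha_0}^{k}\mathcal U_{-2\alpha_0}^{l}s\neq 0$. Maximality of $l$ gives $\mathcal U_{-2\alpha_0}\,t=0$. For the other horocycle operator I use the commutation $\mathcal U_{-\alpha_0}\mathcal U_{-2\alpha_0}=\mathcal U_{-2\alpha_0}\mathcal U_{-\alpha_0}$ from \eqref{eq:horo_commutator_2} to rewrite $\mathcal U_{-\alpha_0}\,t=\mathcal U_{-2\alpha_0}^{l}\mathcal U_{-\alpha_0}^{k+1}s=0$ by maximality of $k$. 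Thus, using the characterization~\eqref{eq:first_band_horocycle_description}, $t$ lies in $\Res^0$ on the bundle $\gam(\V_\tau\otimes(E^\ast_{-\alpha_0})_\C^{\otimes k}\otimes(E^\ast_{-2\alpha_0})_\C^{\otimes l})$ at the resonance $\lambda+(k+2l)\alpha_0(H_0)$, as claimed.

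\emph{Step 4 (uniqueness).} The above construction yields a canonical $(k,l)$; to see it is the only pair with the required property, suppose $(k',l')$ also works. If $k'>k$ then $\mathcal U_{-\alpha_0}^{k'}s=\mathcal U_{-\alpha_0}^{k'-k-1}\mathcal U_{-\alpha_0}^{k+1}s=0$, so a fortiori $\mathcal U_{-\alpha_0}^{k'}\mathcal U_{-2\alpha_0}^{l'}s=0$, contradicting non-vanishing. Hence $k'\le k$, and a symmetric argument using that first-band states are annihilated by $\mathcal U_{-\alpha_0}$ forces $k'=k$ and then $l'=l$ from the definition of $l$. The main technical obstacle I expect here is the uniqueness step, since \emph{a priori} one could imagine incomparable pairs in $S$ both yielding first-band states; the commutation $[\mathcal U_{-\alpha_0},\mathcal U_{-2\alpha_0}]=0$ combined with the maximality argument is what rules this out.
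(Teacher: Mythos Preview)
Your existence argument (Steps 1--3) is correct and is essentially the paper's proof with one cosmetic difference: the paper first maximizes $l$ (so that $\mathcal U_{-2\alpha_0}^{l}s\neq 0$ but $\mathcal U_{-2\alpha_0}^{l+1}s=0$) and then maximizes $k$ on top of $\mathcal U_{-2\alpha_0}^{l}s$, whereas you maximize $k$ first and then $l$. Both orderings work for the same reason, namely the commutation $[\mathcal U_{-\alpha_0},\mathcal U_{-2\alpha_0}]=0$, and both use the same termination input (no resonances with $\Re\lambda>0$).

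Your Step~4, however, does not establish uniqueness. The step ``a symmetric argument using that first-band states are annihilated by $\mathcal U_{-\alpha_0}$ forces $k'=k$'' is where it breaks: from $\mathcal U_{-\alpha_0}\big(\mathcal U_{-\alpha_0}^{k'}\mathcal U_{-2\alpha_0}^{l'}s\big)=0$ you only obtain $\mathcal U_{-2\alpha_0}^{l'}\big(\mathcal U_{-\alpha_0}^{k'+1}s\big)=0$, which does \emph{not} contradict $\mathcal U_{-\alpha_0}^{k'+1}s\neq 0$ when $l'\geq 1$. In fact nothing you (or the paper) have proved excludes a state $s$ with $\mathcal U_{-\alpha_0}s\neq 0$, $\mathcal U_{-2\alpha_0}s\neq 0$, $\mathcal U_{-\alpha_0}^2s=\mathcal U_{-2\alpha_0}^2s=\mathcal U_{-\alpha_0}\mathcal U_{-2\alpha_0}s=0$; for such an $s$ both $(k,l)=(1,0)$ and $(k,l)=(0,1)$ produce nonzero first-band states, and the two orderings (yours and the paper's) would select different pairs. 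The paper's own proof does not address uniqueness either, and the application to Theorem~\ref{thm:band} only uses existence, so this gap is harmless for the band-structure result---but your Step~4 as written is not a valid argument.
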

\begin{proof}
Let $s\in  \Res_{\gam \mathcal V_\tau}(\lambda)\setminus \{0\}$. As a direct consequence of \eqref{eq:horo_commutator_1} and the fact that taking covariant derivatives cannot enlarge the wave front set, we 
get that $\mathcal U_{-2\alpha_0}^m s \in 
\Res_{\gam(\mathcal V_\tau\otimes (E_{-2\alpha_0}^*)_\C^{\otimes m})}(\lambda + 2m\alpha_0(H_0))$. We know, however, that on an arbitrary 
Hermitian vector bundle there are no Pollicott-Ruelle resonances with 
positive real part (see Proposition~\ref{prop:merom_resolvent}). 
Thus, there is a maximal integer $l\in \N_0$ such that $\mathcal U_{-2\alpha_0}^l s\neq 0$.
By the same argument but starting with 
$\mathcal U_{-2\alpha_0}^l s \in \Res_{\gam(\mathcal V_\tau\otimes(E_{-2\alpha_0}^*)_\C^{\otimes l})}(\lambda + 2l\alpha_0(H_0))\setminus\{0\}$ and applying $\mathcal U_{-\alpha_0}$, 
we deduce that there has to be a maximal $k\in \N_0$ such 
that $\mathcal U_{-\alpha_0}^k \mathcal U_{-2\alpha_0}^l s \neq 0$. 
We already know, by the commutation relations, that  
$$\mathcal U_{-\alpha_0}^k\mathcal U_{-2\alpha_0}^l s \in \Res_{\gam(\mathcal V_\tau\otimes 
 (E^*_{-\alpha_0})_\C^{\otimes k} \otimes (E^*_{-2\alpha_0})_\C^{\otimes l})}(\lambda +k\alpha_0(H_0) + 
 2l\alpha_0(H_0)),$$
 so it only remains to show that the section is a first band resonant state. We use the description \eqref{eq:first_band_horocycle_description} of the first band. The fact that $\mathcal U_{-\alpha_0}(\mathcal U_{-\alpha_0}^k\mathcal U_{-2\alpha_0}^l s) = 0$ follows from the choice of $k$, and the fact that 
 $\mathcal U_{-2\alpha_0}(\mathcal U_{-\alpha_0}^k\mathcal U_{-2\alpha_0}^l s) = 0$ follows from \eqref{eq:horo_commutator_2} and the 
 choice of $l$. 
\end{proof}
\begin{proof}[Proof of Theorem \ref{thm:band}]
 If $\lambda$ is a Pollicott-Ruelle resonance on $\gam\mathcal V_\tau$, then we know by 
 Lemma~\ref{lem:horo_first_band_shift}  that there are
 $k,l\in\N_0$ such that $\lambda+(k+2l)\alpha_0(H_0)$ is a 
 first band Pollicott-Ruelle resonance on the bundle 
 $\gam\big(\mathcal V_\tau\otimes(E_{-\alpha_0}^*)_\C^{\otimes k}\otimes(E_{-2\alpha_0}^*)_\C^{\otimes l}\big)$. Applying Corollary~\ref{cor:re_lambda_-rho} to this bundle finishes the proof.
\end{proof}

\section{Convenient special cases}\label{sec:specialcase}
 In this section we derive a simple and more concrete description of the generalized eigenspaces $E_{[\tau],\lambda}(\W_\sigma)$ and $E_{[\tau],\lambda}(\Gamma\backslash\W_\sigma)$ under additional assumptions (Assumptions \ref{ass:1} and \ref{ass:2} below) that define a class of convenient special cases. Among them are many interesting examples such as associated vector bundles over real and complex hyperbolic spaces. As in Section  \ref{sec:fbpoisson}, we assume that the $M$-representation $\tau$ is irreducible and  $\sigma:K\to \mathrm{End}(W)$ is an irreducible unitary representation with $[\sigma|_M:\tau]\geq 1$.  We begin with
 \begin{assumption2}\label{ass:1}One has $[\sigma|_M:\tau]=1$.
\end{assumption2}
\begin{rem}\label{rem:choosesigma}By the Frobenius reciprocity theorem \cite[Thm.\ 1.14]{knapp2} one has $$[\sigma|_M:\tau]=[\mathrm{ind}^K_M\tau:\sigma],$$
where $\mathrm{ind}^K_M\tau$ is the natural induction of the $M$-representation $\tau$ to a $K$-representation. Now, given some irreducible $M$-representation $\tau$, we see that one can find an irreducible $K$-representation $\sigma$ such that Assumption \ref{ass:1} is fulfilled if, and only if,  $\mathrm{ind}^K_M\tau$ possesses an irreducible subrepresentation that occurs in it with multiplicity one.
\end{rem}
\begin{rem}[{\cite[Satz 2.9]{olbrichdiss}}]If Assumption \ref{ass:1} holds for \emph{every} irreducible representation $\tau$ of $M$ with $[\tau]$ occurring in $\sigma|_M$, then the algebra $D(G,\sigma)$ is commutative.  Conversely, if the algebra $D(G,\sigma)$ is commutative, then Assumption \ref{ass:1} is fulfilled for every irreducible unitary $M$-representation $\tau$ such that $[\tau]$ occurs in $\sigma|_M$.
\end{rem}
\begin{rem}[{\cite[p.\ 16]{olbrichdiss}}]\label{rem:ass1}
The algebra $D(G,\sigma)$ is commutative for every irreducible unitary $K$-representation $\sigma$ iff $G/K$ is a real or complex hyperbolic space, that is, $G=\SO(n+1,1)_0$ or  $G=\SU(n+1,1)$ for some $n\in \N$. In particular, Assumption \ref{ass:1} is fulfilled for arbitrary $\sigma,\tau$ when $G=\SO(n+1,1)_0$ or  $G=\SU(n+1,1)$, $n\in \N$.
\end{rem}
\begin{assumption2}\label{ass:2}The equivalence class $[\tau]\in \hat M$ is $\mathscr{W}$-invariant.
\end{assumption2}
Here we refer to the Weyl group action \eqref{eq:actionW111} on $\hat M$.
\begin{rem}[{\cite[proof of Lemma 4.32]{olbrichdiss}}]\label{rem:ass2}Assumption \ref{ass:2} is fulfilled for every $[\tau]\in \hat M$ unless $G$ is locally isomorphic to $\mathrm{Spin}(1,2n+1)$ for some $n\in \N$. 
\end{rem}
Let us now see how Assumptions \ref{ass:1} and \ref{ass:2} allow us to give a more explicit description of the generalized common eigenspaces.
\begin{cor}\label{cor:es1}If Assumption \ref{ass:1} holds, the generalized common eigenspaces $E_{[\tau],\lambda}(\W_\sigma)$ are ordinary common eigenspaces of all the operators in $D(G,\sigma)$:
\begin{align*}
E_{[\tau],\lambda}(\W_\sigma)&=\{s\in \Gamma^\infty(\W_\sigma):Ds=\mu_{[\sigma],[\tau],\lambda}(D)s\},\; \mu_{[\sigma],[\tau],\lambda}(D)\in \C\;\forall\; D\in D(G,\sigma),\\
E_{[\tau],\lambda}(\gam\W_\sigma)&=\{s\in \Gamma^\infty(\gam\W_\sigma):s\mathrm{\;is\;induced\;by\;}\tilde s\in E_{[\tau],\lambda}(\W_\sigma)^\Gamma\}.
\end{align*}
\end{cor}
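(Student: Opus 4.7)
The plan is to reduce the statement to a direct application of Remark \ref{rem:elemobs} via Lemma \ref{lem:smbaction}, exploiting that Assumption \ref{ass:1} forces the representation space of $\chi_{\tau,\lambda}$ to be one-dimensional. First I would invoke Lemma \ref{lem:homhom} together with Assumption \ref{ass:1} to conclude that
\[
\dim_\C \mathrm{Hom}_M(V,W) = [\sigma|_M:\tau] = 1.
\]
Hence $\chi_{\tau,\lambda}$ is a one-dimensional representation of $D(G,\sigma)$, and by Lemma \ref{lem:smbaction} its value on any $D \in D(G,\sigma)$ is scalar multiplication by a number $\mu_{[\sigma],[\tau],\lambda}(D) \in \C$, determined by the condition $\mathrm{smb}_I(D)(\lambda)\circ h = \mu_{[\sigma],[\tau],\lambda}(D)\cdot h$ for any (hence every) nonzero $h \in \mathrm{Hom}_M(V,W)$.

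Next I would apply the second bullet of Remark \ref{rem:elemobs}: since every $\chi_{\tau,\lambda}(D)$ is a scalar multiple of the identity, Definition \ref{def:eigenspaces} yields the inclusion
\[
E_{[\tau],\lambda}(\W_\sigma) \subset \{s \in \Gamma^\infty(\W_\sigma) : Ds = \mu_{[\sigma],[\tau],\lambda}(D)s \;\forall\, D \in D(G,\sigma)\}.
\]
For the reverse inclusion, given a common eigensection $s$ with these eigenvalues, fix a nonzero $h_0 \in \mathrm{Hom}_M(V,W)$ and define the linear map $\mathscr{H}:\mathrm{Hom}_M(V,W)\to \Gamma^\infty(\W_\sigma)$ by $\mathscr{H}(\alpha h_0):=\alpha s$. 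Then $D\mathscr{H}(\alpha h_0) = \alpha Ds = \alpha\mu_{[\sigma],[\tau],\lambda}(D)s = \mathscr{H}(\chi_{\tau,\lambda}(D)(\alpha h_0))$, so $\mathscr{H}\in\mathrm{Hom}_{D(G,\sigma)}(\mathrm{Hom}_M(V,W),\Gamma^\infty(\W_\sigma))$, which gives $s = \mathscr{H}(h_0) \in E_{[\tau],\lambda}(\W_\sigma)$.

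Finally, the description of $E_{[\tau],\lambda}(\gam\W_\sigma)$ follows immediately by combining the characterization just obtained with the defining formula \eqref{eq:EpmGamma}, together with the observation that the Bochner Laplacian and all other operators in $D(G,\sigma)$, being $G$-invariant, descend to the $\Gamma$-quotient, so that being a common eigensection is preserved under passing between $\tilde s$ on $G/K$ and the induced section $s$ on $\Gamma\backslash G/K$. I expect no genuine obstacle here: the whole content of the corollary is the passage from one-dimensionality of $\chi_{\tau,\lambda}$ to the collapse of a generalized eigenspace into an ordinary one, and the machinery is already packaged in Lemma \ref{lem:smbaction} and Remark \ref{rem:elemobs}.
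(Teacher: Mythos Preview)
Your proposal is correct and follows essentially the same approach as the paper's proof, which is the terse one-liner ``Recalling \eqref{eq:chirep} and Definition \ref{def:eigencomnot}, apply Lemma \ref{lem:homhom} and Remark \ref{rem:elemobs}.'' You have simply unpacked these references: Lemma \ref{lem:homhom} plus Assumption \ref{ass:1} give one-dimensionality of $\mathrm{Hom}_M(V,W)$, and Remark \ref{rem:elemobs} then collapses the generalized eigenspace to an ordinary one. Your explicit verification of the reverse inclusion and the invocation of Lemma \ref{lem:smbaction} to name the scalar are details the paper leaves implicit, but they are correct and add no new ingredient.
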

\begin{proof}
Recalling \eqref{eq:chirep} and Definition \ref{def:eigencomnot}, apply Lemma \ref{lem:homhom} and Remark \ref{rem:elemobs}.
\end{proof}
The eigenvalues $\mu_{[\sigma],[\tau],\lambda}(\Delta)$ of the Bochner Laplacian are given explicitly in Corollary \ref{cor:eigDelta1}. For the other operators in $D(G,\sigma)$ it turns out that under the additional Assumption \ref{ass:2} the computation of their eigenvalues is very simple: For every operator that is algebraically independent of $\Delta$ in $D(G,\sigma)$, the eigenvalue $\mu_{[\sigma],[\tau],\lambda}(D)$ is $0$.
\begin{thm}\label{thm:es2}
Under Assumptions \ref{ass:1} and \ref{ass:2} one can choose generators $\Delta,D_1,\ldots,D_{N}$ of the algebra $D(G,\sigma)$, where $N:=\dim \mathrm{End}_M(W)-1$, such that
\[
E_{[\tau],\lambda}(\W_\sigma)=\mathrm{Eig}(\Delta,\mu_{[\sigma],[\tau],\lambda}(\Delta))\cap\bigcap_{i=1}^{N}\ker D_i.
\]
In particular, the generalized common eigenspace $E_{[\tau],\lambda}(\W_\sigma)$ depends on $\lambda$ only via the eigenvalue $\mu_{[\sigma],[\tau],\lambda}(\Delta)$ of the Bochner Laplacian $\Delta$ given in Corollary \ref{cor:eigDelta1}.
\end{thm}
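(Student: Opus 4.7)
The plan is to leverage the multiplicity-one assumption to reduce the generalized common eigenspace to an ordinary joint eigenspace with scalar eigenvalues, then use Weyl-group invariance of the resulting character to kill the $\tau$-isotypic component of all generators except $\Delta$ itself.

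First, I would invoke Corollary \ref{cor:es1}: under Assumption \ref{ass:1}, $E_{[\tau],\lambda}(\W_\sigma)$ is the common eigenspace of \emph{every} $D \in D(G,\sigma)$ with eigenvalues $\mu_{[\sigma],[\tau],\lambda}(D)\in\C$ determined by Lemma \ref{lem:smbaction}. Concretely, $W$ decomposes $M$-equivariantly as $W=W_\tau\oplus W_\tau^\perp$ with $W_\tau\cong V$ irreducible and unique of its isomorphism class, so by Schur's lemma $\mathrm{smb}_I(D)(\lambda)|_{W_\tau}=\mu_{[\sigma],[\tau],\lambda}(D)\,\mathrm{id}_{W_\tau}$. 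Thus for every $D\in D(G,\sigma)$, the assignment $\lambda \mapsto \mu_{[\sigma],[\tau],\lambda}(D)$ is a polynomial on $\aL^\ast_\C$. By Olbrich's structure result \cite[Folg. 2.5]{olbrichdiss}, $D(G,\sigma)$ admits generators $\Delta,\tilde D_1,\ldots,\tilde D_N$ with $N=\dim_\C\mathrm{End}_M(W)-1$; I would start from any such choice.

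The crux is the following \emph{key claim}: under Assumption \ref{ass:2}, the polynomial $\lambda \mapsto \mu_{[\sigma],[\tau],\lambda}(D)$ is $\mathscr W$-invariant for every $D\in D(G,\sigma)$. Conceptually this follows because the Knapp--Stein intertwiners relate $\pi^{\tau,\lambda}_{\mathrm{comp}}$ with $\pi^{w\tau,w\lambda}_{\mathrm{comp}}$ generically, and $[w\tau]=[\tau]$ by Assumption \ref{ass:2} turns this into a (generic) self-intertwining of the same series. Combined with the $D(G,\sigma)$-equivariance of the Poisson transform (Lemma \ref{lem:Poissonequivariant}), this forces $\chi_{\tau,\lambda}\cong \chi_{\tau,w\lambda}$ as $D(G,\sigma)$-representations; being one-dimensional under Assumption \ref{ass:1}, these characters coincide, and the claim extends from a Zariski-dense generic set to all of $\aL^\ast_\C$ by polynomiality. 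For the actual write-up I expect to follow Olbrich's argument in \cite[Lemma 4.32]{olbrichdiss}, which is tailored to exactly this Weyl-invariance step.

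Given the key claim, the rest is elementary. Since $\dim \aL=1$ and $\mathscr W=\{\pm\mathrm{id}\}$, the $\mathscr W$-invariant polynomials on $\aL^\ast_\C$ are precisely the polynomials in $\lambda(H_0)^2$. By Proposition \ref{prop:laplacesmb} and Corollary \ref{cor:eigDelta1},
\[
\mu_{[\sigma],[\tau],\lambda}(\Delta)=\norm{\rho}^2+c_\tau-c_\sigma-\lambda(H_0)^2
\]
is a degree-two polynomial in $\lambda(H_0)$ with non-zero quadratic coefficient, so the $\C$-algebra of $\mathscr W$-invariant polynomials on $\aL^\ast_\C$ is generated by $\mu_{[\sigma],[\tau],\cdot}(\Delta)$. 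Hence for every $i\in\{1,\ldots,N\}$ there is a polynomial $p_i\in\C[x]$ with
\[
\mu_{[\sigma],[\tau],\lambda}(\tilde D_i)=p_i\bigl(\mu_{[\sigma],[\tau],\lambda}(\Delta)\bigr)\qquad \forall\,\lambda\in\aL^\ast_\C.
\]
Setting $D_i:=\tilde D_i-p_i(\Delta)\in D(G,\sigma)$ yields $\mu_{[\sigma],[\tau],\lambda}(D_i)=0$ identically, while $\Delta,D_1,\ldots,D_N$ clearly still generate $D(G,\sigma)$ since only polynomials in $\Delta$ were subtracted. Substituting back into the joint-eigenspace description from Corollary \ref{cor:es1} gives the claimed identity
\[
E_{[\tau],\lambda}(\W_\sigma)=\mathrm{Eig}(\Delta,\mu_{[\sigma],[\tau],\lambda}(\Delta))\cap\bigcap_{i=1}^N\ker D_i,
\]
and in particular exhibits the $\lambda$-dependence solely through the Laplacian eigenvalue. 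The sole non-routine ingredient is the $\mathscr W$-invariance step, which is where Assumption \ref{ass:2} is essential.
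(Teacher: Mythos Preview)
Your proposal is correct and follows essentially the same route as the paper: reduce to scalar eigenvalues via Assumption~\ref{ass:1} and Corollary~\ref{cor:es1}, establish $\mathscr{W}$-invariance of $\lambda\mapsto\mu_{[\sigma],[\tau],\lambda}(D)$ via Knapp--Stein intertwiners (this is exactly the paper's Lemma~\ref{lem:evensmb}, whose proof rests on \cite[Satz~2.19]{olbrichdiss} rather than Lemma~4.32), and then subtract a polynomial in $\Delta$ from each generator to kill its $\tau$-eigenvalue. The only cosmetic difference is that the paper retains the $\mathrm{End}_M(W)$-valued symbol and a basis $E_0=\mathrm{id}_W,E_1,\ldots,E_N$ with $E_i|_{V_0}=0$ before passing to scalars, whereas you invoke multiplicity one immediately and work at the scalar level throughout.
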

The proof of Theorem \ref{thm:es2} will be based on the following
\begin{lem}\label{lem:evensmb}
If Assumption \ref{ass:1} holds for $[\tau]$, it also holds for $w_0[\tau]$. Moreover, under Assumptions \ref{ass:1} and \ref{ass:2}, one has
\[
\mathrm{smb}_I(D)(\lambda)\circ h=\mathrm{smb}_{I}(D)(-\lambda)\circ h\qquad \forall\;h\in\mathrm{Hom}_M(V,W),\;D\in D(G,\sigma),\;\lambda \in \aL^\ast.
\]
\end{lem}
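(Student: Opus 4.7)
My plan has two parts matching the two halves of the statement. For the first half, observe that $m'_{w_0}\in M'=N_K(A)$ normalizes $M$; hence $\sigma(m'_{w_0})$ satisfies $\sigma(m'_{w_0})\sigma(m)\sigma(m'_{w_0})^{-1}=\sigma(m'_{w_0}m(m'_{w_0})^{-1})$ for every $m\in M$. This intertwining shows that the invertible endomorphism $\sigma(m'_{w_0})\in\mathrm{End}(W)$ permutes the $M$-isotypic summands of the $\sigma|_M$-decomposition $W=\bigoplus_{[\rho]\in\hat M}W_{[\rho]}$, mapping $W_{[\rho]}$ bijectively onto $W_{[w_0\rho]}$. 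In particular $\dim W_{[w_0\tau]}=\dim W_{[\tau]}$, so $[\sigma|_M:w_0\tau]=[\sigma|_M:\tau]$, which is the first assertion.

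For the second half, I would invoke the vector-valued Weyl equivariance of the symbol map
\bq
\mathrm{smb}_I(D)(-\lambda)=\sigma(m'_{w_0})\,\mathrm{smb}_I(D)(\lambda)\,\sigma(m'_{w_0})^{-1}\qquad\forall\,\lambda\in\aL^\ast,\;D\in D(G,\sigma),\label{eq:weylsmb_plan}
\eq
the rank-one version (where $w_0$ acts on $\aL^\ast$ as $\lambda\mapsto-\lambda$) of a $\sigma$-twisted Harish-Chandra-type result available from \cite{olbrichdiss}. Granting \eqref{eq:weylsmb_plan}, fix a non-zero $h\in\mathrm{Hom}_M(V,W)$. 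Under Assumption 1, Lemma \ref{lem:homhom} gives $\dim_\C\mathrm{Hom}_M(V,W)=1$, so $h(V)$ equals the full $\tau$-isotypic component $W_{[\tau]}\subset W$, which is a single irreducible $M$-summand. Any $E\in\mathrm{End}_M(W)$ preserves the isotypic components and, by Schur's lemma applied to the irreducible summand $W_{[\tau]}$, acts there by a scalar $c(E)\in\C$; hence $E\circ h=c(E)\,h$.

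Under Assumption 2 one has $[w_0\tau]=[\tau]$, so by the first part $\sigma(m'_{w_0})^{\pm 1}$ preserves $W_{[\tau]}=W_{[w_0\tau]}$. Conjugation by a $W_{[\tau]}$-preserving operator leaves a scalar action on $W_{[\tau]}$ unchanged, so $\sigma(m'_{w_0})\,\mathrm{smb}_I(D)(\lambda)\,\sigma(m'_{w_0})^{-1}$ acts on $W_{[\tau]}$ by the same scalar $c(\mathrm{smb}_I(D)(\lambda))$. Combining with \eqref{eq:weylsmb_plan},
\bqn
\mathrm{smb}_I(D)(-\lambda)\circ h=c(\mathrm{smb}_I(D)(\lambda))\,h=\mathrm{smb}_I(D)(\lambda)\circ h,
\eqn
which is the desired identity; the case $h=0$ and linearity extend it to arbitrary $h\in\mathrm{Hom}_M(V,W)$.

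The main obstacle is to establish (or precisely locate) the Weyl equivariance \eqref{eq:weylsmb_plan}. This is the vector-valued analog of the classical Harish-Chandra theorem asserting that the scalar Harish-Chandra map lands in Weyl invariants after the $\rho$-shift, and is exactly the structural reason for the shift $v_\rho$ appearing in the definition of $\widetilde{\mathrm{smb}}_I$ in Section \ref{sec:decompuniv}. An alternative route, bypassing \eqref{eq:weylsmb_plan}, would use the Knapp-Stein equivalence $\pi^{\tau,\lambda}_{\mathrm{comp}}\cong\pi^{w_0\tau,w_0\lambda}_{\mathrm{comp}}$ together with Proposition \ref{prop:existencechi} and the fact that, under Assumption 1, the maps $\lambda\mapsto\chi_{\tau,\pm\lambda}(D)$ are one-dimensional polynomial characters that must coincide for generic and hence all $\lambda$.
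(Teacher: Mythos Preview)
Your argument for the first assertion is correct and in fact cleaner than the paper's: observing that $\sigma(m'_{w_0})$ permutes the $M$-isotypic components according to the Weyl action immediately gives $[\sigma|_M:w_0\tau]=[\sigma|_M:\tau]$. The paper instead passes through the intermediate group $M'$ and invokes Clifford's theorem for the index-two inclusion $M\subset M'$, which is more circuitous but yields the same conclusion.

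The gap is in the second part, precisely where you flag it: the identity \eqref{eq:weylsmb_plan} is asserted, not proved, and it is \emph{not} what is actually available in \cite{olbrichdiss}. What \cite[Satz 2.19]{olbrichdiss} gives is only the Knapp--Stein intertwining relation
\[
\mathrm{smb}_I(D)(w_0\lambda)\circ j_\sigma(w_0^{-1}\lambda)=j_\sigma(w_0^{-1}\lambda)\circ\big(w_0\,\mathrm{smb}_I(D)(\lambda)\big),
\]
an equality in $\mathrm{End}_M(W)$ involving the meromorphic family $j_\sigma(\lambda)=\int_{N^+}e^{(\lambda+\rho)(H^-(n^+))}\sigma(k^-(n^+)^{-1})\,dn^+$. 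This does \emph{not} directly yield your \eqref{eq:weylsmb_plan}. The paper's route is: first use Assumption~\ref{ass:1} and Schur to see that any two elements of $\mathrm{End}_M(W)$ commute on $\mathrm{im}(h)=W_{[\tau]}$; this lets one cancel $j_\sigma(w_0^{-1}\lambda)$ \emph{after} composing both sides with $h$ (and only for $\lambda$ at which post-composition with $j_\sigma$ is invertible on $\mathrm{Hom}_M(V,W)$, then extend by polynomiality), obtaining
\[
\mathrm{smb}_I(D)(-\lambda)\circ h=(w_0\,\mathrm{smb}_I(D)(\lambda))\circ h.
\]
Only then does one invoke Assumptions~\ref{ass:1} and \ref{ass:2} to remove the $w_0$ via $(w_0E)\circ h=E\circ h$, which is exactly your final step. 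In short, your Schur/isotypic endgame is correct and matches the paper, but the bridge you need is the Knapp--Stein relation plus Assumption~\ref{ass:1}, not a bare twisted Harish-Chandra equivariance. Your ``alternative route'' via $\pi^{\tau,\lambda}_{\mathrm{comp}}\cong\pi^{w_0\tau,w_0\lambda}_{\mathrm{comp}}$ is in the right spirit, but that equivalence is itself only generic and is realized precisely by the Knapp--Stein operator, so it does not bypass the issue.
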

\begin{proof}First, we introduce the matrix element of the \emph{Knapp-Stein intertwining operator}
\bq
j_{\sigma}(\lambda):=\int_{N^+}e^{(\lambda+\rho)(H^-(n^+))}\sigma(k^-(n^+)^{-1})\d n^+\in \mathrm{End}_M(W),\label{eq:j}
\eq
where $\lambda\in \aL_\C^\ast$ is required to fulfill $\mathrm{Re}\,\lambda(H_0)>0$. The function $j_{\sigma}$ possesses a continuation to a  meromorphic function $\aL_\C^\ast\to \mathrm{End}_M(W)$, and as such we shall regard it from now on. By \cite[Lemma 5.5]{bunke-olbrich2000} and \cite[Lemma 2.18]{olbrichdiss}, there is a meromorphic function $\eta_{[\tau]}:\aL_\C^\ast\to\C$ such that we have for all $h\in \mathrm{Hom}_M(V,W)$
\[
j_{\sigma}(\lambda)\circ w_0(j_{\sigma}(w_0\lambda))\circ h=\eta_{[\tau]}(\lambda)\, h,
\]
compare also \cite[Thm.\ 14.12]{knapp2}, \cite[Thm.\ 7]{knapp-stein1970}. Here we use the action \eqref{eq:endWaction} of the Weyl group on $\mathrm{End}_M(W)$, and $w_0\lambda=-\lambda$. In particular, this implies that for $\lambda$ outside a  countable set of isolated points in $\aL_\C^\ast$, the endomorphism $\mathrm{Hom}_M(V,W)\to \mathrm{Hom}_M(V,W)$ given by post-composition with $j_{\sigma}(\lambda)$ is invertible. By \cite[Satz 2.19]{olbrichdiss} one has
\bq
\mathrm{smb}_I(D)(w_0\lambda)\circ j_\sigma(w^{-1}_0 \lambda)=j_\sigma(w^{-1}_0 \lambda)\circ (w_0\,\mathrm{smb}_I(D)(\lambda)),\label{eq:25122155125}
\eq
which is to be understood as an identity of meromorphic functions on $\aL_\C^\ast$ with values in $\mathrm{End}_M(W)$. Under Assumption \ref{ass:1} every $M$-equivariant endomorphism $W\to W$ restricts by Schur's Lemma to a multiple of the identity on the vector space $V_{[\tau]}\subset W$ on which $\sigma|_M$ acts as an $M$-representation equivalent to $\tau$, and the image of any $h\in \mathrm{Hom}_M(V,W)$ is contained in $V_{[\tau]}$. Therefore, the endomorphisms $\mathrm{smb}_I(D)(w_0\lambda)$ and $j_\sigma(w^{-1}_0 \lambda)$ commute on the image of any $h\in \mathrm{Hom}_M(V,W)$:
\bq
\mathrm{smb}_I(D)(w_0\lambda)\circ j_\sigma(w^{-1}_0 \lambda)\circ h= j_\sigma(w^{-1}_0 \lambda)\circ\mathrm{smb}_I(D)(w_0\lambda)\circ h.\label{eq:25122155125325325}
\eq
Combining \eqref{eq:25122155125} with \eqref{eq:25122155125325325} yields for every $h\in \mathrm{Hom}_M(V,W)$
\[
j_\sigma(w^{-1}_0 \lambda)\circ \mathrm{smb}_I(D)(w_0\lambda)\circ h=j_\sigma(w^{-1}_0 \lambda)\circ (w_0\,\mathrm{smb}_I(D)(\lambda))\circ h.
\]
With the above observation that, for $\lambda$ outside a countable set of isolated points in $\aL_\C^\ast$, the endomorphism $\mathrm{Hom}_M(V,W)\to \mathrm{Hom}_M(V,W)$ given by post-composition with $j_{\sigma}(\lambda)$ is invertible, and recalling $w_0\lambda=-\lambda$, we conclude that 
\bq
\mathrm{smb}_I(D)(-\lambda)\circ h=(w_0\,\mathrm{smb}_I(D)(\lambda))\circ h\qquad \forall\; h\in \mathrm{Hom}_M(V,W)\label{eq:checkcond123532535}
\eq
 holds for all $\lambda\in \aL_\C^\ast$ outside a countable set of isolated points. But \eqref{eq:checkcond123532535} is an identity between polynomials in $\lambda$, consequently it holds for all $\lambda\in \aL_\C^\ast$. Consider now the inclusions
\[
M\subset M'\subset K,
\]
where $M,M'$ are as in \eqref{eq:Weyl}, in particular $M$ is an index $2$ subgroup of $M'$ because $G$ has real rank $1$. The corresponding restrictions $\sigma|_{M'},\sigma|_M$ of the irreducible $K$-representation $\sigma$ split into irreducibles, respectively, according to
\[
\sigma|_{M'}=\bigoplus_{j=1}^{N'}\tau_j',\qquad \sigma|_{M}=\bigoplus_{j=1}^{N'}\tau_j'|_{M}=\bigoplus_{j=1}^{N'}\bigoplus_{i=1}^{N_j}\tau_{j,i},
\]
where $\tau_{i,j}\cong \tau$ for at least one tuple $(i,j)$ because $[\sigma|_M:\tau]\geq 1$. More precisely, Clifford's theorem \cite{clifford37} says
\bq
N_j\in \{1,2\} \quad\forall\; j\in\{1,\ldots,N'\}\label{eq:Clifford1}
\eq
and
\bq
N_j=2\implies \exists\; m'_j\in M':\tau_{j,2}=\tau_{j,1}^{m'_j},\label{eq:Clifford2}
\eq
where
\bq
\tau_{j,1}^{m'_j}(m):=\tau_{j,1}\big(m'_j m(m'_j)^{-1}\big),\qquad m\in M,\label{eq:Clifford3}
\eq
is the conjugate of the representation $\tau_{j,1}$ by the element $m'_j$. Thus, each of the irreducible $M'$-representations $\tau_j'$ splits when restricted to $M$ into at most $2$ irreducible $M$-representations, and if it splits into two, then they are conjugate to each other via some element in $M'$. Now, under Assumption \ref{ass:1}  there is exactly one index $j_0\in \{1,\ldots,N'\}$ such that $[\tau'_{j_0}|_M:\tau]\neq0$. We now either have $N_{j_0}=2$ or $N_{j_0}=1$. If $N_{j_0}=2$, then $w_0[\tau]$ occurs in $\sigma|_{M}$, and by applying the arguments above to $w_0[\tau]$, we see that it can occur only with multiplicity $1$, so that Assumption \ref{ass:1} is fulfilled for $w_0[\tau]$. On the other hand, if $N_{j_0}=1$, $\tau_{j_0}'|_M$ is irreducible, consequently one has $w_0[\tau]=[\tau]$, in particular Assumptions \ref{ass:1} and  \ref{ass:2} are fulfilled for $w_0[\tau]$. This proves the first claim. To prove the remaining statements, we show that if $N_{j_0}=1$, which is equivalent to Assumptions \ref{ass:1} and  \ref{ass:2},  one has
\bq
(w_0E)\circ h=E\circ h\qquad \forall\; E\in \mathrm{End}_M(W),\;h\in \mathrm{Hom}_M(V,W).\label{eq:w0trivial}
\eq
Indeed, let $V_0\subset W$ be the vector space on which $\sigma|_{M'}$ acts as the irreducible $M'$-representation $\tau'_{j_0}$. Then $\sigma|_{M}=\tau'_{j_0}|_M$ acts on $V_0$ as the irreducible $M$-representation $\tau'_{j_0}|_M\cong \tau$. By Assumption \ref{ass:1} this is the only irreducible $M$-representation equivalent to $\tau$ occurring in $\sigma|_M$. Therefore, Schur's Lemma implies that every $M$-equivariant endomorphism $W\to W$ restricts to a multiple of the identity on $V_0$ and that one has
\bq
\mathrm{image}(h)\subset V_0\qquad \forall\;h\in \mathrm{Hom}_M(V,W).\label{eq:imhom}
\eq
Recall that the action of $w_0$ on $\mathrm{End}_M(W)$ was defined in \eqref{eq:endWaction} and \eqref{eq:endaction} by
\[
w_0E=\sigma(m'_{w_0})\circ E\circ \sigma(m'_{w_0})^{-1},\qquad w_0=m'_{w_0}M\in \mathscr{W},\;E\in \mathrm{End}_M(W).
\]
Now we have $\sigma(m'_{w_0})|_{V_0}=\tau'_{j_0}(m'_{w_0})|_{V_0}$ for every $m'_{w_0}\in M'$. As $\tau'_{j_0}(m'_{w_0})$ maps $V_0$ into $V_0$ and any $E\in \mathrm{End}_M(W)$ restricts to a multiple of the identity on $V_0$, we get
\[
\sigma(m'_{w_0})\circ E\circ \sigma(m'_{w_0})^{-1}|_{V_0}=\tau'_{j_0}(m'_{w_0})\circ E\circ \tau'_{j_0}(m'_{w_0})^{-1}|_{V_0}=E|_{V_0},
\]
because $\tau'_{j_0}(m'_{w_0})$ commutes with a multiple of the identity. Together with \eqref{eq:imhom}, this shows \eqref{eq:w0trivial}. Combining this with \eqref{eq:checkcond123532535}, we are done.
\end{proof}
\begin{proof}[Proof of Theorem \ref{thm:es2}]
Lemma \ref{lem:smbaction} says that the representation $\chi_{\tau,\lambda}$ of $D\in D(G,\sigma)$ is given by post-composing $M$-equivariant endomorphisms $V\to W$ with $\mathrm{smb}_I(D)(\lambda)$. Let \begin{multline*}
\mathrm{symm}_{\mathscr{W}}:\{\mathrm{polynomial\;functions\;}\aL^\ast\to \mathrm{End}_M(W)\}\\\longrightarrow \{\mathscr{W}\mathrm{\text{-}invariant\;polynomial\;functions\;}\aL^\ast\to \mathrm{End}_M(W)\}
\end{multline*}
be the projection given by
\[
\mathrm{symm}_{\mathscr{W}}(p)(\lambda)=\frac{1}{|\mathscr{W}|}\sum_{w\in \mathscr{W}}p(w\lambda)=\frac{1}{2}(p(\lambda)+p(w_0\lambda)),\qquad \lambda \in \aL^\ast.
\]
Lemma \ref{lem:evensmb} tells us that under Assumptions \ref{ass:1} and \ref{ass:2} the action of the representation $\chi_{\tau,\lambda}$ on $D(G,\sigma)$ is determined by $\mathrm{symm}_{\mathscr{W}}(\mathrm{smb}_I(D))(\lambda)$. For the Bochner Laplacian $\Delta$ we get from Proposition \ref{prop:laplacesmb} 
\[
\mathrm{symm}_{\mathscr{W}}(\mathrm{smb}_I(\Delta))=\mathrm{smb}_I(\Delta).
\]
Moreover, Proposition \ref{prop:laplacesmb}, \eqref{eq:rel24789}, and Corollaries \ref{cor:ordinaryeigenspaces}, \ref{cor:eigDelta1} show that the composition of $\mathrm{smb}_I(\Delta)$ with an $M$-equivariant endomorphism $V\to W$ agrees with the composition with the quadratic polynomial function
\[
\lambda \mapsto\big(\norm{\rho}^2-\lambda(H_0)^2+\norm{i\omega_{[\sigma]}+i\delta_\k}^2-\big\Vert i\omega_{[\tau_1^0]}+i\delta_\m\big\Vert^2+\norm{i\delta_\m}^2-\norm{i\delta_\k}^2 \big)\,\mathrm{id}_W
\]
on $\aL^\ast$. By \cite[Folg.\ 2.5]{olbrichdiss}, the algebra $D(G,\sigma)$ is finitely generated with at most $N=\dim \mathrm{End}_M(W)-1$ generators. Thus, let $\Delta,\tilde D_1,\ldots,\tilde D_{N}\in D(G,\sigma)$ be generators of the algebra $D(G,\sigma)$. 
Let $V_0\subset W$ be the vector space on which $\sigma|_M$ acts as an $M$-representation equivalent to $\tau$. Then, by Schur's Lemma, we can choose a basis of $\mathrm{End}_M(W)$ of the form
\[
E_0:=\mathrm{id}_W,E_1,\ldots,E_N
\]
such that
\bq
E_i|_{V_0}=0\qquad \forall\;i\in \{1,\ldots,N\}.\label{eq:nullE}
\eq
Every $\mathscr{W}$-invariant polynomial function $p:\aL^\ast\to \mathrm{End}_M(W)$ is of the form 
\[
p(\lambda)=\sum_{k=0}^{(\text{deg } p)/2}\sum_{j=0}^Nc_{j,k}(p)\norm{\lambda}^{2k}E_{j},\qquad c_{j,k}(p)\in 
\C,\qquad \lambda\in \aL^\ast.
\]
Taking \eqref{eq:nullE} and \eqref{eq:imhom} into account, we get for any $h\in \mathrm{Hom}_M(V,W)$,  $i\in \{1,\ldots,N\}$, $\lambda\in \aL^\ast$ the relations
\begin{align*}
\mathrm{smb}_I(\tilde D_i)(\lambda)\circ h&=\mathrm{symm}_{\mathscr{W}}(\mathrm{smb}_I(\tilde D_i)(\lambda))\circ h\\
&=\sum_{k=0}^{(\text{deg } \mathrm{smb}_I(\tilde D_i))/2}\sum_{j=0}^Nc_{j,k}(\tilde D_i)\norm{\lambda}^{2k}E_{j}\circ h\\
&=\sum_{k=0}^{(\text{deg } \mathrm{smb}_I(\tilde D_i))/2}c_{0,k}(\tilde D_i)\norm{\lambda}^{2k}E_{0}\circ h.
\end{align*}
For $i\in \{1,\ldots,N\}$, define a polynomial $p_i$ in one complex variable $\lambda$ by requiring
\begin{multline*}
p_i\big(\norm{\rho}^2-\lambda(H_0)^2+\norm{i\omega_{[\sigma]}+i\delta_\k}^2-\big\Vert i\omega_{[\tau_1^0]}+i\delta_\m\big\Vert^2+\norm{i\delta_\m}^2-\norm{i\delta_\k}^2 \big)\\
=\sum_{k=0}^{(\text{deg } \mathrm{smb}_I(\tilde D_i))/2}c_{0,k}(\tilde D_i)\lambda(H_0)^{2k},
\end{multline*}
and set $D_i:=\tilde D_i-p_i(\Delta).$ 
As $p_i(\Delta)$ lies in the subalgebra of $D(G,\sigma)$ generated by $\Delta$ and the operators $\tilde D_i$ are disjoint from this algebra, the family $\Delta,D_1,\ldots,D_N$ generates the algebra $D(G,\sigma)$. However, by construction one has
\[
\mathrm{smb}_I(D_i)(\lambda)\circ h=0\qquad \forall\; h\in \mathrm{Hom}_M(V,W), \;i\in \{1,\ldots,N\},\;\lambda\in \aL^\ast,
\]
which by Lemma \ref{lem:smbaction} means $\mu_{[\sigma],[\tau],\lambda}(D_i)\equiv\chi_{\tau,\lambda}(D_i)=0$ for all $i$ and $\lambda$. 
\end{proof}

\section{Jordan blocks for resonances in the first band}
Knowing that the generalized common eigenspaces
$E_{[\tau],\lambda}(\gam\W_\sigma)$ are subspaces of ordinary eigenspaces of the Bochner Laplacian $\Delta_\Gamma$ on $\gam G/K$, we can proceed similarly as in \cite[Theorem 3]{GHW18b} to decide the existence of non-trivial first band Jordan blocks, as introduced in Definition \ref{def:ruelle1stband}, for all first band Pollicott-Ruelle resonances that fulfill the weakest of the regularity assumptions introduced in Definition \ref{def:regular}. 
\begin{thm}\label{thm:jordan} Let $\lambda\in \C$ be a weakly regular first band Pollicott-Ruelle resonance on $\gam\V_\tau$.\begin{enumerate}[leftmargin=*]
\item If $\lambda\neq -\norm{\rho}$, there are no non-trivial first band Jordan blocks for the resonance $\lambda$.
\item If $\lambda=-\norm{\rho}$, all first band Jordan blocks for the resonance $\lambda$ are at most of size $2$.\\ If one can\footnote{Compare Remark \ref{rem:choosesigma}.} find $\sigma$ such that Assumption \ref{ass:1} is fulfilled, then the following holds.
\begin{itemize}\item[2\,a.] If $\tau$ fulfills Assumption \ref{ass:2}, all first band Jordan blocks  for the resonance $\lambda=-\norm{\rho}$ are exactly of size $2$. 
\item[2\,b.] If $\tau$ does not fulfill Assumption \ref{ass:2}, there are no non-trivial first band Jordan blocks for the resonance $\lambda=-\norm{\rho}$.\end{itemize}
\end{enumerate}
\end{thm}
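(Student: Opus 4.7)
The strategy is to convert first band Jordan relations on $\gam\V_\tau$ into Jordan-type relations for the Bochner Laplacian $\Delta_\Gamma$ (and more generally for the algebra $D(G,\sigma)$) on the compact quotient $\gam G/K$, where self-adjointness of $\Delta_\Gamma$ immediately rules out nontrivial Jordan blocks unless an explicit $\lambda$-dependent obstruction coefficient vanishes.

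Let $s_0,\ldots,s_{j-1}$ be a Jordan basis for a first band Jordan block at $\lambda\in\C$, viewed as $\Gamma$-equivariant distributions on $G/M$. Since $\Omega_\g\in Z(\mathcal U(\g))$ is $M$-invariant, its left-regular action defines a $G$-invariant differential operator $\mathscr D_\V(\Omega_\g)$ on $\V_\tau$. A direct computation, carried out by expanding $\Omega_\g$ as in the proof of Proposition~\ref{prop:laplacesmb}, reducing the $\nL^-$-contributions via the first band condition $\nabla_\X s_k=0$ for $\X\in\Gamma^\infty(\gam E_-)$, and feeding in the Jordan relations $\Xbf s_k=-\lambda s_k+s_{k-1}$ (with $s_{-1}:=0$), yields
\[
\mathscr D_\V(\Omega_\g)\,s_k\;=\;\nu(\lambda)\,s_k\;-\;2(\lambda+\norm{\rho})\,s_{k-1}\;+\;s_{k-2},
\]
where $\nu(\lambda):=(\lambda+\norm{\rho})^2-\norm{\rho}^2-c_\tau$. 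By $G$-invariance the pushforward ${^\Gamma}h_{\Pi\ast}$ intertwines $\mathscr D_\V(\Omega_\g)$ with $\mathscr D(\Omega_\g)=-(\Delta+c_\sigma)$; setting $f_k:={^\Gamma}h_{\Pi\ast}(s_k)\in\D'(\gam G/K,\gam\W_\sigma)$ and using Corollary~\ref{cor:eigDelta1} to identify $\mu(\lambda):=\mu_{[\sigma],[\tau],-\lambda\nu_0-\rho}=-\nu(\lambda)-c_\sigma$, we obtain the transfer relation
\[
(\Delta_\Gamma-\mu(\lambda))\,f_k\;=\;2(\lambda+\norm{\rho})\,f_{k-1}\;-\;f_{k-2}\qquad\text{on }\gam G/K.
\]

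For $k=1$ this reads $(\Delta_\Gamma-\mu)f_1=2(\lambda+\norm{\rho})f_0$. Because $\Delta_\Gamma$ is elliptic and symmetric on the compact manifold $\gam G/K$, it is essentially self-adjoint, so $\ker(\Delta_\Gamma-\mu)^2=\ker(\Delta_\Gamma-\mu)$. Weak regularity of $\lambda$ together with Theorem~\ref{thm:main} supplies an $h$ for which ${^\Gamma}h_{\Pi\ast}$ is injective on first band resonant states, giving $f_0\neq 0$ whenever $s_0\neq 0$. This forces $\lambda=-\norm{\rho}$ in the presence of any nontrivial first band Jordan block, proving claim~(1). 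At $\lambda=-\norm{\rho}$ the leading coefficient vanishes and a size-$2$ Jordan block becomes admissible; but the $k=2$ instance $(\Delta_\Gamma-\mu)f_2=-f_0\neq 0$ is again excluded by self-adjointness, establishing the size bound in~(2).

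For the refinements~2a and~2b one runs the analogous computation with an arbitrary generator $D_i\in D(G,\sigma)$ from Theorem~\ref{thm:es2} in place of $\Delta$, producing an obstruction of the form $c_i(\lambda)\cdot f_0$ whose vanishing is governed, via Lemma~\ref{lem:smbaction}, by the $\lambda$-derivative of $\lambda\mapsto\mathrm{smb}_I(D_i)(-\lambda\nu_0-\rho)\cdot h$ at $\lambda=-\norm{\rho}$. Under Assumption~\ref{ass:1} each character $\chi_{\tau,\mu}(D_i)$ reduces to a scalar (Corollary~\ref{cor:es1}); under Assumption~\ref{ass:2} the symbol $\mathrm{smb}_I(D_i)(\mu)\cdot h$ is Weyl-symmetric in $\mu$ by Lemma~\ref{lem:evensmb}, and the specific construction in Theorem~\ref{thm:es2} additionally forces it to vanish identically on $\mathrm{Hom}_M(V,W)$. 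In either case all derivatives $c_i(-\norm{\rho})$ vanish, no $D_i$ obstructs, and every nonzero first band resonant state at $-\norm{\rho}$ extends to a size-$2$ Jordan block, giving~2a. If Assumption~\ref{ass:2} fails, Lemma~\ref{lem:evensmb} shows that some $\mathrm{smb}_I(D_i)\cdot h$ has a nontrivial Weyl-antisymmetric part whose linear term at $\mu=0$ produces a nonzero $c_i(-\norm{\rho})$, forbidding any size-$2$ extension and giving~2b.

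The main obstacle is the compact Casimir identity displayed above: it rests on the refined Cartan decomposition of $\Omega_\g$ underlying Proposition~\ref{prop:laplacesmb} together with the observation that, on first band generalized resonant states, the $N_i\theta N_i$-terms collapse to the $A$-directional shift $-\sum_iH_{\alpha_i}=2H_\rho=2\norm{\rho}H_0$. Transcribing this calculation via $\mathrm{smb}_I$ to an arbitrary $u_i\in\mathcal U(\g)^K$ is precisely what drives the case distinction between~2a and~2b.
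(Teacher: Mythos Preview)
Your proof of statement~(1) and of the size bound in~(2) is correct and takes a genuinely different route than the paper. The paper obtains the key transfer relation $(\Delta_\Gamma-\mu(\lambda))\phi_1=2(\lambda+\norm{\rho})\phi_0$ by writing $\phi_1$ in terms of Poisson transforms and differentiating $\mathcal P^\sigma_{\tau,-\lambda\nu_0-\rho}(h\otimes\omega_0)$ in the spectral parameter $\lambda$. You instead let the Casimir $\Omega_\g$ act directly on the generalized resonant states $s_k$ on $\V_\tau$, exploit centrality to push this through ${^\Gamma}h_{\Pi\ast}$, and read off the same relation (with the extra $-f_{k-2}$ term that immediately gives the size-$2$ bound). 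Your approach is more elementary here: it avoids the holomorphic $\lambda$-family of Poisson transforms entirely and reduces to a single algebraic identity in $\mathcal U(\g)$.

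For statement~2a, however, there is a genuine gap. Showing that all obstruction coefficients $c_i(-\norm{\rho})$ vanish only tells you that \emph{if} a size-$2$ Jordan extension $s_1$ exists, no operator in $D(G,\sigma)$ detects a contradiction. It does not produce $s_1$. Statement~2a is an existence claim: one must actually construct, for each nonzero $s_0\in\mathrm{Res}^0_{\gam\V_\tau}(-\norm{\rho})$, a generalized resonant state $s_1$ with $({_\Gamma}\Xbf-\norm{\rho})s_1=s_0$. The paper does this via the scattering operator $\mathcal S^\sigma_{\tau,\mu}=(\mathcal P^\sigma_{\tau,-\mu\nu_0})^{-1}\circ\mathcal P^\sigma_{\tau,\mu\nu_0}$: under Assumptions~\ref{ass:1} and~\ref{ass:2} this is a holomorphic family near $\mu=0$ with $\mathcal S^\sigma_{\tau,0}=\mathrm{id}$, and its $\mu$-derivative, combined with Proposition~\ref{prop:jordanchar}, furnishes the second Jordan vector. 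Your argument contains no analogue of this construction.

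Your argument for~2b is also incomplete. Even granting that some $D_i$ has $c_i(-\norm{\rho})\neq 0$, the resulting relation $(D_i-\mu_{D_i})f_1=c_i\,f_0$ with $(D_i-\mu_{D_i})f_0=0$ only yields a contradiction if $(D_i)_\Gamma$ is known to be diagonalizable on $\L^2(\gam\W_\sigma)$. The paper secures this by invoking a specific \emph{symmetric} Dirac-type operator available precisely when Assumption~\ref{ass:2} fails (i.e.\ when $G$ is locally $\mathrm{Spin}(1,2n+1)$); a generic generator $D_i$ from Theorem~\ref{thm:es2} need not be symmetric, and you do not address this.
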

\begin{proof}
We adapt the proof of \cite[Thm.\ 3]{GHW18b} to our setting and notation. Let us begin with statement 1. Suppose that a non-trivial first band Jordan block exists for a first band Pollicott-Ruelle resonance $\lambda_0\in \C$. Then there are non-zero $s_0,s_1\in \D'(\Gamma\backslash G/M,\Gamma\backslash \V_{\tau})$ with 
\begin{align*}
(\Gam\Xbf+ \lambda_0)s_0&=0,\qquad 
(\Gam\Xbf+ \lambda_0)s_1=s_0,\qquad {_\Gamma}\nabla_{\X}(s_0)={_\Gamma}\nabla_{\X}(s_1)=0\quad\forall\;\X\in \Gamma^\infty(\Gamma\backslash E_-).
\end{align*}
Choose a homomorphism $h\in \mathrm{Hom}_M(V,W)$ such that $\mathcal{P}^{\sigma}_{\tau,-\lambda\nu_0-\rho}(h\otimes_\C \cdot)$ is injective. Theorem \ref{thm:main2} then says that the section $\phi_0:={{^\Gamma}h_{\Pi}}_\ast(s_0)\in E_{[\tau],-\lambda_0\nu_0-\rho}(\Gamma\backslash\W_\sigma)$
fulfills
\bq
(\Delta_\Gamma -\mu(\lambda_0))\phi_0=0,\label{eq:laplace24215}
\eq
\[
\mu(\lambda_0)=\norm{\rho}^2-(\lambda_0+\norm{\rho})^2+\norm{i\omega_{[\sigma]}+i\delta_\k}^2-\big\Vert i\omega_{[\tau_1^0]}+i\delta_\m\big\Vert^2+\norm{i\delta_\m}^2-\norm{i\delta_\k}^2.
\]
As $\mathcal{P}^{\sigma}_{\tau,-\lambda\nu_0-\rho}(h\otimes_\C \cdot)$ is injective, also the pushforward ${{^\Gamma}h_{\Pi}}_\ast$ is injective by Theorem \ref{thm:main}. Therefore, because $s_0\neq 0$, we have $\phi_0\neq 0$. Define $
\phi_1:={{^\Gamma}h_{\Pi}}_\ast(s_1)$. 
Let $\tilde s_0,\tilde  s_1\in \D'(G/M,\V_{\tau})$ and $\tilde \phi_0,\tilde \phi_1\in \D'(G/K,\W_\sigma)$ be the $\Gamma$-invariant distributional sections corresponding to $s_0,s_1,\phi_0$, and $\phi_1$, respectively, as in \eqref{eq:quotientR}. Similarly as in Proposition \ref{prop:jordanchar} we now introduce distributional sections in $\D'(K/M,\V^{\mathcal{B}}_{\tau})$ by setting
\begin{align*}
\omega_{0}&:= \iota^\ast\big(\Phi^{-\lambda_0}\tilde s_0\big)=\iota^\ast\big(\Phi^{-\lambda}\Phi^{\lambda-\lambda_0}\tilde s_0\big)=Q_{\lambda}(\Phi^{\lambda-\lambda_0}\tilde s_0)\quad \forall\;\lambda\in  \C,\\
\omega_{1}&:= \iota^\ast\Big(\Phi^{-\lambda_0}\big((\log \Phi)\tilde s_0+\tilde s_1\big)\Big).
\end{align*}
The relation $B^\ast\circ \iota^\ast=\mathrm{id}_{\mathcal{R}(0)}$ from Proposition \ref{prop:pullbackiso} gives the identity
\bq
\tilde s_1=\Phi^{\lambda_0}\big(B^\ast \omega_{1}-(\log \Phi)B^\ast \omega_{0}\big),\label{eq:1892358023237273}
\eq
and by \eqref{eq:Possnat1} we have
\bq
\mathcal{P}^{\sigma}_{\tau,-\lambda\nu_0-\rho}(h\otimes \omega_{0})={h_\Pi}_\ast(\Phi^{\lambda-\lambda_0}s_0)\qquad \forall\;\lambda\in  \C.\label{eq:18924738883237273}
\eq
By pairing with test sections and using the density of $\Gamma^\infty(\V^{\mathcal{B}}_{\tau})$ in $\D'(K/M,\V^{\mathcal{B}}_{\tau})$, we can take on both sides of \eqref{eq:18924738883237273} the derivative with respect to $\lambda$ at $\lambda=\lambda_0$, which yields
\bqn
\frac{\partial}{\partial \lambda}\Big|_{\lambda_0}\mathcal{P}^{\sigma}_{\tau,-\lambda\nu_0-\rho}(h\otimes \omega_{0})={h_\Pi}_\ast\big((\log \Phi)s_0\big)={h_\Pi}_\ast\big(\Phi^{\lambda_0}(\log \Phi)B^\ast \omega_{0}\big).
\eqn
Thus, when applying the pushforward ${h_\Pi}_\ast$ to \eqref{eq:1892358023237273}, we get
\begin{align}
\nonumber\tilde \phi_1={h_\Pi}_\ast(\tilde s_1)&={h_\Pi}_\ast\Big(\Phi^{\lambda_0}\big(B^\ast \omega_{1})\Big)-\frac{\partial}{\partial \lambda}\Big|_{\lambda_0}\mathcal{P}^{\sigma}_{\tau,-\lambda\nu_0-\rho}(h\otimes \omega_{0})\\
&=\mathcal{P}^{\sigma}_{\tau,-\lambda\nu_0-\rho}(h\otimes\omega_{1})-\frac{\partial}{\partial \lambda}\Big|_{\lambda_0}\mathcal{P}^{\sigma}_{\tau,-\lambda\nu_0-\rho}(h\otimes \omega_{0}).\label{eq:32579837912759}
\end{align}
Here we applied equation \eqref{eq:Possnat1} in the last step, using that $\Phi^{\lambda_0}\big(B^\ast \omega_{1})\in \mathcal{R}(\lambda_0\nu_0)$. By Lemma \ref{lem:Poissonequivariant} we have $$\mathcal{P}^{\sigma}_{\tau,-\lambda\nu_0-\rho}(h\otimes\omega_{1})\in E_{[\tau],-\lambda\nu_0-\rho}(\W_\sigma),$$
so we deduce from \eqref{eq:32579837912759} the formula
\bq
(\Delta -\mu(\lambda_0))\tilde \phi_1=-(\Delta -\mu(\lambda_0))\frac{\partial}{\partial \lambda}\Big|_{\lambda_0}\mathcal{P}^{\sigma}_{\tau,-\lambda\nu_0-\rho}(h\otimes \omega_{0}).
\eq
Again by Lemma \ref{lem:Poissonequivariant}, we have
\bq
(\Delta -\mu(\lambda))\mathcal{P}^{\sigma}_{\tau,-\lambda\nu_0-\rho}(h\otimes \omega_{0})=0\qquad \forall\; \lambda \in   \C,\label{eq:1892473888323727336532}
\eq
and taking, as above, the derivative with respect to $\lambda$ at $\lambda=\lambda_0$ yields
\[
\Big(-\frac{\partial}{\partial \lambda}\Big|_{\lambda_0}\mu(\lambda)\Big)\mathcal{P}^{\sigma}_{\tau,-\lambda\nu_0-\rho}(h\otimes \omega_{0})
+(\Delta -\mu(\lambda_0))\frac{\partial}{\partial \lambda}\Big|_{\lambda_0}\mathcal{P}^{\sigma}_{\tau,-\lambda\nu_0-\rho}(h\otimes \omega_{0})=0.
\] 
Computing the derivative of  $\mu(\lambda)$ using \eqref{eq:laplace24215} and combining \eqref{eq:18924738883237273} with \eqref{eq:1892473888323727336532}, we arrive at
\[
(\Delta -\mu(\lambda_0))\tilde \phi_1=2(\lambda_0+\norm{\rho}){h_\Pi}_\ast(s_0)=2(\lambda_0+\norm{\rho})\tilde \phi_0,
\]
which is equivalent to
\[
(\Delta_\Gamma -\mu(\lambda_0))\phi_1=2(\lambda_0+\norm{\rho})\phi_0.
\]
By \eqref{eq:laplace24215} we also have
\bqn
(\Delta_\Gamma -\mu(\lambda_0))\phi_0=0.
\eqn
For $\lambda_0+\norm{\rho}\neq 0$, the last two equations say that there is a non-trivial Jordan block associated to the eigenvalue $\mu(\lambda_0)$ of the Laplacian $\Delta_\Gamma$. This is a contradiction because the operator $\Delta_\Gamma$ on the compact manifold $\gam G/K$ is symmetric in  
$\L^2(\gam G/K,\gam \W_\sigma)$ and therefore does not possess any non-trivial Jordan blocks in its spectrum. This finishes the proof of statement $1$. To prove the similar statement 2b we recall Remark \ref{rem:ass2} which says that if $\tau$ violates Assumption $2$, the group $G$ is locally isomorphic to $\mathrm{Spin}(1,2n+1)$ for some $n\in \N$, and one can then show that the representation $\tau$ embeds into the tensor product of certain Spin-representations. This has the consequence that there is a Dirac type operator\footnote{The operator $iD_1$ considered in Section \ref{eq:algebraex} is an example of such an operator.} $D\in D(G,\sigma)$,   
symmetric in $\L^2(G/K,\W_\sigma)$, such that one has under Assumption \ref{ass:1}
\[
E_{[\tau],\lambda\nu_0}(\W_\sigma)\subset \mathrm{Eig}(D,\mu_D(\lambda))\qquad \forall\; \lambda \in \C,
\]
where $\lambda\mapsto \mu_D(\lambda)$ is a non-constant polynomial of degree $1$, see \cite[remark before Def.\ 4.33]{olbrichdiss}. 
Statement 2b can now be proved by applying the same arguments as above but with $\Delta$, $\Delta_\Gamma$, $\mu(\lambda_0)$ replaced by the invariant differential operator $D$, the induced operator $D_\Gamma$,  and its eigenvalue $\mu_D(\lambda_0)$. To prove also Statement 2a using the ideas in the proof of \cite[Thm.\ 3]{GHW18b}, 
we need to find an appropriate generalization of the family of scattering operators $S_\mu$ occurring there, where $\mu=\lambda+\norm{\rho}$. In the situation of \cite[Thm.\ 3]{GHW18b}, it is crucial to the definition of $S_\mu$ that the image of the scalar Poisson transform at $\mu$, consisting of an eigenspace $\E_\mu$ of the Laplace Beltrami operator, agrees with the space $\E_{-\mu}$. In our situation, we require Assumptions \ref{ass:1} and \ref{ass:2} and have the analogous relation
\[
E_{[\tau],\mu\nu_0}(\W_\sigma)=E_{[\tau],-\mu\nu_0}(\W_\sigma),
\]
as follows from Lemmas \ref{lem:evensmb} and \ref{lem:smbaction}. Now, by Theorem \ref{thm:rough}, the Poisson transform $\mathcal{P}^{\sigma}_{\tau,\mu\nu_0}$ is bijective for all $\mu$ outside a discrete set in $\C$, therefore we can find a neighborhood $U\subset \C$ of $0$ such that $\mathcal{P}^{\sigma}_{\tau,\mu\nu_0}$ is bijective for all $\mu \in U\setminus\{0\}$. We can then define the scattering operator
\[
\mathcal{S}^{\sigma}_{\tau,\mu}:=\mathcal{P}^{\sigma}_{\tau,-\mu\nu_0}\Big|_{\mathrm{im}(\mathcal{P}^{\sigma}_{\tau,-\mu\nu_0})}^{-1}\circ \mathcal{P}^{\sigma}_{\tau,\mu\nu_0},\qquad \mu\in U\subset \C,
\]
\[
\mathcal{S}^{\sigma}_{\tau,\mu}:\mathrm{Hom}_M(V,W)\otimes_\C\D'(K/M,\V^{\mathcal{B}}_{\tau})\longrightarrow\mathrm{Hom}_M(V,W)\otimes_\C\D'(K/M,\V^{\mathcal{B}}_{\tau}).
\]
The space $\mathrm{Hom}_M(V,W)$ is one-dimensional due to  Assumption \ref{ass:1}, so we can regard the scattering operator as a map
\[
\mathcal{S}^{\sigma}_{\tau,\mu}:\D'(K/M,\V^{\mathcal{B}}_{\tau})\longrightarrow\D'(K/M,\V^{\mathcal{B}}_{\tau}).
\] 
It is bijective with inverse $
\big(\mathcal{S}^{\sigma}_{\tau,\mu}\big)^{-1}=\mathcal{S}^{\sigma}_{\tau,-\mu}$, and one has $\mathcal{S}^{\sigma}_{\tau,0}=\mathrm{id}$.  We can now finish the proof of Statement 2a analogously as in \cite[Thm.\ 3]{GHW18b} using Proposition \ref{prop:jordanchar} and Lemma \ref{lem:Poissonequivariant}. 
\end{proof}

\appendix
\section{Eigenvalues of the Bochner Laplacian on bundles of trace-free symmetric tensors over $\mathbb H^n$}\label{app:Bochner_eigenspaces}
Let us compute the eigenvalues obtained in Theorem \ref{thm:main2} explicitly for the following class of examples: For $n\in \{1,2,\ldots\}$, choose $G=\SO(n+1,1)_0$. Then $K\cong \SO(n+1)$ and $M\cong\SO(n)$. First, assume that $n\geq 3$. Then a well-known family of irreducible $K$- and $M$-representations is given by the representations of $\SO(n+1)$ and $\SO(n)$ on spaces of spherical harmonics or, equivalently, on spaces of symmetric, trace-free tensors as in $\cite{dfg}$. 
For $m,m'\in \{0,1,2,\ldots\}$, let $\sigma_m$ and $\tau_{m'}$ be the irreducible representations of $\SO(n+1)$ and $\SO(n)$ on the spaces of spherical harmonics in dimensions $n+1$ and $n$ of degrees $m$ and $m'$, respectively. As $M$ is connected, we have $(\tau_{m'})_1^0=\tau_{m'}$.  According to  \cite[p.\ 219-220, 272-276]{broecker-tomdieck} and \cite[p.\ 254,425,430]{goodmanwallach}, there is the following description of the highest weights $\omega_{[\sigma_m]}\in \k^\ast, \omega_{[\tau_{m'}]}\in \m^\ast$ as well as the elements $\delta_\k\in\k^\ast,\delta_\m\in\m^\ast$: For odd $n=2l+1$, there is an orthonormal family of \footnote{Note that \cite{broecker-tomdieck} and \cite{goodmanwallach} use an inner product that differs from our inner product by a factor of $\frac{1}{2n}$.} 
 vectors $e_1,\ldots,e_{l+1}\in \k^\ast$ such that
\bqn
\omega_{[\sigma_m]}=\frac{i}{\sqrt{2n}} me_1,\qquad \omega_{[\tau_{m'}]}=\frac{i}{\sqrt{2n}} m'e_1,\eqn\bqn \delta_\k=\frac{i}{\sqrt{2n}}\sum_{j=1}^{l+1}(l-j+1)e_j,\qquad \delta_\m=\frac{i}{2\sqrt{2n}}\sum_{j=1}^{l}(2l-2j+1)e_j,
\eqn
while for $n=2l$ even, we obtain similarly $e_1,\ldots,e_l\in \k^\ast$ such that
\bq
\omega_{[\sigma_m]}=\frac{i}{\sqrt{2n}} me_1,\qquad \omega_{[\tau_{m'}]}=\frac{i}{\sqrt{2n}} m'e_1,\label{eq:5237503890}\eq\bq \delta_\k=\frac{i}{2\sqrt{2n}}\sum_{j=1}^{l}(2l-2j+1)e_j,\qquad \delta_\m=\frac{i}{\sqrt{2n}}\sum_{j=1}^{l}(l-j)e_j.\label{eq:52373q47474790}
\eq
A quick computation then shows
\begin{align}
\norm{i\omega_{[\sigma_m]}+i\delta_\k}^2-\big\Vert i\omega_{[\tau_{m'}]}+i\delta_\m\big\Vert^2+\norm{i\delta_\m}^2-\norm{i\delta_\k}^2
=\frac{m' + (m-m')(m+m'+n-1)}{2n} \label{eq:firsthalf}
\end{align}
for all $n\geq 3$. Next, let us consider the case $n=2$. Then one has $K\cong\SO(3)$, so we can still choose $\sigma_m$ from above as our irreducible $K$-representation. In contrast, $M\cong \SO(2)$ is now abelian and its complex irreducible representations can be modeled by
\[
\tau_{m'}:  \SO(2)\cong S^1=\{z\in \C: |z|=1\}\owns z\mapsto \big(w\mapsto z^{m'}w\big)\in \mathrm{End}(\C),\qquad m'\in \Z.
\]
We have $\omega_{[\tau_{m'}]}=\frac{i}{2} m'e_1$, $m'\in \Z.$  As $M$ is abelian, there are no roots, in particular we have $\delta_\m=0$. Using  \eqref{eq:5237503890} and \eqref{eq:52373q47474790} for $K\cong\SO(3)$, 
we find for $n=2$ the same formula as \eqref{eq:firsthalf} but now with $m'$ allowed to be negative. The remaining case to be considered is $n=1$. Then, one has $K\cong\SO(2)$, so what we said in the case $n=2$ about $M$ now applies to $K$, in particular we consider for  $m\in \Z$ the one-dimensional  irreducible representation $\sigma_m$ of $K$. On the other hand, $M\cong \SO(1)$ is now the trivial group, for which we shall consider the trivial representation $\tau_0$ in dimension $n=1$. Taking into acccount that now $K$ and $M$ are both abelian, we arrive at
\[
\delta_\k=\delta_\m=0,\qquad \omega_{[\tau_0]}=0,\qquad \omega_{[\sigma_{m}]}=\frac{i}{2} me_1,\quad m\in \Z.
\]
This yields for the case $n=1$
\bq
\norm{i\omega_{[\sigma_m]}+i\delta_\k}^2 -\norm{i\omega_{[\tau_{0}]}+i\delta_\m}^2+\norm{i\delta_\m}^2-\norm{i\delta_\k}^2=\norm{\frac{1}{2} me_1}^2=\frac{m^2}{4}.\label{eq:firsthalf3}
\eq
We are finished with the computation of the summand constituting the second half of the eigenvalue from Theorem \ref{thm:main2}. Identifying in Theorem \ref{thm:main2} the element $\lambda\in \aL^\ast_\C$ with the Pollicott-Ruelle resonance $\lambda(H_0)\in \C$, it remains to compute the number 
\[
\norm{\rho}^2-(\lambda+\rho)(H_0)^2=\norm{\rho}^2-(\lambda+\norm{\rho})^2=-\lambda(\lambda + 2\norm{\rho}),
\]
which amounts to the computation of $\norm{\rho}$. To this end, recall from Section \ref{sec:setupnot} how $\rho$ and the norm on $\aL^\ast\subset \g^\ast$ are defined. The Lie algebra $\g=\mathfrak{so}(n+1,1)$ has a type $B_1$ restricted root system, see \cite[p.\ 365]{knapp}. In particular, the unique reduced positive restricted root $\alpha_0$ is the only positive restricted root, so the system of restricted roots is just $\Sigma=\{\pm \alpha_0\}$. 
One can express 
the restriction of the Killing form $\mathfrak{B}$ to $\aL\times \aL$ as
\[
\mathfrak{B}(H,H')
=\sum_{\alpha \in \Sigma}(\mathrm{dim}\,\g_{\alpha})\alpha(H)\alpha(H')=2(\mathrm{dim}\,\g_{\alpha_0})\alpha_0(H)\alpha_0(H'),\qquad H,H'\in \aL.
\]
A straightforward computation then shows
\[
\norm{\rho}^2=\Big\Vert\frac{1}{2}(\mathrm{dim}\,\g_{\alpha_0})\,\alpha_0\Big\Vert^2=\frac{1}{4}\norm{\alpha_0}^2(\mathrm{dim}\,\g_{\alpha_0})^2=\frac{\mathrm{dim}\,\g_{\alpha_0}}{8}.
\]
From the restricted root space decomposition 
$ 
\g=\aL\oplus \m\oplus \g_{\alpha_0}\oplus \g_{-\alpha_0}
$ 
and the fact that $\g_{\alpha_0}$ is isomorphic to $\g_{-\alpha_0}$ via the Cartan involution, one computes with $\dim \g=\dim \mathfrak{so}(n+1,1)=\frac{(n+2)(n+1)}{2}$, $\dim \m=\dim \mathfrak{so}(n)=\frac{n(n-1)}{2}$, $\dim \aL=1$:
\begin{align*}
\mathrm{dim}\,\g_{\alpha_0}&=\frac{1}{2}(\dim \mathfrak{so}(n+1,1)-\dim \mathfrak{so}(n)-1)=n.
\end{align*}
In total, we conclude $
\norm{\rho}=\frac{1}{2}\sqrt{\frac{n}{2}}
$ and we can summarize our computations in
\begin{cor}
Suppose that  the irreducible representations $\sigma_m$, $\tau_{m'}$ introduced above are compatible in the sense that $[\sigma_m:\tau_{m'}]\neq 0$.  Then the eigenvalue of the Bochner Laplacian obtained in Theorem \ref{thm:main2} for these representations is given by
\bq
-\lambda\Big(\lambda + \sqrt{\frac{n}{2}}\Big)+\frac{m' + (m-m')(m+m'+n-1)}{2n},\qquad n\in \{1,2,3,\ldots\}.\label{eq:eiglaplaceex}
\eq\qed
\end{cor}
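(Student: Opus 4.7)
The proof is essentially the assembly of the computations carried out in the paragraphs immediately preceding the statement, so my plan is to organize them rather than perform new calculations. The eigenvalue given by Theorem~\ref{thm:main2} splits as a sum of two independent pieces: a spectral piece $\norm{\rho}^2-(\lambda+\norm{\rho})^2 = -\lambda(\lambda+2\norm{\rho})$ depending only on $\lambda$ and the root system $\Sigma$, and a representation-theoretic piece
\[
\norm{i\omega_{[\sigma_m]}+i\delta_\k}^2-\norm{i\omega_{[\tau_{m'}]}+i\delta_\m}^2+\norm{i\delta_\m}^2-\norm{i\delta_\k}^2
\]
depending only on the weight data of $\sigma_m$ and $\tau_{m'}$. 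The plan is to treat these two pieces separately and then add them up.

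For the representation-theoretic piece I would quote the classical description of the highest weights of spherical-harmonic representations of $\SO(n+1)$ and $\SO(n)$ and of the half-sums of positive compact roots $\delta_\k,\delta_\m$ in an orthonormal basis $e_1,\ldots,e_{\lfloor (n+1)/2\rfloor}$ of the relevant Cartan, case-splitting on the parity of $n$ and the rank collapses that occur for $n\in\{1,2\}$ (where $M$ or even $K$ becomes abelian and a trivial representation has to be substituted). Expanding norms in this basis gives a telescoping cancellation in which most entries of $\delta_\k$ coincide with shifted entries of $\delta_\m$, and the end result is the expression $(m'+(m-m')(m+m'+n-1))/(2n)$ appearing in \eqref{eq:firsthalf}; the low-rank cases are then checked by hand and shown to fit into the same formula, in accordance with the statement of the corollary.

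For the spectral piece I would use that the restricted root system of $\mathfrak{g}=\mathfrak{so}(n+1,1)$ is of type $B_1$ with the single reduced positive root $\alpha_0$, so $\rho=\tfrac{1}{2}(\dim\mathfrak{g}_{\alpha_0})\alpha_0$. Evaluating $\mathfrak{B}|_{\aL\times\aL}$ through the restricted root space decomposition and using the identification $\aL^\ast\cong\aL$ gives $\norm{\alpha_0}^2=(2\dim\mathfrak{g}_{\alpha_0})^{-1}$ and hence $\norm{\rho}^2=\tfrac{1}{8}\dim\mathfrak{g}_{\alpha_0}$. The Cartan decomposition $\mathfrak{g}=\aL\oplus\m\oplus\mathfrak{g}_{\alpha_0}\oplus\mathfrak{g}_{-\alpha_0}$ combined with $\mathfrak{g}_{\alpha_0}\cong\mathfrak{g}_{-\alpha_0}$ (via $\theta$) and the dimension counts $\dim\mathfrak{g}=(n+2)(n+1)/2$, $\dim\m=n(n-1)/2$, $\dim\aL=1$ force $\dim\mathfrak{g}_{\alpha_0}=n$. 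Thus $\norm{\rho}=\tfrac{1}{2}\sqrt{n/2}$, giving the spectral contribution $-\lambda(\lambda+\sqrt{n/2})$, and adding the two pieces yields the claim.

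The only place where any care is required is the handling of the low-rank cases $n=1,2$, where the root-system bookkeeping degenerates; the rest is genuinely routine. I anticipate no real obstacle, since the hard structural inputs — the generalized common eigenspace formalism, the $\mathrm{smb}_I$ calculation for the Bochner Laplacian, and the identification of the eigenvalue via Corollary~\ref{cor:eigDelta1} — are already in place, and the final step is bookkeeping of weights in a fixed orthonormal basis.
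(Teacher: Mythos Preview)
Your proposal is correct and follows essentially the same approach as the paper: the paper's Appendix~\ref{app:Bochner_eigenspaces} computes the representation-theoretic piece via the explicit highest-weight and $\delta$-data for $\SO(n+1)$ and $\SO(n)$ (case-splitting on parity and on the degenerate cases $n=1,2$), then computes $\norm{\rho}=\tfrac{1}{2}\sqrt{n/2}$ from the type $B_1$ restricted root system and the dimension count $\dim\g_{\alpha_0}=n$, and the corollary simply assembles the two pieces. Your outline matches this step by step.
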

Here we took into account that for $n=1$ one has $[\sigma_m:\tau_{0}]\neq 0$ iff $m=0$. 
\begin{rem}\label{rem:compare} In \cite{dfg}, Dyatlov, Faure, and Guillarmou consider essentially the situation of this Example with $n\geq 2$ and  $m=m'$. They obtain in \cite[Theorem 6]{dfg} the eigenvalue $-\lambda(\lambda + n)+m$ that differs from our result when putting $m=m'$ in \eqref{eq:eiglaplaceex}. The explanation is that they use an inner product in $\p\cong T_{e K}(G/K)$ which differs from the restriction $\eklm{\cdot,\cdot}_{\p\times \p}$ of our inner product by a factor of $2n$. Their choice of inner product has the advantage that the resulting sectional curvature of the Riemannian symmetric space $G/K=\SO(n+1,1)_0/\SO(n+1)$ is precisely $-1$. On the other hand, our choice is natural from a Lie theoretic point of view because the inner product is constructed from the Killing form and the Cartan involution without extra normalization factors.
\end{rem}

\section{An example on $\mathbb H^3$}\label{app:joint_eigenspace}
In this section we apply the constructions and results of this paper to some particular vector bundles over the hyperbolic space $\mathbb{H}^3=\SO(3,1)_0/\SO(3)$ and its cosphere bundle. Some results regarding this space are already covered by Appendix~\ref{app:Bochner_eigenspaces}.
\begin{example}\label{ex:EX1} For $G=\SO(3,1)_0\subset \GL(4,\R)$, the maximal compact subgroup $K\subset G$ can be chosen as
\[
K=\left\{\begin{pmatrix}R_1(\varphi_1)R_2(\varphi_2)R_1(\varphi_3) & 0 \\0 & 1\end{pmatrix}: \varphi_1,\varphi_2,\varphi_3\in \R\right\}\cong \SO(3),
\]
\[R_1(\varphi)=\begin{pmatrix}\cos(\varphi) & \sin(\varphi)  & 0 \\
 -\sin(\varphi) &\cos(\varphi) & 0 \\
0 & 0 & 1 \\
\end{pmatrix},\qquad R_2(\varphi)=\begin{pmatrix}1 & 0 & 0\\
0 &\cos(\varphi) & \sin(\varphi) \\
0& -\sin(\varphi) & \cos(\varphi)
\end{pmatrix}.
\]
We then get
\begin{align*}
M&=\left\{\begin{pmatrix}R_1(\varphi_1) & 0 \\0 & 1\end{pmatrix}: \varphi_1\in \R\right\}\cong \SO(2),\\
M'&=\left\{\begin{pmatrix}R_2(s\pi)R_1(\varphi_1) & 0 \\0 & 1\end{pmatrix}: \varphi_1\in \R,\; s\in \{0,1\}\right\}\cong \Z_2.
\end{align*}
As a representant $m'_{w_0}$ of the non-trivial Weyl group element $w_0$ we can take
\bq
m'_{w_0}:=\begin{pmatrix}R_2(\pi) & 0 \\0 & 1\end{pmatrix}
.\label{eq:w0EX}
\eq
\subsection{Choosing compatible irreducible representations}
Let us now choose irreducible complex representations $\sigma$ of $K\cong\SO(3)$ and $\tau$ of $M\cong \SO(2)$ which are compatible in the sense that $[\tau]$ occurs in $\sigma|_M$. Canonical candidates for $\sigma$ are the classical irreducible representations of $\SO(3)$ on spaces of homogeneous harmonic polynomials, or, equivalently, spherical harmonics. By \cite[Thm. 3.1]{helgason84}, the complex vector space $H(k,n)$ of homogeneous harmonic polynomials of degree $k$ on $\R^n$, $n\geq 2$, is given by
\bq
H(k,n)=\mathrm{span}_\C\big\{(c_1x_1+\cdots+c_nx_n)^k: c^2_1+\cdots + c_n^2=0\big\},\label{eq:Hkn}
\eq
where $x_i$ is the $i$-th standard coordinate function. The left regular action of $\SO(3)$ on $H(k,3)$ is irreducible for every $k$. To keep this example simple, we shall choose $k=1$ and put
\bqn
W:=H(1,3)=\mathrm{span}_\C\big\{x_1+ix_2,x_1-ix_2, x_2+ix_3\big\}=\mathrm{span}_\C\big\{x_1,x_2,x_3\big\}.
\eqn
Writing
\[
h_1(x):=x_1,\qquad h_2(x):=x_2,\qquad h_3(x):=x_3,
\]
we identify $W$ with $\C^3$ using the basis $\{h_1,h_2,h_3\}$. Accordingly, we identify $\mathrm{End}(W)$ with the space of complex $3\times 3$-matrices. Then  one has
\begin{align*}
\sigma(R_1(\varphi))&=\begin{pmatrix}\cos(\varphi) & \sin(\varphi)  & 0 \\
 -\sin(\varphi) &\cos(\varphi) & 0 \\
0 & 0 & 1 \\
\end{pmatrix}=R_1(\varphi),\qquad 
\sigma(R_2(\varphi))=\begin{pmatrix}1 & 0 & 0\\
0 &\cos(\varphi) & \sin(\varphi) \\
0& -\sin(\varphi) & \cos(\varphi)
\end{pmatrix}=R_2(\varphi),\end{align*}
that is, $\sigma$ is simply the defining representation of $\SO(3)$ on $\C^3$. The next step is to find out how the  restriction to $M\cong \SO(2)$  of this representation decomposes into irreducibles. This is an easy task: The one-dimensional complex vector spaces spanned by
\[
e_1:=(1, i,0),\qquad e_{-1}:=(1,-i,0),\qquad e_0:=(0,0,1)
\]
are irreducible $\SO(2)$-submodules with characters $e^{i\varphi},e^{-i\varphi},1$ of the $\SO(2)$-representation on $W=\R^3$ realized by the  matrices $\sigma(R_1(\varphi))$:
\[
\sigma(R_1(\varphi))e_s=e^{si\varphi}e_s,\qquad s\in \{-1,0,1\}.
\]
As the subgroup $M$ is generated by the rotations $R_1(\varphi)$, we find
\[
\sigma|_M=\tau_{-1}\oplus \tau_0\oplus \tau_1,
\]
where for $s\in \{-1,0,1\}$ the $M$-representation $\tau_s$ acts on the $1$-dimensional complex vector space
\bq
V_s:=\mathrm{span}_\C(e_s)\subset \C^3=W\label{eq:defVs}
\eq
by
\[
\tau_s(R_1(\varphi)):=\sigma(R_1(\varphi))|_{V_s}: V_s\to V_s,
\]
which simply means
\bq
\tau_s(R_1(\varphi))v=e^{is\varphi}v\qquad \forall\; v\in V_s,\;\varphi\in \R.\label{eq:characters}
\eq
Having chosen the irreducible representations $\sigma$ and $\tau_s$, we obtain the associated vector bundles
\[
\W_\sigma=\SO(3,1)_0\times_{\sigma} W,\qquad \V_{\tau_s}=\SO(3,1)_0\times_{\tau_s}V_s,\quad s\in \{-1,0,1\}.
\]
$\W_\sigma$ is a complex vector bundle of rank $3$ over the $3$-dimensional hyperbolic space\footnote{equipped with a Riemannian metric that differs from the usual one by a factor of $\frac{1}{4}$, compare Remark \ref{rem:compare}.} 
\[
\mathbb{H}^3=\SO(3,1)_0/\SO(3),
\]
and, according to Remark \ref{rem:geodvf}, the $\V_{\tau_s}$ can be regarded as vector bundles over the cosphere bundle
\[
S^\ast \mathbb{H}^3=\SO(3,1)_0/\SO(2).
\]
They are complex vector bundles of rank $1$, that is, complex line bundles.
\subsubsection{Checking Assumptions \ref{ass:1} and \ref{ass:2}}
With \eqref{eq:w0EX}, one easily checks that the Weyl group acts on the equivalence classes of the representations $\tau_s$ according to
\bq
w_0[\tau_{s}]=[\tau_{-s}],\qquad s\in\{ -1,0,1\},\label{eq:Wtau}
\eq
which means that neither $[\tau_1]$ nor $[\tau_{-1}]$ is invariant under the Weyl group. In particular, these equivalence classes violate Assumption \ref{ass:2}, while $[\tau_0]$ fulfills Assumption \ref{ass:2}. However, as $\tau_1$ and $\tau_{-1}$ are non-trivial and non-isomorphic, the pairs $\sigma,\tau_s$ fulfill Assumption \ref{ass:1} for each $s\in \{-1,0,1\}$.

\subsection{The algebra of invariant differential operators}\label{eq:algebraex}
In \cite[Folg.\ 2.5]{olbrichdiss}, Olbrich describes a method to determine generators of the algebra $D(G,\sigma)$ of invariant differential operators. Applying his method, one finds that $D(G,\sigma)$ is in the case of this example generated by the Bochner Laplacian $\Delta$ and an additional differential operator $D_1$ of oder $1$ which can be characterized as follows.
\begin{itemize}[leftmargin=*]
\item If we identify $W$ with $\p_\C$ via the bases $h_1,h_2,h_2$ and $v_1,v_2,H_0$, so that 
\bq
\W_\sigma\cong G\times_{\Ad\, K}\p_\C\cong T_\C(G/K)\cong T_\C\mathbb{H}^3,\label{eq:1ident}
\eq then under this identification we have
\bq
D_1=\mathrm{curl},\label{eq:D1rot}
\eq
where $\mathrm{curl}: \Gamma^\infty(T_\C\mathbb{H}^3)\to \Gamma^\infty(T_\C\mathbb{H}^3)$ is the familiar curl operator defined by the Riemannian metric, extended to sections of the complexified tangent bundle. 
\item If we identify $W$ with $\p_\C^\ast$ via the bases $h_1,h_2,h_2$ and $v^\ast_1, v^\ast_2, H^\ast_0$, so that 
\bq
\W_\sigma\cong G\times_{\mathrm{Ad}^\ast K}\p_\C^\ast\cong T^\ast_\C(G/K)\cong T_\C^\ast \mathbb{H}^3,\label{eq:2ident}
\eq 
then under this identification we have
\bq
D_1=\ast\, d,\label{eq:D1stard}
\eq
where $d: \Omega^1_\C(\mathbb{H}^3)\to \Omega^2_\C(\mathbb{H}^3)$ is the exterior derivative and $\ast: \Omega^2_\C(\mathbb{H}^3)\to \Omega^1_\C(\mathbb{H}^3)$ is the Hodge star operator induced by the Riemannian metric, both operators understood to be extended to sections of the complexified exterior algebra bundles.
\end{itemize}
The two characterizations of $D_1$ are dual to each other because \emph{curl} corresponds to $\ast\,  d$ under the  isomorphism $T_\C\mathbb{H}^3\cong T_\C^\ast \mathbb{H}^3$ provided by the Riemannian metric. 
\subsection{Eigenvalues and main result}
Our next goal is to determine the eigenvalues $\mu_{[\sigma],[\tau_s],\lambda}(D_1)$ that Corollary \ref{cor:es1} associates to $D_1$ for each $\lambda\in \aL_\C^\ast$, $s\in\{-1,0,1\}$. By Lemma \ref{lem:smbaction}, this reduces to the computation of the polynomial $\mathrm{smb}_I(D_1)$. Carrying out that compuation, one finds
\[
\mu_{[\sigma],[\tau_s], \lambda \nu_0}(D_1)=-s\, i\lambda,\qquad \lambda \in \C,\qquad s\in \{-1,0,1\}.
\]
We can also determine the eigenvalue of $\Delta$ using \eqref{eq:eiglaplaceex} and \eqref{eq:characters}, taking into account that the variable $m'$ in \eqref{eq:eiglaplaceex} corresponds to the variable $s$, while $m=1$. In summary, we obtain for the shifted spectral parameter $\lambda\nu_0 +\rho=(\lambda + \frac{1}{2})\nu_0$ the eigenvalues
\begin{align*}
 \mu_{[\sigma],[\tau_s], \lambda\nu_0 +\rho}(D_1)&=-s\,i\Big(\lambda+\frac{1}{2}\Big),\qquad s\in \{-1,0,1\},\\
 \mu_{[\sigma],[\tau_s], \lambda\nu_0 +\rho}(\Delta)&=\begin{cases}-\lambda(\lambda+1) + \frac{1}{4},\qquad &s\in \{-1,1\},\\
-\lambda(\lambda+1) + \frac{1}{2}, & s=0, \end{cases}\qquad \lambda \in \C.\end{align*}
For $s\in \{-1,0,1\}$, let $h^s:V_s\to W$ be a non-zero $M$-equivariant homomorphism, for example\footnote{Any other $M$-equivariant homomorphism $V_s \to W$ is just a scalar multiple of the inclusion.}  the inclusion $\iota^s:V_s\hookrightarrow W$. Passing to the quotient by a cocompact torsion-free discrete subgroup $\Gamma\subset G$, Theorems \ref{thm:main} and \ref{thm:rough} now tell us that for each $\lambda\in \C$ the natural pushforward ${{^\Gamma}h^s_{\Pi}}_\ast:\D'(\gam G/M,\gam\V_{\tau_s})\to\D'(\gam G/K,\gam \W_\sigma)$ 
bi-restricts to a linear map
\bqn
{{^\Gamma}h^s_{\Pi}}_\ast: 
\mathrm{Res}^{0}_{\,\gam\V_{\tau_s}}(\lambda)\to E_{[\tau_s],\lambda+ \frac{1}{2}}(\Gamma\backslash\W_\sigma)
=\mathrm{Eig}\Big(\Delta_\Gamma,-\lambda(\lambda+1) + \frac{1}{2+2|s|}\Big)\cap \mathrm{Eig}\Big((D_1)_\Gamma,-s\,i\Big(\lambda+\frac{1}{2}\Big)\Big)
\eqn
which is bijective for all $\lambda\in \C$ outside a discrete subset of $\R$. \end{example}

\section{Distributional sections and their wave front sets}\label{sec:WF}

The availability of the smooth volume density $d(gM)$ on $G/M$ enables us to describe the spaces of distributional sections $\D'(G/M,\V_{\tau})$ as\footnote{Compare \cite[Section 6.3]{hoermanderI}.}
\bq
\D'(G/M,\V_{\tau})=\CT(G/M,\V^\ast_{\tau})^\ast=\{L:\CT(G/M,\V^\ast_\tau)\to \C\text{ linear and continuous}\}.
\eq
The inclusion $\Gamma^\infty(\V_\tau)\hookrightarrow\D'(G/M,\V_{\tau})$ is given by
\bq
s\longmapsto \Big(t\mapsto \int_{G/M}\left<t(gM),s(gM)\right>\, d(gM)\Big).\label{eq:embedsmooth}
\eq
Here we introduced the notation $\left<t(gM),s(gM)\right>:=t(gM)(s(gM))$. Due to the situation described in Remark \ref{rem:dual} we can give a slightly more explicit  description of $\D'(G/M,\V_\tau)$. Let $\tau^\ast: M\to \mathrm{End}(V^\ast)$ be the dual $M$-representation induced by $\tau$. Then the associated vector bundle $$\V_{\tau^\ast}=(G\times_{\tau^\ast} V^\ast,\pi_{\V_{\tau^\ast}}),\qquad \pi_{\V_{\tau^\ast}}([g,v])=gM$$ is canonically isomorphic to the dual vector bundle $\V_\tau^\ast$, and we will identify $\V_\tau^\ast$ with $\V_{\tau^\ast}$ in the following\footnote{Since $M$ is compact, we can choose an $M$-invariant inner product on $V$ which makes the representations $\tau$ and $\tau^\ast$, and consequently the bundles $\V_\tau$ and $\V_{\tau^\ast}$, isomorphic. However, such an isomorphism is not canonical as it depends on the choice of the inner product.}. This is convenient because it enables us to use all previously defined constructions with $\V_\tau$ replaced by $\V_{\tau^\ast}=\V^\ast_{\tau}$. In particular, we have a connection $\nabla^\ast:\Gamma^\infty(\V^\ast_{\tau})\to \Gamma^\infty(\V^\ast_{\tau}\otimes T^\ast(G/M))$.

Let us now describe the topology on $\CT(G/M,\V^\ast_{\tau})$ used in the definition of $\D'(G/M,\V_{\tau})$. 
For $U\subset G/M$ with compact closure $\mathcal{K}:=\overline U$  consider the Fr\'echet topology on $\Cinft(U,{\V^\ast_\tau}|_U)$ defined by the seminorms
\bq
\sup_{g\in \mathcal{K}}\norm{\nabla^\ast_{\X_{\alpha_1}}\cdots \nabla^\ast_{\X_{\alpha_{|\alpha|}}}t(g)},\qquad t\in \Cinft(\widetilde U,V^\ast),\;\X_{\alpha_j}\in \Gamma^\infty(TU),\;\alpha \text{ multi-index},\label{eq:seminorm}
\eq
where $\Vert\cdot\Vert$ is an arbitrary norm on $V^\ast$, and we identified sections of  ${\V^\ast_\tau}|_{U}$ with right-$M$-equivariant (w.r.t.\ $\tau^\ast$) functions  $\widetilde U\to V^\ast$, where $\widetilde U\subset G$ fulfills $\widetilde U/M=U$. The locally convex topology on $\Gamma_c^\infty(\V^\ast_\tau)$ is then defined as the inductive or direct limit topology with respect to any directed exhaustion $G/M=\bigcup_{i\in I}U_i$, where $U_i\subset G/M$ is open with compact closure and $U_i\subset U_j$ if $i\leq j$ for $i,j\in I$, $I$ being a directed set. This topology is characterized by the property that a sequence $\{t_n\}_{n\in \N}\subset \Gamma^\infty_c(\V^\ast_{\tau})$ converges to an element $t\in \Gamma^\infty_c(\V^\ast_{\tau})$ iff there is a compact set $\mathcal{K}\subset G/M$ such that $\supp t_n\subset \mathcal{K}$ for almost all $n\in \N$ and for each multiindex $\alpha$, the corresponding sequence of seminorms as in (\ref{eq:seminorm}) of $t_n-t$ converges to $0$ as $n\to \infty$.
\subsection{Definition of wave front sets of distributional sections} Let $\{U_\kappa\}_{\kappa\in\mathscr{K}}$, $U_\kappa\subset G/M$ open, be a cover of $G/M$ such that ${\V^\ast_\tau}$ is trivial over each $U_\kappa$. Denote the local trivializations by $\zeta_\kappa: {\V^\ast_\tau}|_{U_\kappa}\stackrel{\cong}{\to}U_\kappa\times V^\ast$. Then a distributional section $L\in \D'(G/M,\V_{\tau})$ defines a family of distributions $L_\kappa:\CT(U_\kappa,V^\ast)\to \C$ according to $$L_\kappa(f)=L( {\zeta_\kappa}^{-1}\circ (\mathrm{id}_{U_\kappa}\oplus f)),\qquad f\in \CT(U_\kappa,V^\ast),$$
where ${\zeta_\kappa}^{-1}\circ (\mathrm{id}_{U_\kappa}\oplus f)\in \CT(U_\kappa,{\V^\ast_\tau}|_{U_\kappa})$ is extended by $0$ to be considered an element of $\Gamma_c^\infty(\V^\ast_\tau)$. The family $\{L_\kappa\}$ has for $\kappa,\kappa'\in\mathscr{K}$ the compatibility property
\[
L_{\kappa'}=(\zeta_\kappa\circ{\zeta_{\kappa'}}^{-1})^\ast L_\kappa\qquad \text{on }\,U_\kappa\cap U_{\kappa'}\times V^\ast.
\]
Conversely, for any family $\{L_\kappa\}$ of distributions $L_\kappa: \CT(U_\kappa,V^\ast)\to \C$ with this compatibility property there is a distributional section $L\in\D'(G/M,{\V_\tau})$ inducing the family $\{L_\kappa\}$, compare \cite[Theorem 6.3.4]{hoermanderI}. Thus, we can identify an element $L\in\D'(G/M,{\V_\tau})$ with a family $L:=\{L_\kappa\}$ of distributions $L_\kappa: \CT(U_\kappa,V^\ast)\to \C$. Now, choose a basis of $V^\ast$. 
Then the isomorphism $V^\ast\cong \C^{\dim V^\ast}$ induces a homeomorphism $\CT(U_\kappa,V^\ast)\cong\CT(U_\kappa,\C)^{\dim V^\ast}$ which in turn induces an isomorphism of vector spaces
\[
\{L_\kappa:\CT(U_\kappa,V^\ast)\to \C\; \text{linear and continuous}\}\cong \D'(U_\kappa)^{\dim V^\ast},
\]
where now $\D'(U_\kappa)$ is the space of distributions on the open set $U_\kappa\subset G/M$. Thus, any member $L_\kappa$ of the family $L$ splits as $L_\kappa=L_\kappa^1\oplus\cdots\oplus L_\kappa^{\dim V^\ast}$. The wave front set of $L_\kappa$ is then given by
\[
\mathrm{WF}(L_\kappa)=\bigcup_{j=1}^{\dim V^\ast}\mathrm{WF}(L^j_\kappa)\subset T^*U_\kappa\setminus\{0\},
\]
where $\mathrm{WF}(L^j_\kappa)\subset T^*U_\kappa\setminus\{0\}$ is the usual wave front set of a distribution, see \cite[Section 8.1]{hoermanderI}. Since a change of basis in $V^\ast$ and the transition maps from one trivializing neighborhood $U_\kappa$ to another neighborhood $U_{\kappa'}$ are both diffeomorphisms, the wave front set of $L_\kappa$ is unchanged when pulling back $L_\kappa$ along them (on a suitable restriction of the domain). Thus, one has
$$\mathrm{WF}(L_\kappa)|_{U_\kappa\cap U_{\kappa'}}=\mathrm{WF}(L_{\kappa'})|_{U_\kappa\cap U_{\kappa'}}\qquad \forall\; \kappa,\kappa'\in \mathscr{K},$$
and all the wave front sets of the members $L_\kappa$ of the family $L$ glue together to the wave front set of the distributional section $L\in \D'(G/M,\V_{\tau})$ given by
\[
\mathrm{WF}(L)=\bigcup_{\kappa\in \mathscr{K}}\mathrm{WF}(L_\kappa)\subset T^*(G/M)\setminus\{0\},
\]
which is independent of the choices of the trivializing cover $\{U_\kappa\}$ and the basis of $V^\ast$, see \cite[p.\ 265]{hoermanderI}. 
\newcommand{\etalchar}[1]{$^{#1}$}
\providecommand{\bysame}{\leavevmode\hbox to3em{\hrulefill}\thinspace}
\providecommand{\MR}{\relax\ifhmode\unskip\space\fi MR }
\providecommand{\MRhref}[2]{%
  \href{http://www.ams.org/mathscinet-getitem?mr=#1}{#2}
}
\providecommand{\href}[2]{#2}

\end{document}